\newtheorem{prop}{Proposition}[section]
\newtheorem{defn}[prop]{Definition}
\newtheorem{lemma}[prop]{Lemma}
\newtheorem{remark}[prop]{Remark}
\newtheorem{theorem}[prop]{Theorem}
\newtheorem{cor}[prop]{Corollary}
\theoremstyle{definition}
\theoremstyle{definition}
\newtheorem{example}[prop]{Example}
\newtheorem{claim}[prop]{Claim}
\newcommand{\infinity}{\ensuremath{\infty } }
\begin{document}

\title{Dynamics of Irreducible Endomorphisms of $F_n$}

\author{Patrick Reynolds}

\address{\tt Department of Mathematics, University of Illinois at
  Urbana-Champaign, 1409 West Green Street, Urbana, IL 61801, USA}
  \email{\tt
  preynol3@math.uiuc.edu} 

\date{\today}

\maketitle

\begin{abstract}
We consider the class non-surjective irreducible endomorphisms of the free group $F_n$.  We show that such an endomorphism $\phi$ is topologically represented by a simplicial immersion $f:G \rightarrow G$ of a marked graph $G$; along the way we classify the dynamics of $\partial \phi$ acting on $\partial F_n$: there are at most $2n$ fixed points, all of which are attracting.  After imposing a necessary additional hypothesis on $\phi$, we consider the action of $\phi$ on the closure $\overline{CV}_n$ of the Culler-Vogtmann Outer space.  We show that $\phi$ acts on $\overline{CV}_n$ with ``sink'' dynamics: there is a unique fixed point $[T_{\phi}]$, which is attracting; for any compact neighborhood $N$ of $[T_{\phi}]$, there is $K=K(N)$, such that $\overline{CV}_n\phi^{K(N)} \subseteq N$.  The proof uses certian projections of trees coming from invariant length measures.  These ideas are extended to show how to decompose a tree $T$ in the boundary of Outer space by considering the space of invariant length measures on $T$; this gives a decomposition that generalizes the decomposition of geometric trees coming from Imanishi's theorem.
\end{abstract}

\section{Introduction}

In what follows, $F_n$ denotes the rank $n$ free group; $\overline{CV}_n$ denotes the ``Thurston compactification'' of the Culler-Vogtmann Outer space; and $\overline{cv}_n$ denotes the space of very small actions of $F_n$ on $\mathbb{R}$-trees, so $\overline{CV}_n$ consists of projective classes $[T]$ of trees $T \in \overline{cv}_n$; see Section \ref{S.CVn} for definitions.

In \cite{BH92} Bestvina and Handel introduce the notion of an \emph{irreducible} outer automorphism of the free group $F_n$ (see Section \ref{S.Irred}); this class of (outer) automorphisms serves as an analog of the class of (mapping classes of) pseudo-Anosov diffeomorphisms of a hyperbolic surface.  Bestvina and Handel introduce \emph{train track represetatives} for irreducible elements of $Out(F_n)$; these are topological representatives that allow for very close control over rates of growth of conjugacy classes.  It is shown in \cite{BH92} that any irreducible outer automorphism of $F_n$ has a train track representative; see Section \ref{SS.TrainTracks}.

The terminiology of train tracks in \cite{BH92} seems to anticipate the work in \cite{BFH97}, where to each irreducible automorphism of $F_n$ there is associated a pair of abstract laminations.  These abstract laminations on $F_n$ are formalized by Coulbois, Hilion, and Lustig in \cite{CHL08a} via the notion of an \emph{algebraic lamination} (see Subsection \ref{SS.Laminations}); this formalism, as well as its applications in \cite{CHL08b, CHL08c}, does well to compliment the theory of train tracks for studying free group (outer) automorphisms.  The algebraic laminations associated to a free group automorphism are analogous to the geodesic surface laminations associated to a pseudo-Anosov surface diffeomorphism: they are a sort of asymptotic invariant encoding limits of iterates of the automorphism (and its inverse) on primitive elements--the free group analogs of essential simple closed curves on a surface.  Generalizations of the tools of \cite{BFH97} and \cite{BH92} were used by Bestvina-Feighn-Handel to prove the Tits Alternative for $Out(F_n)$ \cite{BFH00, BFH05}.

Inspired by the applicability of these dynamical techniques for understanding elements of $Out(F_n)$, we study non-surjective irreducible (outer) endomorphisms of $F_n$ from a dynamical viewpoint.  An endomorphism $\phi:F_n \rightarrow F_n$ is \emph{irreducible} if no power of $\phi$ maps a non-trivial, proper free factor of $F_n$ into a conjugate of itself, and if this condition holds for any power of $\phi$ as well; see Section \ref{S.Irred}.  

Suppose that $\phi:F_n \rightarrow F_n$ is irreducible; it follows from work of Bestvina-Feighn-Handel \cite{BFH97} that one may associate to $\phi$ an algebraic lamination $\Lambda_{\phi}$ and a (projective) \emph{stable tree} $[T_{\phi}] \in \overline{CV_n}$; see Subsections \ref{SS.StableLam} and \ref{SS.TrainTracks}.  There is a natural right action of $\phi$ on the set of $\mathbb{R}$-trees, equipped with minimal, isometric actions of $F_n$, and $[T_{\phi}]$ has the property that $[T_{\phi}\phi]=[T_{\phi}]$.   Using the techniques of \cite{BFH97}, Coulbois-Hilion have shown that for irreducible $\alpha \in Out(F_n)$, the stable tree $T_{\alpha}$ has a strong mixing property--it is \emph{indecomposable} \cite{CH10}; see Definition \ref{D.Indecomposable}. In constrast with the case of outer automorphisms, we obtain (Proposition \ref{P.FreeSimp}):

\begin{prop}
Let $\phi:F_n \rightarrow F_n$ an irreducible endomorphism that is non-surjective.  There is a free simplicial $F_n$-tree $T_{\phi}$ such that $[T_{\phi}]\phi=[T_{\phi}]$.
\end{prop}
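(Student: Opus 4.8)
The plan is to build $T_\phi$ by hand from a well-chosen topological representative of $\phi$, using a Perron--Frobenius metric and the passage to the universal cover; the only non-elementary ingredient is the structural result announced above, that an irreducible non-surjective $\phi$ is topologically represented by a \emph{simplicial immersion} $f\colon G\to G$ of a finite (core) marked graph --- a map sending vertices to vertices and each edge to a reduced edge-path, and restricting to a locally injective map near every point. I would first record the elementary consequences of this for $f$: because $\phi$ is irreducible, its transition matrix $M = M(f)$ may be taken Perron--Frobenius irreducible, with PF eigenvalue $\lambda$ and positive eigenvectors; and $\lambda > 1$, since if $\lambda = 1$ then summing the eigenvector equation over all coordinates shows that every column sum of the (nonnegative, integral, irreducible) matrix $M$ equals $1$, which forces $M$ to be a cyclic permutation matrix, hence $f$ a graph automorphism and $\phi$ an element of $\mathrm{Aut}(F_n)$ --- contrary to non-surjectivity.

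Next I would metrize and lift. Using a positive Perron--Frobenius eigenvector, assign to each edge of $G$ a positive length so that $f$ multiplies the length of each edge by $\lambda$. The point of the immersion hypothesis is that $f$ carries reduced edge-paths to reduced edge-paths with no cancellation at concatenation points, so in fact $f$ scales the length of \emph{every} reduced path in $G$ by exactly $\lambda$ (incidentally making $f$ a train-track map, with $\lambda(f^k) = \lambda^k$). Let $T_\phi$ be the universal cover $\widetilde G$ with the lifted path metric. Since $G$ is a finite core graph, $T_\phi$ is a minimal simplicial $\mathbb{R}$-tree on which $F_n = \pi_1(G)$ acts freely and cocompactly by deck transformations, so $[T_\phi] \in CV_n \subseteq \overline{CV}_n$.

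It remains to verify $[T_\phi]\phi = [T_\phi]$. Choose a lift $\widetilde f\colon T_\phi \to T_\phi$ of $f$. By the standard equivariance of lifts, $\widetilde f(g\cdot x) = \phi(g)\cdot\widetilde f(x)$ for all $g \in F_n$ and $x \in T_\phi$, and by the previous step $\widetilde f$ multiplies all distances by $\lambda$, so $\widetilde f$ is a $\lambda$-homothety, in particular injective. Hence $g \in \ker\phi$ would force $\widetilde f(gx) = \phi(g)\widetilde f(x) = \widetilde f(x)$ for all $x$, so $gx = x$ for all $x$, so $g = 1$ by freeness: thus $\phi$ is injective. Now $\widetilde f$ is an $F_n$-equivariant homothety from $T_\phi$ onto its image $T' = \widetilde f(T_\phi)$, where $T'$ carries the $F_n$-action through $\phi$; via $\widetilde f$ this action on $T'$ is equivariantly homothetic to the minimal action on $T_\phi$ and hence is itself minimal, so $T'$ is the minimal subtree of $T_\phi\phi$. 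Therefore $[T_\phi]\phi = [T'] = [T_\phi]$, and $T'$ is free and simplicial, which is the proposition.

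I expect the real difficulty to lie not in this last construction --- routine Perron--Frobenius and universal-cover bookkeeping, using nothing beyond the immersion property --- but in the input it consumes: producing the simplicial immersion representative. That means running Bestvina--Handel-style folding and (relative) train-track arguments in the non-surjective endomorphism setting, and in particular showing that the sequence of elementary moves (subdivisions, folds of illegal turns, collapses of invariant subgraphs) terminates, can be kept irreducible, and --- crucially, using non-surjectivity --- does not terminate in a homotopy equivalence; this is the work of the preceding sections.
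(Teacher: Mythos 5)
Your construction is clean and correct as far as it goes, but it begs the question: the ``non-elementary ingredient'' you take as input --- that an irreducible non-surjective $\phi$ is topologically represented by a simplicial immersion (a train track map with no illegal turns) --- is not something the preceding sections establish; it is Corollary \ref{C.NoIllegalTurns}, which the paper \emph{deduces} from Proposition \ref{P.FreeSimp}. In the paper's logical order, Bestvina--Handel gives a train track representative $f\colon G\to G$, but that map may perfectly well have illegal turns, and one cannot a priori rule this out (irreducible \emph{automorphisms} typically do have illegal turns, and an abstract endomorphism has no reason to be better behaved). The stable tree $T_\Phi$ is constructed as a rescaled Gromov--Hausdorff limit $\lim \lambda^{-k}\tilde f^k(\tilde G)$, which could very well have dense orbits; the entire content of the proposition is to rule that out. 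Once that is done, a $\phi$-compatible $\lambda$-homothety of the free simplicial $T_\Phi$ descends to a simplicial immersion $T_\Phi/F_n \to T_\Phi/F_n$, giving the representative you are assuming. So your argument runs the derivation backwards.

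The paper's proof of the hard step is qualitatively different from ``run Bestvina--Handel folding and check termination.'' It applies Lemma \ref{L.NoGraph} to reduce to the dichotomy ``$T_\Phi$ free simplicial'' versus ``$T_\Phi$ with dense orbits,'' and then kills the dense-orbits alternative by a dynamics-at-infinity argument: the map $Q\colon\partial F_n\to\overline{T_\Phi}$ of Levitt--Lustig has compact nonempty fibers, and a fixed branch point of the homothety $f_\phi$ with $\lambda>1$ produces a nondegenerate arc contained in $\bigcap_k f_\phi^k(T_\Phi)$, hence (via the commuting square $Q\circ\partial\phi = f_\phi\circ Q$) an infinite subset of $\bigcap_k\partial\bigl(\phi^k(F_n)\bigr)$; on the other hand, because $\phi$ is expansive, the Stallings subgroup graph of $\phi^k(F_n)$ unfolds to a union of at most $2n$ cylinders whose defining words grow without bound, forcing $\bigcap_k\partial\bigl(\phi^k(F_n)\bigr)$ to be finite. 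That contradiction is the proof, and it also yields Corollary \ref{C.BoundaryDynamics}. None of this is captured by Perron--Frobenius bookkeeping on an assumed immersion. Your computation that $\lambda>1$ from non-surjectivity, and the equivariant-homothety argument identifying $T'=\tilde f(T_\phi)$ with the minimal subtree of $T_\phi\phi$, are fine; but the proposal needs a genuine proof that an immersion representative exists before any of that applies, and that proof is exactly what the proposition supplies in the paper.
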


This immediately gives (Corollary \ref{C.NoIllegalTurns}):

\begin{cor}
Let $\phi:F_n \rightarrow F_n$ an irreducible endomorphism that is non-surjective.  Then (the outer class of) $\phi$ is topologically-represented by a train track map with no illegal turns. 
\end{cor}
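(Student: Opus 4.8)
The plan is to read off the desired representative directly from the tree $T_\phi$ provided by Proposition~\ref{P.FreeSimp}: I would pass to the quotient graph, descend the equivariant map that realizes $[T_\phi]\phi = [T_\phi]$, and then check by inspection that the resulting map is a simplicial immersion, so that the assertion about illegal turns becomes automatic. Concretely, set $G := T_\phi/F_n$. Since the action of $F_n$ on $T_\phi$ is free, simplicial, and minimal, the quotient $G$ is a finite graph with $\pi_1(G)\cong F_n$ and no valence-one vertices, and the identification of $T_\phi$ with the universal cover equips $G$ with a marking. Absorbing each valence-two vertex into its two incident edges — an elementary modification not changing the marked homotopy type — I may assume every vertex of $G$ has valence at least three.

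Next I would unwind the relation $[T_\phi]\phi = [T_\phi]$. Writing $\rho$ for the action of $F_n$ on $T_\phi$, the tree $T_\phi\phi$ is the minimal subtree of $(T_\phi,\rho\circ\phi)$, so the projective identity furnishes a scalar $\lambda>0$ and an $F_n$-equivariant homothety
\[
h:T_\phi \longrightarrow T_\phi,\qquad h(\rho(g)x)=\rho(\phi(g))h(x),\qquad d(h(x),h(y))=\lambda\, d(x,y),
\]
whose image is exactly that minimal subtree. A homothety is injective, and injectivity of $h$ forces injectivity of $\phi$: if $\phi(g)=1$ then $h(\rho(g)x)=h(x)$ for all $x$, hence $\rho(g)=\mathrm{id}$, hence $g=1$ by freeness. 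Because of the $\phi$-twisted equivariance, $h$ carries each $F_n$-orbit into an $F_n$-orbit, so it descends to a map $f:G\to G$ with $f\circ q = q\circ h$, where $q:T_\phi\to G$ is the quotient map; tracking the twisting shows $f$ represents the outer class of $\phi$.

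It then remains to verify that $f$ is a simplicial immersion with no illegal turns. Since $F_n$ acts freely, $q$ is a local homeomorphism, hence induces a bijection on directions at every point; and an injective homothety of a tree is injective on directions at every point; combining these, $f$ is locally injective, i.e.\ $Df$ is injective on the directions at each vertex of $G$. Therefore every vertex of $G$ is sent to a point carrying at least three directions, which by the normalization is a vertex — so $f$ takes vertices to vertices — and since $h$ maps each edge of $T_\phi$ onto a geodesic segment, $f$ maps each edge of $G$ to a reduced edge-path. Thus $f$ is a simplicial immersion. Running the same argument with $h^k$ (again an injective homothety) in place of $h$ shows every $f^k$ is a simplicial immersion, so $f$ is a train track map; and since $Df^k$ is injective on directions at every vertex for all $k\geq 1$, no turn is ever identified, i.e.\ $f$ has no illegal turns, which is the claim.

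The hard work has all been absorbed into Proposition~\ref{P.FreeSimp}; the only points requiring care here are, first, that the right action of $\phi$ on $\overline{cv}_n$ must be taken on minimal subtrees so that $h$ genuinely maps $T_\phi$ into itself and is injective, and second, the bookkeeping that local injectivity descends through the free quotient and that the valence-$\geq 3$ normalization upgrades ``topological immersion'' to ``simplicial immersion''. I expect the first of these — correctly extracting the injective equivariant homothety from the projective fixed-point relation — to be the only genuine, if mild, obstacle.
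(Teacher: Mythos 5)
Your proof is correct and follows essentially the same approach as the paper: invoke Proposition~\ref{P.FreeSimp} to get a free simplicial stable tree, take the $\phi$-compatible homothety $h$ of $T_\Phi$ witnessing $[T_\Phi]\phi = [T_\Phi]$, and descend $h$ through the free quotient to a map $\overline{f}: T_\Phi/F_n \to T_\Phi/F_n$ which one checks is a simplicial immersion. The paper's proof compresses the ``easily seen'' verification that you spell out explicitly (the valence-$\geq 3$ normalization, the direction count forcing vertices to vertices, and the propagation to all powers $h^k$), but the underlying argument is the same.
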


Building on the techniques of \cite{BFH97}, Levitt and Lustig show in \cite{LL03} that any irreducible outer automorphism of $F_n$ acts on $\overline{CV}_n$ with north-south dynamics: there are exactly two fixed points, one attracting and one repelling, such that convergence to the attractor is uniform on compact subsets avoiding the repeller.

Unlike in the case of $Out(F_n)$ one needs to impose an additional assumption on a non-surjective irreducible endomorphism $\phi$ to ensure that the action of $\phi$ on the set of $F_n$-trees induces an action on $\overline{CV_n}$; we call such $\phi$ \emph{admissible} (see Section \ref{S.Admissible}).  In this case we consider the dynamics of the action of $\phi$ on $\overline{CV_n}$; we show (Theorem \ref{T.Main}):

\begin{theorem}\label{T.Intro1}
Let $\phi:F_n \rightarrow F_n$ be an admissible irreducible endomorphism that is non-surjective.  Then $\phi$ has a unique fixed point $[T_\phi] \in \overline{CV_n}$, which is free and simplicial; for any $[T] \neq [T_{\phi}]$ one has that $[T]\phi^k \rightarrow [T_{\phi}]$; and for any compact nieghborhood $N$ of $[T_{\phi}]$, there is $k=k(N)$ such that $\overline{CV}_n \phi^k \subseteq N$.
\end{theorem}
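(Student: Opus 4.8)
The plan is to establish Theorem~\ref{T.Intro1} in three stages: first uniqueness of the fixed point, then the convergence statement, and finally the uniform-on-compacta conclusion, with the bulk of the work going into the convergence step.

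For \emph{uniqueness}, I would start from Proposition~\ref{P.FreeSimp}, which already supplies a free simplicial fixed tree $[T_\phi]$. Suppose $[T]\phi = [T]$ for some $[T] \in \overline{CV}_n$. The idea is to exploit the algebraic lamination $\Lambda_\phi$ and the fact that it is carried by any $\phi$-fixed tree (via the Bestvina--Feighn--Handel machinery): a $\phi$-invariant projective tree must contain $\Lambda_\phi$ in its "zero lamination'' or dual lamination. Since $T_\phi$ is free and simplicial and $\phi$ is non-surjective and irreducible, the immersion $f\colon G \to G$ from the Corollary has no illegal turns, so its expansion factor $\lambda$ is a genuine stretch on lengths of loops. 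For any competing fixed $[T]$, one compares the translation length functions: $\|\cdot\|_{T\phi} = \lambda_T \|\cdot\|_T$ for some $\lambda_T > 1$ (the relevant eigenvalue), and the common carried lamination forces $\|\cdot\|_T$ to be a scalar multiple of $\|\cdot\|_{T_\phi}$ on a spanning set of conjugacy classes, hence $[T] = [T_\phi]$. The delicate point is ruling out degenerate fixed trees in $\partial CV_n$; here I would use indecomposability/minimality together with admissibility to show no such tree can be $\phi$-fixed — admissibility is precisely the hypothesis that makes the $\phi$-action well-defined on $\overline{CV}_n$ and should exclude the pathological fixed points.

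For \emph{convergence}, $[T]\phi^k \to [T_\phi]$ for all $[T] \neq [T_\phi]$, I would adapt the Levitt--Lustig argument from \cite{LL03}, using the invariant length measure projections advertised in the abstract. The key estimate is a Bounded Cancellation / contraction statement: because $f$ has no illegal turns, edge-paths are mapped to immersed edge-paths without folding, so lengths in the metric realization grow by exactly $\lambda$ at each step with \emph{no} cancellation error. Concretely, for a fixed basis or a fixed finite spanning set of conjugacy classes $\{g_i\}$, I would show $\|g_i\|_{T\phi^k} / \|g_j\|_{T\phi^k} \to \|g_i\|_{T_\phi}/\|g_j\|_{T_\phi}$ by writing out the combinatorics of $f^k$ acting on the cyclically reduced word representing $g_i$: the dominant contribution comes from the top stratum (irreducibility means there is a single stratum with Perron--Frobenius eigenvalue $\lambda$), and lower-order terms are controlled geometrically. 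This is where I expect the \textbf{main obstacle}: one must handle arbitrary $[T] \in \overline{CV}_n$, including trees in the boundary where translation length functions can vanish on infinite-index subgroups, so the naive "compare finitely many translation lengths'' approach needs the projection-to-invariant-measures technology to reduce a boundary tree to its measured pieces and then argue the top piece dominates under $\phi$. I would structure this as: (i) prove it for $[T] \in CV_n$ by the no-cancellation length computation; (ii) extend to $\partial CV_n$ by a continuity/compactness argument, using that $CV_n$ is dense and that $\phi$ acts continuously, together with the fact that $[T_\phi]$ is the only possible accumulation point of any orbit.

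For the \emph{uniform} statement, given a compact neighborhood $N$ of $[T_\phi]$, I would argue by contradiction: if no such $k$ exists, then for each $k$ there is $[S_k] \in \overline{CV}_n$ with $[S_k]\phi^k \notin N$; by compactness of $\overline{CV}_n$, pass to a subsequence with $[S_k] \to [S]$ and $[S_k]\phi^k \to [S'] \notin \mathrm{int}\,N$. But the pointwise convergence from the previous step, made locally uniform via an equicontinuity estimate (again furnished by the no-cancellation property, which gives a modulus of convergence independent of the starting tree on $CV_n$, then extended), forces $[S'] = [T_\phi] \in \mathrm{int}\,N$, a contradiction. The technical heart here is upgrading pointwise to locally uniform convergence; the clean no-illegal-turns structure should give explicit bounds of the form $d([T]\phi^k, [T_\phi]) \leq C \lambda_2^k/\lambda^k$ (with $\lambda_2$ a subdominant eigenvalue) on a compact exhaustion, which is more than enough. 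I would close by remarking that this "sink'' behavior is exactly what fails for automorphisms — the absence of a repeller is a direct consequence of non-surjectivity forcing $T_\phi$ to be simplicial rather than an indecomposable $\mathbb{R}$-tree.
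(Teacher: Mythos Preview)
Your proposal has a genuine gap in the convergence step for boundary trees. You propose to handle $[T] \in \partial CV_n$ by ``a continuity/compactness argument, using that $CV_n$ is dense and that $\phi$ acts continuously.'' This does not work: continuity of each $\phi^k$ and pointwise convergence on the dense set $CV_n$ do not combine to give pointwise convergence on $\overline{CV}_n$. One would need equicontinuity of the family $\{\phi^k\}$, which you have not established (and which is essentially equivalent to what you are trying to prove). Your mention of the length-measure projections is too vague to fill this; the paper uses that technology for a very specific purpose, not as a general ``reduce to measured pieces'' device.

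The paper's route is structurally different. The engine is the Bestvina--Feighn--Handel/Levitt--Lustig criterion (Proposition~\ref{P.ConvergeLemma}): to get (locally uniform) convergence at $[T]$, it suffices that some leaf of $\Lambda_\Phi$ is \emph{not} in $L^2(T)$. The paper then verifies this tree-by-tree via a four-case analysis of $\partial CV_n$: (1) for simplicial $T$, admissibility is reformulated (Proposition~\ref{P.LamLemma}) as ``no leaf of $\Lambda_\Phi$ is carried by a vertex group of a very small splitting,'' which gives the criterion directly; (2) for $T$ splitting as a graph of actions, one shows $T\phi^k$ eventually becomes free simplicial (Lemmas~\ref{L.EventuallyFree}, \ref{L.GraphsConverge}); (3) for indecomposable $T$, a result from \cite{R10a} (Proposition~\ref{P.IndecompConverge}) forces $T\phi$ to be simplicial since $\phi(F_n)$ has infinite index; (4) for the remaining $T$ with dense orbits, the invariant-measure machinery (Proposition~\ref{P.ExceptionalMeasure}, Corollary~\ref{C.OthersConverge}) produces a projection $T \to T'$ with $L^2(T) \subseteq L^2(T')$ and $T'$ falling into case (2) or (3). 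You are missing cases (2)--(4) entirely, and in particular the key external input from \cite{R10a} for indecomposable trees.

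Two smaller points. First, your uniqueness argument is unnecessarily elaborate: once convergence $[T]\phi^k \to [T_\Phi]$ is established for \emph{every} $[T]$, uniqueness is immediate. Second, for the uniform statement the paper does not use explicit rate estimates or equicontinuity; since Proposition~\ref{P.ConvergeLemma} already gives \emph{locally uniform} convergence at every point, compactness of $\overline{CV}_n$ yields global uniformity by a finite-cover (or short contradiction) argument.
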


It should be noted that Theorem \ref{T.Intro2} is novel in the sense that $\phi$ is not assumed to be invertible.  This result turns out to be much more difficult to prove than North-South dynamics for irreducible automorphisms of $F_n$ \cite{LL03}, which is in turn much more difficult to prove than North-South dynamics for pseudo-Anosov surface automorphisms.  The latter two results use ``backwards iteration'' in an essential way, and it is reasonable to say that many of the complications in the present work stem from the lack of an inverse.    

Theorem \ref{T.Intro1} implies that for $k$ sufficiently large, the subgroups $\phi^k(F_n)$ have a strong rigidity property (Corollary \ref{C.Rigid}), which seems interesting to us:

\begin{cor}
 For any $C>1$, there is a finitely generated, non-abelian subgroup $H \leq F_n$, such that for any non-trivial $h,h' \in H$ and any trees $T,T' \in \overline{cv}_n$, one has $l_T(h)>0$ and $$\frac{1}{C} \leq \frac{l_T(h)/l_T(h')}{l_{T'}(h)/l_{T'}(h')} \leq C$$
\end{cor}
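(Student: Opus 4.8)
The plan is to extract the subgroup $H$ from the stable tree $T_\phi$ produced by Theorem~\ref{T.Intro1}, using the ``sink'' dynamics to get uniform bilipschitz control on length functions over all of $\overline{cv}_n$. Since $[T_\phi]$ is free and simplicial, we may normalize a representative $T_\phi \in \overline{cv}_n$; fix a finite generating set and note that $T_\phi$ has a compact quotient graph $T_\phi/F_n$ with positive edge lengths. First I would observe that, by Theorem~\ref{T.Intro1}, for any prescribed compact neighborhood $N$ of $[T_\phi]$ there is $k=k(N)$ with $\overline{CV}_n\phi^k \subseteq N$. The idea is to choose $N$ to be a small neighborhood on which length functions are $C^{1/2}$-bilipschitz to those of $T_\phi$ (possible because the length function $[T] \mapsto (l_T(g)/l_T(g'))$ on relevant ratios varies continuously, and $\overline{CV}_n$ is compact metrizable), then take $H = \phi^k(F_n)$.

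The key steps, in order: (1) Normalize $T_\phi$ and record that it is free simplicial, so $l_{T_\phi}(h) > 0$ for every nontrivial $h \in F_n$; choose two elements witnessing non-abelianity. (2) Use continuity of projectivized length functions together with compactness of $\overline{CV}_n$ to find $C' > 1$ and a compact neighborhood $N$ of $[T_\phi]$ such that for all $[S] \in N$ and all nontrivial $g, g'$ in a fixed finite subset (or, after a uniform bound, arbitrary nontrivial $g,g'$) one has the ratio $\big(l_S(g)/l_S(g')\big)\big/\big(l_{T_\phi}(g)/l_{T_\phi}(g')\big)$ between $1/C'$ and $C'$; here one must be careful that length ratios are well-defined, i.e. that $l_S(g') \neq 0$, which is where the free simplicial structure of $T_\phi$ and the closeness of $S$ to $T_\phi$ enter. (3) Apply Theorem~\ref{T.Intro1} to get $k$ with $\overline{CV}_n\phi^k \subseteq N$; set $H = \phi^k(F_n)$, a finitely generated non-abelian subgroup since $\phi$ is injective. (4) For $h, h' \in H$, write $h = \phi^k(w)$, $h' = \phi^k(w')$; then for any $T \in \overline{cv}_n$ we have $l_T(h) = l_T(\phi^k(w)) = l_{T\phi^k}(w)$, and $[T\phi^k] \in N$, so the ratio $l_T(h)/l_T(h')$ is within a factor $C'$ of $l_{T_\phi}(w)/l_{T_\phi}(w')$ — and the same for $T'$ — giving the claimed bound with $C = (C')^2$; choosing $N$ small enough that $(C')^2 \le C$ finishes it. Positivity $l_T(h) > 0$ follows since $l_{T\phi^k}(w) = l_{T_\phi}(w) \cdot (\text{bounded ratio}) > 0$.

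The main obstacle I anticipate is Step~(2): making the bilipschitz comparison of length ratios \emph{uniform over all nontrivial $h, h' \in H$ simultaneously}, rather than just over a finite set. Two points need care. First, one must ensure $l_{T}(h)/l_{T}(h')$ is never $0/0$ or finite$/0$; positivity of $l_{T\phi^k}(w)$ is automatic once we know $[T\phi^k]$ lies in a neighborhood of the free simplicial tree $T_\phi$ on which all elements are hyperbolic with uniformly comparable translation length — this is a genuine use of $T_\phi$ being free and simplicial (a general very small tree has elliptic elements, which is why one cannot do this with an arbitrary point of $\overline{CV}_n$). Second, the uniformity over infinitely many $w, w'$: the function $[S] \mapsto l_S$ is continuous into the space of length functions with its natural (weak) topology, so for $[S]$ in a sufficiently small neighborhood of $[T_\phi]$ the ratio $l_S(\cdot)/l_{T_\phi}(\cdot)$ is uniformly pinched near $1$ — this is essentially the statement that, near a free simplicial point, convergence of projectivized length functions is uniform on the set of primitive (indeed all nontrivial) conjugacy classes. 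I would cite or prove this as a lemma about the metric topology on $\overline{CV}_n$ near Outer space itself, where the identification of $[S]$ with a nearby marked metric graph makes the comparison of all word lengths simultaneously transparent. Once this uniform comparison lemma is in hand, the corollary is a short deduction via the right-action identity $l_T(\phi^k(w)) = l_{T\phi^k}(w)$ and Theorem~\ref{T.Intro1}.
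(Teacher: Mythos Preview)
Your approach is correct and is exactly the paper's: take $H=\phi^k(F_n)$ for an admissible irreducible non-surjective $\phi$, use the identity $l_T(\phi^k(w))=l_{T\phi^k}(w)$, and apply Theorem~\ref{T.Main} to force $[T\phi^k]$ into a small neighborhood of the free simplicial point $[T_\Phi]$ where length ratios are uniformly pinched. The paper's proof is a two-line ``immediate from Theorem~\ref{T.Main},'' and your write-up simply unpacks that immediacy---your Step~(2) concern is handled exactly as you suggest, since a small enough compact neighborhood of $[T_\Phi]$ lies in $CV_n$ (which is open in $\overline{CV}_n$), and near a free simplicial tree the comparison of all translation lengths is uniform via the equivariant bilipschitz map between nearby marked graphs.
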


Along the way to proving Theorem \ref{T.Intro1}, we introduce the rudiments of a decomposition theory of individual trees $T \in \overline{cv}_n$.  At the heart of this approach is a study of the space $M_0(T)$ of invariant, non-atomic \emph{length measures} on $T$ (Section \ref{S.Measures}); these objects, introduced by Paulin, generalize measured laminations on surfaces.  In \cite{Gui00} Guirardel uses length measures as part of an approach to study the dynamics of $Out(F_n)$ acting on the boundary of $\overline{CV}_n$; there he shows that for $T \in \overline{cv}_n$ with dense orbits, the projectivization of $M_0(T)$ embeds in the boundary of $\overline{CV}_n$.  This shows, in particular, that the space $M_0(T)$ is finite dimensional.  

We now briefly recall the structure of trees dual to measured laminations on surfaces, in order to contrast with the trees in $\overline{cv}_n$.  Let $\mathscr{L}=(L,\mu)$ be a measured lamination on a surface $S$, and let $T=T_{\mathscr{L}}$ denote the dual tree; see, for example, \cite{MS84} or \cite{MKap01}.  If $L$ is not minimal, then $T$ can be decomposed in a way that parallels the decompositon of $L$ into minimal components--$T$ is a \emph{graph of actions}; see Section \ref{S.GoA}.  A feature of (minimal) \emph{arational measured laminations} is that every half-leaf is dense; this implies that a tree dual to an arational measured laminiation is indecomposable.  It follows that, if $L$ has no compact leaf, then either $T$ is indecomposable, or $T$ splits as a graph of indecomposable actions.  

The structure of some trees in $\overline{cv}_n$ is divergent from this picture: there are trees $T \in \overline{cv}_n$ such that $T$ is neither indecomposable nor a graph of actions; see Example \ref{E.ExceptionalSet}.  There is a holonomy pseudogroup associated to $T$, which is completely analogous to the holonomy pseudogroup associated to a lamination, and this psuedogroup contains an \emph{exceptional} set, in contrast with the surface case; see Section \ref{S.Measures}.  To understand the dynamical structure of $T$, it is useful to consider certain \emph{projections} of $T$; for this approach it is critical that the dynamics of the action $F_n \curvearrowright T$ are ``visible'' to length measures.  This is accomplished via the following (Proposition \ref{P.ExceptionalMeasure}):

\begin{prop}\label{P.Intro1}
 Let $T \in \overline{cv}_n$ have dense orbits.  Suppose that $T$ does not split as a graph of actions and that $T$ contains an exceptional set $X$.  Then there is $\mu \in M_0(T)$ supported on $X$.  
\end{prop}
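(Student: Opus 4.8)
The plan is to construct the desired measure $\mu$ as a weak-$*$ limit of rescaled counting measures along orbit segments that are forced to spend an asymptotically full proportion of their time inside the exceptional set $X$. Recall that $X$ is a closed subset of (finite subtrees of) $T$ on which the holonomy pseudogroup acts in the ``exceptional'' way — by the analogue of interval-exchange-with-a-Cantor-obstruction — so that $X$ carries no invariant subset that is a genuine subtree, yet $X$ is not swallowed by the indecomposable dynamics because $T$ is assumed not to split as a graph of actions. The first step is to make precise, following Section \ref{S.Measures}, the correspondence between invariant non-atomic length measures on $T$ and finitely-additive invariant functionals on the algebra of ``clopen'' arcs adapted to the holonomy pseudogroup; this reduces the problem to producing an invariant, non-atomic, finitely-additive measure on the pseudogroup-invariant algebra generated by $X$, which then extends (by the length-measure analogue of the Riesz-type correspondence set up in that section) to an element of $M_0(T)$ supported on $X$.

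Next I would exploit the hypothesis that $T$ has dense orbits together with the fact that $T$ does \emph{not} split as a graph of actions. If no such $\mu$ supported on $X$ existed, then every invariant length measure would give zero mass to $X$; I want to argue that this forces a decomposition. Concretely, consider the ``transverse structure'' that $X$ induces: the complement of the saturation of $X$ under the pseudogroup, together with the minimal pieces of dynamics obtained by collapsing $X$. Using Guirardel's finite-dimensionality of $M_0(T)$ (quoted in the excerpt after \cite{Gui00}) and the structure theory of the holonomy pseudogroup, one shows that the absence of a measure charging $X$ means the $F_n$-action restricted to the subtree(s) spanned by the $X$-orbit carries its own minimal invariant subtree on which the action is by a proper free factor — equivalently, $T$ admits a nontrivial transverse covering, hence splits as a graph of actions. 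This contradicts the standing hypothesis. The cleanest implementation is a Hahn-Banach / Markov-Kakutani fixed point argument: the set of invariant finitely-additive probability measures on the $X$-algebra is a nonempty compact convex set preserved by the (amenable, since it is generated by the abelian holonomy maps on each exceptional component) pseudogroup action, so it has a fixed point; one then checks this fixed point is non-atomic because $X$ has no isolated orbit — atoms would produce a periodic orbit of the holonomy pseudogroup, which in a very small $\mathbb{R}$-tree with dense orbits cannot sit inside an exceptional set.

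The main obstacle I anticipate is the non-atomicity and the ``support exactly $X$'' part, rather than existence of \emph{some} invariant functional. A priori the fixed-point argument could hand back a measure concentrated on a periodic point or on a proper pseudogroup-invariant subset of $X$ that happens to be a subtree, and ruling this out requires genuinely using both that $T \in \overline{cv}_n$ is very small (so arc stabilizers are trivial or maximal cyclic, which kills small periodic configurations) and that the exceptional set is minimal as a pseudogroup-invariant set — this second point is where the definition in Section \ref{S.Measures} must be used carefully, since ``exceptional'' should be set up to mean precisely that $X$ supports no invariant \emph{subtree} but is itself dynamically minimal. A secondary technical point is verifying that the extension of an invariant finitely-additive arc-functional to a genuine length measure (a countably-additive, $\sigma$-finite object on the Borel sets of $T$) goes through despite the Cantor-set nature of $X$; here I would lean on the same machinery that establishes the measure-functional dictionary earlier in Section \ref{S.Measures}, checking that $X$ being closed and the holonomy maps being continuous gives the requisite regularity. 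If the direct fixed-point route proves awkward, the fallback is the limit-of-counting-measures construction from the first paragraph, where the nontrivial input becomes a quantitative recurrence estimate: orbit segments entering $X$ return to $X$ with frequency bounded below, which again follows from minimality of the exceptional dynamics and is exactly the place where the hypothesis ``$T$ does not split as a graph of actions'' does its work.
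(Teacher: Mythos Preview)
Your proposal has a genuine gap at the fixed-point step. The holonomy pseudogroup $\Gamma_K$ is generated by restrictions of elements of $F_n$, which is non-amenable; the claim that the holonomy is ``abelian'' on an exceptional component is simply false in this setting. The paper explicitly flags this obstruction in Section~\ref{S.Measures} (the paragraph following the definition of $M_1(T)$): if $\mu_l$ is invariant under the first $l$ generators of $\Gamma_K$, a weak limit $\mu = \lim \mu_l$ need \emph{not} be $\Gamma_K$-invariant, because the domains of the partial maps are closed. So neither Markov--Kakutani nor a counting-measure limit produces an invariant measure here, and the Acknowledgements even record that an earlier draft relied on a result of Plante in this spirit which turned out to be incorrect.

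The paper's proof is structurally different. One collapses the components of $T \setminus X$ to points to obtain a \emph{non-nesting} (not isometric) $F_n$-action on a tree $T'$. Then one approximates $T$ by geometric actions $T_i$ coming from restrictions to an exhaustion $K_i$; the corresponding quotients $T_i'$ are dual to \emph{finite} non-nesting closed systems of maps, and Levitt's Proposition~\ref{P.InvariantMeasure} (from \cite{Lev98}) supplies a non-atomic invariant probability measure on each. Pulling back gives measures $\mu_i$ on $T_i$ supported on the exceptional set, and one passes to the limit by forming the trees $Y_i$ with metric $\mu_L(T_i)+\mu_i$ and invoking Guirardel's Proposition~\ref{P.LimitMap} to get alignment-preserving limit maps $Y \to T$ and $Y \to T''$. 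The hypothesis that $T$ does not split as a graph of actions enters only at the last step, and not via periodic orbits as you suggest: if the limiting measure acquires an atom, one shows directly that $T''$ has nonempty simplicial part, and pushing this structure back along $Y \to T$ produces a transverse covering of $T$, contradicting the hypothesis. Your intuition that ``no measure on $X$ forces a splitting'' points in the right direction, but the mechanism is this atom-in-the-limit argument, not a free-factor argument on a subtree spanned by $X$.
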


A non-empty $F_n$-invariant subset $X \subseteq T$ is called \emph{exceptional} if for any finite subtree $K \subseteq T$, $X \cap K$ is empty, finite, or a Cantor set with finitely many points added.  According to Proposition \ref{P.Intro1}, given a tree $T \in \overline{cv}_n$ with dense orbits, such that $T$ is not a graph of actions and such that $T$ contains an exceptional set $X$, we can find an invariant measure $\mu$ supported on $X$.  We may then pass to a \emph{projection} of $T$: the measure $\mu$ gives rise to a pseudometric $d_{\mu}$ on $T$, where $d_{\mu}(x,y):=\mu([x,y])$.  Making this pseudometric Hausdorff gives a tree $T_{\mu}$, equipped with an isometric action of $F_n$ (see \cite{Gui00, CHL07}); in a precise sense, the action $F_n \curvearrowright T_{\mu}$ isolates the dynamics of $F_n \curvearrowright X$.


Note that an $F_n$-tree $T$ can be non-uniquely ergodic, even if it has strong mixing properties: examples already come from non-uniquely ergodic, arational laminations on surfaces \cite{KN76}.  Defnine a partial order $\leq$ on $M_0(T)$ via $\mu \leq \mu'$ if and only if $Supp(\mu) \subseteq Supp(\mu')$; this gives an equivalence with classes $[[\mu]] =\{ \nu \in M_0(T)| Supp(\nu)=Supp(\mu)\}$, which serve as candidates for the ``components'' of $T$.  Indeed, in the case that $T$ is dual to a measured lamination $(\lambda, \mu_0)$ on a surface with boundary, the set of $[[.]]$-classes of invariant (non-atomic) length measures on $T$ bijectively corresponds to the set of sublaminations of $\lambda$.

Having understood that some dynamical structure of $T$ is clarified by considering projections of $T$, we complete our analogy with measured laminations by associating to every $[[.]]$-class of ergodic measures in $M_0(T)$ a canonical \emph{mixing} action (Definition \ref{D.Mixing}, Proposition \ref{P.MixingSubaction}, and Corollary \ref{C.UniqueMaxMix}).  Below we give a simplified form of our decomposition result (Theorem \ref{T.Decompose}).




\begin{theorem}\label{T.Intro2}
Let $T \in \overline{cv}_n$ have dense orbits, and let $\{\nu_1,...,\nu_r\}$ be a maximal set of mutually-singular ergodic measures.  
 \begin{enumerate}
  \item [(i)] For each $\nu_i$ with non-degenerate support, there is associated to $[[\nu_i]]$ a mixing action $H([[\nu_i]]) \curvearrowright T([[\nu_i]])$, such that $T([[\nu_i]])$ is unique up to translation in $T$,
  \item [(ii)] For each $\nu_j$ with degenerate support, there is a projection $T \rightarrow T_{[[\nu_j]]}$, such that:
    \begin{enumerate}
     \item [(a)] $dim(M_0(T_{[[\nu_j]]}))<dim(M_0(T))$,
     \item [(b)] for all $\nu_j' \in [[\nu_j]]$, $M_0(T_{\nu_j'})$ is naturally identified with $M_0(T_{[[\nu_j]]})$,
     \item [(c)] for all $\nu_j' \in [[\nu_j]]$, $L^2(T_{\nu_j'})=L^2(T_{[[\nu_j]]})$
    \end{enumerate}
 \end{enumerate}
\end{theorem}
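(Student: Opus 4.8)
The plan is to treat the two parts separately, since a mutually-singular ergodic measure $\nu_i$ either has non-degenerate support (the support spans $T$, or at least a full-dimensional subtree) or degenerate support (it is concentrated on a subtree that is ``thinner'' than $T$, e.g.\ on an exceptional set), and these produce a mixing subaction and a nontrivial projection respectively. For part (i): given $\nu_i$ with non-degenerate support, I would invoke the machinery of Definition \ref{D.Mixing} and Proposition \ref{P.MixingSubaction} to extract a mixing action $H([[\nu_i]]) \curvearrowright T([[\nu_i]])$; here $H([[\nu_i]])$ is the stabilizer (in $F_n$) of the appropriate minimal invariant subtree carrying $\mathrm{Supp}(\nu_i)$, and $T([[\nu_i]])$ is its minimal subtree. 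Uniqueness up to translation in $T$ is then a consequence of Corollary \ref{C.UniqueMaxMix}: any two mixing subactions with the same ergodic-class support must coincide after an $F_n$-translation, since a maximal mixing subaction attached to a $[[.]]$-class is canonical. The only genuine work here is checking that ``non-degenerate support'' is exactly the hypothesis needed to guarantee the subaction is mixing rather than merely indecomposable or a graph of actions — this should follow because a length measure of full support sees every holonomy-return map, which forces the strong mixing condition.

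For part (ii): let $\nu_j$ have degenerate support. The key point is that $\mathrm{Supp}(\nu_j)$ is then contained in a proper ``thin'' invariant subset — and by the reasoning behind Proposition \ref{P.Intro1} (applied to the holonomy pseudogroup, whose exceptional sets carry measures), the pseudometric $d_{\nu_j}(x,y) = \nu_j([x,y])$ collapses a positive-dimensional family of directions in $T$. Passing to the Hausdorffification gives the projection $T \to T_{[[\nu_j]]}$ as in \cite{Gui00, CHL07}. For (a), $\dim M_0(T_{[[\nu_j]]}) < \dim M_0(T)$: the projection is $F_n$-equivariant and surjective, so pushforward of measures gives an injection $M_0(T_{[[\nu_j]]}) \hookrightarrow M_0(T)$ landing in the subspace of measures supported inside the $[[.]]$-class; since $\nu_j$ has degenerate (hence proper) support, this subspace is proper, and by Guirardel's finite-dimensionality of $M_0(T)$ the strict inequality of dimensions follows — one needs to rule out the degenerate case where the projection is an isometry, which is precisely where ``degenerate support'' (as opposed to non-degenerate) is used. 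For (b) and (c), I would show that $d_{\nu_j}$ only depends on the class $[[\nu_j]]$ up to a scalar on each component: if $\nu_j' \in [[\nu_j]]$ then $\mathrm{Supp}(\nu_j') = \mathrm{Supp}(\nu_j)$, so the null-sets of $d_{\nu_j}$ and $d_{\nu_j'}$ coincide, hence the quotient trees are canonically identified as topological $F_n$-trees; this gives the natural identification of $M_0(T_{\nu_j'})$ with $M_0(T_{[[\nu_j]]})$ in (b), and since $L^2$ of a tree (the algebraic lamination / its dual data) depends only on the underlying topological action and not the metric, (c) follows from (b).

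The main obstacle I anticipate is part (a), the strict drop in dimension of the space of length measures under projection. It is easy to see $\dim M_0(T_{[[\nu_j]]}) \le \dim M_0(T)$ by pushforward; the difficulty is producing a length measure on $T$ that does \emph{not} descend to $T_{[[\nu_j]]}$, i.e.\ witnessing that the inequality is strict. The natural candidate is a measure ``transverse'' to $\mathrm{Supp}(\nu_j)$ — for instance a measure supported on a complementary component in a graph-of-actions-type decomposition, or more subtly, when $T$ does not split as a graph of actions, a measure detecting the non-exceptional part of the holonomy. Making this precise requires careful bookkeeping of how the Hausdorffification interacts with $M_0$, and in the worst case (when $\mathrm{Supp}(\nu_j)$ is an exceptional set with no complementary invariant subtree) one must argue that the original tree still carries ``extra'' measure-theoretic data — essentially re-running the dichotomy of Proposition \ref{P.Intro1} one level down and using that $T$ itself was not obtained from $T_{[[\nu_j]]}$ by a measure-preserving operation. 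I would isolate this as a separate lemma about the behavior of $M_0$ under collapsing a proper invariant subset, proved by a Poincaré-recurrence / transverse-measure argument on the holonomy pseudogroup.
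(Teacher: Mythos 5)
Your approach tracks the paper's quite closely: part (i) does come from Proposition \ref{P.MixingSubaction} and Corollary \ref{C.UniqueMaxMix}, and part (ii) from the projection $T\to T_\mu$ together with Lemma \ref{L.ControlL2} and Lemma \ref{L.Observers}. Two points are worth flagging. First, you overstate the difficulty of (ii)(a): the witness showing the dimension drops is just the Lebesgue measure itself. Since $T$ has dense orbits, $\mu_L\in M_0(T)$ has $\mathrm{Supp}(\mu_L)=T$, whereas $\nu_j$ has degenerate support, so $\mathrm{Supp}(\nu_j)\subsetneq T$; the pullback $f^*\colon M_0(T_{[[\nu_j]]})\hookrightarrow M_0(T)$ lands in the face of measures supported inside $\mathrm{Supp}(\nu_j)$, and $\mu_L$ is not in that face. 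No transverse-measure or Poincar\'e-recurrence argument is needed, nor does one need to ``re-run the dichotomy of Proposition \ref{P.Intro1}.'' (Also, the injection is a pullback, not a pushforward.) Second, your heuristic for (i) --- that ``a length measure of full support sees every holonomy-return map, which forces the strong mixing condition'' --- is not correct as stated: the tree in $\mathscr{F}$ associated to $[[\nu_i]]$ need not itself carry a mixing subaction, and Proposition \ref{P.MixingSubaction} actually produces the mixing subtree via a non-trivial blow-up on finitely many orbits of special directions plus an induction on rank. Mixing is the output of that construction, not an automatic consequence of non-degenerate support. For (ii)(c), it is safer to invoke Lemma \ref{L.Observers} directly (the identity $T_{\nu_j}\to T_{\nu_j'}$ is a bijection continuous on segments by non-atomicity, hence induces a homeomorphism of the observers' completions) rather than appeal vaguely to ``$L^2$ depends only on the topology'' --- the dual lamination is an invariant of the observers' topology, which is coarser than, and not determined by, the underlying set-theoretic tree alone; similarly for (ii)(b), the identification of $M_0$'s comes from the inverse pair of alignment-preserving bijections, not merely from having the same underlying point-set.
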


In the statement, $M_0(T)$ denotes the space of non-atomic invariant length measures on $T$.  The subtree $T([[\nu_i]]) \subseteq T$ is ``equal to'' the support set $Supp(\nu_i)$, and $H([[\nu_i]])$ is the setwise stabilizer of $T([[\nu_i]])$.  In short, part (i) of Theorem 2 is analogous to the usual dynamical decomposition of measured surface laminations, and, more generally, the decomposition of \emph{geometric trees} coming from Imanishi's Theorem; see \cite{GLP94}.  So, part (ii) illustrates only \emph{non-geometric} phenomena.  

Theorem \ref{T.Intro2} is related to ongoing work (in preparation) of Guirardel and Levitt about actions of finitely presented groups on R-trees \cite{GL10}; see the Acknowledgements, as well as Sections \ref{S.Measures} and \ref{S.Decompose} for further discussion.  


\subsection{Organization}

In Section \ref{S.Back} we collect basic background material about $\mathbb{R}$-trees, Outer space, algebraic laminations, and the observers' topology on trees; this section is expository, except for Lemma \ref{L.Observers}.  

Section \ref{S.Irred} introduces topological representatives, train tracks and stable trees; there we introduce \emph{expansive} (Definition \ref{D.Expansive}) endomorphisms and show (Proposition \ref{P.Dichot}) that an irreducible endomorphism is either expansive or an automorphism; this gives, via Corollary \ref{C.TisFree}, that stable trees of non-surjective irreducible endomorphisms are free.

Section \ref{S.GoA} is devoted to defining and giving basic properties of \emph{graphs of actions} (Definition \ref{D.GoA}), which are used in Section \ref{S.StableTree} to show (Proposition \ref{P.FreeSimp}) that the stable tree of a non-surjective irreduicble endomorphism is free and simplicial.  This immediately implies that such an endomorphism is topologically represented by a simplicial immersion (Corollary \ref{C.NoIllegalTurns}).  The proof of Proposition \ref{P.FreeSimp} shows (Corollary \ref{C.BoundaryDynamics}) that for $\phi:F_n \rightarrow F_n$ irreducible and non-surjective, $\partial \phi$ acts on $\partial F_n$ with finitely-many fixed points, all of which are attracting.

We then turn to the question of the dynamics of $\phi$ acting on Outer space, denoted $CV_n$, and its closure, denoted $\overline{CV}_n$.  Section \ref{S.CVnDynamics} introduces the stable lamination, denoted $\Lambda_{\phi}$, associated to $\phi$.  We then state the convergenge criterion that will be used for the sequel: Proposition \ref{P.ConvergeLemma}, which is due to Bestvina-Feighn-Handel.  This immediately gives that $\phi$ acts on $CV_n$ with precisely one (attracting) fixed point--the stable tree of $\phi$.

Before proceeding to consider the dynamics of $\phi$ acting on $\overline{CV}_n$, we must impose a condition to ensure that $\phi$ acts on $\overline{CV}_n$; we explain what can go wrong in Section \ref{S.Admissible}, and then give the definition for \emph{admissible} endomorphisms (Definition \ref{D.Admissible}).  So that we may apply our convergence criterion, we show that being admissible is equivalent to a condition on $\Lambda_{\phi}$, namely that no leaf of $\Lambda_{\phi}$ is carried by a vertex group of a \emph{very small splitting} of $F_n$ (Proposition \ref{P.LamLemma}).  

Using Proposition \ref{P.LamLemma} along with Proposition \ref{P.ConvergeLemma}, in Section \ref{S.SimpGraphConverge} we get convergence for trees in the boundary of Outer space that split as a non-trivial graph of actions.   In Section \ref{S.IndecompConverge} we handle convergence for \emph{indecomposable} (Definition \ref{D.Indecomposable}) trees via a result (Proposition \ref{P.IndecompConverge}) of the author from \cite{R10a}.  

Section \ref{S.Measures} introduces the relevant measure theory on trees; we explain, following Guirardel, how measures on a tree $T$ encode morphisms $T \rightarrow T'$.  We state the key result of Guirardel (Proposition \ref{P.M0T}), that for $T$ with dense orbits, the space $M_0(T)$ of invariant non-atomic measures on $T$ is finite dimensional.  Next, we define \emph{exceptional sets} (Definition \ref{D.Exceptional}) and provide Example \ref{E.ExceptionalSet} to show that such things actually occur.  We present an iterative procedure (see Remark \ref{R.GLProcedure}), due to Guirardel-Levitt \cite{GL10}, for building \emph{transverse families} (Definition \ref{D.TF}) of subtrees.  In Subsection \ref{SS.ExceptionalMeasure}, we prove the critical result (Proposition \ref{P.ExceptionalMeasure}) for the rest of the paper: if $T$ is a tree in the boundary of Outer space with dense orbits, and if $T$ does not split as a graph of actions, then any exceptional set in $T$ supports an invaraint measure.   

In Section \ref{S.MainDynamics} we combine Proposition \ref{P.ExceptionalMeasure} with Lemma \ref{L.Observers} to get convergence for the remainder of actions in $\overline{CV}_n$.  The dynamics of $\phi$ acting on $\overline{CV}_n$ (Theorem \ref{T.Main}) then easily follows.  

Section \ref{S.Decompose} elaborates upon the measure-theoretic techniques introduced in Section \ref{S.Measures} to present an approach to decomposing trees in the boundary of Outer space.  For the remainder of the paper, we consider a tree $T$ with dense orbits.  In Subsection \ref{SS.F}, we define a transverse family $\mathscr{F}$ that gives a coarse decomposition of $T$ (Lemma \ref{L.SuppGraph}).  We then bring Proposition \ref{P.MixingSubaction} and Corollary \ref{C.UniqueMaxMix} to show how to associate to every invariant measure on $T$ a canonical mixing action; these actions are ``building blocks'' of $T$.  We collect the results of Section \ref{S.Decompose} to give our decomposition result, Theorem \ref{T.Decompose}.

\vspace{.25cm}
{\bf Acknowledgements:}  Thanks go to Thierry Coulbois, Arnaud Hilion, and Martin Lustig for many helpful conversations and comments that helped to shape this work in early stages, and to Universit\'e Aix-Marseille for hospitality during two visits.  

We are grateful to Vincent Guirardel for explaining some ideas from his new work with G. Levitt \cite{GL10}; this lead to an improved exposition in Sections \ref{S.Measures} and \ref{S.Decompose} coming from the use of the iterative procedure explained in Remark \ref{R.GLProcedure}.  More importantly, Guirardel pointed out a gap in an earlier approach: instead of Proposition \ref{P.ExceptionalMeasure}, we were using a result of Plante \cite[Theorem 3.1]{Pl75}, which Guirardel noticed to be incorrect.  Guirardel gave an enlightening counterexample and pointed out that \cite[Theorem 3.2]{Pl75} can be proved under a stronger hypothesis (mixing).  After appropriately weaking the hypothesis of Guirardel to suit our needs, we arrived at the proof of Proposition \ref{P.ExceptionalMeasure}, which generalizes a result of \cite{GL10}.

Very special thanks go to my advisor Ilya Kapovich for helpful guidiance, patient support, and for devoting so much of his time to my graduate education.

The author acknowledges support from National Science Foundation grant DMS 08-38434 ”EMSW21-MCTP: Research Experience for Graduate Students.

\section{Background}\label{S.Back}

In this section we briefly review the relevant definitions around $\mathbb{R}$-trees, Outer space, and algebraic laminations.  In what follows $F_n$ denotes the free group of rank $n$; for $g \in F_n$ let $[g]$ denote the conjugacy class of $g$.

\subsection{Basics About $\mathbb{R}$-Trees}

A metric space $(T,d)$ is called an $\mathbb{R}$-\emph{tree} (or just a \emph{tree}) if for any two points $x,y \in T$, there is a unique topological arc $p_{x,y}:[0,1] \rightarrow T$ connecting $x$ to $y$, and the image of $p_{x,y}$ is isometric to the segment $[0,d(x,y)]$.  As is usual, we let $[x,y]$ stand for Im$(p_{x,y})$, and we call $[x,y]$ the \emph{segment} (also called an \emph{arc}) in $T$ from $x$ to $y$.  A segment is called \emph{non-degenerate} if it contains more than one point.  We let $\overline{T}$ stand for the metric completion of $T$.  Unless otherwise stated, we regard $T$ as a topological space with the metric topology.  If $T$ is a tree, and $x \in T$, then $x$ is called a \emph{branch point} if the cardinality of $\pi_0(T - \{x\})$ is greater than two.  For $x \in T$, the elements of $\pi_0(T- \{x\})$ are called \emph{directions} at $x$.

In this paper, all the trees we consider are equipped with an isometric (left) action of a finitely generated group $G$, i.e. a group morphism $\rho: G \rightarrow$ Isom$(T)$; as usual, we always supress the morphism $\rho$ and identify $G$ with $\rho(G)$.  A tree $T$ equipped with an isometric action will be called an $G$-\emph{tree}, and we denote this situation by $G \curvearrowright T$.  Notice that an action $G \curvearrowright T$ induces an action of $G$ on the set of directions at branch points of $T$.  We identify two $G$-trees $T,T'$ if there is an $G$-equivariant isometry between them.

There are two sorts of isometries of trees: an isometry $g$ of $T$ is called \emph{elliptic} if $g$ fixes some point of $T$, while an isometry $h$ of $T$ is called \emph{hyperbolic} if it is not elliptic.  It is easy to see that any hyperbolic isometry $h$ of $T$ leaves invariant a unique isometric copy of $\mathbb{R}$ in $T$, which is called the \emph{axis} of $h$ and denoted by $A(h)$.  If $g$ is an elliptic isometry, we let $A(g)$ stand for the fixed point set of $g$, \emph{i.e.} $A(h):=\{x \in T|hx=x\}$.  Given a $G$-tree $T$, we have the so-called \emph{hyperbolic length function} $l_T:G \rightarrow \mathbb{R}$, where
$$l_T(g):=\inf \{d(x,gx)|x \in T\}$$ 

\noindent The number $l_T(g)$ is called the \emph{translation length} of $g$, and it is easily verified that, for any $g \in F_N$, the infimum is always realized on $A(g)$, so that $g$ acts on $A(g)$ as a translation of length $l_T(g)$.  If $H \leq G$ is a finitely generated subgroup containing a hyperbolic isometry, then $H$ leaves invariant the set
$$T_H:=\cup_{l_T(h)>0} A(h)$$

\noindent which is a subtree of $T$, and is minimal in the set of $H$-invariant subtrees of $T$; $T_H$ is called the \emph{minimal invariant subtree for} $H$.  An action $G \curvearrowright T$ is called \emph{minimal} if $T=T_G$; a minimal action $G \curvearrowright T$ is \emph{non-trivial} if $T$ contains more than one point.

For an action $G \curvearrowright T$, and for $x \in T$, let $Gx:=\{gx|g \in G\}$ denote the \emph{orbit} of $x$.  An action $G \curvearrowright T$ has \emph{dense orbits} if for some $x \in T$, we have $\overline{Gx}=T$.  Note that if some orbit is dense, then every orbit is dense.  

\subsection{Outer Space and its Closure}\label{S.CVn}

Recall that an action $F_n \curvearrowright T$ is \emph{free} if for any $1 \neq g \in F_n$ one has $l_T(g) > 0$.  If $X \subseteq T$, then the \emph{stabilizer} of $X$ is $Stab(X):=\{g \in F_n|gX=X\}$--the setwise stabilizer of $X$.  We say that an action $F_n \curvearrowright T$ is \emph{very small} if:

\begin{enumerate}
 \item [(i)] $F_n \curvearrowright T$ is minimal,
 \item [(ii)] for any non-degenerate arc $I \subseteq T$, $Stab(I) = \{1\}$ or $Stab(I)$ is a maximal cyclic subgroup of $F_n$,
 \item [(iii)] stabilizers of tripods are trivial.
\end{enumerate}
 
An action $F_n \curvearrowright T$ is called \emph{discrete} (or \emph{simplicial}) if the $F_n$-orbit of any point of $T$ is a discrete subset of $T$; in this case $T$ is obtained by equivariantly assigning a metric to the edges of a (genuine) simplicial tree.  It is important to note that the metric topology is weaker than the simplicial topology if the tree is not locally compact.

Let $T,T'$ be trees; a map $f:T \rightarrow T'$ is called a \emph{homothety} if $f$ is $F_n$-equivariant and bijective, and if there is some positive real number $\lambda$ such that for any $x,y \in T$, we have $d_{T'}(f(x),f(y))=\lambda d_T(x,y)$; in this case $T,T'$ are called \emph{projectively equivalent} or \emph{homothetic}.  

\begin{defn}\label{D.CVn}
\noindent
\begin{enumerate}
 \item The \emph{unprojectivised Outer space} of rank $n$, denoted $cv_n$, is the topological space whose underlying set consists free, minimal, discrete, isometric actions of $F_n$ on $\mathbb{R}$-trees; it is equipped with the \emph{length function topology}.
 \item \cite{CV86} The \emph{Culler-Vogtmann} \emph{Outer space} of rank $n$, denoted $CV_n$, is the topological space whose underlying set consists of homothety classes of free, minimal, discrete, isometric actions of $F_n$ on $\mathbb{R}$-trees; it is equipped with the \emph{projective length function topology}.
 \item The \emph{unprojectivised closed Outer space} of rank $n$, denoted $\overline{cv}_n$, is the topological space whose underlying set consists of very small isometric actions of $F_n$ on $\mathbb{R}$-trees; it is equipped with the \emph{length function topology}.
 \item The \emph{closed Outer space} of rank $n$, denoted $\overline{CV}_n$, is the topological space whose underlying set consists of homothety classes of very small isometric actions of $F_n$ on $\mathbb{R}$-trees; it is equipped with the \emph{projective length function topology}.
\end{enumerate}

\end{defn}

It is known that a minimal $F_n$-tree is completely determined by its hyperbolic length function \cite{CM87}; so points in $CV_n$ can be thought of as projective classes of such length functions, \emph{i.e.} $CV_n \subseteq \mathbb{P}\mathbb{R}^{F_n}$; and $CV_n$ is topologized via the quotient of the weak topology on length functions.  It is the case that the closure $\overline{CV}_n$ of $CV_n$ is compact and consists precisely of homothety classes of very small $F_N$-actions on $\mathbb{R}$-trees \cite{CL95, BF94}.  For more background on $CV_n$ and its closure, see \cite{Vog02} and the references therein.

\subsection{Algebraic Laminations}\label{SS.Laminations}

Here, we present a brief and restricted view of dual laminations of $F_n$-trees; see \cite{CHL08a} and \cite{CHL08b} for a careful development of the general theory.  Let $\partial F_n$ denote the Gromov boundary of $F_n$--\emph{i.e.} the Gromov boundary of any Cayley graph of $F_n$; let $\partial^2(F_n):=\partial F_n \times \partial F_n - \Delta$, where $\Delta$ is the diagonal.  The left action of $F_n$ on a Cayley graph induces actions by homeomorphisms of $F_n$ on $\partial F_n$ and $\partial^2 F_n$.  Let $i: \partial^2 F_n \rightarrow \partial^2 F_n$ denote the involution that exchanges the factors.  An \emph{algebraic lamination} is a non-empty, closed, $F_n$-invariant, $i$-invariant subset $\mathscr{L} \subseteq \partial^2 F_n$.  

Fix an action $F_n \curvearrowright T$ with dense orbits; following \cite{LL03} (see also \cite{CHL08b}), we associate an algebraic lamination $L^2(T)$ to the action $F_n \curvearrowright T$. Let $T_0 \in cv_n$ (\emph{i.e.} the action $F_n \curvearrowright T_0$ is free and discrete), and let $f:T_0 \rightarrow T$ be an $F_n$-equivariant map, isometric when restricted to edges of $T_0$.  Say that $f$ has \emph{bounded backtracking} if there is $C > 0$ such that $f([x,y]) \subseteq N_C([f(x),f(y)])$, where $N_C$ denotes the $C$-neighborhood.  For $T_0 \in cv_n$, denote by $vol(T_0):=vol(T_0/F_n)$ the sum of lengths of edges of the finite metric graph $T_0/F_n$.

\begin{prop}\label{P.BBT}\cite[Lemma 2.1]{LL03}
Let $T \in \overline{cv}_n$; let $T_0 \in cv_n$; and let $f:T_0 \rightarrow T$ be equivariant and isometric on edges.  Then $f$ has bounded backtracking with $C=vol(T_0)$.  
\end{prop}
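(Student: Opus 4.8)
The plan is to exploit the simplicial structure of $T_0$ together with the fact that $f$ is isometric on edges, reducing the bounded backtracking estimate to a single local computation at vertices of $T_0$ where two edges meet. First I would fix a vertex $v_0 \in T_0$ and consider, for an arbitrary point $x \in T_0$, the geodesic $[v_0, x]$; since $T_0$ is simplicial and $f$ is isometric on edges, the image $f([v_0,x])$ is a concatenation of geodesic segments in $T$ whose lengths match the lengths of the corresponding edge-pieces in $T_0$. The quantity we must control is the maximum over $y \in [v_0,x]$ of the distance in $T$ from $f(y)$ to the geodesic $[f(v_0), f(x)]$; in a tree this maximum is realized at a ``backtracking point,'' i.e.\ a point where the image path $f([v_0,x])$ turns back toward $f(v_0)$ and then away again.

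The key step is the standard observation that in an $\R$-tree, for any path $p$ from $a$ to $b$ obtained by concatenating finitely many geodesic segments, the Hausdorff distance from $p$ to the geodesic $[a,b]$ is bounded by $\tfrac12(\mathrm{length}(p) - d(a,b))$, and more usefully, the ``cancellation'' incurred along $p$ telescopes: at each concatenation point the amount by which the path fails to be geodesic is bounded in terms of the Gromov products. I would then feed in the equivariance of $f$: any edge of $T_0$ is an $F_n$-translate of one of the finitely many edges of $T_0/F_n$, and any turn (pair of adjacent edges at a vertex) is an $F_n$-translate of one of finitely many turns in $T_0/F_n$. Since $f$ is isometric on edges, the geometry of $f$ restricted to a single edge is completely rigid; the only backtracking comes from turns where the two outgoing directions at a vertex $v$ of $T_0$ are mapped by $f$ into the same direction at $f(v)$. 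By equivariance, the total amount of such cancellation accumulated along any geodesic of $T_0$ is bounded by the sum, over the finitely many edge-orbits, of the edge lengths — that is, by $vol(T_0/F_n) = vol(T_0)$ — because each edge-orbit can contribute its cancellation ``at most once'' in an appropriate accounting (one uses that a geodesic in $T_0$ crosses each edge at most once, hence meets each turn-orbit in a controlled way, and that the image geodesic $[f(v_0),f(x)]$ absorbs the non-cancelled parts).

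Concretely, the argument I would write: given $x,y \in T_0$, apply the bound to the path $f([x,y])$; decompose $[x,y]$ into maximal subsegments on which $f([x,y])$ is ``monotone'' (moves without backtracking), and note that backtracking occurs only at the finitely many turn-types; a careful bookkeeping of the lengths of the retraced portions, using that each edge of $T_0$ along $[x,y]$ is traversed once and that isometry-on-edges forbids internal folding, yields $f([x,y]) \subseteq N_C([f(x),f(y)])$ with $C = vol(T_0)$. The main obstacle I anticipate is making the ``each edge contributes at most once'' accounting precise: one must rule out that a single edge-orbit's cancellation is charged multiple times along one geodesic, which requires the key point that $f$ is isometric (not merely Lipschitz) on edges, so that no folding happens in the interior of an edge and all cancellation is concentrated at vertices; and one must correctly set up the telescoping of Gromov products so the bound is exactly $vol(T_0)$ rather than some multiple of it. I expect the cleanest route is to first prove it for $T_0$ having all edge lengths rational (or even to reduce to the universal cover of a rose), then pass to general $T_0 \in cv_n$ by a density/continuity argument, though a direct proof along the lines above should also work.
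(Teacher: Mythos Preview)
The paper does not prove this proposition at all; it is stated with a citation to \cite[Lemma~2.1]{LL03} and no argument is given. So there is no ``paper's own proof'' to compare against, and your proposal must stand on its own merits.

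Unfortunately there is a genuine gap. Your strategy is to bound the \emph{total cancellation} along a geodesic, i.e.\ the quantity $d_{T_0}(x,y)-d_T(f(x),f(y))$, by $vol(T_0)$, and then deduce the BBT bound from that. But total cancellation is in general unbounded. Take for instance $T$ to be the Bass--Serre tree of a very small splitting $F_n=A*B$ and $T_0$ a Cayley tree: for any $a\in A$, the word $a^k$ has $d_{T_0}(1,a^k)=k$ while $d_T(f(1),f(a^k))$ stays bounded (since $a$ is elliptic in $T$), so the cancellation is $\sim k$. The implication (cancellation $\le 2C$) $\Rightarrow$ (BBT $\le C$) is correct, but the hypothesis simply fails here, even though BBT is indeed bounded by $vol(T_0)$.

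The source of the error is the sentence ``each edge-orbit can contribute its cancellation at most once \ldots\ one uses that a geodesic in $T_0$ crosses each edge at most once.'' A geodesic in $T_0$ crosses each \emph{edge} at most once, but it can cross each \emph{edge-orbit} arbitrarily many times (that is exactly what happens along the axis of $a^k$ above). So the accounting you anticipate being delicate is in fact impossible as stated. The correct argument in \cite{LL03} does not go through total cancellation: one works directly with the point $z\in[x,y]$ and the median $m$ of $f(x),f(y),f(z)$ in $T$, analyzing the last time the path $f|_{[x,z]}$ and the first time $f|_{[z,y]}$ hit $m$, and using the simplicial structure of $T_0$ to bound $d_T(f(z),m)$ itself rather than the global length defect. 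You should consult that reference (or the earlier Bounded Cancellation arguments of Cooper and of \cite{GJLL98}) for the actual mechanism.
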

 
For $T_0 \in cv_n$, we have an identification $\partial T_0 \cong \partial F_n$.  If $\rho$ is a ray in $T_0$ representing $X \in \partial F_n$, we say that $X$ is $T$-\emph{bounded} if $f \circ \rho$ has bounded image in $T$; this does not depend on the choice of $T_0$ (see \cite{BFH97}).  

\begin{prop}\label{P.DefQ}\cite[Proposition 3.1]{LL03}
Let $T \in \overline{cv}_n$ have dense orbits, and suppose that $X \in \partial F_n$ is $T$-bounded.  There there is a unique point $Q(X) \in \overline{T}$ such that for any $f:T_0 \rightarrow T$, equivariant and isometric on edges, and any ray $\rho$ in $T_0$ representing $X$, the point $Q(X)$ belongs to the closure of the image of $f \circ \rho$ in $\overline{T}$.  Further the image of $f \circ \rho$ is a bounded subset of $\overline{T}$.  
\end{prop}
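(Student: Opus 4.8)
The plan is to construct the point $Q(X)$ as a limit along the ray $\rho$ and then show it is independent of all choices. First I would fix $T_0 \in cv_n$ and an equivariant map $f:T_0 \to T$ that is isometric on edges; by Proposition \ref{P.BBT} this $f$ has bounded backtracking with constant $C = vol(T_0)$. Parametrize the ray $\rho:[0,\infty) \to T_0$ by arc length, so $\rho$ represents $X \in \partial F_n \cong \partial T_0$. Since $X$ is $T$-bounded, the image $f(\rho([0,\infty)))$ is a bounded subset of $T$, hence of diameter $\le D$ for some $D$; I want to show that $f(\rho(t))$ actually converges in $\overline{T}$ as $t \to \infty$. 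The key is bounded backtracking: for $s < t$, the point $f(\rho(s))$ lies within $C$ of the segment $[f(\rho(0)), f(\rho(t))]$, and more usefully, applying BBT to the subray shows that once $s$ is large, $f(\rho(s))$ is within $C$ of $[f(\rho(t)), f(\rho(t'))]$ for all $t' > t$ large. Combined with boundedness, a standard argument shows the points $f(\rho(t))$ form a Cauchy sequence in $\overline{T}$: if they did not converge, the bounded set they live in, being an increasing (up to backtracking $C$) family of arcs in a tree, would force the arcs $[f(\rho(0)), f(\rho(t))]$ to have lengths tending to a finite supremum, and the endpoints would converge. So define $Q(X) := \lim_{t\to\infty} f(\rho(t)) \in \overline{T}$.

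Next I would verify independence of the choice of ray $\rho$ within $T_0$ representing the same $X$. Two such rays $\rho, \rho'$ are eventually equal (they share a common subray after a bounded initial segment, since $T_0$ is a simplicial tree with $\partial T_0 = \partial F_n$), so $f\circ\rho$ and $f\circ\rho'$ have the same tail and hence the same limit. Then I would verify independence of the choice of $(T_0, f)$: given another $T_0' \in cv_n$ with $f':T_0' \to T$ equivariant and isometric on edges, pick an equivariant map $g: T_0 \to T_0'$ isometric on edges; then $f$ and $f' \circ g$ are both equivariant maps $T_0 \to T$, and since $T$ has dense orbits any two such equivariant maps that agree "coarsely" — here one uses that equivariant maps $T_0 \to T$ are determined up to bounded error, or more directly that $f$ and $f'\circ g$ agree on the orbit of a vertex up to bounded distance — the images $f(\rho(t))$ and $f'(g(\rho(t)))$ stay a bounded distance apart. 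But that alone only gives the limits agree up to bounded distance; to get equality I would instead argue that $Q(X)$ is characterized intrinsically as the unique point in $\overline T$ lying in $\bigcap_{t} \overline{f(\rho([t,\infty)))}$, and this nested intersection of closed bounded sets (whose diameters one checks shrink, again via BBT, because the "new" part of the ray backtracks by at most $C$ past the limiting point) is a single point; since $g(\rho)$ is a ray in $T_0'$ representing $X$, the same characterization applies to $f'$, and both intersections contain $Q(X)$, forcing them equal. The cleanest route is: show $\mathrm{diam}\, \overline{f(\rho([t,\infty)))} \to 0$ as $t\to\infty$ (this is where density of orbits and BBT combine — a bounded-image ray that does not have shrinking tail-diameter would produce a nondegenerate arc whose points are all "at infinity" along the ray, contradicting that the ray's image is totally bounded in the tree and backtracks boundedly), so $\bigcap_t \overline{f(\rho([t,\infty)))}$ is exactly $\{Q(X)\}$ and manifestly choice-independent.

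The last assertion of the proposition, that $f\circ\rho$ has bounded image in $\overline T$, is immediate: it is bounded in $T$ by the definition of $T$-bounded (which I would recall is itself independent of $T_0$, citing \cite{BFH97}), and the closure of a bounded set in a metric space is bounded.

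I expect the main obstacle to be the convergence/shrinking-diameter step: proving that $\mathrm{diam}\,\overline{f(\rho([t,\infty)))} \to 0$, i.e., that the bounded, boundedly-backtracking image of the ray genuinely converges rather than oscillating within a bounded region. In a general metric space this would fail, but in an $\mathbb{R}$-tree with dense orbits it should follow by combining the tree structure (segments between far-out points nest up to the backtracking constant) with the hypothesis that $X$ is $T$-bounded. Once convergence is in hand, the independence statements are comparatively routine, reducing to the observation that any two choices produce rays with the same endpoint and that the limit is detected by the intrinsic nested-intersection characterization.
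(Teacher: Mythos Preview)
The paper does not give a proof of this proposition; it is quoted verbatim from \cite[Proposition 3.1]{LL03} and used as a black box, so there is no in-paper argument to compare against.

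That said, your sketch has a genuine gap at exactly the place you flagged. For a \emph{fixed} choice of $T_0$ and $f$ with backtracking constant $C>0$, it is not true in general that $\mathrm{diam}\,\overline{f(\rho([t,\infty)))}\to 0$; bounded backtracking only pins the tail of $f\circ\rho$ to a region of diameter on the order of $C$, and nothing prevents the image from oscillating forever inside such a region. Your proposed mechanism (``a nondegenerate arc whose points are all at infinity along the ray'') does not lead to a contradiction from BBT and boundedness alone. Consequently you cannot define $Q(X)$ as $\lim_{t\to\infty} f(\rho(t))$ for a single $f$, and the ``intrinsic nested-intersection'' characterization you propose yields a set of diameter $\le 2C$, not a singleton.

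The argument in \cite{LL03} does not attempt convergence for fixed $f$. Instead it uses the fact---recorded later in this paper as \cite[Proposition 2.2]{LL03}---that when $T$ has dense orbits one can find, for every $\epsilon>0$, a tree $T_0\in cv_n$ and an equivariant edge-isometric $f:T_0\to T$ with $BBT(f)<\epsilon$. For each such $f$ the closed tail-image has diameter $\le 2\epsilon$, and one checks that the tail-images for different choices of $(T_0,f,\rho)$ all meet. Intersecting over all choices therefore produces a single point $Q(X)$, which by construction lies in the closure of $f\circ\rho$ for every choice. So the dense-orbits hypothesis enters through the availability of maps with arbitrarily small backtracking, not through a topological argument internal to one fixed $f$.
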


The (partially-defined) map $Q$ given above is clearly $F_n$-equivariant; in fact, it extends to an equivariant map $Q: \partial F_n \rightarrow \overline{T} \cup \partial T$, which is surjective (see \cite{LL03}).  The crucial property for us is that $Q$ can be used to associate to $T$ an algebraic lamination.  

\begin{prop}\label{P.DefL2}\cite{CHL08b}
Let $T \in \overline{cv}_n$ have dense orbits.  The set $L^2_Q(T):=\{(X,Y) \in \partial^2(F_n)|Q(X)=Q(Y)\}$ is an algebraic lamination.
\end{prop}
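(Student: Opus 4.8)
The plan is to verify directly that $L^2_Q(T)$ satisfies each of the four defining conditions of an algebraic lamination: it must be non-empty, closed, $F_n$-invariant, and $i$-invariant (symmetric). The $i$-invariance is immediate, since the defining condition $Q(X)=Q(Y)$ is symmetric in $X,Y$. The $F_n$-invariance follows from the equivariance of $Q$: if $Q(X)=Q(Y)$ and $g \in F_n$, then $Q(gX)=gQ(X)=gQ(Y)=Q(gY)$, so $(gX,gY) \in L^2_Q(T)$; here one uses that $Q$ is defined on all of $\partial F_n$ (its extension from the $T$-bounded points, as noted after Proposition \ref{P.DefQ}). Non-emptiness: since $T$ has dense orbits and is non-trivial, $\overline{T}$ is an uncountable (in fact perfect) set while $\partial F_n$ is a Cantor set; the surjective equivariant map $Q:\partial F_n \to \overline{T} \cup \partial T$ cannot be injective — for instance, a branch point of $T$ has at least three preimage directions, or more simply a cardinality/compactness argument shows $Q$ identifies some pair of distinct boundary points — so $L^2_Q(T) \neq \emptyset$. (If $T$ is a point the statement is vacuous or trivial; dense orbits on a non-trivial tree rules this out.)

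The substantive step is showing that $L^2_Q(T)$ is \emph{closed} in $\partial^2 F_n$. I would argue by sequences (or nets): suppose $(X_k,Y_k) \to (X,Y)$ in $\partial^2 F_n$ with $Q(X_k)=Q(Y_k)$ for all $k$; the goal is $Q(X)=Q(Y)$. Fix $T_0 \in cv_n$ and an equivariant map $f:T_0 \to T$ isometric on edges, which has bounded backtracking with constant $C=vol(T_0)$ by Proposition \ref{P.BBT}. Identify $\partial T_0 \cong \partial F_n$, and for each boundary point pick a ray from a fixed basepoint $*\in T_0$. The key geometric input is that $Q(X)$ is characterized by Proposition \ref{P.DefQ} as the point in the closure of $f(\rho_X)$; combined with bounded backtracking, one gets that $f(\rho_X)$ stays within distance $C$ of the ray/segment structure determined by $Q(X)$, and more precisely that for $X$ a $T$-bounded point, $f(\rho_X)$ has diameter bounded in terms of $C$ and $d_{T_0}(*,\cdot)$ along the initial segment. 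When $X_k \to X$, the rays $\rho_{X_k}$ and $\rho_X$ share arbitrarily long initial segments; applying $f$ and using that $f$ is Lipschitz (isometric on edges, hence $1$-Lipschitz with respect to the path metric) together with bounded backtracking forces $Q(X_k) \to Q(X)$ in $\overline{T}$ — i.e. $Q$ is continuous on the $T$-bounded locus, with the appropriate care that all the relevant boundary points are $T$-bounded (a convergent sequence together with its limit: one must check the limit is $T$-bounded, which follows because $Q(Y_k)=Q(X_k)$ stays in a bounded region of $\overline T$, forcing $f(\rho_{X_k})$ and in the limit $f(\rho_X)$ to be bounded). Then $Q(X)=\lim Q(X_k)=\lim Q(Y_k)=Q(Y)$.

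The main obstacle I anticipate is precisely this continuity/closedness argument: $Q$ is only defined a priori on $T$-bounded points of $\partial F_n$, it takes values in the completion $\overline T$ rather than in a compact space, and $Q$ is genuinely not continuous on all of $\partial F_n$ (it is only a Borel map in general, as one sees by comparing with \cite{LL03}). So the argument must be localized to the region of $\partial F_n$ where things are bounded, and one must exploit that the \emph{hypothesis} $Q(X_k)=Q(Y_k)$ pins the images down to a bounded set, propagating boundedness to the limit. This is the point where bounded backtracking (Proposition \ref{P.BBT}) does the real work, converting the combinatorial convergence $X_k \to X$ into metric convergence $Q(X_k) \to Q(X)$ in $\overline T$. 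Everything else — symmetry, equivariance, non-emptiness — is formal. The cleanest exposition may well be to cite the relevant continuity statement directly from \cite{CHL08b} or \cite{LL03}, since that reference establishes exactly that $L^2_Q(T)$ (there often denoted $L^2(T)$ or $L(T)$) is a closed symmetric $F_n$-invariant subset of $\partial^2 F_n$; but the sketch above indicates how the argument goes if one wants it self-contained.
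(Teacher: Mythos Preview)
The paper does not supply its own proof of this proposition; it is quoted as a result of \cite{CHL08b}. The relevant tool, however, appears in Subsection~\ref{SS.Observe}: the map $Q:\partial F_n \to \hat{T}=\overline{T}\cup\partial T$ is \emph{continuous} when $\hat{T}$ is given the observers' topology \cite[Proposition 2.3]{CHL07}. Since $\hat{T}$ is Hausdorff in that topology, closedness of $L^2_Q(T)$ is immediate: it is the preimage of the diagonal of $\hat{T}\times\hat{T}$ under the continuous map $(Q,Q)$. Your symmetry and $F_n$-invariance arguments are fine and match this.

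Your closedness argument, by contrast, has a genuine gap. You assert that ``$Q(Y_k)=Q(X_k)$ stays in a bounded region of $\overline{T}$,'' but nothing you have said forces this: the common image $p_k:=Q(X_k)=Q(Y_k)$ may well escape every bounded set and tend toward a point of $\partial T$. Bounded backtracking controls the $f$-image of a \emph{single} ray relative to the segment between the images of its endpoints; it does not, by itself, prevent a sequence of such images from wandering to infinity. To close the argument you would need to show that if $p_k\to\xi\in\partial T$ then $X_k$ and $Y_k$ converge to the \emph{same} boundary point (so the limit lies on the diagonal, outside $\partial^2 F_n$). That statement is true, but it amounts to continuity of $Q$ at the boundary together with injectivity of $Q$ on $Q^{-1}(\partial T)$ --- exactly the observers'-topology input you were trying to avoid. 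So either invoke that continuity directly, or supply the missing boundary analysis; the metric-topology/bounded-backtracking argument as written does not suffice.

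A smaller point: your non-emptiness sketch is also incomplete. The cardinality argument fails (both $\partial F_n$ and $\overline{T}$ have the cardinality of the continuum), and ``a branch point has three preimage directions'' conflates directions in $T$ with points of $\partial F_n$. One clean route: dense orbits give elements of arbitrarily small translation length, hence $L^2_\Omega(T)\neq\emptyset$, and then use $L^2_\Omega(T)=L^2_Q(T)$ from \cite{CHL08b}; alternatively, show directly that any point of $T$ has at least two $Q$-preimages.
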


Following \cite{CHL08b}, we mention that there is different, perhaps more intuitive, procedure for defining $L^2(T)$.  Let $T \in \overline{cv}_n$ (not necessarily with dense orbits, but not free and discrete), and let $\Omega_{\epsilon}(T):=\{g \in F_n|l_T(g) < \epsilon\}$, where $l_T$ is the hyperbolic length function for the action $F_n \curvearrowright T$.  The set $\Omega_{\epsilon}(T)$ generates an algebraic lamination $L^2_{\epsilon}(T)$, which is the smallest algebraic lamination containing $(g^{-\infinity},g^{\infinity})=(...g^{-1}g^{-1},gg...) \in \partial^2(F_n)$ for every $g \in \Omega_{\epsilon}$.  One then defines $L^2_{\Omega}(T):= \cap_{\epsilon > 0} L^2_{\epsilon}(T)$.  In \cite{CHL08b} it is shown that for an action $F_n \curvearrowright T \in \overline{cv}_n$ with dense orbits, $L^2_{\Omega}(T)=L^2_Q(T)$, as defined above.  

\begin{defn}\label{D.L2}
Let $F_n \curvearrowright T \in \overline{cv}_n$ be an action with dense orbits.  The \emph{dual lamination of} $F_n \curvearrowright T$ is $L^2(T):=L^2_Q(T)=L^2_{\Omega}(T)$.  
\end{defn}

\subsection{The Observers Topology}\label{SS.Observe}

In \cite{CHL07}, a weaker topology on $\mathbb{R}$-trees is considered.  Let $T$ be a tree with Gromov boundary $\partial T$ and metric completion $\overline{T}$; put $\hat{T}:=\overline{T} \cup \partial T$.  The metric topology on $T$ canonically extends to $\overline{T}$, and we may extend this topology to $\hat{T}$ as follows: for a ray $\rho$ in $T$ representing $[\rho] \in \partial T$, a neighborhood basis at $[\rho]$ is taken to be the set of components of $T \setminus \{pt.\}$ meeting $\rho$ in a non-compact set.  For $p,q \in \hat{T}$, the \emph{direction} of $q$ at $p$ the component $d_p(q)$ of $\hat{T} \setminus \{p\}$ containing $q$.  The \emph{observers topology} $\hat{T}$ is the topology with subbbasis the collection of directions in $\hat{T}$.  

With this topology the map $Q:\partial F_n \rightarrow \overline{T} \cup \partial T$ is continuous \cite[Proposition 2.3]{CHL07}.  Further, when restricted to finite subtrees of $T$, the observers topology agrees with the metric topology; in particular, we have the following:

\begin{lemma}\cite{CHL07, CHL08b}\label{L.CompactFibers}
 Let $T \in \overline{cv}_n$ have dense orbits.  For any $x \in T$, the set $Q^{-1}(x) \subseteq \partial F_n$ is compact.
\end{lemma}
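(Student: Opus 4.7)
The plan is to leverage the compactness of $\partial F_n$ together with the continuity of $Q$ with respect to the observers topology on $\hat{T}$, stated just before the lemma. Since $\partial F_n$ is compact, it suffices to show that $Q^{-1}(x)$ is a closed subset of $\partial F_n$; equivalently, since $Q$ is continuous when $\hat{T}$ is given the observers topology, it suffices to show that the singleton $\{x\}$ is closed in $\hat{T}$ with the observers topology.

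To see this, I would argue that the complement $\hat{T} \setminus \{x\}$ is open in the observers topology. Recall that a subbasis for this topology consists of the directions $d_p(q)$, where $p \in \hat{T}$ and $d_p(q)$ is the connected component of $\hat{T} \setminus \{p\}$ containing $q$. For any $y \in \hat{T}$ with $y \neq x$, the direction $d_x(y)$ is by definition a subbasic open set containing $y$ and missing $x$. Thus $\hat{T} \setminus \{x\} = \bigcup_{y \neq x} d_x(y)$ is a union of subbasic open sets and hence is open, so $\{x\}$ is closed. Pulling back through the continuous map $Q : \partial F_n \to \hat{T}$ gives that $Q^{-1}(x)$ is closed in $\partial F_n$.

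Finally, $\partial F_n$ is a standard compact space (for instance, it is the Gromov boundary of a hyperbolic group, or equivalently the space of ends of any Cayley graph of $F_n$), so a closed subset of $\partial F_n$ is compact. This yields that $Q^{-1}(x)$ is compact, as desired.

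I do not anticipate any real obstacle here: the content of the lemma is essentially bundled into the continuity of $Q$ in the observers topology (Proposition 2.3 of \cite{CHL07}, cited in the preceding paragraph) and the fact that $\hat{T}$ is $T_1$ with respect to this topology. The small subtlety worth noting is that the observers topology need not be Hausdorff in general, so one cannot conclude that $\{x\}$ is closed from a Hausdorff argument; but the direct verification above using directions at $x$ goes through without issue.
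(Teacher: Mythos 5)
Your argument is correct, and it is essentially the justification the paper has in mind: the lemma is stated as a citation immediately after the remark that $Q$ is continuous for the observers topology, so the intended proof is exactly continuity of $Q$ plus compactness of $\partial F_n$ plus the (easy) fact that singletons are closed in $\hat{T}_{obs}$. Your explicit verification that $\{x\}$ is closed via the directions $d_x(y)$ is a correct and worthwhile detail to spell out.
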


The aim of \cite{CHL07} is to investigate to what extent $L^2(T)$ determines $T$ for trees $T \in \overline{cv}_n$ with dense orbits.  For the following equip trees $T \in \overline{cv}_n$ with the metric topology, and equip $\hat{T}$ with the observers topology.

\begin{prop}\label{P.L2DeterminesT}\cite[Theorem I]{CHL07}
 Let $T_1,T_2 \in \overline{cv}_n$ have dense orbits.  Then $L^2(T_1)=L^2(T_2)$ if and only if $\hat{T_1}$ is homeomorphic to $\hat{T_2}$.  
\end{prop}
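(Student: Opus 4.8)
The statement asserts that for $T_1, T_2 \in \overline{cv}_n$ with dense orbits, $L^2(T_1) = L^2(T_2)$ if and only if $\hat T_1 \cong \hat T_2$ (homeomorphism in the observers' topology). One direction is essentially bookkeeping: if $\hat T_1 \cong \hat T_2$ equivariantly, then the maps $Q_i : \partial F_n \to \hat T_i$ — which are continuous for the observers' topology by \cite[Proposition 2.3]{CHL07} and equivariant — have identical fibers, so $L^2_Q(T_1) = L^2_Q(T_2)$, and by Definition \ref{D.L2} these equal the dual laminations. (One must first check the homeomorphism can be taken $F_n$-equivariant, or build equivariance into the statement; this is where a small amount of care is needed, but it is routine because $\hat T_i$ is canonically reconstructed from the $F_n$-action.) The substance is the converse.

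For the converse, suppose $L^2(T_1) = L^2(T_2) =: L^2$. The plan is to factor each tree through the common lamination. Since $Q_i : \partial F_n \to \hat T_i$ is continuous, surjective, $F_n$-equivariant, and by Lemma \ref{L.CompactFibers} has compact point-preimages, I would first show that $Q_i$ is a \emph{quotient map} onto $\hat T_i$ with the observers' topology: compactness of fibers plus continuity, together with the fact that $\partial F_n$ is compact and $\hat T_i$ is Hausdorff (in the observers' topology), should give that $\hat T_i$ is the topological quotient $\partial F_n / {\sim_i}$, where $X \sim_i Y$ iff $Q_i(X) = Q_i(Y)$. The key point is then that the equivalence relation $\sim_i$ is \emph{determined by $L^2$}: we have $X \sim_i Y$ exactly when $(X,Y) \in L^2$ \emph{or} $X = Y$ — that is, $\sim_i$ is the equivalence relation generated by $L^2 \cup \Delta$, which does not depend on $i$. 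Here one needs that the relation "$Q_i(X) = Q_i(Y)$" is already transitive; that is built into it being defined via equality of a map, so $L^2 \cup \Delta$ is genuinely an equivalence relation, and it is the same set of pairs for $i = 1, 2$ by hypothesis. Composing the two quotient maps then yields a canonical $F_n$-equivariant homeomorphism $\hat T_1 \cong \partial F_n / {\sim} \cong \hat T_2$.

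The main obstacle I anticipate is the claim that $Q_i$ is a quotient map, i.e. that the observers' topology on $\hat T_i$ coincides with the quotient topology pushed forward from $\partial F_n$. Continuity of $Q_i$ gives one inclusion of topologies for free; the reverse — that a set in $\hat T_i$ whose preimage is open is itself open in the observers' topology — requires genuinely using the structure of the observers' topology (its subbasis of directions $d_p(q)$) and the geometry of the map $Q_i$, in particular that $Q_i^{-1}$ of a direction is an open subset of $\partial F_n$ and that these generate. A clean way around a hands-on argument is to invoke compactness: $\partial F_n$ is compact, $\hat T_i$ in the observers' topology is compact Hausdorff (this should be recorded from \cite{CHL07}; it follows since $\hat T_i$ is a closed subset of a product of finitely-branching "observer" data, or directly from the subbasis description), and a continuous surjection from a compact space onto a Hausdorff space is automatically a closed map, hence a quotient map. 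That reduces everything to: (a) continuity of $Q_i$ (cited), (b) compactness of $(\hat T_i, \text{obs})$, and (c) the purely set-theoretic identification of the fiber relations of $Q_1$ and $Q_2$ via $L^2$. I would structure the proof around (b) as the one genuinely topological input and treat (a), (c) as immediate from the cited results and Definition \ref{D.L2}.

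Finally, I would remark that the resulting homeomorphism $\hat T_1 \to \hat T_2$ restricts to an $F_n$-equivariant bijection $\overline{T_1} \cup \partial T_1 \to \overline{T_2} \cup \partial T_2$ which need not be an isometry — indeed the whole point of \cite{CHL07} is that $L^2(T)$ remembers $T$ only up to this weaker (observers'-topological) equivalence, not up to homothety — so no attempt should be made to upgrade it to an isometry, and the proof ends at the level of the quotient-topology identification.
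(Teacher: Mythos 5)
This proposition is cited from \cite[Theorem I]{CHL07} and the paper does not supply its own proof, so there is no internal argument to compare against; your outline is a reasonable reconstruction of the cited proof. The converse (substantive) direction is correct: $Q_i\colon\partial F_n\to\hat T_i$ is continuous (by \cite[Proposition 2.3]{CHL07}) and surjective, $\partial F_n$ is compact, and $(\hat T_i,\mathrm{obs})$ is Hausdorff (two distinct points of $\hat T_i$ are separated by the two directions at an interior point of the arc joining them), hence $Q_i$ is a closed map and therefore a quotient map; the fiber relation of $Q_i$ is exactly $L^2_Q(T_i)\cup\Delta$, which is the same set for $i=1,2$ by hypothesis, giving $F_n$-equivariant identifications $\hat T_1\cong\partial F_n/{\sim}\cong\hat T_2$. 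One technical note: your item (b), compactness of $(\hat T_i,\mathrm{obs})$, is not actually a needed input; Hausdorffness together with compactness of $\partial F_n$ already yields the quotient property, and compactness of $\hat T_i$ then falls out as a continuous image.

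The genuine gap is in the forward direction. From an equivariant homeomorphism $h\colon\hat T_1\to\hat T_2$ you assert that $Q_1,Q_2$ ``have identical fibers,'' but this amounts to $h\circ Q_1=Q_2$, which requires the \emph{uniqueness} of $Q_i$ as the continuous $F_n$-equivariant map $\partial F_n\to\hat T_i$. That uniqueness is true and standard but not free: one shows any two such maps agree at the dense set of points $g^{+\infty}$ for $g$ hyperbolic in $T_i$ (equivariance forces the image to be a fixed point of $g$ in $\hat T_i$, and a surjectivity/dynamics argument in the observers' topology rules out the repelling end), then concludes by continuity since $\hat T_i$ is Hausdorff. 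This should be cited or recorded explicitly rather than folded into ``bookkeeping.'' You are right that the proposition must be read with ``equivariantly'' before ``homeomorphic,'' matching \cite[Theorem I]{CHL07}; a bare homeomorphism of $\hat T_1$ with $\hat T_2$ carries no information about $L^2$.
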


Let $T \in \overline{cv}_n$ have dense orbits; fix $q \in T$; and let $(p_k)$ be a sequence in $T$.  Put $I_m:= \cap_{k \geq m} [q,p_k]$, so $I_m=[q,r_m]$, and we have $I_m \subseteq I_{m+1}$.  The \emph{inferior limit} of $(p_k)$ from $q$ is the limit $\lim_q p_k:= \lim r_m$.  The following gives a characterization of convergence in $\hat{T}$:

\begin{lemma}\cite[Lemma 1.12]{CHL07}
 If a sequence $(p_k)$ in $\hat{T}$ converges to $p$, then for any $q \in \hat{T}$, $p=\lim_q p_k$.  
\end{lemma}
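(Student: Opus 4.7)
The approach is to exploit that a neighborhood basis at $p \in \hat{T}$ in the observers topology is given by finite intersections of directions $d_x(p)$ with $x \neq p$; in particular, $p_k \to p$ is equivalent to the condition that, for every $x \neq p$, the point $p_k$ eventually lies in the component of $\hat{T} \setminus \{x\}$ containing $p$. Set $p^* := \lim_q p_k = \lim_{m \to \infty} r_m$; the goal is to show $p^* = p$.

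The first step establishes $[q,p) \subseteq [q,p^*]$. Fix $x \in [q,p]$ with $x \notin \{q,p\}$, so that $q$ and $p$ lie in distinct components of $\hat{T} \setminus \{x\}$. By observers convergence there is $K$ with $p_k \in d_x(p)$ for all $k \geq K$, whence $[q,p_k] = [q,x] \cup [x,p_k]$ and $[x,p_k] \subseteq \overline{d_x(p)}$. For $m \geq K$ the intersection $I_m = \bigcap_{k \geq m}[q,p_k]$ therefore contains $[q,x]$ and is itself contained in $[q,x] \cup \overline{d_x(p)}$; combined with $I_m = [q,r_m]$, this forces $x \in [q,r_m]$ and $r_m \in \overline{d_x(p)}$. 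Passing to the limit in $m$ yields $x \in [q,p^*]$.

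Now I argue $p^* = p$. Suppose $p^* \neq p$. From $[q,p) \subseteq [q,p^*]$ two cases remain. First, $p$ may lie strictly between $q$ and $p^*$ on $[q,p^*]$; then for any $x$ strictly between $p$ and $p^*$, both $q$ and $p$ lie in $d_x(p)$, so observers convergence forces $p_k \in d_x(p)$ and therefore $x \notin [q,p_k]$ for $k$ large, while $r_m \to p^*$ along $[q,p^*]$ gives $d(q,r_m) > d(q,x)$ and hence $x \in I_m$ for $m$ large --- a contradiction. Second, $[q,p]$ and $[q,p^*]$ could share only an initial subsegment $[q,y]$ with $y \neq p, p^*$; then any $z$ immediately past $y$ toward $p$ lies in $[q,p) \subseteq [q,p^*]$ while also lying in the direction $d_y(p) \neq d_y(p^*)$, again a contradiction. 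Thus $p^* = p$.

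The main subtlety is pinning down which direction at a chosen cut point $x$ the sequence $r_m$ escapes into --- a priori $r_m$ could emerge in any direction at $x$ --- and for this one crucially exploits that $I_m$ is cut out by \emph{entire} segments $[q,p_k]$ whose tails beyond $x$ all lie in $\overline{d_x(p)}$ for large $k$. The corner cases $p = q$ (in which no interior $x$ exists, but a direct argument shows that $r_m \neq q$ would contradict $p_k \to q$ via the direction $d_{r_m}(q)$) and $p^* \in \partial T$ (where $d(q,p^*) = \infty$, yet the threshold comparison $d(q,r_m) > d(q,x)$ still applies) introduce no essentially new ideas.
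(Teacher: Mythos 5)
The paper does not prove this lemma; it is quoted verbatim from \cite[Lemma 1.12]{CHL07}, so there is no internal argument to compare yours against. Your proof is correct and self-contained: the two key points --- that a separating point $x\in(q,p)$ forces $x\in I_m$ for large $m$ (giving $[q,p)\subseteq[q,p^*]$), and that a point $x\in(p,p^*)$ would have to lie in $I_m$ for large $m$ while convexity of the direction $d_x(p)$ excludes it from every $[q,p_k]$ with $k$ large --- are exactly what is needed, and your handling of the degenerate cases $p=q$, $p\in\partial T$, and $p^*\in\partial T$ is sound.
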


A map $f:T \rightarrow T'$ between trees $T,T'$ is \emph{continuous on segments} if for any finite segment $I \subseteq T$, the restriction $f|_I:I \rightarrow T'$ is continuous.  The following result, along with the approach of Sections \ref{S.Measures} and \ref{S.Decompose} provide a sort of converse of the work in \cite{CHL09}.

\begin{lemma}\label{L.Observers}
 Let $T, T' \in \overline{cv}_n$ have dense orbits, and suppose that there is an equivariant bijection $f:T \rightarrow T'$ that is continuous on segments.  Then $f$ extends to a unique homeomorphism $\hat{f}:\hat{T} \rightarrow \hat{T'}$; in particular, $L^2(T)=L^2(T')$.  
\end{lemma}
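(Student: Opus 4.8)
The plan is to build $\hat f$ directly from $f$ using the inferior-limit characterization of convergence in the observers topology, rather than trying to extend $f$ by a limiting argument on Cauchy sequences (which would be hampered by the fact that $f$ need not be an isometry, nor even metrically continuous off segments). First I would record the easy structural facts: since $f$ is an equivariant bijection that is continuous on segments, it sends the segment $[x,y]\subseteq T$ onto a connected subset of $T'$ containing $f(x),f(y)$, hence onto the segment $[f(x),f(y)]$ (a connected subset of a tree containing two points contains the arc between them); moreover $f$ is monotone along segments, because a bijective continuous image of $[0,d(x,y)]$ onto the arc $[f(x),f(y)]$ is a homeomorphism of arcs. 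In particular $f$ preserves betweenness: $z\in[x,y]$ iff $f(z)\in[f(x),f(y)]$, and $f$ carries directions at a point $p$ to directions at $f(p)$, inducing a bijection on $\pi_0(T\setminus\{p\})\to\pi_0(T'\setminus\{f(p)\})$. This is the combinatorial skeleton that the observers topology sees.

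Next I would define $\hat f$ on $\hat T$. A point of $\hat T\setminus T$ is either an end $[\rho]\in\partial T$ or a point of $\overline T\setminus T$; in either case it is determined by the nested family of directions $\{d_p(\xi)\}_{p\in T}$ it picks out, equivalently (fixing a basepoint $q$) by the increasing union of segments $\bigcup_m [q,r_m]$ where $r_m$ are the ``visible'' points along any approximating sequence. Using the previous paragraph, $f$ maps this nested family of directions in $T$ to a nested family of directions in $T'$, which by completeness/compactness considerations for $\hat T'$ determines a unique point $\hat f(\xi)\in\hat T'$; concretely, $\hat f(\xi):=\lim_q f(r_m)$, the inferior limit in $T'$ of the images of the visible points, and one checks this is independent of $q$ and of the approximating sequence using that $f$ preserves betweenness. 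Equivariance of $\hat f$ is inherited from that of $f$. Running the same construction with $f^{-1}$ (which is again an equivariant bijection continuous on segments, by the monotonicity established above) gives a map $\widehat{f^{-1}}$ which is a two-sided inverse to $\hat f$ by the betweenness-preservation, so $\hat f$ is a bijection $\hat T\to\hat T'$.

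Then I would verify $\hat f$ is a homeomorphism for the observers topologies. By definition the observers topology has a subbasis of ``directions'' $d_p(q)$, $p\in\hat T$, so it suffices to show $\hat f$ and $\hat f^{-1}$ send each such subbasic open set to an open set. Since $\hat f$ carries the direction of $q$ at $p$ in $\hat T$ exactly to the direction of $\hat f(q)$ at $\hat f(p)$ in $\hat T'$ — this is precisely the content of "$\hat f$ preserves betweenness and induces a bijection on directions", extended from $T$ to $\hat T$ — the preimage of a subbasic open set is a subbasic open set, and symmetrically. Continuity on the dense subset $T$ with the metric topology restricted to finite subtrees also follows, matching the stated agreement of the two topologies on finite subtrees. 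Uniqueness of the extension is immediate since $T$ is dense in $\hat T$ in the observers topology and $\hat T'$ is Hausdorff. Finally, $L^2(T)=L^2(T')$ follows from Proposition \ref{P.L2DeterminesT}: we have produced a homeomorphism $\hat T\cong\hat T'$.

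The main obstacle I anticipate is the second paragraph — showing that the nested family of directions determined by $\xi\in\hat T$ really does determine a single point of $\hat T'$, i.e. that the inferior limit $\lim_q f(r_m)$ exists in $\hat T'$ and is well-defined. The subtlety is that $f$ is only continuous on segments and a priori quite wild, so one cannot argue via uniform continuity; instead one must argue purely order-theoretically, showing the images $f(r_m)$ march off monotonically in $T'$ and that the resulting end-or-point of $\hat T'$ doesn't depend on the chosen approximating sequence for $\xi$ (two such sequences have a common "spine" of visible points, and $f$ respects this spine by betweenness-preservation). Getting this bookkeeping right, and handling uniformly the two cases $\xi\in\partial T$ versus $\xi\in\overline T\setminus T$, is where the real work lies; everything else is formal once betweenness-preservation and the direction bijection are in hand.
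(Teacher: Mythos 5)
Your proposal is correct and follows essentially the same route as the paper's proof: establish that $f$ carries segments to segments and directions to directions, extend $f$ to $\hat T$ via the inferior-limit characterization of convergence in the observers topology, check bijectivity by running the argument symmetrically for $f^{-1}$, and conclude via Proposition~\ref{P.L2DeterminesT}. The one point you flag as the ``main obstacle'' (well-definedness of $\lim_q f(r_m)$ and the fact that it lands in $\hat{T'}\setminus T'$) is handled in the paper exactly as you anticipate: $f(I_m)=[f(q),f(r_m)]$ is an increasing nested family of segments, so the inferior limit exists, and if it were some $r'\in T'$ then $f^{-1}(r')$ would be a point of $T$ on $[q,p]$ lying beyond every $r_m$, contradicting $\lim r_m = p\notin T$.
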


\begin{proof}
 Let $T, T'$, and $f$ as in the statement, and let $T_{obs},T_{obs}'$ denote $T,T'$ regarded as subspaces of $\hat{T},\hat{T'}$.  We first show that $f$ induces a homeomorphism $T_{obs} \rightarrow T_{obs}'$.  Let $p,q \in T$, and notice that $d_p(q)=\cup_{p \notin [q,q']} [q,q']$.  As $f$ is continuous on segments and bijective, we have that $f(d_p(q))=\cup_{f(p) \notin [f(q),f(q')]} [f(q),f(q')] = d_{f(p)}(f(q))$, hence $f$ is open.  Applying essentially the same argument to $f^{-1}$ gives that $f$ is continuous, hence $f$ is a homeomorphism $T_{obs} \rightarrow T_{obs}'$.  

 Let $p_k \in \hat{T}$ with $p_k \rightarrow p \in \hat{T} \setminus T$.  By the discussion following Proposition \ref{P.L2DeterminesT}, we have for any $q \in \hat{T}$, $p=\lim_q p_k$.  Set $I_m=\cap_{k \geq m} [q,p_k]=[q,r_m]$, so that $\lim r_m = p$.  Since $f$ is continuous on segments and bijective, we have that $f(I_m)=[f(q),f(r_m)]$, hence the sequence $f(r_m)$ has a well-defined limit $r' \in \hat{T'}$.  If $r' \in T'\subseteq \hat{T}'$, then there is $r'' \in T$ such that $f(r'')=r'$; in this case $f([q,r''])=[f(q),r']$.  Further $[q,r'']$ evidently contains each $I_m$, hence $[q,r'']$ contains $\overline{\cup_m I_m} \ni \lim r_m=p$, a contradiction.  Hence, $r' \in \hat{T'} \setminus T'$, and we define $\hat{f}(p)=r'$.  

 Now note that for any $p' \in \hat{T'} \setminus T'$ and any $q' \in T'$ there is a sequence $r_m'$ in $T'$ such that $[q,r_m'] \subseteq [q,r_{m+1}']$ with $p' = \lim r_m'$.  We find $q,r_m \in T$ such that $f(q)=q'$ and $f(r_m)=r_m'$, and it follows from the preceding arguments that there is a unique $p \in \hat{T} \setminus T$ with $\hat{f}(p)=p'$; hence $\hat{f}$ is bijective.  Futher, it is easy to check as above that $\hat{f}$ is continuous and open, so $\hat{f}$ is a homeomorphism.  The fact that $L^2(T)=L^2(T')$ then follows from Proposition \ref{P.L2DeterminesT}.
\end{proof}

\section{Irreducible Endomorphisms}\label{S.Irred}

Let $\phi: F_n \rightarrow F_n$ be an endomorphism.  The \emph{outer class of} $\phi$ is the set $\Phi:=\{\iota_g \circ \phi | g \in F_n\}$, where $\iota_g$ is the inner automorphism $\iota_g(f)=gfg^{-1}$; we call $\Phi$ an \emph{outer endomorphism}.  We will frequently be discussing both outer classes of endomorphisms and particular endomorphisms in a class; we will always use capital letters to denote outer classes and lower case letters to denote particular endomorphisms, surpressing futher comment when confusion is unlikely to arise.  Further, we will frequently take liberties in replacing $\phi$ by a power with little or no comment, as throughout we are studying asymptotic behavior.

\subsection{Topological Representatives}\label{SS.TopReps}

This subsection closely follows \cite{BH92}, to which the reader should refer for details.  The (n-petal) \emph{rose} is $R_n:=\vee_{i=1}^n S^1$, the wedge of $n$ copies of $S^1$; once and for all, we make the identification $F_n=\pi_1(R_n)$.  A \emph{marked graph} is a finite graph $G$ of rank $n$, along with a homotopy equivalence $\tau:R_n \rightarrow G$; this gives an action of $F_n$ on $\tilde{G}$ by deck transformations, hence an identification of $\pi_1(G)$ with $F_n$.  This action is well-defined up to conjugation, \emph{i.e.} up to choosing a preferred lift of a base point in $G$.  Denote by $V=V(G)=\{v_1, ... , v_l\}$ and $E=E(G)=\{e_1, ... , e_k\}$ the sets of vertices and edges of $G$, respectively.  

Let $\Phi$ be an outer endomorphism of $F_n$, and let $G$ be a marked graph.  A map $f: G \rightarrow G$ is a \emph{topological representative} for $\Phi$ if:

\begin{itemize}
 \item $f(V) \subseteq V$,
 \item for any $e \in E$, $f|_e$ is either locally injective, or $f(e)$ is a vertex, and
 \item $f$ induces $\Phi$.
\end{itemize}

Topological representatives always exist; one can take the obvious map with $G=R_n$.  Fix a topological representative $f: G \rightarrow G$.  The \emph{transition matrix} of $f$ is the $k \times k$ matrix $M(f)$ whose $(i,j)$-entry is the number of times the $f$-image of $e_j$ crosses $e_i$ (in either direction).  Any transition matrix is a non-negative integral matrix, and it is evident that $M(f)^r=M(f^r)$.  We say that a non-negative integral matrix $M$ is (fully) \emph{irreducible} if:

\begin{itemize}
 \item for any $(i,j)$, there is $N(i,j)$ such that $(M^{N(i,j)})_{i,j} > 0$, and 
 \item the prior condition holds for any power of $M$.
\end{itemize}

A subgraph $G_0 \subseteq G$ is called \emph{invariant} if $f(G_0) \subseteq G_0$.  The topological representative $f$ is \emph{admissible} if there is no invariant non-degenerate forest.  

\begin{defn}\label{D.Irred}\cite{BH92}
 An endomorphism $\phi:F_n \rightarrow F_n$ is \emph{irreduicble} if any admissible topological representative for $\Phi$ has irreducible transition matrix.
\end{defn}

A \emph{free factor system} $\mathscr{F}$ for $F_n$ is a decomposition $F_n=F_{n_1} \ast ... \ast F_{n_r} \ast F'$, where each $F_{n_i}$ is a non-trivial proper subgroup.  An endomorphism $\psi:F_n \rightarrow F_n$ \emph{preserves} $\mathscr{F}$ if $\psi(F_{n_i}) \leq F_{n_i}^{g_i}$ for some elements $g_i \in F_n$.  

\begin{lemma}\label{L.PreserveFF}\cite{BH92}
 If $\phi:F_n \rightarrow F_n$ be irreducible, then $\phi$ does not preserve any free factor system for $F_n$.
\end{lemma}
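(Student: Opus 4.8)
The plan is to prove the contrapositive: if $\phi$ preserves some free factor system $\mathscr{F}$, then $\phi$ is not irreducible in the sense of Definition \ref{D.Irred}, i.e. I will exhibit an \emph{admissible} topological representative of $\Phi$ (one with no invariant non-degenerate forest) whose transition matrix fails to be irreducible. The first point to exploit is that a free factor system is genuinely non-trivial: writing $\mathscr{F}: F_n = F_{n_1} \ast \cdots \ast F_{n_r} \ast F'$ with each $F_{n_i}$ a proper nontrivial free factor, one cannot have $r=1$ and $F' = \{1\}$, so there is always a noncontractible \emph{complementary piece}, namely (the rose of) $F'$ if $F' \neq \{1\}$, or $F_{n_2}$ if $r \geq 2$. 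I would realize $\mathscr{F}$ geometrically by a marked graph $G$ built by wedging together graphs $G_1, \dots, G_r$ with $\pi_1(G_i) = F_{n_i}$ and a rose $R'$ with $\pi_1(R') = F'$ at a common vertex, with a marking so that $\pi_1(G) = F_n$ realizes the decomposition; then $G_1$ is a noncontractible subgraph of $G$ sharing no edge with the complementary piece, so $G_1$ is in particular a proper subgraph.

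Since $\phi(F_{n_1})$ is contained in a conjugate of $F_{n_1}$, I may replace $\phi$ inside its outer class by $\iota_{g^{-1}}\circ\phi$ so that $\phi(F_{n_1}) \leq F_{n_1}$ on the nose (this affects only the bookkeeping for $F_{n_1}$, which is all I need). I would then build a topological representative $f_0 : G \to G$ of $\Phi$ with $f_0(G_1) \subseteq G_1$: take $f_0|_{G_1}$ to be any graph self-map of $G_1$ realizing the restricted endomorphism $F_{n_1} \to F_{n_1}$ (which need not be onto), fixing the wedge vertex, and extend over $R'$, the remaining $G_i$, and the wedge vertex so that the whole map induces $\Phi$; this uses only the standard fact that any endomorphism of a free group is induced by a self-map of a given marked graph, together with the freedom we have off of $G_1$. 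Thus $G_1$ is a noncontractible, proper, $f_0$-invariant subgraph of $G$. If $f_0$ happens to be admissible we are essentially done; in general I would pass to a maximal invariant non-degenerate forest $W \subseteq G$ and collapse it, setting $f := f_0/W : G' := G/W \to G'$. As in \cite{BH92}, collapsing an invariant forest yields a topological representative of the same $\Phi$, one has $(f/W)^N = f^N/W$ so transition matrices are compatible with collapse, and maximality of $W$ forces $f$ to be admissible (an invariant non-degenerate forest in $G'$ would lift, together with $W$, to a strictly larger one in $G$). Let $G_1' \subseteq G'$ be the image of $G_1$; it is $f$-invariant. Because $W$ is a forest and $G_1$ carries a nontrivial loop, $G_1 \not\subseteq W$, so $G_1'$ contains an edge; because the complementary piece is noncontractible, edge-disjoint from $G_1$, and not contained in the forest $W$, some edge of $G'$ lies outside $G_1'$. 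Hence $G_1'$ is a nonempty proper $f$-invariant subgraph. Choosing an edge $e_1$ of $G_1'$ and an edge $e_0$ of $G' \smallsetminus G_1'$, invariance gives $f^N(e_1) \subseteq G_1'$ for all $N$, so $f^N(e_1)$ never crosses $e_0$; the $(e_0,e_1)$-entry of $M(f)^N = M(f^N)$ is $0$ for every $N \geq 1$, so $M(f)$ is not irreducible, completing the contrapositive.

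The main obstacle is the collapsing step: one must verify that collapsing a maximal invariant forest really produces an admissible topological representative of $\Phi$, and — the crux — that the witnessing subgraph survives the collapse as a \emph{nonempty proper} invariant subgraph. What makes this work is precisely that $\mathscr{F}$, being an honest free factor system, always supplies a noncontractible complementary piece, and a forest collapse (which removes no essential loop) can neither swallow $G_1$ into a forest nor absorb the complementary piece into $G_1$. Everything else — the realization of $\mathscr{F}$ by a marked graph, the construction of $f_0$, and the behavior of transition matrices under collapse — is routine and follows the framework of \cite{BH92}.
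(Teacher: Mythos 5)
Your proof is correct and follows the standard Bestvina--Handel argument; the paper gives no proof of its own here, simply citing \cite{BH92}, and your argument is precisely the one that citation refers to. Realizing $\mathscr{F}$ on a marked graph, normalizing in the outer class so that one factor is preserved on the nose, building a topological representative fixing the corresponding subgraph $G_1$, collapsing a maximal invariant forest to obtain admissibility (with the rank-preservation argument showing the lift of an invariant forest upstairs is again a forest), and then observing that $G_1'$ survives as a nonempty proper invariant subgraph (nonempty since $G_1$ is noncontractible, proper since the complementary piece is noncontractible and hence not swallowed by the collapsed forest) so that $M(f)^N$ has a permanently zero entry --- all of this is right, and the points you flag as the crux are indeed the ones that need care.
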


\begin{cor}
 Let $\phi: F_n \rightarrow F_n$ be an irreducible endomorphism.  Then $\phi$ is injective.
\end{cor}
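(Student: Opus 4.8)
The plan is to argue by contraposition: suppose $\phi$ is not injective, and produce a free factor system of $F_n$ that $\phi$ preserves, which contradicts Lemma \ref{L.PreserveFF}. The key observation is that for a non-injective endomorphism of a free group, the kernel is a non-trivial normal subgroup, but more usefully, the \emph{image} $\phi(F_n)$ is a subgroup of $F_n$ of rank strictly less than $n$ (since $F_n$ is Hopfian, a non-injective endomorphism cannot be surjective, and in fact by the Nielsen--Schreier / rank considerations the image is a free group of rank $\le n-1$; one must be slightly careful here, as a proper subgroup of $F_n$ can still have rank $n$, so the genuinely useful fact is rather that $\phi$ factors through the free group $F_n/\ker\phi$ which has rank $< n$ by an Euler characteristic count, hence $\phi(F_n)$ has rank $< n$). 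Thus $\phi(F_n)$ is contained in a proper free factor of $F_n$: any finitely generated subgroup of rank $< n$ is contained in a free factor of rank $< n$ — indeed one can extend a free basis of (a free factor containing) $\phi(F_n)$ to a free basis of $F_n$, or more simply observe that $\phi(F_n)$ lies in the free factor generated by fewer than $n$ of the standard generators after a change of basis. Call this proper free factor $F'$, so $\phi(F_n) \le F'$.

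With $F'$ a proper free factor containing the image, I would then build the free factor system. Write $F_n = F' \ast F''$ with $F''$ non-trivial (possible since $F'$ is proper). Then $F' $ together with $F''$ (or just $F'$, viewing $F_n = F' \ast F''$ as the decomposition with $r = 1$, $F_{n_1} = F'$) gives a free factor system $\mathscr{F}$. Since $\phi(F_n) \le F' = F_{n_1}$, in particular $\phi(F') \le F'$ and $\phi(F'') \le F'$. The first containment shows $\phi$ preserves the factor $F'$; but to contradict Lemma \ref{L.PreserveFF} I need $\phi$ to preserve the \emph{whole} system, i.e.\ also $\phi(F'') \le (F'')^g$ for some $g$. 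This need not hold on the nose, so the cleaner route is to take the free factor system $\mathscr{F}$ consisting of the single factor $F'$ in the decomposition $F_n = F' \ast F''$ and to check directly from the definition given in the excerpt: an endomorphism preserves $\mathscr{F} = \{F_{n_1} = F'\}$ iff $\phi(F') \le (F')^{g_1}$, which holds with $g_1 = 1$ since $\phi(F') \le \phi(F_n) \le F'$. That already contradicts Lemma \ref{L.PreserveFF}, which asserts an irreducible $\phi$ preserves \emph{no} free factor system.

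The main obstacle is the rank bound on the image: one must be sure that a non-injective endomorphism of $F_n$ has image of rank at most $n-1$, and hence image contained in a proper free factor. For this I would use that $\phi$ factors as $F_n \twoheadrightarrow F_n/\ker\phi \xrightarrow{\ \sim\ } \phi(F_n)$; since $\ker\phi \ne 1$ is a non-trivial normal subgroup of infinite index (it cannot have finite index, as $F_n$ has no finite-index subgroup isomorphic to a quotient of itself by a nontrivial subgroup and still… — more cleanly: a nontrivial normal subgroup of $F_n$ with $n \ge 2$ has infinite index unless it is all of $F_n$, and $\ker\phi \ne F_n$ since $\phi \ne 0$), the quotient $F_n/\ker\phi$ is a free group — here I invoke that $\phi(F_n)$, being a subgroup of a free group, is free — of rank $r$, and I claim $r < n$. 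If $r \ge n$ then $F_n/\ker\phi$ contains $F_n$ as a subgroup (any free group of rank $\ge n$ does), and composing with the quotient map would make $F_n$ a proper quotient of $F_n$ mapping onto a copy of itself; the Hopf property of $F_n$ then forces $\ker\phi = 1$, contradiction. Hence $r \le n-1$, $\phi(F_n)$ is free of rank $\le n-1$, and a rank-$\le n-1$ subgroup of $F_n$ is contained in a proper free factor (extend a basis). This completes the contradiction. Once this rank input is secured, the rest is immediate from Lemma \ref{L.PreserveFF}.
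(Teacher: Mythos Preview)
Your contrapositive strategy is the same as the paper's, but your execution has a genuine gap. The claim that a subgroup of $F_n$ of rank at most $n-1$ is contained in a proper free factor of $F_n$ is false, and the justification you offer (``extend a basis'') is exactly the step that fails. Concretely, take $F_2 = \langle a,b\rangle$ and $\phi(a)=[a,b]$, $\phi(b)=1$. Then $\phi$ is non-injective and $\phi(F_2)=\langle [a,b]\rangle$ has rank $1$, yet $\langle [a,b]\rangle$ is not contained in any proper free factor of $F_2$: proper free factors of $F_2$ are cyclic on primitive elements, and $[a,b]$ is not conjugate to a power of a primitive. So there is no proper free factor $F'$ with $\phi(F_2)\le F'$, and your argument breaks down precisely at the point you flag as the ``main obstacle.'' (Your rank bound $r<n$ is correct, by the way; the quickest proof is to abelianize the surjection $F_n\twoheadrightarrow F_n/\ker\phi\cong F_r$.)

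The paper repairs this by working on the \emph{kernel} side rather than the image side. Starting from a basis $a_1,\dots,a_n$ of $F_n$, apply Nielsen reduction to the generating set $\phi(a_1),\dots,\phi(a_n)$ of $\phi(F_n)$; each Nielsen move on these images is induced by a Nielsen move on the basis of $F_n$. After reduction one obtains a new basis $b_1,\dots,b_n$ of $F_n$ such that the nontrivial $\phi(b_i)$ form a free basis of $\phi(F_n)$ and the remaining $\phi(b_j)$ are trivial. Since $\phi$ is non-injective there is at least one $b_j$ with $\phi(b_j)=1$, so $\langle b_j\rangle$ is a nontrivial free factor contained in $\ker\phi$. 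Then $\phi(\langle b_j\rangle)=\{1\}\le \langle b_j\rangle$, giving a preserved free factor system and contradicting Lemma~\ref{L.PreserveFF}. In the example above this produces the factor $\langle b\rangle$, which your image-side argument cannot see.
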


\begin{proof}
 An easy argument using Nielsen moves shows that $ker(\phi)$ contains a non-trivial free factor of $F_n$; the corollary then follows from Lemma \ref{L.PreserveFF}.
\end{proof}

Let $f:G \rightarrow G$ be a topological representative with $M(f)$ irreducible.  A \emph{turn} in $G$ is a set $T=\{e_i,e_j\}$ of directed edges of $G$ with a common initial vertex; a turn is \emph{degenerate} if $e_i=e_j$.  The topological representative $f$ induces a map $Tf$ on the set of turns in $G$ by sending an edge $e$ to the first edge in the path $f(e)$.  A non-degenerate turn $T$ is called \emph{illegal} if $(Tf)^r(T)$ is degenerate for some $k$.  A turn is called \emph{legal} if it is not illegal, and a path is called legal if it crosses only legal turns.  For any path $\alpha$ in $G$, denote by $[f(\alpha)]$ the immersed path homotopic to $f(\alpha)$ (rel endpoints).  

\begin{defn}\label{D.TT}\cite{BH92}
 An admissible topological representative $f:G \rightarrow G$ for $\Phi$ is a \emph{train track map} for $\Phi$ if $[f^r(e)]=f^r(e)$ for every $e \in E$.  
\end{defn}

\subsection{Train Tracks and the Stable Tree}\label{SS.TrainTracks}

The following result is proved in \cite{BH92} for irreducible outer automorphisms of $F_n$; however, with no modification their proof works for all irreducible endomorphisms.  This result is established by different means by Dicks-Ventura in \cite{DV}.

\begin{prop}\label{P.TT}\cite{BH92, DV}
 Let $\phi: F_n \rightarrow F_n$ be irreducible, then $\Phi$ has a topological representative that is a train track map.
\end{prop}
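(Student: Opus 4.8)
The plan is to reproduce the Bestvina--Handel algorithm for constructing a train track representative, checking at each step that the hypothesis of invertibility (which is used in \cite{BH92}) is never actually needed---only injectivity (which we have from the Corollary above) and irreducibility of the transition matrix. First I would start from any admissible topological representative $f_0 : G_0 \to G_0$ of $\Phi$ with irreducible transition matrix $M(f_0)$; such a representative exists by Definition \ref{D.Irred} (or rather, the existence of \emph{some} admissible topological representative is elementary, and irreducibility of $\phi$ forces the transition matrix to be irreducible). The Perron--Frobenius eigenvalue $\lambda = \lambda(M(f_0)) \geq 1$ is then an invariant of $\Phi$ among admissible representatives, and the strategy is to run the sequence of moves (subdivision, valence-one and valence-two homotopies, folding of illegal turns, collapsing of invariant subforests) that monotonically decreases $\lambda$, or keeps $\lambda$ fixed while decreasing the number of edges, until no illegal turn can be folded---at which point the representative is a train track map by the standard argument (if $[f^r(e)] \neq f^r(e)$ for some $e$ and all $r$, then $f$ maps some legal path to a non-legal path, producing a foldable illegal turn, contradiction).

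The key steps, in order, are: (1) record the elementary moves and verify each one outputs an admissible topological representative of the \emph{same} outer endomorphism $\Phi$ with Perron--Frobenius eigenvalue $\le \lambda$, with strict decrease unless the move is "neutral"; here one must check that collapsing an invariant subforest does not change $\Phi$ and preserves injectivity of the induced endomorphism---this is where Lemma \ref{L.PreserveFF} (via the injectivity corollary) is used to rule out collapsing "too much." (2) Show the process terminates: the pair $(\lambda, \#E(G))$ strictly decreases in lexicographic order (after the refinement that bounds the number of consecutive neutral moves), exactly as in \cite{BH92}; this step is purely combinatorial and does not see whether $\phi$ is surjective. (3) Conclude that the terminal representative $f:G \to G$ has no foldable illegal turn, and deduce $[f^r(e)] = f^r(e)$ for all $e$ and $r$, i.e.\ $f$ is a train track map for $\Phi$ in the sense of Definition \ref{D.TT}. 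Alternatively, and more briefly, one can simply invoke \cite{DV}: Dicks--Ventura establish the existence of train track representatives for arbitrary injective endomorphisms (their setting is exactly monomorphisms of free groups), so the statement follows immediately from the injectivity corollary together with irreducibility.

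The main obstacle I anticipate is the bookkeeping in step (1): one must be careful that the Bestvina--Handel moves, as originally stated for homotopy equivalences $G \to G$, still make sense and still represent $\Phi$ when $f$ is only a $\pi_1$-injective self-map of $G$ rather than a homotopy equivalence. In particular, the "core subdivision" and the identification of the subgraph on which $f$ fails to be a train track map need to be set up so that the collapse of an invariant forest is along a forest, not along a subgraph carrying $\pi_1$; irreducibility of $M(f)$ together with admissibility guarantees no invariant \emph{non-degenerate} forest survives at the terminal stage, so this is consistent, but it requires a short argument that the intermediate representatives remain admissible. Everything else---the eigenvalue estimates, the termination argument, the final train track conclusion---goes through verbatim from \cite{BH92} because those arguments only manipulate the transition matrix and the combinatorics of turns, never the surjectivity of $\phi$. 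For this reason I would present the proof as: "The proof of \cite{BH92} applies verbatim once one observes that injectivity of $\phi$ (the Corollary above) replaces every use of invertibility; alternatively see \cite{DV}," and include the one-paragraph check that the collapse moves preserve injectivity and admissibility.
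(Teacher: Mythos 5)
Your proposal takes essentially the same route as the paper. The paper does not give a proof at all — it simply states (just before the proposition) that "the following result is proved in \cite{BH92} for irreducible outer automorphisms of $F_n$; however, with no modification their proof works for all irreducible endomorphisms," and that Dicks--Ventura establish it by different means in \cite{DV}. Your proposal is exactly this observation fleshed out: run the Bestvina--Handel folding algorithm, note that each elementary move and the termination argument use only injectivity and irreducibility of the transition matrix, never surjectivity; or else invoke \cite{DV} directly. This matches the paper's intent, and the extra care you devote to checking that the collapse moves preserve admissibility and that the intermediate representatives still represent $\Phi$ is a reasonable elaboration of what "no modification" needs to mean in practice.
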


The Perron-Frobenius theory gives for any any irreducible matrix $M$ a unique positive normalized eigenvector $\mathbf{v}$ with associated eigenvalue $\lambda > 1$ (see \cite{Sen}).  Let $\phi: F_n \rightarrow F_n$ be irreducible, and let $f: G \rightarrow G$ be a train track representative for $\phi$.  We equip $G$ with the \emph{Perron-Frobenius metric}: identify edge $e_i$ with the segment of length $\mathbf{v}_i$.  With this metric, the map $f$ expands lengths of legal paths by the factor $\lambda$.  For $g \in F_n$ we let $\alpha_g$ stand for the immersed loop representing $[g]$.  

\begin{lemma}\label{L.StableT}\cite{BFH97}
 Let $\phi:F_n \rightarrow F_n$ be irreducible, and let $f:G \rightarrow G$ be a train track representative for $\Phi$ with Perron-Frobenius eigenvalue $\lambda$.  For $g \in F_n$ put
$$
l_{T_\Phi}(g):= \lim_k \lambda^{-k} L([f^k(\alpha_g)])
$$
Then the following hold:

\begin{enumerate}
 \item [(i)] $l_{T_\Phi}$ is the length function for an $\mathbb{R}$-tree $T_\Phi \in \overline{cv}_n$,
 \item [(ii)] $T_{\Phi}$ is independent of the choice of train track representative, and
 \item [(iii)] $l_{T_\Phi}(\phi(g))=\lambda_{\Phi} l_{T_\Phi}(g)$,
\end{enumerate}

\end{lemma}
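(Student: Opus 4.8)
The plan is to follow the standard construction of the stable tree from \cite{BFH97}, using the train track structure to control the limit. First I would fix a train track representative $f:G \to G$ with Perron--Frobenius eigenvalue $\lambda$ and the Perron--Frobenius metric, so that $f$ stretches legal paths exactly by $\lambda$. The key point is that for any $g \in F_n$, once we write $\alpha_g$ as a concatenation of maximal legal subpaths, the number of illegal turns in $[f^k(\alpha_g)]$ is non-increasing in $k$ (each application of $f$ either preserves an illegal turn or cancels it, and cancellation only removes illegal turns). Hence the number of illegal turns stabilizes, and for $k$ large the cancellation in passing from $f(f^k(\alpha_g))$ to $[f^{k+1}(\alpha_g)]$ is uniformly bounded; this gives $L([f^{k+1}(\alpha_g)]) = \lambda L([f^k(\alpha_g)]) + O(1)$, so the normalized sequence $\lambda^{-k} L([f^k(\alpha_g)])$ is Cauchy and the limit $l_{T_\Phi}(g)$ exists. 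This limit depends only on the conjugacy class $[g]$ since $L([f^k(\alpha_g)])$ does.

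Next I would verify that $l_{T_\Phi}$ is the length function of a point of $\overline{cv}_n$. The cleanest route is to observe that for each $k$, the rescaled tree $\lambda^{-k}\widetilde{G}$ (the universal cover of $G$ with its metric scaled by $\lambda^{-k}$, with the $F_n$-action twisted by $\phi^k$ via the marking) lies in $cv_n$, and its length function is $[g] \mapsto \lambda^{-k} L([f^k(\alpha_g)])$ up to the bounded-cancellation error — more precisely these length functions converge projectively. Since $\overline{cv}_n = \overline{CV}_n$'s cone is closed in $\mathbb{R}^{F_n}$ (by \cite{CL95, BF94}) and the limit function is not identically zero (legal loops exist and have positive limit length, using irreducibility of $M(f)$), the limit $l_{T_\Phi}$ is realized by some very small action $F_n \curvearrowright T_\Phi$, i.e. $T_\Phi \in \overline{cv}_n$. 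This proves (i).

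For (ii), independence of the train track representative: if $f':G' \to G'$ is another train track representative for $\Phi$ with eigenvalue $\lambda'$, a standard argument (comparing the two via a homotopy equivalence $h:G \to G'$ commuting with the maps up to homotopy, and invoking bounded backtracking from Proposition \ref{P.BBT}) shows that $\lambda = \lambda'$ — the exponential growth rate of legal, hence of all, conjugacy classes is an invariant of $\Phi$ — and that $\lambda^{-k}L([f^k(\alpha_g)])$ and $(\lambda')^{-k}L([(f')^k(\alpha_g)])$ have the same limit for every $g$; since a minimal action is determined by its length function \cite{CM87}, $T_\Phi$ is well-defined. For (iii), from the definition
$$
l_{T_\Phi}(\phi(g)) = \lim_k \lambda^{-k} L([f^k(\alpha_{\phi(g)})]) = \lim_k \lambda^{-k} L([f^{k+1}(\alpha_g)]) = \lambda \lim_k \lambda^{-(k+1)} L([f^{k+1}(\alpha_g)]) = \lambda\, l_{T_\Phi}(g),
$$
using that $f$ represents $\phi$ so $[f(\alpha_g)]$ represents $[\phi(g)]$, hence $[f^{k+1}(\alpha_g)] = [f^k(\alpha_{\phi(g)})]$ up to reparametrization. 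Here $\lambda_\Phi := \lambda$.

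The main obstacle is the convergence argument in (i): one must make the "number of illegal turns is eventually constant, so cancellation is eventually uniformly bounded" claim precise, which requires the train track property in an essential way (for a general topological representative the cancellation need not be controlled and the limit need not exist). Everything else is bookkeeping with bounded backtracking and the already-cited facts that $\overline{cv}_n$ is closed and that length functions determine trees.
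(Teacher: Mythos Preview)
Your proposal is correct and follows the standard \cite{BFH97} argument that the paper is citing. Note that the paper does not supply its own proof of this lemma: it is stated with attribution to \cite{BFH97}, and the paragraph that follows is an informal description of $T_\Phi$ as the Gromov--Hausdorff limit of the rescaled twisted trees $T_k=\lambda^{-k}\tilde f^k(\tilde G)$ rather than a proof. Your route via convergence of the normalized length functions and closedness of $\overline{cv}_n$ is exactly the length-function side of that same limiting construction, so the two are the same approach at different levels of detail.
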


Let $\phi:F_n \rightarrow F_n$ be irreducible, and let $f:G \rightarrow G$ be a train track representative for $\Phi$ with associated Perron-Frobenius eigenvalue $\lambda$.  Note that it follows from that above lemma that $\lambda$ is determined by $\Phi$.  Equip $G$ with the Perron-Frobenius metric, and put $T_0:=\tilde{G}$ with the lifted metric.  Let $\tilde{f}:T_0 \rightarrow T_0$ be a lift of $f$; the choice of $\tilde{f}$ amounts to choosing some representative $\psi \in \Phi$, and we prefer to take $\tilde{f}$ corresponding to $\phi$ when convenient.  Note that for any $g \in F_n$, one has $\tilde{f}(gx)=\phi(g)\tilde{f}(x)$.  Let $T_k'$ denote the minimal $F_n$-invariant subtree of $T_0$ with the action twisted by $\phi^k$; so $T_k'=\tilde{f}^k(T_0)$.  Define $T_k$ to be $T_k'$ with the metric rescaled by $\lambda^{-k}$.  The sequence of $F_n$-trees $(T_k)$ converges in the Gromov-Hausdorff topology to the tree $T_{\Phi}$.  The map $\tilde{f}:T_0 \rightarrow T_0$ gives maps $f_k:T_k \rightarrow T_{k+1}$, which give rise to a map $f_{\phi}:T_{\Phi} \rightarrow T_{\Phi}$ satisfying:

\begin{itemize}
 \item Length$(f_{\phi}([x,y]))=\lambda_{\Phi}$Length$([x,y])$,
 \item $f_{\phi}(gx)=\phi(g)f_{\phi}(x)$
\end{itemize}

\begin{defn}
 The tree $T_{\Phi}$ is called the \emph{stable tree of} $\Phi$.
\end{defn}

Any endomorphism $\psi:F_n \rightarrow F_n$ acts (on the right) on the set of $F_n$-trees via
$$
l_{T\psi}(g)=l_T(\psi(g))
$$
\noindent If one restricts attention to a space $X$ of nontrivial trees such that $\phi$ acts on $X$, then the action of $\phi$ on $X$ gives an action on the set of projective classes of trees coming from $X$.  If $[T]\psi=[T]$, then $T\psi$ is $F_n$-equivariantly isometric to $T$ with the metric rescaled by some number $c$.  This data is witnessed by a function $H:T \rightarrow T$ satisfying:

\begin{itemize}
 \item Length$(H([x,y]))=c$Length$([x,y])$,
 \item $H(gx)=\psi(g)H(x)$
\end{itemize}

Call such a map $H$ a $\psi$-\emph{compatible} $c$-\emph{homothety}, or just a \emph{homothety} if $\psi$ and $c$ are clear from context (see \cite{GJLL98}).  Conversely, if $Y$ is an $F_n$ tree, $\eta:F_n \rightarrow F_n$ some endomorphism, then the existence of a $\eta$-compatible homothety $H:Y \rightarrow Y$ implies that $[Y]\eta=[Y]$.  The map $f_{\phi}:T_{\Phi} \rightarrow T_{\Phi}$ is a $\phi$-compatible $\lambda_{\Phi}$-homothety, so $[T_{\Phi}]\phi=[T_{\Phi}]$.

\subsection{Expansive Endomorphisms}

\begin{defn}\label{D.Expansive}
 Fix a basis $B$ for $F_n$.  An endomorphism $\phi: F_n \rightarrow F_n$ is \emph{expansive} with respect to $B$ if for any real number $L$ there is a number $K$ such that for any $1 \neq g \in F_n$, one has $||\phi^k(g)||_B \geq L$ whenever $k \geq K$.  
\end{defn}

The definition of expansive involves a particular basis for $F_n$; however, it is clear that if an endomorphism is expansive with respect to some basis then it is expansive with respect to any basis.  The following lemma is an easy application of the definition of a train track map.

\begin{lemma}\label{L.1}\cite{BFH97}
 Let $f:G \rightarrow G$ be a train track map with associated eigenvalue $\lambda$, and let $p$ be a path in $G$.  Then the sequence $L([f^k(p)])$ either is uniformly bounded or grows like $Const. \lambda^k$.  
\end{lemma}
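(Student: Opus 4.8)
The plan is to analyze the sequence $L([f^k(p)])$ by decomposing the path $p$ into maximal legal subpaths and tracking what happens at the finitely many illegal turns of $p$ under iteration. Since $f$ is a train track map, the image $[f^k(e)] = f^k(e)$ for every edge $e$, so the only cancellation that can occur in forming $[f^k(p)]$ from $f^k(p)$ happens at the (images of the) illegal turns originally in $p$; legal turns are mapped to legal turns and no folding happens there. First I would write $p = p_1 \tau_1 p_2 \tau_2 \cdots \tau_{m-1} p_m$ where each $p_i$ is legal and each $\tau_j$ is an illegal turn; $m$ depends only on $p$, not on $k$.

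Next I would observe that because $f$ expands the Perron-Frobenius length of any legal path by exactly $\lambda$, we have $L(f^k(p_i)) = \lambda^k L(p_i)$ for each legal piece, and these pieces remain legal. The passage to the reduced path $[f^k(p)]$ only deletes a subpath around each of the (at most $m-1$) illegal turns; the key point is that the amount of cancellation at each illegal turn is controlled. Concretely, I would use the standard bounded-cancellation phenomenon for train track maps: there is a constant $C = C(f)$ such that in forming $[f(\alpha\beta)]$ from $[f(\alpha)][f(\beta)]$ at most $C$ is cancelled from each side (this is essentially Proposition \ref{P.BBT} applied to the lift $\tilde f$, which has bounded backtracking with $C = \mathrm{vol}(T_0)$). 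Iterating, the total length removed from $[f^k(p)]$ across all illegal turns is bounded by a geometric-type sum; but since legal pieces grow like $\lambda^k$ and the cancellation at each step is by a fixed constant $C$, the total cancellation through $k$ iterations is at most $(m-1)C$ times a bounded factor — in fact one shows the cancellation stabilizes, because once a legal piece is long enough ($> 2C$), the surviving "core" of each legal piece is never again eaten into.

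Carrying this out, I would split into two cases. If every legal piece $p_i$ is eventually consumed entirely (this happens precisely when $p$, up to reduction, is a "pre-periodic" configuration that reduces to a bounded path), then $L([f^k(p)])$ is uniformly bounded. Otherwise, at least one legal piece $p_{i_0}$ has, after finitely many iterations, length exceeding $2C$ and hence contributes a core of length $\asymp \lambda^k$ that survives all further reductions; combined with the upper bound $L([f^k(p)]) \le L(f^k(p)) = \lambda^k L(p)$, this gives $L([f^k(p)]) \asymp \lambda^k$, i.e.\ growth like $\mathrm{Const}\cdot\lambda^k$.

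The main obstacle will be making the bounded-cancellation estimate at illegal turns genuinely uniform in $k$ — i.e.\ ruling out the a priori possibility that cancellation at one illegal turn cascades and grows without bound as $k$ increases. The resolution is the observation above that for a train track map the image of a legal path is legal and is expanded isometrically by $\lambda$, so the "interface" around each illegal turn is, after one application of $f$, a path of the form (end of an expanded legal piece)(illegal turn image)(start of an expanded legal piece), and the cancellation there is bounded by the bounded-backtracking constant $C$ independent of $k$; an induction on $k$ then shows the cumulative lost length is bounded. I expect this to be a short argument given Proposition \ref{P.BBT} and Definition \ref{D.TT}, and indeed the lemma is attributed to \cite{BFH97} precisely because it is a routine consequence of the train track machinery.
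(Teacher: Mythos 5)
The paper itself gives no proof of this lemma; it is stated with a bare citation to \cite{BFH97} and described as ``an easy application of the definition of a train track map.'' So there is no in-paper argument to compare against, and what you have written is the standard proof sketch underlying the BFH citation (legal pieces expand by $\lambda$, illegal turns contribute only bounded cancellation, either a ``core'' of some legal piece survives and forces $\lambda^k$ growth, or nothing survives and the length stays bounded). Your approach is correct in substance. A few things should be tightened.

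First, the bounded cancellation constant you invoke is not literally Proposition \ref{P.BBT}: that proposition is stated for equivariant maps $T_0\to T$ that are \emph{isometric on edges}, whereas the lift $\tilde f\colon T_0\to T_0$ stretches edges by $\lambda$. One either rescales the target metric by $\lambda^{-1}$ and applies the proposition, or more cleanly invokes Cooper's bounded cancellation lemma for the homotopy equivalence $f\colon G\to G$ directly. Either way a constant $C=C(f)$ exists, but it is not simply $\operatorname{vol}(T_0)$.

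Second, the phrase ``the cumulative lost length is bounded'' is false as stated and should be removed: the total length cancelled across $k$ iterations grows like $\lambda^k$ in general. What is bounded is the \emph{fraction} of each legal piece ever lost. Concretely, if $q$ is a legal piece and $L_{j}$ denotes the length of the corresponding surviving legal subpath at step $j$, then $L_{j+1}\ge \lambda L_j-2C$, so $L_j-\tfrac{2C}{\lambda-1}\ge \lambda^j\bigl(L_0-\tfrac{2C}{\lambda-1}\bigr)$. Hence the correct survival threshold is $\tfrac{2C}{\lambda-1}$, not $2C$: once a legal piece attains length exceeding $\tfrac{2C}{\lambda-1}$, the inequality above shows it grows like $\lambda^k$ and is never consumed. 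Your ``core'' picture is exactly right, but the arithmetic of the threshold should match.

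Third, for the bounded case you need one more observation to rule out the total length creeping upward through proliferation of pieces: the map $Tf$ sends turns to turns, so the number of illegal turns in $[f^k(p)]$ is non-increasing in $k$ and hence eventually constant. Once it stabilizes, the number of maximal legal pieces is fixed; if every such piece stays below the threshold $\tfrac{2C}{\lambda-1}$ forever, the total length $L([f^k(p)])$ is bounded by (number of pieces)$\cdot\tfrac{2C}{\lambda-1}$. This monotonicity of the illegal-turn count is the structural fact that makes the dichotomy exhaustive, and it is what BFH97 organize via splittings into legal pieces and Nielsen paths; your bounded-cancellation bookkeeping reaches the same conclusion a bit more directly, at the cost of this extra observation being implicit rather than stated.
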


\begin{cor}\label{C.TisFree}
 Let $\phi:F_n \rightarrow F_n$ be irreducible, and suppose that $\phi$ is expansive.  Then $T_{\Phi}$ is free.
\end{cor}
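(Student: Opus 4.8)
The plan is to show that if $T_\Phi$ were not free, then some nontrivial $g \in F_n$ would have $l_{T_\Phi}(g) = 0$, and to derive from this a contradiction with expansiveness. First I would observe that, by Lemma \ref{L.StableT}, $l_{T_\Phi}(g) = \lim_k \lambda^{-k} L([f^k(\alpha_g)])$, where $f:G \to G$ is a train track representative for $\Phi$ equipped with the Perron--Frobenius metric and $\lambda > 1$ is the associated eigenvalue. So $l_{T_\Phi}(g) = 0$ precisely when the path-length sequence $L([f^k(\alpha_g)])$ grows strictly slower than $\lambda^k$; by Lemma \ref{L.1}, the only alternative is that $L([f^k(\alpha_g)])$ is \emph{uniformly bounded}, say by a constant $C_g$, since the two possibilities there are ``uniformly bounded'' or ``grows like $\mathrm{Const}\cdot\lambda^k$.''

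Next I would translate boundedness of the $G$-lengths $L([f^k(\alpha_g)])$ into boundedness of word lengths, to connect with the definition of expansive. Fixing a basis $B$ for $F_n$ and using the marking $\tau:R_n \to G$, there is a bi-Lipschitz comparison between the cyclically reduced word length $\|\cdot\|_B$ of a conjugacy class $[h]$ and the $G$-length $L(\alpha_h)$ of its immersed loop representative: both are quasi-isometry invariants of the conjugacy class computed in two cocompact models of $F_n$. Since $[f^k(\alpha_g)]$ represents $[\phi^k(g)]$, we get that $\|\phi^k(g)\|_B$ is comparable to $L([f^k(\alpha_g)])$, hence uniformly bounded in $k$ by some constant $C_g'$. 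But $\phi$ is injective (it is irreducible), so $\phi^k(g) \neq 1$ for all $k$; applying the definition of expansive with $L = C_g' + 1$ yields a $K$ with $\|\phi^k(g)\|_B \geq C_g' + 1$ for all $k \geq K$, a contradiction. Therefore $l_{T_\Phi}(g) > 0$ for every $1 \neq g \in F_n$, i.e. $T_\Phi$ is free.

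I expect the main obstacle to be the bookkeeping in the comparison between $L([f^k(\alpha_g)])$ and $\|\phi^k(g)\|_B$: one must be careful that the immersed loop $[f^k(\alpha_g)]$ really represents the conjugacy class $[\phi^k(g)]$ (which follows since $f$ induces $\Phi$ and we pass to the immersed representative rel the free homotopy class of a loop) and that the bi-Lipschitz constant relating $G$-length and $B$-word-length is uniform over all conjugacy classes (which it is, being determined only by the marked graph $G$ with its fixed metric and the rose $R_n$). One subtlety worth flagging is that expansiveness is a statement about \emph{all} nontrivial $g$ with a \emph{uniform} $K = K(L)$, whereas here we only need it elementwise for each fixed $g$, so even the weaker elementwise consequence of Definition \ref{D.Expansive} suffices. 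Everything else is a direct chain of the cited lemmas.
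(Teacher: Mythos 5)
Your proof is correct and follows the same route as the paper's own (very terse) argument: use Lemma \ref{L.StableT} and Lemma \ref{L.1} to conclude that $l_{T_\Phi}(g)=0$ forces $L([f^k(\alpha_g)])$ to be uniformly bounded, then invoke expansiveness for a contradiction. Your additional bi-Lipschitz comparison step (translation length in $\tilde{G}$ versus cyclic word length in the rose) is exactly the bookkeeping the paper elides, and the aside on injectivity is harmless but not actually needed, since Definition \ref{D.Expansive} already quantifies over the fixed nontrivial $g$ rather than over $\phi^k(g)$.
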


\begin{proof}
 Considering Lemma \ref{L.1} and the construction of $T_{\Phi}$ we see that an elements $g \in F_n$ is elliptic in $T_{\Phi}$ if and only if the conjugacy class of $g$ is represented by a loop $\alpha_g$ in $G$ such that the length of $[f^k(\alpha_g)]$ is uniformly bounded.  If $\phi$ is expansive, this is only possible for $g=1$.
\end{proof}

We now establish a dichotomy for irreducible endomorphisms of the free group.  The result follows easily from a theorem of M. Takahasi \cite{T51}(see also \cite{KM02}), but we include a proof, as the techniques are relevant to the sequel.


\begin{prop}\label{P.Dichot}
 Let $\phi:F_n \rightarrow F_n$ be irreducible, then either $\phi \in Aut(F_n)$ or $\phi$ is expansive.
\end{prop}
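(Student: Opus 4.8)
The plan is to establish the dichotomy by analyzing a train track representative $f\colon G \to G$ for $\Phi$ with Perron--Frobenius eigenvalue $\lambda$. Since $M(f)$ is irreducible, we have $\lambda \geq 1$, and the first step is to separate the two cases according to whether $\lambda = 1$ or $\lambda > 1$. If $\lambda = 1$, then the Perron--Frobenius eigenvector $\mathbf{v}$ has all entries equal (the only way $M(f)\mathbf{v} = \mathbf{v}$ with $M(f)$ irreducible and integral is for $M(f)$ to be a permutation matrix, after possibly passing to a power), so $f$ permutes the edges of $G$ up to orientation; hence $f$ is a simplicial isomorphism, $\Phi$ is realized by a homeomorphism of $G$, and $\phi \in \mathrm{Aut}(F_n)$. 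So the substantive case is $\lambda > 1$, and there I would show $\phi$ is expansive.

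For the case $\lambda > 1$, fix the Perron--Frobenius metric on $G$ and take $1 \neq g \in F_n$ with immersed loop $\alpha_g$. By Lemma \ref{L.1}, the sequence $L([f^k(\alpha_g)])$ is either uniformly bounded or grows like $\mathrm{Const}\cdot\lambda^k$; since $\lambda > 1$, in the latter case $\|\phi^k(g)\|_B$ grows without bound uniformly once we know the constant is bounded below independently of $g$, which follows because a loop crossing any edge has length at least $\min_i \mathbf{v}_i > 0$. So the only obstruction to expansiveness is the existence of some $1 \neq g$ with $L([f^k(\alpha_g)])$ uniformly bounded; I must rule this out. Such a $g$ would be elliptic in every $T_k$, hence elliptic in the stable tree $T_\Phi$ (as in the proof of Corollary \ref{C.TisFree}). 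The key point is then that the set of conjugacy classes of elements that stay bounded under iteration is, after passing to a power of $f$, carried by a proper subgraph: the subgraph $G_0 \subseteq G$ spanned by edges $e$ with $L([f^k(e)])$ bounded is $f$-invariant, and if nonempty and proper it is a nondegenerate invariant subgraph. One checks it contains no invariant nondegenerate forest obstruction issue by noting $f|_{G_0}$ still has a transition submatrix; irreducibility of $M(f)$ forces this submatrix to be either empty or all of $M(f)$, and the latter contradicts $\lambda > 1$ (a fully irreducible matrix equal to its own principal submatrix on a proper subset of indices is impossible). Hence $G_0$ is empty, every edge grows like $\lambda^k$, and so does every immersed loop, giving expansiveness.

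The main obstacle I anticipate is the bookkeeping in the previous paragraph: carefully showing that the elements with bounded orbit are exactly those carried by an $f$-invariant proper subgraph, and that irreducibility of the transition matrix (in the strong ``fully irreducible'' sense, including the condition on powers) rules this out without the subgraph secretly being a forest that the definition of \emph{admissible} already excludes. An alternative, cleaner route — and the one the remark about Takahasi's theorem suggests — is to argue directly on the group: if $\phi \notin \mathrm{Aut}(F_n)$ then $\phi(F_n)$ is a proper finitely generated subgroup of infinite index, and Takahasi's theorem gives a finite chain of free factors controlling the images $\phi^k(F_n)$; combined with Lemma \ref{L.PreserveFF} (irreducibility preserves no free factor system) one deduces $\cap_k \phi^k(F_n)$ contains no fixed conjugacy class with bounded syllable length, which is precisely expansiveness. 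I would present the train track argument as the main proof since, as the authors note, its techniques are reused later, and mention the Takahasi route as the quick alternative.
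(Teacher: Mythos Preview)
Your train track argument contains a genuine gap at the step ``$G_0$ is empty, every edge grows like $\lambda^k$, and so does every immersed loop.'' In any train track map with $\lambda>1$, every \emph{edge} grows like $\lambda^k$ (since $[f^k(e)]=f^k(e)$ by the train track property), so your subgraph $G_0$ is automatically empty. But this does \emph{not} imply every immersed loop grows: the loop $\alpha_g$ may cross illegal turns, and cancellation there can keep $L([f^k(\alpha_g)])$ bounded. Concretely, irreducible automorphisms arising from pseudo-Anosov maps of surfaces with boundary have $\lambda>1$, yet each boundary curve gives a conjugacy class whose length stays bounded under iteration. These bounded-growth loops are governed by (periodic) Nielsen paths, not by an invariant subgraph of $G$, so your reduction to the irreducibility of $M(f)$ does not go through. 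What you flagged as ``bookkeeping'' is in fact the heart of the matter, and the statement you would need is false.

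The paper's proof takes a different route entirely: it assumes $\phi$ is not expansive and works with the Stallings subgroup graphs $S_k=S(\phi^k(F_n))$. Non-expansiveness forces the injectivity radii of the $S_k$ to be uniformly bounded, so after a power there is a fixed labeled graph $S'$ embedding into every $S_k$. The short loops in this common piece generate a collection of subgroups that are $\phi$-invariant up to conjugacy and are free factors (because they come from embedded subgraphs of each $S_k$). Irreducibility (via Lemma~\ref{L.PreserveFF}) then forces $S'$ to surject onto each $S_k$, giving $\phi\in\mathrm{Aut}(F_n)$. This argument never touches the transition matrix or Nielsen paths; it extracts the invariant free factor directly from the geometry of the nested images $\phi^k(F_n)$, and is what the Takahasi reference is pointing toward. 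Your alternative sketch along Takahasi lines is closer to what actually works; the train track approach as you have it would require a substantial detour through the structure of Nielsen paths to be salvaged.
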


\begin{proof}
 Suppose that $\phi: F_n \rightarrow F_n$ is irreducible and not expansive.  Let $S_k=S(\phi^k(F_n))$ denote the Stallings subgroup graph of $\phi^k(F_n)$, and let $x_k \in S_k$ be the base point (see \cite{KM02} for background).  Denote by $i_k$ the injectivity radius of $S_k$ (with the simplicial metric).  Since $\phi$ is not expansive, it follows that the sequence $(i_k)_{k \in \mathbb{N}}$ is bounded.  Hence, by replacing $\phi$ by a suitable power, we can find a labeled graph $S'$ with basepoint $x' \in S'$ along with embeddings $f_k:S' \rightarrow S_k$ sending $x'$ to the projection of $x_k$ onto the image of $f_k$.  

 Let $f:G \rightarrow G$ be a train track representative for $\Phi$.  Since the action $F_n \curvearrowright \tilde{R_n}$ is quasi-isometric to the action $F_n \curvearrowright \tilde{G}$, after replacing $\phi$ by a power if necessary, we get that the collection of subgraphs of $S_k$ that are unions of short loops gives rise to a collection of subgroups that invariant under $\phi$ up to conjugacy.  Each of these subgroups is a free factor of $F_n$ as its conjugacy class corresponds to a subgraph of each $S_k$ for $k>>0$, which implies that there is a free factorization of $F_n$ mapping onto this collection.  Since $\phi$ is injective and irreducible, we get that $S'$ surjects onto each $S_k$ so that $\phi$ is an automorphism.
\end{proof}

\begin{remark}
The above proof shows that for any endomorphism $\psi:F_n \rightarrow F_n$ that is not expansive, after possibly replacing $\psi$ by a power, we can find a collection of free factors $F_{n_1},...,F_{n_r}$ of $F_n$ that are preserved by $\psi$ up to conjugacy such that the restriction of $\psi$ to each $F_{n_i}$ is an automorphism.  In this case, there are inner automorphisms $\iota_{g_{n_i}}$ such that $\cap_k (\iota_{g_{n_i}} \circ \psi)^k(F_n)=F_{n_i}$.  
\end{remark}

\section{Graphs of Actions}\label{S.GoA}

To begin in earnest our study of the structure of stable trees of irreducible endomorphisms, we recall the following notion of decomposability for group actions on trees.  Let $G$ be finitely generated group acting on an $\mathbb{R}$-tree $T$.  

\begin{defn}\label{D.TF}
 A $G$-invariant family $\mathscr{Y}=\{Y_v\}_{v \in V}$ of non-degenerate subtrees $Y_v \subseteq T$ is called a \emph{transverse family} for the action $G \curvearrowright T$ if for $Y_v \neq Y_{v'}$, one has that $Y_v \cap Y_{v'}$ contains at most one point.     
\end{defn}

Note that if $\mathscr{Y}$ is a transverse family for the action $G \curvearrowright T$, we may replace each $Y_v$ by its closure in $T$; the resulting collection also will be a transverse family.  Let $\{Y_v\}_{v \in V}$ is a transverse family of closed subtrees of $T$.  If, in addition, for any finite arc $I \subseteq T$, one has that $I$ is contained in a finite union $Y_{v_1} \cup ... \cup Y_{v_r}$, then the collection $\mathscr{Y}$ is called a \emph{transverse covering} of $T$ \cite{Gui08}.

\begin{defn}\label{D.GoA}\cite{Lev94, Gui08}
A \emph{graph of actions} $\mathscr{G}=(S, \{Y_v\}_{v \in V(S)}, \{p_e\}_{e \in E(S)}$ consists of:

\begin{enumerate}
 \item [(i)] a simplicial tree $S$, called the \emph{skeleton}, equipped with an action (without inversions) of $G$,
 \item [(ii)] for each vertex $v \in V(S)$ of $S$ an $\mathbb{R}$-tree $Y_v$, called a \emph{vertex tree}, and
 \item [(iii)] for each oriented edge $e \in E(S)$ with terminal vertex $v \in V(S)$ a point $p_e \in Y_v$, called an \emph{attaching point}.
\end{enumerate}

\end{defn}

It is required that the projection sending $Y_v \rightarrow p_e$ is equivariant and that for $g \in G$, one has $gp_e=p_{ge}$.  Associated to a graph of actions $\mathscr{G}$ is a canonical action of $G$ on an $\mathbb{R}$-tree $T_{\mathscr{G}}$: define a pseudo-metric $d$ on $\coprod_{v\in V(S)} Y_v$: if $x \in Y_u$, $y \in Y_v$, let $e_1...e_k$ be the reduced edge-path from $u$ to $v$ in $S$, \emph{i.e.} $\iota(e_1)=u$, $\tau(e_k)=v$, and $\tau(e_i)=\iota(e_{i+1})$, then 

\begin{equation}\label{E.Dist}
d(x,y)=d_{Y_u}(x,p_{\overline{e_1}}) + d_{Y_{\tau(e_1)}}(p_{e_1},p_{\overline{e_2}}) + ... +d_{Y_v}(p_{e_r},y)
\end{equation}

\noindent Making this pseudo-metric Hausdorff gives an $\mathbb{R}$-tree, called the \emph{dual} of $\mathscr{G}$, which we denote by $T_{\mathscr{G}}$.  If $T$ is an $\mathbb{R}$-tree equipped with an action of $G$ by isometries, and if there is an equivariant isometry $T \rightarrow T_{\mathscr{G}}$ to the dual of a graph of actions, then we say that $T$ \emph{splits} as a graph of actions.  See \cite{Gui04, Lev94} for details.  

The following result shows that graphs of actions and transverse coverings are equivalent ideas.

\begin{lemma}\label{L.Graphs}\cite[Lemma 1.5]{Gui08}
 Assume that $T$ splits as a graph of actions with vertex trees $\{Y_v\}_{v \in V(S)}$, then $\{Y_v\}_{v \in V(S)}$ is a transverse covering for $T$.  Conversely, if $T$ has a transverse covering $\{Y_v\}_{v \in V}$, then $T$ splits as a graph of actions whose non-degenerate vertex trees are $\{Y_v\}_{v \in V}$.
\end{lemma}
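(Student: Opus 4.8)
The plan is to prove both directions by carefully tracking the metric on $T_{\mathscr{G}}$ given by equation \eqref{E.Dist}. For the forward direction, assume $T$ splits as a graph of actions; by definition there is an equivariant isometry $T \to T_{\mathscr{G}}$, so we may as well identify $T$ with $T_{\mathscr{G}}$ and work with the explicit model $\coprod_{v} Y_v$ modulo the pseudo-metric $d$. First I would check that each $Y_v$ (more precisely, its image in $T_{\mathscr{G}}$) embeds isometrically: the formula \eqref{E.Dist} restricted to $x,y \in Y_u$ (where the reduced edge-path from $u$ to $u$ is empty) gives $d(x,y) = d_{Y_u}(x,y)$, so the embedding is isometric, and the images are non-degenerate subtrees. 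The family $\{Y_v\}$ is $G$-invariant because $G$ acts on the skeleton $S$ and $g p_e = p_{ge}$. Next I would verify the transverse-family condition: if $Y_u \neq Y_v$ and $x \in Y_u \cap Y_v$, then reading \eqref{E.Dist} along the reduced path $e_1 \cdots e_k$ from $u$ to $v$ forces all the intermediate terms $d_{Y_{\tau(e_i)}}(p_{e_i}, p_{\overline{e_{i+1}}})$ and the end terms to vanish, so $x$ must be the common attaching point along that path; any second such point would have to be this same point, so $Y_u \cap Y_v$ is a single point. Finally, for the covering property: given a finite arc $I = [x,y] \subseteq T_{\mathscr{G}}$ with $x \in Y_u$, $y \in Y_v$, the geodesic from $x$ to $y$ is exactly the concatenation of the subsegments $[x, p_{\overline{e_1}}] \subseteq Y_u$, $[p_{e_1}, p_{\overline{e_2}}] \subseteq Y_{\tau(e_1)}$, \ldots, $[p_{e_k}, y] \subseteq Y_v$ — this is precisely the content of \eqref{E.Dist} together with uniqueness of geodesics in an $\mathbb{R}$-tree — so $I$ is contained in the finite union $Y_u \cup Y_{\tau(e_1)} \cup \cdots \cup Y_v$. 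Hence $\{Y_v\}$ is a transverse covering.

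For the converse, suppose $T$ has a transverse covering $\mathscr{Y} = \{Y_v\}_{v \in V}$ by closed subtrees. I would build the skeleton $S$ as follows: take one vertex $v$ for each $Y_v \in \mathscr{Y}$ (call these \emph{$Y$-vertices}), one vertex for each point $x \in T$ that lies in at least two distinct members of $\mathscr{Y}$ (call these \emph{cut-point vertices}), and an edge joining a cut-point vertex $x$ to a $Y$-vertex $v$ whenever $x \in Y_v$. The group $G$ acts on $S$ without inversions since it permutes $\mathscr{Y}$ and the distinguished points. One must check $S$ is a tree: connectedness follows from the covering property (any arc of $T$ is covered by finitely many $Y_v$'s, which chain together through their intersection points), and the absence of loops follows from the transverse-family condition, which prevents two members of $\mathscr{Y}$ from meeting in more than one point and hence prevents cycles in $S$ — this is the standard "Bass–Serre tree of a transverse covering" argument and is exactly \cite[Lemma 1.5]{Gui08}. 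For each $Y$-vertex set the vertex tree to be $Y_v$ itself; for each cut-point vertex set the vertex tree to be a point; and for an edge from $x$ to $v$, set the attaching point in $Y_v$ to be $x$ and the attaching point in the point-tree to be the unique point. Then I would verify that the dual $T_{\mathscr{G}}$ is equivariantly isometric to $T$: the natural map $\coprod Y_v \to T$ descends to the quotient by the pseudo-metric, it is surjective by the covering property, and it is isometric because for $x \in Y_u$, $y \in Y_v$ the geodesic in $T$ decomposes along the chain of $Y$'s joining them in exactly the pattern dictated by \eqref{E.Dist} (here one uses that $T$ is an $\mathbb{R}$-tree so geodesics are unique, and that consecutive members of the chain meet in the single point recorded by the skeleton). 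The non-degenerate vertex trees are, by construction, precisely the members of $\mathscr{Y}$.

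The main obstacle I anticipate is the verification that the skeleton $S$ constructed in the converse is genuinely a simplicial tree — specifically ruling out embedded loops — and that the resulting dual recovers $T$ on the nose rather than merely up to some collapse. Both points hinge on the interaction between the two defining properties of a transverse covering (pairwise intersections are at most a point; finite arcs are finitely covered), and making the decomposition of a geodesic $[x,y]$ into the canonical "$Y_u$-part, then bridge, then $Y_{v}$-part, \ldots" rigorous requires a small induction on the number of members of $\mathscr{Y}$ needed to cover the arc. Since this is precisely \cite[Lemma 1.5]{Gui08}, I would either reproduce Guirardel's argument in this form or simply cite it; given the expository nature of this section, citing it and sketching the skeleton construction above should suffice.
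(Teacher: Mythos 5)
Your proposal follows essentially the same route as the paper: the paper cites Guirardel for the full result and only sketches the converse via exactly the skeleton you build (one vertex class $V_0$ indexed by members of $\mathscr{Y}$, another class $V_1$ indexed by intersection points of distinct members, and edges recording containment), and your forward direction is the straightforward direct computation with Formula~\eqref{E.Dist} that Guirardel's proof also uses. The proposal is correct; the additional care you flag in the converse (ruling out loops in $S$ and showing the dual recovers $T$ on the nose) is exactly what the paper defers to \cite{Gui08}.
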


We now recall a sketch of the proof of the second statement of Lemma \ref{L.Graphs}.  Suppose that the action $G \curvearrowright T$ has transverse covering $\mathscr{Y}=\{Y_v\}_{v \in V}$; we find a graph of actions structure for $G \curvearrowright T$.  First we define the skeleton $S$; $V(S)=V_0 \cup V_1$, where the elements of $V_0$ are in one-to-one correspondence with the elements of $\mathscr{Y}$, and the elements of $V_1$ are in one-to-one correspondence with intersection points of distinct elements of $\mathscr{Y}$.  There is an edge from $v_1 \in V_1$ to $v_0 \in V_0$ if and only if the point corresponding to $v_1$ is contained in the tree corresponding to $v_0$.  One checks that there is an induced action of $G$ on $S$ without inversions and that association given above of trees to the elements of $V(S)$ defines a graph of actions structure on $G \curvearrowright T$ (see \cite{Gui08}).  

The following is a simple application of the preceding discussion.

\begin{lemma}
 Let $G \curvearrowright T$ be an action of a finitely generated group on an $\mathbb{R}$-tree, and suppose that $\mathscr{T}=\{T_v\}_{v \in V}$ is a transverse covering for $G \curvearrowright T$.   If the action $G \curvearrowright T$ is free, then each $Stab(T_v)$ is a free factor of $G$.
\end{lemma}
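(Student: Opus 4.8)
The plan is to reduce the statement to Bass--Serre theory via the equivalence between transverse coverings and graphs of actions recorded in Lemma \ref{L.Graphs}. Since $\mathscr{T}=\{T_v\}_{v\in V}$ is a transverse covering of $G\curvearrowright T$, the second half of Lemma \ref{L.Graphs} (and the explicit construction sketched just after it) produces a graph of actions structure on $G\curvearrowright T$ whose skeleton is a simplicial $G$-tree $S$ (with an action without inversions), with vertex set $V(S)=V_0\sqcup V_1$: the vertices in $V_0$ are in $G$-equivariant bijection with the trees in $\mathscr{T}$, and those in $V_1$ are in $G$-equivariant bijection with the set of points of $T$ lying in two distinct members of $\mathscr{T}$. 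The first thing I would check is that, under this bijection, the $G$-stabilizer of the $V_0$-vertex corresponding to $T_v$ is precisely the setwise stabilizer $Stab(T_v)$; this is immediate from the way the $G$-action on $S$ is built from the $G$-action on $\mathscr{T}$.

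Next I would observe that $S$ has trivial edge stabilizers. Every edge of $S$ has exactly one endpoint in $V_1$, and that endpoint corresponds to an actual point $p\in T$; an element of $G$ fixing the edge fixes this endpoint, hence fixes $p\in T$, so by freeness of $G\curvearrowright T$ it is trivial. (Equivalently, the attaching point $p_e\in T$ of an edge $e$ is fixed by $Stab_G(e)$, forcing $Stab_G(e)=1$.) Now apply the structure theory of groups acting on simplicial trees: a group acting without inversions on a simplicial tree with trivial edge stabilizers is the free product of the stabilizers of a set of orbit representatives of vertices together with a free group (the fundamental group of the quotient graph). Consequently every vertex stabilizer of $S$ is a free factor of $G$, and hence so is $Stab(T_v)$ for every $v$, since $Stab(T_v)$ is conjugate in $G$ to one such stabilizer and a conjugate of a free factor is again a free factor.

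The only delicate point is bookkeeping at the start: one must be careful that the skeleton $S$ from Lemma \ref{L.Graphs} genuinely carries a $G$-action whose $V_0$-vertex stabilizers are the $Stab(T_v)$, and that every edge of $S$ meets $V_1$. Both facts are read off directly from the description of $S$ recalled after Lemma \ref{L.Graphs}, so there is no substantive obstacle; the lemma is essentially a corollary of Lemma \ref{L.Graphs} combined with Bass--Serre theory.
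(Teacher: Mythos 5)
Your argument is correct and follows essentially the same route as the paper's: pass to the graph-of-actions skeleton via Lemma \ref{L.Graphs}, observe that edge stabilizers fix attaching points in $T$ and hence are trivial by freeness, and then invoke Bass--Serre theory to conclude that vertex stabilizers (i.e.\ the $Stab(T_v)$) are free factors. Your version just spells out the $V_0\sqcup V_1$ bookkeeping a bit more explicitly than the paper does.
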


\begin{proof}
 Let $\mathscr{G}=(S, \{T_v\}_{v \in V(S)}, \{p_e\}_{e \in E(S)})$ be the graph of actions structure on $G \curvearrowright T$ defined above.  Note that edge stabilizers in the action $G \curvearrowright~S$ arise from stabilizers of attaching points.  Since $G \curvearrowright T$ is free, edge stabilizers in $G \curvearrowright S$ are trivial.  Since vertex stabilizers in $G \curvearrowright S$ correspond to the stabilizers of the trees $Y_v$, we see from the Bass-Serre theory that each such stabilizer is a free factor of $G$.  
\end{proof}

To conclude this section, we state the following result of Levitt.

\begin{prop}\label{C.IndiscreteGraph}\cite[Theorem 5]{Lev94}
 Let $G \curvearrowright T$ be an action of a finitely generated group on an $\mathbb{R}$-tree; suppose that the action is not simplicial and not with dense orbits.  Then $G \curvearrowright T$ splits as a graph of actions $\mathscr{G}=(S, \{T_v\}_{v \in V(S)}, \{p_e\}_{e \in E(S)}$, where each $T_v$ is either a finite segment or $Stab(T_v) \curvearrowright T_v$ is with dense orbits.
\end{prop}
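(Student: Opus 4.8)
The plan is to produce a transverse covering of $T$ whose members are either finite segments or subtrees carrying a dense-orbit action, and then to quote Lemma~\ref{L.Graphs}. Call a closed non-degenerate subtree $Y\subseteq T$ a \emph{dense piece} if the action of $Stab(Y)$ on $Y$ has dense orbits, and let $\mathscr{Y}$ be the set of dense pieces that are maximal under inclusion among dense pieces. The first and main step is to show that every dense piece is contained in a member of $\mathscr{Y}$ and that $\mathscr{Y}$ has only finitely many $G$-orbits. This is where finite generation of $G$ is used: one exhausts $T$ by an increasing sequence of finite subtrees (convex hulls of finite pieces of an orbit), each carrying an honest simplicial structure, and a complexity bound on these approximations — in the spirit of the Rips machine — lets one detect dense pieces at a finite stage and bounds their number. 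This circumvents a naive appeal to Zorn's lemma, which would otherwise be obstructed because the stabilizer of a dense piece need not preserve a union of pieces. The family $\mathscr{Y}$ is clearly $G$-invariant.

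Next I would check that $\mathscr{Y}$ is a transverse family (Definition~\ref{D.TF}): if $Y_1,Y_2\in\mathscr{Y}$ and $Y_1\cap Y_2$ contains a non-degenerate arc $J$, then $Y_1=Y_2$. The point is that dense orbits propagate across an overlap. Since every orbit of a dense-orbit action is dense, the $Stab(Y_i)$-translates of $J$ are dense in $Y_i$ for $i=1,2$; because $Y_1\cap Y_2\ne\emptyset$, the convex hull of $Y_1\cup Y_2$ equals $Y_1\cup Y_2$, so these translates are dense in $Y_1\cup Y_2$. Setting $H=\langle Stab(Y_1),Stab(Y_2)\rangle$ and letting $Z\subseteq T$ be the smallest closed $H$-invariant subtree containing $Y_1\cup Y_2$, the $H$-orbit of a point of $J$ is dense in $Z$, so $Z$ is a dense piece containing both $Y_1$ and $Y_2$; maximality forces $Y_1=Z=Y_2$.

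Then I would analyze the complement. The non-degenerate ones among the closures of the connected components of $T\setminus\bigcup_{Y\in\mathscr{Y}}Y$ are subtrees meeting each member of $\mathscr{Y}$, and each other, in at most one point (convexity in an $\mathbb{R}$-tree). I claim each such component $C$ is a finite segment, and that $\mathscr{Y}$ together with these segments is a transverse covering, i.e.\ every finite arc of $T$ is covered by finitely many members. If some $C$ were not a finite segment, it would contain branch points of $T$ accumulating in $C$, and the finite-stage analysis of the first step would then exhibit a dense piece inside $C$ — contradicting both the maximality built into $\mathscr{Y}$ and the transversality just proved. The hypothesis that the action is not simplicial is what makes $\mathscr{Y}$ non-empty, and the hypothesis that it is not with dense orbits is what keeps $\mathscr{Y}\ne\{T\}$; either way, Lemma~\ref{L.Graphs} applies to the transverse covering $\mathscr{Y}\cup\{\text{segments}\}$ and yields a graph of actions whose non-degenerate vertex trees are exactly the members of $\mathscr{Y}$ (dense-orbit actions) and the segments (finite), which is the assertion.

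The main obstacle is thus the detectability input feeding the first and third steps: bounding the number of $G$-orbits of maximal dense pieces, and ruling out a ``hidden'' dense piece inside the simplicial-looking complement. Both reduce to the principle that a finitely generated group action on an $\mathbb{R}$-tree is approximated by finite subtrees with simplicial structures of bounded complexity, so that dense (mixing) behavior is already visible at a finite stage. It is here, rather than in the formal transverse-covering bookkeeping, that finite generation is essential and the argument becomes delicate.
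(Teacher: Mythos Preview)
The paper does not prove this proposition; it cites Levitt and then, in the paragraph following the statement, sketches the content: the union of discrete orbits in $T$ is an invariant forest $F$ whose components have a positive lower bound on diameter, the components of $T\setminus F$ are subtrees on which the stabilizer acts with dense orbits, and together these form a transverse covering. So the natural (and Levitt's) route is the reverse of yours: begin with the \emph{simplicial} part (where the structure is visible and easy to control) and let the dense pieces fall out as complementary components, rather than trying to build maximal dense pieces directly.

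Your approach has a genuine gap in the third step. The claim that each closure $C$ of a component of $T\setminus\bigcup_{Y\in\mathscr{Y}}Y$ is a finite segment is false. Take $F_n=A*B$ with $A$ acting on a tree $Y$ with dense orbits and $B$ of rank $\geq 2$; blow up the $A$-vertex in the Bass--Serre tree to $Y$ and give the edge positive length. The complement of the dense pieces in $T$ then has components that are infinite-valence stars (one prong for each edge incident to a $B$-vertex), not finite segments. What actually happens is that the simplicial part is a genuine simplicial forest which must be further subdivided into its edges to get the finite-segment vertex trees of the statement; your argument neither produces nor justifies that subdivision. The proposed mechanism (``branch points accumulating in $C$ would force a dense piece inside $C$'') does not hold: a simplicial subtree can have arbitrarily many branch points without any dense-orbit subaction.

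There is also a softer issue in your first step. Existence of maximal dense pieces and finiteness of their $G$-orbits is asserted by appeal to ``finite-stage approximations in the spirit of the Rips machine,'' but no mechanism is given, and a direct Zorn argument is (as you note) obstructed. Starting from the discrete side avoids this entirely: one shows directly that the set of points with discrete orbit is an invariant open subforest with a lower bound on edge lengths, which is elementary, and then the dense pieces are simply the closures of the complementary components with their setwise stabilizers.
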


If $T \in \overline{cv}_n$ does not have dense orbits, then there is some discrete orbit; according to the above result the union of discrete orbits in $T$ is a forest $F$ with a positive lower bound on the diameter of each component.  The set of components of $T \setminus F$ consists of finitely many orbits of subtrees of $T$, such that the stabilizer of each component acts on it with dense orbits.  The union of components of $T \setminus F$ with closures of components of $F$ is a transverse covering of the action $F_n \curvearrowright T$.  We call the set of closures of components of $F$ the \emph{simplicial part of} $T$.

\begin{remark}
 Graphs of actions are ubiquitous in the sequel, so it seems appropriate to give a bit of motivation; for this we reach to the source of the idea.  The definition of a graph of actions generalizes the decomposition of a tree dual to surface lamination that comes from the decomposition of the lamination into its minimal components.  Indeed, if a surface $S$ is equipped with measured lamination $(L,\mu)$, and if $T$ is the $\mathbb{R}$-tree dual to $(L,\mu)$, then there is, for each sublamination $L'$ of $L$, a transverse family $\mathscr{T}_{L'}$ of subtrees in $T$ that are dual to the various lifts of $L'$ to $\tilde{S}$.  It is easy to see that if $L_1,...,L_k$ are the minimal sublaminations of $L$, then there is a transverse covering, namely $\mathscr{T}=\mathscr{T}_{L_1} \cup ... \cup \mathscr{T}_{L_k}$, of $T$, containg $k$ orbits of trees; this corresponds to the decomposition of $L$ into minimal components.
\end{remark}

\section{Structure of the Stable Tree}\label{S.StableTree}

In this section we investigate the structure of the stable tree of an irreducible non-surjective endomorphism of $F_n$; the first step is to show that if some orbit is discrete, then every orbit must be discrete.  To that end we recall the following result of Levitt-Lustig:

\begin{lemma}\label{L.ShortBasis}\cite[Corollary 2.5]{LL03}
 Let $T \in \overline{cv}_n$ have dense orbits.  Given $p \in T$ and $\epsilon>0$, there is a basis $\{a_1,...,a_n\}$ of $F_n$ such that $\Sigma_{i=1}^n d(p,a_ip) < \epsilon$.
\end{lemma}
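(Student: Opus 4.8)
The plan is to reduce the statement to a structural fact in which the dense-orbit hypothesis does all the work: \emph{for every $\delta>0$ there is a free simplicial tree $T_0\in cv_n$ with $vol(T_0)<\delta$ together with an $F_n$-equivariant map $f:T_0\to T$ that is isometric on edges}. Granting this, the lemma follows with little effort. Such an $f$ is $1$-Lipschitz on segments, and $f(T_0)$ is a non-empty $F_n$-invariant subtree, hence equal to $T$ by minimality; so one may pick $x\in f^{-1}(p)$. Reading off a basis $\{a_1,\dots,a_n\}$ of $F_n$ from a maximal tree of the finite graph $T_0/F_n$ in the usual way, each $a_i$ is represented by a loop of length at most $3\,vol(T_0)$, which lifts to a path from $x$ to $a_ix$; then $\sum_i d_T(p,a_ip)=\sum_i d_T(f(x),a_if(x))\le \sum_i d_{T_0}(x,a_ix)\le 3n\,vol(T_0)<3n\delta$, and choosing $\delta=\epsilon/(3n)$ finishes it. (Conversely, a basis with $\sum_i d_T(p,a_ip)$ small produces such a small model by building the wedge of circles of lengths $d_T(p,a_ip)$; so the structural fact and the lemma are in fact equivalent, and the real content lies there.)

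To build the small model I would proceed by folding. Starting from any basis $B=(b_1,\dots,b_n)$, form $T_0(B)\in cv_n$ as the universal cover of the wedge of $n$ circles with $i$-th circle of length $d_T(p,b_ip)$, marked by $B$; the natural equivariant map $f:T_0(B)\to T$ sending the base vertex to $p$ and each petal isometrically onto $[p,b_ip]$ is isometric on edges, with $vol(T_0(B))=\sum_i d_T(p,b_ip)$. Since $T$ has dense orbits it is not simplicial, so $f$ is not an immersion; folding it equivariantly produces a sequence $T_0(B)=S_0\to S_1\to\cdots$ in $cv_n$ — freeness persists because, $T$ being free, two edges folded at a common vertex are never $F_n$-translates of one another — through which $f$ factors, with $vol(S_m)$ and $l_{S_m}$ non-increasing and $l_{S_m}\downarrow l_T$ (an exhaustive folding limit is an equivariant bijective morphism onto $T$, hence an isometry). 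The remaining point is that $vol(S_m)\to 0$, and then any $S_m$ with $vol(S_m)<\delta$ is the desired $T_0$.

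I expect the step $vol(S_m)\to 0$ to be the main obstacle: it amounts precisely to the assertion that a tree with dense orbits admits free simplicial models of arbitrarily small volume, so that no positive amount of volume survives the folding, and it is in this quantitative form that density of orbits must really be used. In the special case of a stable tree $T=T_\Phi$ this is transparent, since the rescaled train-track trees of Subsection~\ref{SS.TrainTracks} have $vol\le\lambda^{-k}\,vol(\widetilde G/F_n)\to 0$ and carry morphisms onto $T_\Phi$; for a general dense-orbit tree one must either bound the folding directly or invoke the approximation of such $T$ by simplicial trees with collapsing volume. A useful but insufficient partial step in the same direction: because $F_np$ is dense in the geodesic space $T$, it is $\delta$-chain connected for every $\delta$, so $\{u\in F_n: d_T(p,up)<\delta\}$ already generates $F_n$; the content of the lemma is exactly the passage from a generating set inside a small ball to an honest \emph{basis} inside a small ball.
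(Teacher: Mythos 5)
The paper does not supply a proof of this lemma — it is quoted verbatim from \cite[Corollary 2.5]{LL03} — so there is no internal argument of the paper to compare against; one can only evaluate your attempt on its own terms.

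Your reduction to the claim that for every $\delta>0$ there is a free simplicial $T_0 \in cv_n$ with $vol(T_0)<\delta$ and a morphism $T_0\to T$ is correct, and your observation that it is an \emph{equivalence} is accurate. But that equivalent claim \emph{is} the lemma, and you leave it unproved. You build a folding sequence $S_0\to S_1\to\cdots$ with non-increasing volumes and assert that the desired $T_0$ is any $S_m$ with $vol(S_m)<\delta$, but then candidly concede that $vol(S_m)\to 0$ is ``the main obstacle'' and ``amounts precisely'' to the statement being reduced to. That is a genuine gap, not a routine verification: non-increasing volumes plus an exhaustive limit being an isometry onto $T$ do not by themselves preclude $vol(S_m)$ from stabilizing at a positive value while the folding never terminates. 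The statement that dense orbits force the co-volume of simplicial approximations to collapse is exactly where the hypothesis does all the work, and the approximation machinery you gesture at (in \cite{LL03} and elsewhere) is what a real proof must supply.

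There is also a secondary flaw in the folding step. You justify that each $S_m$ stays in $cv_n$ by appealing to ``$T$ being free,'' but the lemma is stated for arbitrary $T\in\overline{cv}_n$ with dense orbits, and such $T$ may have non-trivial point stabilizers. When $T$ is not free, folds in the sequence can introduce non-trivial stabilizers, putting $S_m$ outside $cv_n$; then the final step of reading a basis off the finite graph $T_0/F_n$ is no longer available. To handle general very small $T$ one would need either to approximate $T$ from within $cv_n$ in the first place (rather than folding a $T_0(B)$ mapping onto $T$), or to work harder to control stabilizers along the folding. Both the main gap and this side issue would need to be closed before the argument constitutes a proof.
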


The next lemma is our first step in characterizing the structure of the stable tree of an irreducible, non-surjective endomorphism of $F_n$.

\begin{lemma}\label{L.NoGraph}
 Let $\phi:F_n \rightarrow F_n$ be an irreducible endomorphism, and let $T_{\Phi}$ be its stable tree.  Then either the action $F_n \curvearrowright T_{\Phi}$ has dense orbits or $F_n \curvearrowright T_{\Phi}$ is free and simplicial.
\end{lemma}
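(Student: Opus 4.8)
The dichotomy should follow by combining the structural result of Levitt (Proposition \ref{C.IndiscreteGraph}) with the special features of stable trees established so far, namely that $\phi$ is expansive whenever it is not an automorphism (Proposition \ref{P.Dichot}), that the stable tree is then free (Corollary \ref{C.TisFree}), and that $T_\Phi$ carries a $\phi$-compatible $\lambda_\Phi$-homothety $f_\phi$. The plan is to assume that $F_n \curvearrowright T_\Phi$ does \emph{not} have dense orbits and deduce that it must be free and simplicial. First I would dispose of the case where $\phi$ is an automorphism: then $\lambda_\Phi > 1$ forces $T_\Phi$ to be the stable tree of a fully irreducible automorphism, which is well known to have dense orbits (it is indecomposable by \cite{CH10}), so this case does not arise under our standing hypothesis; alternatively, for the purposes of the present lemma one can simply note that if $\phi$ is simplicial as claimed there is nothing more to prove. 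So assume $\phi$ is expansive, hence (Corollary \ref{C.TisFree}) $F_n \curvearrowright T_\Phi$ is free.

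Next, suppose $F_n \curvearrowright T_\Phi$ is neither simplicial nor with dense orbits. Then Proposition \ref{C.IndiscreteGraph} applies: $T_\Phi$ splits as a graph of actions $\mathscr{G} = (S, \{T_v\}, \{p_e\})$ whose vertex trees are either finite segments or have dense orbits under their stabilizers. Because $F_n \curvearrowright T_\Phi$ is free, the discussion after Proposition \ref{C.IndiscreteGraph} gives a transverse covering of $T_\Phi$ consisting of the nondegenerate dense-orbit pieces together with the closures of components of the simplicial part $F$, and by the freeness lemma each stabilizer $Stab(T_v)$ is a proper free factor of $F_n$ (it is proper because a single transverse-covering piece cannot be all of $T_\Phi$ when the covering is nontrivial, and the covering is nontrivial precisely because $T_\Phi$ is not itself one dense-orbit tree or a point). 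The collection of conjugacy classes of these free factors is canonically attached to $T_\Phi$, hence is a genuine free factor system $\mathscr{F}$ for $F_n$ — here I would be careful to package the finitely many $F_n$-orbits of vertex trees into a bona fide free factor decomposition, which is where the two types of vertex trees and the forest $F$ need to be handled uniformly.

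The key step is then to exploit the homothety $f_\phi : T_\Phi \to T_\Phi$. Since $f_\phi$ is an $F_n$-equivariant (up to the twist by $\phi$) homothety, it permutes the canonical structure of $T_\Phi$: it sends the maximal discrete sub-forest to itself, sends dense-orbit components to dense-orbit components, and hence induces a bijection on the set of $F_n$-orbits of transverse-covering pieces. Tracking this through the Bass--Serre correspondence, $\phi$ must permute (up to conjugacy, and after passing to a power to make the permutation trivial) the free factors $Stab(T_v)$, i.e.\ $\phi$ preserves the free factor system $\mathscr{F}$ in the sense of Subsection \ref{S.Irred}. This contradicts Lemma \ref{L.PreserveFF}, since $\phi$ is irreducible. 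Therefore the assumption was false, and $F_n \curvearrowright T_\Phi$ is either with dense orbits or simplicial; in the simplicial case freeness is immediate from Corollary \ref{C.TisFree} (or, if $\phi$ is an automorphism, one checks directly that a simplicial very small $F_n$-tree admitting a homothety with $\lambda_\Phi$-stretch and no dense orbits must be free).

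The main obstacle I anticipate is the bookkeeping in the middle paragraph: converting "$\phi$ permutes the $F_n$-orbits of pieces of a canonical transverse covering" into "$\phi$ preserves a free factor system," cleanly enough to invoke Lemma \ref{L.PreserveFF}. One has to confirm that the transverse covering (and the induced graph-of-groups decomposition of $F_n$) is genuinely canonical — independent of choices — so that the homothety really does act on it, and one has to make sure the vertex groups assemble into a free factor \emph{system} rather than merely a list of free factors with possibly overlapping or nested conjugates. The freeness of $F_n \curvearrowright T_\Phi$ is what makes this work, since it forces edge stabilizers of $S$ to be trivial and the vertex groups to be free factors in the strong sense; without expansiveness (Proposition \ref{P.Dichot}) none of this would be available, which is exactly why that dichotomy was proved first.
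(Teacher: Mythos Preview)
Your overall architecture matches the paper's: reduce to the expansive case via Proposition~\ref{P.Dichot} and Corollary~\ref{C.TisFree}, apply Levitt's decomposition (Proposition~\ref{C.IndiscreteGraph}), and extract a $\phi$-invariant proper free factor to contradict irreducibility. The difference lies in how you carry out the key step.

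The paper does not argue structurally via the homothety. Instead it invokes Lemma~\ref{L.ShortBasis}: for each dense-orbit vertex group $V_i=Stab(T_{v_i})$ one finds a basis $B_i$ whose elements move a basepoint by less than $\epsilon$, and then, taking $\epsilon$ small relative to the minimum simplicial edge length and using the distance formula~(\ref{E.Dist}), concludes that $\phi(B_i)$ (whose elements move a point by at most $\lambda\epsilon$) must lie in a single vertex group. Pigeonhole on the finitely many conjugacy classes then yields $\phi(V_j)\le V_j^g$.

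Your route bypasses the short-basis lemma: since $H=Stab(Y)$ acts on $Y$ with dense orbits and $f_\phi$ is a $\lambda$-homothety with $f_\phi(hy)=\phi(h)f_\phi(y)$, the subgroup $\phi(H)$ acts on $f_\phi(Y)$ with dense orbits; hence every point of $f_\phi(Y)$ has non-discrete $F_n$-orbit, so $f_\phi(Y)$ misses the simplicial forest $F$ entirely. Because the components of $F$ have a positive lower bound on diameter (this is exactly the content of the sentence after Proposition~\ref{C.IndiscreteGraph}), $f_\phi(Y)$ sits inside a single dense-orbit vertex tree $Y'$, whence $\phi(Stab(Y))\le Stab(Y')$. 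This is correct, and the ``bookkeeping'' you worried about is resolved precisely by that lower bound on component diameters --- the same metric fact the paper exploits, just one step later. Your argument is a bit more conceptual (no auxiliary bases), while the paper's is more self-contained in that the short-basis lemma makes the passage from ``dense orbits'' to ``short generators'' explicit rather than leaving it implicit in the phrase ``$\phi(H)$ acts with dense orbits on $f_\phi(Y)$.''

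One small correction: you do not need to ``assemble a free factor system'' and invoke Lemma~\ref{L.PreserveFF} in full. Once pigeonhole gives some power $\phi^k$ with $\phi^k(Stab(Y))\le (Stab(Y))^g$ for a single proper free factor $Stab(Y)$, this already contradicts the definition of irreducibility directly; the paper also just cites Lemma~\ref{L.PreserveFF} at this point, but the content used is only the single-factor case.
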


\begin{proof}
 In the case that $\phi \in Aut(F_n)$ this result follows from \cite{BFH97}.  Hence, in light of Proposition \ref{P.Dichot}, we may assume that $\phi$ is expansive, and by Corollary \ref{C.TisFree} we have that $T_{\Phi}$ is free.  Toward a contradiction we assume that the action $F_n \curvearrowright T_{\Phi}$ is not discrete but does not have dense orbits.  In this case Corollary \ref{C.IndiscreteGraph} gives that $F_n \curvearrowright T_{\Phi}$ splits as a graph of actions with vertex trees simplicial edges or trees with dense orbits.  

 Put $T=T_{\Phi}$, and let $f:T \rightarrow T$ be a homothety witnessing $[T]\phi=[T]$.  Immediately one has that for $\epsilon>0$, $\phi$ takes elements of $\epsilon$-short translation length to elements of $\lambda \epsilon$-short translation length.  Recall that the action $F_n \curvearrowright T\phi$ is precisely the action $\phi(F_n) \curvearrowright T_{\phi(F_n)}$ of $\phi(F_n)$ on its minimal invariant subtree $T_{\phi(F_n)}$.  There are finitely many orbits of vertices in the skeleton of the graph of actions structure on $T$; each vertex group either acts with dense orbits on the corresponding vertex tree or is trivial, in the case that the corresponding vertex tree is contained in the simplicial part of $T$.  

 Let $\mathscr{G}=(S,\{T_v\}_{v \in V(S)}, \{p_e\}_{e \in E(S)})$ be the graph of actions structure on $T$ guaranteed by Proposition \ref{C.IndiscreteGraph} and described above.  As the action $F_n \curvearrowright T$ is free, the action $F_n \curvearrowright S$ is a free decomposition of $F_n$.  Choose representatives $V_1,...,V_r$ of conjugacy classes of vertex groups with $V_i=Stab(T_{v_i})$ such that the action $V_i \curvearrowright T_{v_i}$ has dense orbits.  According to Lemma \ref{L.ShortBasis}, for any $\epsilon > 0$ and points $p_i \in T_{v_i}$, we can find bases $B_i$ for $V_i$ such that $\Sigma_{b \in B_i} d(p_i,bp_i) < \epsilon$.  Taking $\epsilon$ small with respect to the minimal length of a simplicial edge in $T$ and recalling Formula \ref{E.Dist}, we see that each $B_i$ is mapped under $\phi$ into a single vertex group of the graph of actions structure on $F_n \curvearrowright T$.  Since there finitely many conjugacy classes of these vertex groups, it follows that there is some $V_j$ such that $\phi(V_j) \leq V_j^g$ for some $g \in F_n$; by Lemma \ref{L.PreserveFF}, we arrive at a contradiction to irreducibility of $\phi$.
\end{proof}

Let $T$ be a tree with base point $x \in T$.  To each $x \neq y \in T$, there is associated a (one-sided) \emph{cylinder} $C_x(y)$ that consists of rays $\rho$ in $T$ based at $x$ that contain the segment $[x,y]$.  The cylinder $C_x(y)$ is regarded as a subset of $\partial T$. 

To complete this section, we bring the following result, ruling out the possiblity that the stable tree $T$ of a non-surjective irreducible endomorphism $\phi$ could have dense orbits.  The proof of this result contains a characterization of the dynamics of $\partial \phi$ acting on $\partial F_n$ (Corollary \ref{C.BoundaryDynamics}), which we see to be incompatible with the existence of a map $Q$ for $T$ (refer to Subsection \ref{SS.Laminations}).

\begin{prop}\label{P.FreeSimp}
 Let $\phi:F_n \rightarrow F_n$ be an irreducible endomorphism, and assume that $\phi \notin Aut(F_n)$.  Then $T_{\Phi}$ is free and simplicial.
\end{prop}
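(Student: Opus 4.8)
The plan is to combine the reductions already available with a dynamical analysis of $\partial\phi$ acting on $\partial F_n$. By Proposition \ref{P.Dichot}, the hypothesis $\phi\notin Aut(F_n)$ forces $\phi$ to be expansive, so $T_{\Phi}$ is free by Corollary \ref{C.TisFree}; by Lemma \ref{L.NoGraph} it remains only to rule out the case that the action $F_n\curvearrowright T:=T_\Phi$ has dense orbits, which I assume from now on toward a contradiction. (Since $T_{\Phi^r}=T_\Phi$ we are free to replace $\phi$ by any power, and since $\phi$ is injective it induces a continuous map $\partial\phi:\partial F_n\to\partial F_n$.)

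First I would establish the boundary dynamics (this is Corollary \ref{C.BoundaryDynamics}). Fix a train track representative $f:G\to G$ with Perron--Frobenius eigenvalue $\lambda>1$, equip $G$ with the Perron--Frobenius metric, put $T_0=\tilde G$, and let $\tilde f:T_0\to T_0$ be the lift corresponding to $\phi$, so that $\partial\tilde f=\partial\phi$; after replacing $\phi$ by a power, $f$ fixes every vertex of $G$ and $Tf$ fixes every periodic direction. For an oriented edge $e$ with $(Tf)(d_e)=d_e$, the train track identity $[f(e)]=f(e)$ forces $f(e)$ to begin with $e$, so $e\subset f(e)\subset f^2(e)\subset\cdots$ determines a legal ray $\rho_e$ based at the (fixed) initial vertex of $e$, whose endpoint $X_e\in\partial F_n$ is fixed by $\partial\phi$; since $\tilde f$ stretches legal segments by $\lambda$, any ray agreeing initially with $\rho_e$ has this agreement lengthened under $[\tilde f(\cdot)]$, so $X_e$ is attracting. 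Conversely, if $\partial\phi(X)=X$ then a ray $\rho$ representing $X$ must, past some initial segment, run along a single $\rho_e$: an illegal turn along $\rho$ would be cancelled when $\tilde f\circ\rho$ is tightened, pushing $[\tilde f\circ\rho]$ off $\rho$. As $G$ has finitely many directions, $\partial\phi$ has finitely many fixed points $X_1,\dots,X_m$ (at most $2n$), all attracting; the same analysis shows that the limit set
$$
\Lambda_\infty\ :=\ \bigcap_{k\ge 1}\partial\phi^k(\partial F_n)
$$
is finite — a point of $\Lambda_\infty$ has, for each $k$, its initial segment of length roughly $\lambda^k$ realized as $\tilde f^k$ applied to a single edge, which forces it to coincide with some $\rho_e$.

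Next I would plug this into the map $Q$. Since $T$ has dense orbits, Propositions \ref{P.DefQ}--\ref{P.DefL2} furnish the $F_n$-equivariant surjection $Q:\partial F_n\to\overline T\cup\partial T$, continuous into $\hat T$ with the observers' topology and with compact fibers (Lemma \ref{L.CompactFibers}). Let $f_\phi:T\to T$ be the $\phi$-compatible $\lambda$-homothety; it is a bijection (equivalently, $T_\Phi$ is $\phi(F_n)$-minimal), hence extends to a bijection of $\overline T\cup\partial T$, and comparing the two $\phi$-equivariant maps $f_0\circ\tilde f$ and $f_\phi\circ f_0:T_0\to T$ by bounded backtracking (Proposition \ref{P.BBT}) gives the intertwining relation $Q\circ\partial\phi=f_\phi\circ Q$ on all of $\partial F_n$ (extending from the $T$-bounded points by continuity in the observers' topology). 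Since $f_\phi$ is onto $\overline T\cup\partial T$, it follows that $Q(\partial\phi(\partial F_n))=f_\phi(Q(\partial F_n))=\overline T\cup\partial T$, and iterating, $Q(\partial\phi^k(\partial F_n))=\overline T\cup\partial T$ for every $k$. Applying $Q$ to the nested intersection of the compact sets $\partial\phi^k(\partial F_n)$ yields $Q(\Lambda_\infty)=\bigcap_k Q(\partial\phi^k(\partial F_n))=\overline T\cup\partial T$. But $\Lambda_\infty$ is finite, so $\overline T\cup\partial T$ would be finite, contradicting that $T$ is a non-degenerate $\mathbb R$-tree. Hence $F_n\curvearrowright T_\Phi$ does not have dense orbits, and Lemma \ref{L.NoGraph} finishes the proof.

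The main obstacle is the second half of the boundary-dynamics step: showing that $\partial\phi$ has no fixed points beyond the eigenray endpoints and, more sharply, that $\Lambda_\infty$ harbors no infinite recurrent piece but equals the finite eigenray set. This requires a careful understanding of how iterated tightening $[\tilde f^k(\cdot)]$ acts on rays crossing illegal turns; once that is in hand, the intertwining with $f_\phi$ and the counting contradiction are essentially formal. A secondary point needing care is the bijectivity of $f_\phi$, which one reads off from the explicit construction of the stable tree.
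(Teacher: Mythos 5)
Your overall strategy matches the paper's: after the reductions via Proposition~\ref{P.Dichot}, Corollary~\ref{C.TisFree}, and Lemma~\ref{L.NoGraph}, both you and the paper assume dense orbits, transport the dynamics of $f_\phi$ on $T_\Phi$ to the dynamics of $\partial\phi$ on $\partial F_n$ via the map $Q$, establish that $\Lambda_\infty:=\bigcap_k\partial\phi^k(\partial F_n)$ must be \emph{infinite} because a nontrivial part of $T$ survives forward iteration of $f_\phi$, and contradict this with a separate finiteness argument. However, there are two genuine gaps.

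The more serious one is your claim that $f_\phi$ is a bijection. The image $f_\phi(T_\Phi)$ is exactly $T_{\phi(F_n)}$, the minimal subtree for the image subgroup. Since $\phi$ is injective but not surjective, $\phi(F_n)$ is an infinite-index, rank-$n$ subgroup of $F_n$, and there is no reason for its minimal subtree in a dense-orbit $T$ to be all of $T$ --- indeed, for indecomposable $T$ it is always a proper subtree by Proposition~\ref{P.IndecompConverge}. So $Q(\partial\phi^k(\partial F_n))=f_\phi^k(\overline T\cup\partial T)$ is, in general, a strictly decreasing chain, and you cannot conclude $Q(\Lambda_\infty)=\overline T\cup\partial T$. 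You flagged this as a ``secondary point needing care'' that can be ``read off from the explicit construction of the stable tree,'' but it cannot; the construction gives $F_n$-minimality, not $\phi(F_n)$-minimality. The paper proves the much weaker --- and correct --- fact that $\bigcap_k f_\phi^k(T)$ contains a non-degenerate segment: after passing to a power it finds a branch point $x$ fixed by $f_\phi$ with all directions at $x$ fixed (using finiteness of orbits of branch points and directions), and the $\lambda>1$ expansion then produces $[x,y]\subseteq\bigcap_k f_\phi^k(T)$. That is all you need to get infinitely many points in $\Lambda_\infty$, and your nested-compacts argument already supplies the rest once you replace ``$f_\phi$ is onto'' with this segment.

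The second gap, which you already acknowledge, is your train-track derivation that $\Lambda_\infty$ is finite. The sketch (``a point of $\Lambda_\infty$ has its initial segment of length roughly $\lambda^k$ realized as $\tilde f^k$ applied to a single edge'') does not account for the tightening $[\tilde f^k(\cdot)]$; before Corollary~\ref{C.NoIllegalTurns} is available (which is downstream of this very proposition), you cannot assume cancellation is controlled, so the assertion that recurrence forces coincidence with an eigenray does not follow. The paper avoids this by working at the level of subgroups rather than rays: the Stallings core graph of $\phi^k(F_n)$ has rank $n$, a fundamental domain in the Cayley tree is a union of at most $2n$ segments from $1$, so $\partial(\phi^k(F_n))$ lies in at most $2n$ cylinders whose defining points grow in length by expansivity; hence the nested intersection is finite. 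This is robust against cancellation and requires no train-track information. I would recommend replacing both of your incomplete steps with the paper's corresponding arguments; the surrounding structure (intertwining $Q\circ\partial\phi=f_\phi\circ Q$, compactness of fibers, nested-intersection bookkeeping) is sound and essentially as in the paper.
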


\begin{proof}
 Put $T=T_{\Phi}$, $f=f_{\phi}:T \rightarrow T$, $\lambda=\lambda_{\Phi}$, and note that since $\phi$ is not an automorphism, we have $\lambda > 1$ and that $\phi$ is expansive.  Further, by Corollary \ref{C.TisFree}, $T_{\Phi}$ is free, and by Lemma \ref{L.NoGraph} $T_{\Phi}$ either has dense orbits, or $T_{\Phi}$ is simplicial.  

 Toward a contradiction suppose that the action $F_n \curvearrowright T$ is free with dense orbits.  By \cite{GL95} there are finitely many $F_n$-orbits of branch points in $T$ and finitely many orbits of directions at branch points in $T$.  By the equation $f(gx)=\phi(g)f(x)$ we have that $f$ induces a well-defined map on the set of orbits of branch points in $T$.  By replacing $f$ with some power, we get a branch point $x \in T$ such that $f(x)=gx$, for some $g \in F_n$.  Replace $f$ by $g^{-1}f$, which is easily seen to be a homothety representing $\iota_{g^-1} \circ \phi$.  This gives $f(x)=x$.  As the map $f$ is a homothety, it is injective; since there are finitely many directions at $x$, we may replace $f$ by a power to ensure that $f$ fixes each direction at $x$.  

 Let $d$ be some direction at $x$, and let $\rho$ be a ray in $T$ based at $x$ in direction $d$.  It follows that there is $y \in d$ such that $[x,y] \subseteq f(\rho) \cap \rho$.  Since $f$ is a $\lambda$-homothety and since $\lambda > 1$, we can find a sequence $y_k \in d$ such that $f^k([x,y_k])=[x,y]$.  It follows that $[x,y] \subseteq \cap_k f^k(T)$.

 Let $Q=Q_T: \partial F_n \rightarrow T$ be the map defined in Proposition \ref{P.DefQ}.  Recall that $Q$ is $F_n$-equivariant and surjective, so for any $z \in [x,y]$ the set $Q^{-1}(z) \subseteq \partial F_n$ is non-empty, and by Lemma \ref{L.CompactFibers} $Q^{-1}(z)$ is compact.  The commutativity of the below diagram follows easily from the definition of the map $Q$; see Subsection \ref{SS.Laminations}.
 \[
      \xymatrix{
      \partial F_n \ar[r]^{\partial \phi} \ar[d]_Q & \partial F_n \ar[d]^Q \\
      T \ar[r]^h & T
      }
 \]
 By definition $(\partial \phi)^k(\partial F_n)=\partial \phi^k(F_n)$.  As $[x,y] \subseteq \cap_k f^k(T)$, for each $z \in [x,y]$, we have that the sets $Z_k:=Q^{-1}(z) \cap (\partial \phi)^k(\partial F_n)$ form a nested sequence of non-empty compact subsets of $\partial F_n$, so $Z:= \cap_k Z_k$ is non-empty.  Hence, $\partial \phi(\cup_{z \in [x,y]}Z) = \cup_{z \in [x,y]}Z \subseteq \cap_k \partial(\phi^k(F_n))$; in particular, $\cap_k \partial(\phi^k(F_n))$ is infinite.  We show that this is impossible.

 Fix a Cayley tree $T'$ for $F_n$.  Let $S_k':=S(\phi^k(F_n))$ be the Stallings subgroup graph for $\phi^k(F_n)$, and let $S_k:=Core(S_k')$ be the core of $S_k'$ (see \cite{KM02}).  A fundamental domain for the action $\phi^k(F_n) \curvearrowright T_{\phi^k(F_n)}'$ can be got by ``unfolding'' $S_k$ in $T_{\phi^k(F_n)}$, and such a fundamental domain is the union of exactly $2n$ (possibly overlapping) segments eminating from $1 \in T'$.  It follows that $\partial \phi^k(F_n)$ is contained in the union of $2n$ cylinders, say $C_{1,k},...,C_{2n,k}$.  Let $g_{i,k} \in F_n$ be chosen to define $C_{i,k}$.  Notice that since $\phi$ is expansive we have for any $N$ some $k(N)$ such that $l_{T'}(g_{i,k(N)}) \geq N$ for each $i$.  It follows that $\cap_k(\cup_i C_{i,k})$ is a finite set; on the other hand $\cap_k \partial(\phi^k(F_n)) \subseteq \cap_k(\cup_i C_{i,k})$, a contradiction.
\end{proof}

As the dynamics of $\partial \phi$ acting on $\partial F_n$ is of independent interest, we include the following corollary, which follows immediately from the above proof.

\begin{cor}\label{C.BoundaryDynamics}
 Let $\phi:F_n \rightarrow F_n$ be an irreducible endomorphism, and assume that $\phi$ is not an automorphism.  The induced map $\partial \phi:\partial F_n \rightarrow F_n$ has finitely many fixed points $X_1,...,X_r$ such that $r\leq 2n$, and each $X_i$ is attracting.  If $N \subseteq \partial F_n$ is some compact neighborhood of $\{X_1,...,X_r\}$ then there is $K$ such that $(\partial \phi)^k(\partial F_n) \subseteq N$ for any $k \geq K$.
\end{cor}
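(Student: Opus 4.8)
The plan is to read the corollary off the proof of Proposition \ref{P.FreeSimp}, supplemented by its structural conclusion (that $T_{\Phi}$ is free and simplicial, hence cocompact). As $\phi$ is not an automorphism, Proposition \ref{P.Dichot} makes it expansive and $\lambda_{\Phi}>1$. From the proof of Proposition \ref{P.FreeSimp}, the sets $Y_k:=(\partial\phi)^k(\partial F_n)=\partial\phi^k(F_n)$ form a decreasing sequence of non-empty compact subsets of $\partial F_n$ (limit sets being monotone in the subgroup, and $\phi^{k+1}(F_n)=\phi^k(\phi(F_n))\leq\phi^k(F_n)$), and for each $N$ there is $k(N)$ with $Y_{k(N)}\subseteq\bigcup_{i=1}^{2n}C_{i,k(N)}$, where the cylinders $C_{i,k(N)}$ are based at $1\in T'$ and defined by elements $g_{i,k(N)}$ with $|g_{i,k(N)}|\geq l_{T'}(g_{i,k(N)})\geq N$, so they have depth $\geq N$. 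Put $Y:=\bigcap_k Y_k$; it is non-empty and compact, and it contains $\mathrm{Fix}(\partial\phi)$, since a fixed point of $\partial\phi$ lies in the limit set $\partial\phi^k(F_n)$ for every $k$.

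First I would bound $\#Y$. For each $N$ one has $Y\subseteq Y_{k(N)}\subseteq\bigcup_{i=1}^{2n}C_{i,k(N)}$, so $Y$ is covered by $2n$ cylinders each of which, having depth $\geq N$, forces any two of its boundary points to agree on a length-$N$ prefix (relative to a fixed basis). If $Y$ contained $2n+1$ distinct points, choosing $N$ large enough that their length-$N$ prefixes are pairwise distinct would be a contradiction, since $2n$ such cylinders could then carry at most $2n$ of them. Hence $\#Y\leq 2n$, and in particular $\partial\phi$ has at most $2n$ fixed points. Since irreducible endomorphisms are injective, $\partial\phi$ is injective; as $\partial\phi(Y)\subseteq Y$ with $Y$ finite, $\partial\phi|_Y$ is a permutation, so after replacing $\phi$ by a suitable power (harmless, cf.\ the conventions of Section \ref{S.Irred}) we may assume $\partial\phi|_Y=\mathrm{id}_Y$, whence $Y=\mathrm{Fix}(\partial\phi)$. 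Write $\{X_1,\dots,X_r\}:=Y$, $r\leq 2n$. The neighbourhood assertion is then a soft compactness argument: given a compact neighbourhood $N$ of $\{X_1,\dots,X_r\}$, the sets $Y_k\setminus\mathrm{int}(N)$ form a nested family of compact sets with empty intersection (as $\bigcap_k Y_k=Y\subseteq\mathrm{int}(N)$), so some $Y_K$ — hence every $Y_k$ with $k\geq K$ — lies in $N$.

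It remains to show that each $X_i$ is attracting, and here I would pass to the stable tree. By Proposition \ref{P.FreeSimp} the action $F_n\curvearrowright T:=T_{\Phi}$ is free, simplicial and minimal, hence cocompact and locally finite, and $f:=f_{\phi}$ is a $\phi$-compatible $\lambda$-homothety with $\lambda=\lambda_{\Phi}>1$ which (for the lift corresponding to $\phi$) induces $\partial\phi$ under the equivariant identification $\partial T\cong\partial F_n$. Since $T$ is a complete metric space (it is $\tilde{G}$ for a finite graph $G$) and $f^{-1}$ is a $\lambda^{-1}$-Lipschitz contraction, $f$ has a unique fixed point $x_0\in T$; as $T$ is locally finite, $f$ permutes the finitely many directions at $x_0$. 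For any direction $d$ at $x_0$ fixed by $f$, the initial edge $e_d$ satisfies $e_d\subseteq f(e_d)\subseteq f^2(e_d)\subseteq\cdots$ (there is no branch point in the interior of an edge, and $\lambda>1$), so $R_d:=\bigcup_k f^k(e_d)$ is an $f$-invariant ray from $x_0$ whose endpoint $\xi_d\in\partial T$ is a fixed point of $\partial\phi$; conversely $\partial\phi$ can fix an end only by preserving the ray from $x_0$ to it, which forces that ray to be some $R_d$. Thus $\mathrm{Fix}(\partial\phi)=\{\xi_d\}_d$, consistent with the count above. Finally, using bounded backtracking for $f$ (cf.\ Proposition \ref{P.BBT}): if a ray from $x_0$ agrees with $R_d$ along an initial segment long compared with the backtracking constant, then its reduced $f$-image agrees with $R_d$ along a segment longer by a definite factor approaching $\lambda>1$; iterating, the cylinder of a sufficiently long prefix of $R_d$ is carried into itself by $\partial\phi$ and its points all converge to $\xi_d$, so $\xi_d$ is attracting.

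The main obstacle is the third paragraph. The covering argument of the first two paragraphs is essentially set-theoretic and only produces a finite $\partial\phi$-invariant set together with the bound $\leq 2n$; the attracting property requires the genuinely dynamical input — the expansion $\lambda_{\Phi}>1$ of $f_{\phi}$ on the now-simplicial stable tree, with bounded backtracking absorbing the cancellation created by reducing $f_{\phi}$-images — which is exactly the feature that the purely combinatorial estimate does not see.
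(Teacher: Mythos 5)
Your first two paragraphs correctly implement the paper's intended argument (which is embedded in the proof of Proposition \ref{P.FreeSimp} and is read off by the author without further elaboration): the nested compact sets $Y_k=\partial\phi^k(F_n)$, the covering by $2n$ cylinders of depth growing with $k$ thanks to expansivity, the pigeonhole bound $\#Y\leq 2n$, the permutation/power trick to identify $Y$ with $\operatorname{Fix}(\partial\phi)$, and the soft compactness argument for the neighbourhood-shrinking assertion. That all matches the paper.

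Your third paragraph, however, is both unnecessary and flawed, and your closing assessment that the covering argument ``does not see'' attractingness is incorrect. Attractingness is a soft consequence of what you already have: choose disjoint compact neighbourhoods $B_i\ni X_i$; by continuity of $\partial\phi$ and $\partial\phi(X_i)=X_i$ find smaller neighbourhoods $U_i\subseteq B_i$ with $\partial\phi(U_i)\subseteq B_i$; by your paragraph two, $Y_K\subseteq\bigcup_j U_j$ for $K$ large. If $(\partial\phi)^k(Z)\in U_i$, then $(\partial\phi)^{k+1}(Z)\in\partial\phi(U_i)\cap Y_{k+1}\subseteq B_i\cap\bigcup_j U_j=U_i$, so once a forward orbit enters $U_i$ it stays there; shrinking the $U_i$ (and increasing $K$ accordingly) forces convergence to $X_i$. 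So each $X_i$ is attracting with no further input from the stable tree. As for the flaw: $f=f_\phi$ is a $\lambda$-homothety of $T_\Phi$ \emph{into} itself but not onto, since $f(T_\Phi)=T_{\phi(F_n)}\subsetneq T_\Phi$; so $f^{-1}$ is defined only on a proper subtree, the Banach fixed-point theorem does not apply, and a $\lambda$-homothety with $\lambda>1$ into a complete tree need not have a fixed point at all (think of $x\mapsto 2x+1$ on $[0,\infty)$). In the paper a fixed point is produced differently: after passing to a power one finds a vertex (or, in the dense-orbits case, a branch point) whose orbit is periodic, and one then composes with an inner automorphism to make it genuinely fixed. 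Your bounded-backtracking picture in the rest of paragraph three is morally right but not needed, and as written rests on that unsupported fixed-point claim.
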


The following corollary is a restatement of Proposition \ref{P.FreeSimp} in the language of train tracks.

\begin{cor}\label{C.NoIllegalTurns}
 Let $\phi:F_n \rightarrow F_n$ be an irreducible endomorphism, and assume that $\phi$ is not an automorphism.  Then $\Phi$ is topologically represented by a train track map with no illegal turns.  
\end{cor}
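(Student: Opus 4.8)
The plan is to translate the structural conclusion of Proposition \ref{P.FreeSimp} into the combinatorial language of topological representatives. Recall that a train track map $f:G\to G$ has \emph{no illegal turns} precisely when $Tf$ is injective on directions, i.e. when $f$ restricts to a local homeomorphism (simplicial immersion) on $G$. So what must be produced is a marked graph $G$ together with a simplicial immersion $f:G\to G$ representing $\Phi$. The natural candidate for $G$ is the quotient graph $T_\Phi/F_n$, where $T_\Phi$ is the stable tree.

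First I would invoke Proposition \ref{P.FreeSimp} to conclude that $T:=T_\Phi$ is a free, minimal, simplicial $F_n$-tree; hence $G:=T/F_n$ is a finite graph of rank $n$, and the quotient map $T\to G$ together with the identification $\pi_1(G)=F_n$ coming from the free action equips $G$ with a marking $\tau:R_n\to G$ inducing (the outer class of) the identity. Next I would recall from Subsection \ref{SS.TrainTracks} that the map $f_\phi:T_\Phi\to T_\Phi$ is a $\phi$-compatible $\lambda_\Phi$-homothety, so in particular it is injective and maps segments isometrically onto segments (after rescaling); being a homothety of a simplicial tree that is equivariant for $\phi$, it descends to a map $\bar f:G\to G$. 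The equivariance relation $f_\phi(gx)=\phi(g)f_\phi(x)$ shows that $\bar f$ induces $\Phi$ on $\pi_1$, and the fact that $f_\phi$ is injective on $T$ (hence a local homeomorphism, since it takes the (possibly subdivided) simplicial structure to itself) shows that $\bar f$ is a simplicial immersion: no two directions at a vertex of $G$ can be identified by $T\bar f$, for that would force $f_\phi$ to fold two directions at the corresponding point of $T$, contradicting injectivity. One should subdivide $G$ so that $\bar f$ is simplicial in the literal sense; since $f_\phi$ multiplies lengths by $\lambda_\Phi$, one checks that a suitable subdivision (pulling back the edge-structure under iterates) makes $\bar f$ send edges to edge-paths and vertices to vertices, and the Perron--Frobenius metric recovers the original metric picture.

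Finally I would verify the remaining conditions in Definition \ref{D.TT}: $\bar f$ is a topological representative ($\bar f(V)\subseteq V$ after subdivision, $\bar f$ is locally injective on edges since $f_\phi$ is injective, and $\bar f$ induces $\Phi$); it is \emph{admissible}, since an invariant non-degenerate forest in $G$ would lift to an invariant sub-forest of $T$, and minimality of $F_n\curvearrowright T$ together with freeness rules this out (more directly, irreducibility of $\phi$ via Lemma \ref{L.PreserveFF} forbids an invariant proper subgraph carrying a free factor); and it is a train track map, because $\bar f$ being an immersion means $[\bar f^r(e)]=\bar f^r(e)$ automatically — an immersed map sends reduced paths to reduced paths, so no cancellation occurs under iteration. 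Having no illegal turns is then immediate: an illegal turn would be a non-degenerate turn $\{e_i,e_j\}$ with $(T\bar f)^r$ degenerate on it, but $T\bar f$ is injective on the set of directions at each vertex, so $(T\bar f)^r$ is too.

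The main obstacle I anticipate is purely bookkeeping rather than conceptual: arranging the subdivision of $G$ so that $\bar f$ is genuinely simplicial (sending vertices to vertices and edges to edge-paths), while simultaneously keeping track of the Perron--Frobenius metric and confirming that the resulting map is exactly the train track representative whose stable tree is $T_\Phi$. One must be slightly careful that the homothety constant $\lambda_\Phi>1$ (guaranteed since $\phi\notin Aut(F_n)$, by Proposition \ref{P.Dichot} and expansiveness) so that the immersion is genuinely "expanding" and the construction of Subsection \ref{SS.TrainTracks} applies; the degenerate case $\lambda_\Phi=1$ does not arise here. Everything else follows formally from the injectivity and equivariance of $f_\phi$ established en route to Proposition \ref{P.FreeSimp}.
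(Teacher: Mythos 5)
Your proposal is correct and takes essentially the same route as the paper: invoke Proposition \ref{P.FreeSimp} to get that $T_\Phi$ is free and simplicial, take the $\phi$-compatible homothety $f_\phi:T_\Phi\to T_\Phi$, and observe that it descends to a simplicial immersion $\overline{f}:T_\Phi/F_n\to T_\Phi/F_n$ inducing $\Phi$. The paper's proof states this in two sentences ("easily seen to be a simplicial immersion"), and your write-up simply fills in the bookkeeping — subdivision to make $\overline{f}$ genuinely simplicial, injectivity of $f_\phi$ ruling out folding, Lemma \ref{L.PreserveFF} ruling out invariant subgraphs — all of which is correct and consistent with the paper's intent.
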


\begin{proof}
 From Proposition \ref{P.FreeSimp} we have that the action $F_n \curvearrowright T_{\Phi}$ is free and simplicial.  Let $f:T_{\Phi} \rightarrow T_{\Phi}$ be a homothety witnessing the fact that $[T_{\Phi}]\phi=[T_{\Phi}]$; then $f$ descends to a map $\overline{f}:T_{\Phi}/F_n \rightarrow T_{\Phi}/F_n$ that is easily seen to be a simplicial immersion inducing $\Phi$, \emph{i.e.} a train track representative with no illegal turns.  
\end{proof}

\section{Dynamics on $CV_n$}\label{S.CVnDynamics}

In this section we classify the dynamics of an irreducible non-surjective endomorphism $\phi$ acting on $CV_n$.  Recall that, in this case, by Proposition \ref{P.Dichot}, we have that $\phi$ is expansive; and by Proposition \ref{P.FreeSimp} there is a fixed point for the action, namely $[T_{\Phi}]$.

\subsection{The Stable Lamination}\label{SS.StableLam}

Let $\phi:F_n \rightarrow F_n$ be irreducible.  Following \cite{BFH97} we associate to $\Phi$ an algebraic lamination.  Let $f:G \rightarrow G$ be a train track representative for $\Phi$ with transition matrix $M=M(f)$, and equip $G$ with the Perron-Frobenius metric (see Subsection \ref{SS.TrainTracks}).  By Corollary \ref{C.NoIllegalTurns}, we can assume that $f$ is an immersion.  

Let $e_i \in E(G)$; by irreducibility of $M$, there is a natural number $k$ such that the $(i,i)$-entry of $M^k$ is non-zero.  Since $M^k=M(f^k)$, this gives that the $f$-image of $e_i$ crosses $e_i$.  This gives a fixed point $x$ of $f^k$ in the interior of $e_i$.  Let $N(x)$ be a small $\epsilon$-neighborhood of $x$ in the interior of $e_i$.  There is a unique orientation-preserving isometry $l_0:(-\epsilon, \epsilon) \rightarrow N(x):0 \mapsto x$.  Each $f^r|_{e_i}$ is an immersion; hence, there are unique orientation-preserving isometric immersions $l_n:(-\lambda^{nk} \epsilon, \lambda^{nk} \epsilon) \rightarrow G: 0 \mapsto x$ satisfying $l_n(y)=f^k(l_{n-1}(\lambda^{-k}y))$.  The sequence $(l_n)$ gives an isometric immersion $l:\mathbb{R} \rightarrow G$ that is $f^k$-invariant in the sense that $f^k \circ l:\mathbb{R} \rightarrow G$ is a reparametrization of $l$.

Let $L_{\Phi}$ stand for the set of isometric immersions $l':\mathbb{R} \rightarrow G$ obtained via the above procedure; this set is essentially the lamination defined in \cite{BFH97}.  The marking $\tau:R_n \rightarrow G$ gives a free action of $F_n=\pi_1(R_n)$ on $\tilde{G}$, which gives an identification $\partial F_n \cong \partial G$.  This gives a homeomorphism from the space of immersed lines in $\tilde{G}$ (with the weak topology) to $\partial^2 F_n$.  For any $l \in L_{\Phi}$ there are various lifts of $l$ to $\tilde{G}$, and the collection of lifts to $\tilde{G}$ of lines $l \in L_{\Phi}$ evidently gives an $F_n$-invaraiant subset $\mathscr{L}_{\Phi} \subseteq \partial^2 F_n$.  The \emph{stable lamination of} $\Phi$, denoted $\Lambda_{\Phi}$, is defined to be the smallest algebraic lamination containing $\mathscr{L}_{\Phi}$.   

\subsection{The Convergence Criterion}

In this subsection we state a result of Bestvina-Feighn-Handel from \cite{BFH97} that gives a sufficient condition on a tree $T \in \overline{cv}_n$ to ensure that $[T]\phi^k$ converges to $[T_{\Phi}]$; this will immediately give a dynamics statement for an irreducible, non-surjective endomorphism acting on Outer space.  

Let $T_0 \in cv_n$ and $T \in \overline{cv}_n$; an equivariant map $f:T_0 \rightarrow T$ has \emph{bounded backtracking} if there is a constant $C$ such that the $f$-image of a segment $[p,q]$ is contained in the $C$-neighborhood of the segment $[f(p),f(q)]$.  The smallest such $C$ is called the \emph{backtracking constant of} $f$, denoted $BBT(f)$.  It is a fact that for $T_0, T$, and $f$ as above, it is always the case that $f$ has bounded backtracking (see \cite{LL03} and the references therein).

\begin{prop}\label{P.ConvergeLemma}\cite{BFH97}\cite[Proposition 6.1]{LL03}
 Let $T \in \overline{cv}_n$.  Suppose that there is a tree $T_0 \in cv_n$, an equivariant map $f:T_0 \rightarrow T$, and a bi-infinite geodesic $\gamma_0 \subseteq T_0$ representing a leaf of $\Lambda_{\Phi}$ such that $f(\gamma_0)$ has diameter greater than $2BBT(f)$.  Then $f(\gamma_0)$ has infinite diameter and there exists a neighborhood $V$ of $[T]$ in $\overline{CV}_n$ such that $\phi^p|_V$ converges uniformly to $[T_{\Phi}]$.
\end{prop}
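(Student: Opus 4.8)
The engine of the argument is a single quantitative non-collapsing estimate for leaves of $\Lambda_{\Phi}$; once it is available the first assertion is immediate, and the dynamical statement follows from compactness of $\overline{CV}_n$ together with the characterization of $T_{\Phi}$ as the stable tree.

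\emph{The non-collapsing estimate (the main obstacle).} I would first establish: given $T\in\overline{cv}_n$, an equivariant map $f':T_0'\to T$ with $BBT(f')=C'$, and a leaf $\gamma$ of $\Lambda_{\Phi}$, realized in $T_0'$, with $\operatorname{diam}f'(\gamma)>2C'$, there are constants $\epsilon'>0$ and $L_0'>0$ --- depending only on $\Lambda_{\Phi}$, on $C'$, and on the Lipschitz constant of $f'$ --- such that $d_T\!\big(f'(x),f'(y)\big)\ge\epsilon'\ell$ whenever $x,y$ are the endpoints of a segment, of length $\ell\ge L_0'$, of a leaf of $\Lambda_{\Phi}$. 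The proof has two inputs: (a) from $\operatorname{diam}f'(\gamma)>2C'$ one extracts a finite ``good block'' $\sigma_0\subseteq\gamma$ with $\operatorname{diam}f'(\sigma_0)>2C'$, so $\operatorname{diam}f'(g\sigma_0)>2C'$ for every $g\in F_n$ by equivariance; (b) the quasi-periodicity of the leaves of $\Lambda_{\Phi}$ --- a standard property of stable laminations of irreducible train tracks \cite{BFH97} --- which provides $M$ so that every leaf segment of length $\ge M$ contains an $F_n$-translate of $\sigma_0$. Tiling a long leaf segment $\sigma$ by consecutive length-$M$ blocks, each contributes to $f'(\sigma)$ a subset of diameter $>2C'$; the crux is to show that these contributions \emph{add up} along the geodesic $[f'(x),f'(y)]$ rather than piling up on a bounded part of it, and this is where $BBT(f')=C'$ must be combined with the fact that $\sigma$, being a leaf segment, is legal, so that $f'|_\sigma$ progresses with backtracking at most $C'$ at each stage. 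I expect this additivity estimate --- which is the substance of the Bestvina--Feighn--Handel and Levitt--Lustig arguments --- to be the delicate point; an equivalent reading is that the hypothesis forces $\Lambda_{\Phi}\cap L^2(T)=\varnothing$. Granting the estimate, the first assertion of the proposition follows at once: $\gamma_0$ is bi-infinite, hence contains leaf segments of arbitrarily large length, so $\operatorname{diam}f(\gamma_0)\ge\sup_{\ell}\epsilon'\ell=\infty$.

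\emph{Convergence and uniformity.} By Corollary \ref{C.NoIllegalTurns} fix a train track representative $f_G:G\to G$ of $\Phi$ that is a simplicial immersion, so every turn of $G$ is legal and $f_G$ stretches every reduced path by $\lambda=\lambda_{\Phi}>1$; put $T_0:=\tilde G$ with the lifted metric and let $\tilde f:T_0\to T_0$ be the lift of $f_G$, an injective $\lambda$-homothety onto its image. Since $\gamma_0$ is not carried by $T$ --- by the first assertion, an intrinsic condition independent of the auxiliary tree --- the estimate applies to $f:T_0\to T$ and, for each $p$, to the equivariant maps $f_p:=f\circ\tilde f^{\,p}:T_0\to T\phi^{\,p}$, which satisfy $BBT(f_p)\le BBT(f)$ since $\tilde f$ creates no backtracking. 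Rescale: set $U_p:=\lambda^{-p}\cdot(T\phi^{\,p})$, so that $l_{U_p}(g)=\lambda^{-p}l_T\!\big(\phi^{\,p}(g)\big)$. For every $g$ the element $\phi^{\,p}(g)$ is carried by the reduced loop $[f_G^{\,p}(\alpha_g)]=f_G^{\,p}(\alpha_g)$ of length $\lambda^{\,p}l_{T_0}(g)$, which yields the uniform upper bound $l_{U_p}(g)\le\operatorname{Lip}(f)\,l_{T_0}(g)$; applying the estimate to the leaf segments composing $\tilde f^{\,p}$ of a geodesic of $T_0$, and adding up (again via the backtracking argument), yields a matching lower bound, so the maps $f_p:T_0\to U_p$ are uniformly bi-Lipschitz. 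By compactness of $\overline{CV}_n$ and an Arzel\`a--Ascoli argument any subsequence of $(U_p)$ subconverges to a tree $U_\infty\in\overline{cv}_n$ receiving an equivariant bi-Lipschitz map from $T_0$, and the identity $l_{U_{p+1}}=\lambda^{-1}(l_{U_p}\circ\phi)$ --- passed to the limit with the usual care about subsequences, as in \cite{BFH97,LL03} --- shows that $U_\infty$ is free, simplicial and carries a $\phi$-compatible $\lambda$-homothety. By uniqueness of the stable tree (Lemma \ref{L.StableT}(ii)) such a tree is homothetic to $T_{\Phi}$, so every accumulation point of $\big([T]\phi^{\,p}\big)$ is $[T_{\Phi}]$ and $[T]\phi^{\,p}\to[T_{\Phi}]$. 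Finally, all the constants used above depend on $T$ only through its length function, which varies continuously on $\overline{CV}_n$; choosing a neighborhood $V$ of $[T]$ on which they may be taken uniform makes the convergence $\phi^{\,p}|_V\to[T_{\Phi}]$ uniform, as claimed.
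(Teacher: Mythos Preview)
The paper does not give its own proof of this proposition: it is quoted verbatim from \cite{BFH97} and \cite[Proposition~6.1]{LL03}, with the remark that the Levitt--Lustig argument ``goes through without modification'' for non-surjective irreducible endomorphisms, and could even be simplified since one has a train track representative with no illegal turns. So there is nothing in the paper to compare against beyond the citation.

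Your plan is essentially the \cite{BFH97}/\cite{LL03} argument, and you have correctly isolated the two ingredients: quasi-periodicity of leaves of $\Lambda_\Phi$ to propagate a single ``good block'' $\sigma_0$, and the bounded-backtracking constant to ensure the contributions of successive blocks accumulate linearly rather than cancel. Your use of Corollary~\ref{C.NoIllegalTurns} to take $f_G$ an immersion is exactly the simplification the paper alludes to, and it makes the bound $BBT(f_p)\le BBT(f)$ legitimate since $\tilde f$ sends geodesics to geodesics.

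One point deserves tightening. You identify the limit $U_\infty$ by saying it ``carries a $\phi$-compatible $\lambda$-homothety'' and then invoke Lemma~\ref{L.StableT}(ii). That lemma only asserts independence of $T_\Phi$ from the choice of train track map; it does not assert that every free simplicial tree admitting a $\phi$-compatible $\lambda$-homothety is homothetic to $T_\Phi$. The cleaner route, and the one actually taken in \cite{LL03}, is to use your bi-Lipschitz bound directly: once you know $f_p:T_0\to U_p$ is uniformly bi-Lipschitz, the ratio $l_{U_p}(g)/l_{T_\Phi}(g)$ is uniformly bounded above and below for all hyperbolic $g$, and the relation $l_{U_{p+1}}(g)=\lambda^{-1}l_{U_p}(\phi(g))$ together with $l_{T_\Phi}(\phi(g))=\lambda\,l_{T_\Phi}(g)$ forces these ratios to a common limit. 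This avoids any appeal to an abstract uniqueness statement for trees with homotheties.
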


We cite the result of \cite{LL03}, as it is completely clear that their proof works in our context; Proposition 6.1 is proved for laminations associated to irreducible outer automorphisms of $F_n$, but the proof goes through without modification for the case of non-surjective irreducible endomorphisms.  Actually, the proof could be simplified by considering only the case of an irreducible expansive endomorphism, as one has in this case the luxury of a train track representative with no illegal turn.

Proposition \ref{P.ConvergeLemma} can be restated in terms of dual laminations.  Let $T \in \overline{cv}_n$ have dense orbits.  For any $\epsilon >0$, Proposition 2.2 of \cite{LL03} ensures the existence of a simplicial tree $T_0 \in cv_n$ and an equivariant map $f:T_0 \rightarrow T$ with $BBT(f) < \epsilon$.  If $Z=(X,Y) \in \partial^2 F_n$ is some point such that for all $f$ with small backtracking, a line representing $Z$ in $T_0$ is sent under $f$ to a small diameter subset of $T$, then $Z \in L^2(T)$.  Hence, we may apply Proposition \ref{P.ConvergeLemma} to get convergence for a tree $T \in \overline{cv}_n$ as long as some leaf of $\Lambda_{\Phi}$ is not contained in $L^2(T)$; note that by irreducibility of $\phi$, if some leaf of $\Lambda_{\Phi}$ is contained in $L^2(T)$, then every leaf of $\Lambda_{\Phi}$ is contained in $L^2(T)$.  

\begin{cor}\label{C.InteriorDynamics}
 Let $\phi:F_n \rightarrow F_n$ be irreducible and non-surjective.  For any $[T] \in CV_n$, we have $[T]\phi^k \rightarrow [T_{\Phi}]$. 
\end{cor}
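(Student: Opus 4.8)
The plan is to reduce the statement to Proposition \ref{P.ConvergeLemma} by producing, for an arbitrary $[T] \in CV_n$, a tree $T_0 \in cv_n$, an equivariant map $f \colon T_0 \to T$, and a bi-infinite geodesic $\gamma_0$ representing a leaf of $\Lambda_{\Phi}$ whose image $f(\gamma_0)$ has diameter greater than $2\,BBT(f)$; this will give a neighborhood $V$ of $[T]$ on which $\phi^p$ converges uniformly to $[T_{\Phi}]$, and since $[T]$ is arbitrary this yields the pointwise convergence $[T]\phi^k \to [T_{\Phi}]$ for every $[T] \in CV_n$. First I would take $T_0 = T$ itself, together with the identity map, so that $BBT(f) = 0$; since $T \in CV_n$ the action is free, hence for any nontrivial $g \in F_n$ the geodesic axis $A(g)$ has positive (indeed infinite-diameter) image. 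The task is then to choose $g$ so that $A(g)$, or rather a bi-infinite geodesic asymptotic in the right way, realizes a leaf of $\Lambda_{\Phi}$.

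The key input is the description of $\Lambda_{\Phi}$ from Subsection \ref{SS.StableLam}: leaves of $\Lambda_{\Phi}$ arise as lifts to $\tilde{G}$ of $f^k$-invariant isometric immersions $l \colon \mathbb{R} \to G$ obtained by iterating $f$ near a periodic point in the interior of an edge. Because $\phi$ is non-surjective, by Proposition \ref{P.Dichot} it is expansive with $\lambda = \lambda_{\Phi} > 1$, and by Corollary \ref{C.NoIllegalTurns} the train track map $f \colon G \to G$ has no illegal turns, i.e.\ it is a simplicial immersion. Consequently the leaf-line $l$ is simply an increasing union of legal paths, and in fact every bi-infinite reduced edge-path in $G$ is legal; the subtlety that usually accompanies leaves of train-track laminations (having to avoid illegal turns) disappears entirely. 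In particular, a leaf of $\Lambda_{\Phi}$ can be taken to be the bi-infinite periodic line $(\dots w w w \dots)$ associated to any cyclically reduced edge-path $w$ that occurs as a subpath of some $f^k(e)$; since $T \in CV_n$ acts freely, such a periodic line in $\tilde G$ is the axis of the corresponding $g \in F_n$ with $g \neq 1$, and its image in $T$ — the axis $A_T(g)$ — has infinite diameter, certainly exceeding $2\,BBT(f) = 0$.

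Thus the concrete steps are: (1) fix a train track representative $f \colon G \to G$ for $\Phi$ which by Corollary \ref{C.NoIllegalTurns} is a simplicial immersion, and pick an edge $e_i$ and a power $k$ with $(M^k)_{ii} > 0$; (2) extract from the $f^k$-iteration near the periodic point in $e_i$ a periodic leaf-line of $\Lambda_{\Phi}$, corresponding to a nontrivial element $g \in F_n$; (3) apply Proposition \ref{P.ConvergeLemma} with $T_0 = T$, $f = \mathrm{id}_T$, and $\gamma_0 = A_T(g)$, noting $BBT = 0$ and $\mathrm{diam}\,f(\gamma_0) = \infty > 0$, to obtain a neighborhood $V \ni [T]$ with $\phi^p|_V \to [T_{\Phi}]$ uniformly; (4) conclude that $[T]\phi^k \to [T_{\Phi}]$ for every $[T] \in CV_n$. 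The only point requiring care — and the step I expect to be the mild obstacle — is the bookkeeping in step (2): making sure the line produced is genuinely a leaf of the abstract lamination $\Lambda_{\Phi}$ (not merely a line in $\mathscr{L}_\Phi$ before taking closure) and that it is realized in a \emph{free} tree as the axis of a hyperbolic isometry, so that its image has the required diameter; both follow readily from freeness of $T \in CV_n$ and from the immersion property of $f$, but they must be stated. One could equivalently, if one prefers to keep $T_0$ simplicial and distinct from $T$, invoke Proposition 2.2 of \cite{LL03} to get $T_0 \in cv_n$ with a map $f\colon T_0 \to T$ of arbitrarily small backtracking and argue that the image of a leaf has infinite, hence large, diameter; but with $T$ itself free this detour is unnecessary.
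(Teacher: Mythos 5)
Your overall strategy matches the paper's: take $T_0 = T$, $f = \mathrm{id}_T$ so that $BBT(f)=0$, note that a leaf of $\Lambda_{\Phi}$ is realized by a bi-infinite geodesic of infinite diameter in the free discrete tree $T$, and apply Proposition~\ref{P.ConvergeLemma}. However, step~(2) as you wrote it contains a genuinely false claim: you assert that ``a leaf of $\Lambda_{\Phi}$ can be taken to be the bi-infinite periodic line $(\dots w w w \dots)$'' for a cyclically reduced edge-path $w$ occurring in some $f^k(e)$, and then take $\gamma_0 = A_T(g)$ for the corresponding $g$. But $\Lambda_{\Phi}$ has \emph{no} periodic leaves. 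This is exactly what the paper's Claim~\ref{C.NonPeriodic} (inside the proof of Proposition~\ref{P.LamLemma}) establishes: a periodic leaf would produce $f \in F_n$ with $\phi(f) = h f^r h^{-1}$, contradicting expansivity when $|r|=1$ and contradicting Lemma~\ref{L.PreserveFF} when $|r|>1$. The lines in $\mathscr{L}_{\Phi}$ are obtained by iterating $f^k$ near a periodic \emph{point} of an edge and are definitely aperiodic; taking the algebraic closure does not add periodic lines here. So the axis $A_T(g)$ you produce does not represent a leaf of $\Lambda_{\Phi}$, and Proposition~\ref{P.ConvergeLemma} does not apply to it.

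The repair is immediate and is what the paper actually does: no periodicity is needed. For \emph{any} $Z \in \partial^2 F_n$ --- in particular any leaf of $\Lambda_{\Phi}$ --- the line in $T_0 = T$ joining the two endpoints of $Z$ is a bi-infinite geodesic of infinite diameter, simply because $T \in cv_n$ is a free, minimal, discrete $F_n$-tree and hence each boundary point is the end of a genuine ray with $d(x_0, \cdot) \to \infty$. Thus $\mathrm{diam}\,\gamma_0 = \infty > 0 = 2\,BBT(\mathrm{id}_T)$ and the proposition applies. You should delete the sentence identifying the leaf with a periodic line and the reference to $A_T(g)$, and replace them with this observation; the rest of your argument (including the correct remark that $f$ is a simplicial immersion so all reduced paths are legal, though this is not actually needed here) then goes through.
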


\begin{proof}
 For any $T \in cv_n$ and any $Z \in \partial^2 F_n$, $Z$ is represented by an infinite line in $T$; the result follows by applying Proposition \ref{P.ConvergeLemma}.
\end{proof}

The convergence of Corollary \ref{C.InteriorDynamics} is uniform on compact subsets of $CV_n$; the goal of the next several sections is to show that the convergence is actually uniform over all of $CV_n$.  The next section deals with obvious obstructions.

\section{Endomorphisms Acting on $\overline{CV}_n$}\label{S.Admissible}

It is evident that $\phi$ acts on $CV_n$ as long as $\phi$ is injective.  However, for a tree $T \in \overline{cv}_n$, it could be the case that $T\phi$ is trivial even if $\phi$ is injective, and in this case $\phi$ would not act on $\overline{CV}_n$.  The aim of this section is to first illustrate exactly how an endomorphism can fail to act on $\overline{CV}_n$ and to give a sufficient condition for an irreducible endomorphism to act on $\overline{CV}_n$.

\subsection{Admissible Endomorphisms}

\begin{example}\label{E.NonAdmissible}
 Let $F_3=F(a,b,c)$, and define $\phi:F_3 \rightarrow F_3$ by:
\begin{align*}
 a &\mapsto a\\
 b &\mapsto bab^{-1}\\
 c &\mapsto b^2ab^{-2}
\end{align*}
 Suppose that $T \in \overline{cv}_3$ is the Bass-Serre tree of the splitting $F(a,b,c)=\langle a,b \rangle \ast \langle c \rangle$.  The endomorphism $\phi$ is injective, but the tree $T\phi$ is trivial, since $\phi(F_3)$ fixes the vertex of $T$ corresponding to $\langle a,b \rangle$.
\end{example}  

The endomorphism $\phi$ does not act on $\overline{CV}_3$, hence we must restrict attention to a proper subsemigroup of the semigroup of injective endomorphisms of $F_n$.

\begin{defn}\label{D.Admissible}
 An endomorphism $\phi:F_n \rightarrow F_n$ is called \emph{admissible} if for all $T \in \overline{cv}_n$, one has that $T\phi$ is non-trivial.
\end{defn}

It follows from \cite{Gui98} that any very small action $F_n \curvearrowright T$ with trivial arc stabilizers can be approximated by a simplicial very small action $F_n \curvearrowright T'$ such that a subgroup $V \leq F_n$ fixes a point $x \in T$ if and only if $V$ fixes some vertex $x' \in T'$; hence we get the following characterization of admissibility.

\begin{lemma}\label{L.Simp}
 An endomorphism $\phi: F_n \rightarrow F_n$ is admissible if and only if for any simplicial tree $T \in \overline{cv}_n$, one has that $T\phi$ is non-trivial.
\end{lemma}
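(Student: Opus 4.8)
The plan is to prove the contrapositive in the nontrivial direction. One implication is immediate: if $\phi$ is admissible, then in particular $T\phi$ is nontrivial for every \emph{simplicial} $T \in \overline{cv}_n$, since simplicial trees form a subclass of all very small trees. So the content is the converse: assuming $T\phi$ is nontrivial for every simplicial $T \in \overline{cv}_n$, I want to deduce that $T\phi$ is nontrivial for an arbitrary $T \in \overline{cv}_n$.

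First I would observe that the action $F_n \curvearrowright T\phi$ is, by the definition of the right action on length functions, precisely the action $\phi(F_n) \curvearrowright T_{\phi(F_n)}$ of the subgroup $\phi(F_n)$ on its minimal invariant subtree inside $T$; so $T\phi$ is trivial exactly when $\phi(F_n)$ fixes a point of $T$ (using that $\phi$ is injective, so $\phi(F_n)$ is a free group of rank $n$ and can't act trivially for the boring reason). Thus the statement to prove becomes: if $\phi(F_n)$ fixes a point of some very small $T$, then $\phi(F_n)$ fixes a point of some \emph{simplicial} very small $T'$. Next I would reduce to the case of trees with trivial arc stabilizers. If $T \in \overline{cv}_n$ has a nondegenerate arc with nontrivial (maximal cyclic) stabilizer, one can collapse such arcs — or, more cleanly, first note that the set of $T$ with trivial arc stabilizers is dense in $\overline{cv}_n$ (approximate by Grushko/free-splitting-type trees), and a point-stabilizer condition for a finitely generated subgroup is detected in the limit: if $\phi(F_n)$ is elliptic in a limit $T$ of trees $T_i$ with $l_{T_i}(h) \to l_T(h)$, then... actually here I must be careful, ellipticity is not obviously closed under limits. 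The cleaner route is: if $\phi(F_n)$ fixes $x \in T$, pick a finite generating set $S$ of $\phi(F_n)$; then $l_T(s) = 0$ and $l_T(ss') = 0$ for all $s,s' \in S$, which by the Culler–Morgan / Serre ellipticity criterion forces $\phi(F_n)$ elliptic; these are finitely many closed conditions on $l_T$, so the set of $T$ in which $\phi(F_n)$ is elliptic is closed in $\overline{cv}_n$.

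With that in hand, I would invoke the result of \cite{Gui98} cited just before the lemma: any very small $T$ with trivial arc stabilizers is a limit of simplicial very small $T'$ with the property that a subgroup $V \le F_n$ fixes a point of $T$ iff $V$ fixes a vertex of $T'$. So, given an arbitrary very small $T$ with $\phi(F_n)$ elliptic: first reduce to trivial arc stabilizers (either $T$ already has them, or collapse the arcs with nontrivial stabilizer, which preserves ellipticity of $\phi(F_n)$ and stays in $\overline{cv}_n$ — one must check the collapsed tree is still very small, which is standard), then apply \cite{Gui98} to get a simplicial very small $T'$ in which $\phi(F_n)$ is elliptic, i.e.\ $T'\phi$ trivial, contradicting the hypothesis. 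Hence no very small $T$ has $\phi(F_n)$ elliptic, i.e.\ $\phi$ is admissible.

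The main obstacle I anticipate is the reduction to trivial arc stabilizers: one needs to be sure that collapsing the (maximal cyclic) arc stabilizers of $T$ yields a tree still lying in $\overline{cv}_n$ and still having $\phi(F_n)$ elliptic, and that \cite{Gui98} as quoted genuinely applies — the cited statement is specifically for trivial arc stabilizers, so handling the nontrivial-arc-stabilizer case is exactly where care is required. (An alternative that sidesteps this: work directly with the deformation-retraction/approximation results for the full $\overline{cv}_n$, but the cleanest citation available in the paper is the trivial-arc-stabilizer version, so the collapse argument seems unavoidable.) Everything else — rewriting $T\phi$ as the minimal subtree for $\phi(F_n)$, the ellipticity criterion, the fact that a finite-rank free group can't act trivially — is routine.
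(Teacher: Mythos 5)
Your proposal is essentially the paper's own (unstated) argument: rewrite $T\phi$ as $\phi(F_n) \curvearrowright T_{\phi(F_n)}$, note that triviality of $T\phi$ is exactly ellipticity of $\phi(F_n)$ in $T$, and then invoke the approximation theorem from \cite{Gui98} (stated in the paper in the sentence immediately preceding the lemma) to pass to a simplicial very small $T'$ that detects the same elliptic subgroups. The paper in fact gives no proof beyond that one-sentence citation, and — as you correctly flag — the cited statement carries a trivial-arc-stabilizer hypothesis, so your collapse reduction is genuinely filling a gap the paper leaves silent: one collapses the forest of arcs with nontrivial (maximal cyclic) stabilizer; ellipticity of $\phi(F_n)$ descends to the quotient, the quotient has trivial arc and tripod stabilizers and remains very small and minimal, and then \cite{Gui98} applies. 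Your digression about ellipticity of $\phi(F_n)$ being a closed condition in $\overline{cv}_n$ (via $l_T(s)=l_T(ss')=0$ for a finite generating set) is correct but is not actually used anywhere in the argument — it neither helps nor hurts — so you could simply delete it.
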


Lemma \ref{L.Simp} shows that Example \ref{E.NonAdmissible} is quite generic and immediately emphasizes the importance of vertex stabilizers in simplicial trees in $\overline{cv}_n$ in the present context.  Hence, we bring the following:

\begin{defn}
 A splitting of $F_n$ is called \emph{very small} if it corresponds to a simplicial tree in $\overline{cv}_n$.  
\end{defn}

The following is a translation of \cite[Definition 2.2]{BFH97} into the formalism of algebraic laminations.

\begin{defn}
 Let $H \leq F_n$ be finitely generated; say that (the conjugacy class of) $H$ \emph{carries} a point $Z \in \partial^2 F_n$ if for any $T \in cv_n$, there is $g \in F_n$ such that $Z \in \partial^2 T_{H^g}$.  
\end{defn}

The stable lamination $\Lambda_{\Phi}$ and its relationship to the dual lamination $L^2(T)$ of a tree $T \in \overline{cv}_n$ is of primary importance to us if we wish to apply the convergence criterion given by Proposition \ref{P.ConvergeLemma}; hence, we now begin working to develop a characterization of admissibility involving only $\Lambda_{\Phi}$.

\begin{lemma}\label{L.FinIndex}
 Let $T \in cv_n$, and let $H \leq F_n$ be finitely generated.  Then $T_H=T$ if and only if $H$ is finite index in $F_n$.
\end{lemma}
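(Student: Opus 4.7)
My plan is to translate the statement about minimal invariant subtrees into the language of graph coverings. Since $T \in cv_n$, the action of $F_n$ on $T$ is free, minimal, and simplicial, so the quotient $G := T/F_n$ is a finite metric graph of rank $n$, with no valence-one vertices (by minimality), and $F_n = \pi_1(G)$. Because $H$ also acts freely on $T$, the quotient $T/H$ is again a graph, and the natural projection $T/H \to G$ is a covering of graphs corresponding to the subgroup $H \leq F_n$. The key identification I will use is that $T_H/H$ coincides with the core of $T/H$, that is, the smallest subgraph carrying $\pi_1 = H$, obtained by deleting all maximal hanging subtrees; this core is finite whenever $H$ is finitely generated.

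For the ``if'' direction, assume $H$ has finite index in $F_n$. Then $T/H \to G$ is a finite-degree covering, so $T/H$ is itself a finite graph. Since covering maps preserve local valences and $G$ has no valence-one vertex, the same holds in $T/H$; consequently $T/H$ has no hanging subtrees and equals its own core. Lifting this equality back, $T_H = T$.

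For the converse, I will argue the contrapositive. Assume $H$ has infinite index. Since edge stabilizers in $T$ are trivial, each of the finitely many $F_n$-orbits of edges of $T$ splits into $[F_n : H] = \infty$ many $H$-orbits, so $T/H$ has infinitely many edges. But $H$ is finitely generated, so $\mathrm{core}(T/H)$ is a finite subgraph and is therefore a proper subgraph of $T/H$. Hence $T_H/H \subsetneq T/H$, and pulling back, $T_H \subsetneq T$.

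The only mildly subtle step is the identification $T_H/H = \mathrm{core}(T/H)$, which I expect to be the main point requiring care. I would justify it by noting that every closed loop in $T/H$ lifts to the axis of a hyperbolic element $h \in H$, so it lies in $T_H/H$; conversely each edge of $T_H$ lies on some axis $A(h)$ and hence projects onto a loop in $T/H$. This is standard in the theory of Stallings subgroup graphs (see \cite{KM02}), and the remainder of the argument is purely combinatorial covering-space theory.
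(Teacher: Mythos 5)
Your proof is correct, and it lives in the same Stallings/covering-space framework as the paper's: both arguments hinge on identifying $T_H/H$ with the finite core of the covering $T/H \to T/F_n$. The execution of each direction differs, though. For ``$T_H=T \Rightarrow$ finite index,'' the paper first reduces to the standard Cayley tree by collapsing a spanning tree (and adjusting $H$ by an automorphism), then invokes the fact that an $A$-regular immersion of a finite labeled graph into the rose is a covering map; you instead prove the contrapositive directly on an arbitrary $T\in cv_n$ by counting edge orbits -- trivial edge stabilizers give $[F_n:H]$ many $H$-orbits inside each $F_n$-orbit of edges, so infinite index forces $T/H$ to be infinite while the core stays finite. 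This avoids the reduction step entirely and is arguably cleaner. For the converse, the paper uses a purely tree-theoretic trick: finite index gives a uniform $k$ with $f^k\in H$ for all $f$, and $A(f^k)=A(f)$ forces $T_H\supseteq\bigcup_f A(f)=T$; you instead observe that a finite-degree cover of a finite graph with no valence-one vertices has no valence-one vertices, hence equals its core. Both are sound; the paper's converse needs only the definition of $T_H$ as a union of axes, while yours leans a bit more on the core identification, which you correctly flag as the one step requiring care and which is standard from \cite{KM02}.
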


\begin{proof}
 Let $T$ and $H$ be as in the statement, and suppose that $T_H =T$.  First suppose that $T=\tilde{R_n}$ is the ``standard'' Cayley tree for $F_n=F(A)$, and regard $T$ as a labeled directed tree.  Then $T/H$ is a labeled directed finite graph, which is $A$-regular (see \cite{KM02}), and choosing a basepoint in $T/H$ gives an immersion representing a subgroup $H' \leq F_n$ that is conjugate to $H$.  As $T/H$ is $A$-regular, this immersion is a covering map, and it follows that $H'$ is finite index in $F_n$; hence, $H$ is finite index in $F_n$.

 Now let $T \in cv_n$ be arbitrary, and choose a spanning tree $G_0 \subseteq T/F_n$; collapsing the lifts of $G_0$ in $T$ to points gives a map $f:T \rightarrow T_0$ onto a Cayley tree.  By replacing $H$ by its image under some $\alpha \in Aut(F_n)$, we may suppose that $T_0$ is the  Cayley tree $\tilde{R_n}$.  It is easy to see that $(T_0)_H=f(T_H)$, and so by above, we get that $H$ is finite index in $F_n$.

 Conversely, suppose that $H \leq F_n$ is finite index, so there is $k$ such that for all $f \in F_n$, one has $f^k \in H$.  Since $A(f^k)=A(f)$, it follows that $T_H=\cup_{1 \neq h \in H} A(h)=T$.  
\end{proof}

\subsection{The Admissibility Criterion}

We now establish a characterization of admissibility in the case of irreducible endomorphisms; the following lemma allows us to use the convergence criterion of Proposition \ref{P.ConvergeLemma} to understand the action of an admissible irreducible endomorphism on the simplicial trees in $\overline{CV}_n$.

\begin{prop}\label{P.LamLemma}
 Let $\phi:F_n \rightarrow F_n$ be irreducible.  Then $\phi$ is admissible if and only if no leaf of $\Lambda_{\Phi}$ is carried by a vertex group of a very small splitting of $F_n$.
\end{prop}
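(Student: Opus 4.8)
The plan is to prove both implications by relating the dynamics of $[T]\phi$ directly to the behavior of $f(\gamma_0)$, where $f: T_0 \to T$ is an equivariant map with small backtracking constant and $\gamma_0$ is a line representing a leaf of $\Lambda_\Phi$. The key bridge is the observation, made after Proposition \ref{P.ConvergeLemma}, that a leaf $Z$ of $\Lambda_\Phi$ lies in $L^2(T)$ precisely when every approximating map $f: T_0 \to T$ with small backtracking sends a line representing $Z$ to a bounded subset of $T$; and by irreducibility of $\phi$, either all leaves of $\Lambda_\Phi$ lie in $L^2(T)$ or none do. So the statement to prove becomes: $\phi$ is admissible if and only if for no very small simplicial tree $T$ is every leaf of $\Lambda_\Phi$ carried by a point stabilizer of $T$; and by Lemma \ref{L.Simp} it suffices to test admissibility on simplicial trees.

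For the contrapositive of the ``only if'' direction, suppose some leaf $\gamma$ of $\Lambda_\Phi$ is carried by a vertex group $V$ of a very small splitting, with Bass-Serre tree $S$. First I would observe that ``carried by $V$'' means $\gamma$ (and, by irreducibility, every leaf of $\Lambda_\Phi$, after suitable conjugation) lies in $\partial^2 S_{V^g}$, i.e. is represented by a line in the fixed subtree of a conjugate of $V$. The core computation is then to show $S\phi$ is trivial: recall $S\phi$ is the action of $\phi(F_n)$ on its minimal subtree $S_{\phi(F_n)}$, and I want to show $\phi(F_n)$ fixes a point of $S$. Here I would use the explicit description of $\Lambda_\Phi$ from Subsection \ref{SS.StableLam}: leaves of $\Lambda_\Phi$ are obtained as bi-infinite iterates $l:\mathbb{R}\to G$ fixed (up to reparametrization) by a power $f^k$ of the train track map, and the images $f^r(e)$ of edges are (up to the identification $\partial F_n \cong \partial G$) subwords of leaves of $\Lambda_\Phi$. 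Thus for each basis element $a$ of $F_n$, the path $f(a) = \phi(a)$ is a subpath of a leaf of $\Lambda_\Phi$; if every leaf is carried by (a conjugate of) $V$, a Ping-Pong / intersection argument on the tree $S$ forces all the elements $\phi(a)$ — hence all of $\phi(F_n)$ — to fix a common point of $S$, so $S\phi$ is trivial and $\phi$ is not admissible. The main technical care here is handling the conjugating elements $g$ for different leaves and checking the fixed points are consistent; the finiteness (at most $2n$ directions, finitely many orbits of leaves) and the fact that a connected union of fixed-point sets meeting pairwise is still a fixed-point set keep this manageable.

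For the ``if'' direction I would argue the contrapositive again: suppose $\phi$ is not admissible, so by Lemma \ref{L.Simp} there is a very small simplicial tree $T$ with $T\phi$ trivial, i.e. $\phi(F_n)$ fixes a vertex $v$ of $T$, so $\phi(F_n) \le V := Stab(v)$, which is a vertex group of a very small splitting. I then want to conclude that every leaf of $\Lambda_\Phi$ is carried by $V$. Since $\phi(F_n) \le V$ and $V$ is $\phi$-invariant (it contains the image), every $f^k$-generated leaf $l:\mathbb{R}\to G$ is, after identifying $\pi_1(G)=F_n$, a limit of words $\phi^{k}(\cdots)$ lying in $\phi(F_n) \le V$; more precisely, the leaves of $\Lambda_\Phi$ are closures of $\{(\phi^k(g)^{-\infty}, \phi^k(g)^\infty)\}$-type limit points, all lying in the closed invariant subset $\partial^2 V \subseteq \partial^2 F_n$. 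Since $V$ is finitely generated and elliptic in $T$, and since $V^g$ fixes $gv$ in every $T' \in cv_n$ as well (being a free factor or carried appropriately), the defining condition ``for any $T' \in cv_n$ there is $g$ with $Z \in \partial^2 T'_{V^g}$'' is met: I would verify this using Lemma \ref{L.FinIndex} (so $V$ is infinite index, hence proper) together with the fact that in a free-group Cayley tree a finitely generated subgroup's boundary sits inside $\partial^2$ of its minimal tree, which tracks correctly under change of $T' \in cv_n$ via the collapse maps used in that lemma's proof.

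The step I expect to be the main obstacle is the ``only if'' direction's core computation — extracting from ``$\gamma$ carried by $V$'' the conclusion that the entire image $\phi(F_n)$ is elliptic in $S$. The subtlety is that being carried is a statement about one line (extended by irreducibility to all leaves, but only up to independent conjugates $V^{g_i}$), whereas ellipticity of $\phi(F_n)$ is a statement about a whole subgroup acting on $S$; bridging them requires pinning down that the relevant conjugates can be taken uniformly, and that the words $\phi(a)$ really do appear as subpaths of $\Lambda_\Phi$-leaves in a way that sees the action on $S$ rather than just on $G$. I would handle this by working in the universal cover $\tilde G = T_0$, pushing forward via an equivariant small-backtracking map $T_0 \to S$, and using bounded backtracking (Proposition \ref{P.BBT}) to control how much the $f(\gamma_0)$-images wander, so that their boundedness in $S$ (forced by carrying) propagates to boundedness of the translates $\phi(a)\cdot(\text{basepoint})$, i.e. ellipticity.
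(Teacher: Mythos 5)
Your easy direction (contrapositive of ``if'': not admissible $\Rightarrow$ some leaf carried) is essentially the paper's argument and is fine.

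The hard direction has a real gap. You want to show that ``$\phi(a)$ appears as a subpath of a leaf of $\Lambda_\Phi$'' plus ``every leaf is carried by a conjugate of $V$'' forces $\phi(a)$ to be elliptic in the Bass--Serre tree $S$. This implication does not hold. Being carried by $V$ is an asymptotic statement about a point $(X_-,X_+)\in\partial^2 F_n$ lying in $\partial^2 T'_{V^g}$ for some conjugate; the finite word $\phi(a)$ read off a bounded subsegment of the leaf line in $T_0$ is just the label of a translate $p\mapsto\phi(a)p$ occurring along that geodesic, and nothing constrains it to be an element of $V^g$, let alone elliptic in $S$. (For a general one-edge very small splitting $V$ is not even a free factor, so there is no basis-adapted Cayley tree in which the leaf's geodesic lives inside a single coset $gV$.) Pushing the leaf line into $T'$ by a bounded-backtracking map keeps the image of that subsegment near $T'_{V^g}$, but a hyperbolic element can perfectly well translate two points that both lie near $T'_{V^g}$. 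So your ``ping-pong'' step, as described, has no mechanism to convert boundedness of the carried leaf into ellipticity of the generators.

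The paper closes this by working in the stable tree $T=T_\Phi$ rather than in $S$. Using the $\phi$-compatible homothety $f_\phi$, it produces (after taking a power) a fixed branch point $x\in T$ whose directions are all fixed and extracts a fixed infinite multipod $X$ with $\partial^2X\subseteq\Lambda_\Phi$. Two auxiliary claims — a uniform intersection constant $C(H,Y)$ forcing $Y_H\cap Y_{H^g}$ of diameter $>C$ to have infinite diameter with $H\cap H^g$ nontrivial, and non-periodicity of leaves (so a leaf cannot run along an axis for too long) — are then used to show, after a suitable conjugation, $X\subseteq T_V$, then that $T_{\phi(F_n)}\subseteq T_V$, then that $Stab(T_V)=V$, hence $\phi^k(F_n)\le V$ for some $k$. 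That directly contradicts admissibility without ever needing individual $\phi(a)$'s to be elliptic; the whole subgroup $\phi^k(F_n)$ is corralled into $V$ at once via the minimal-subtree inclusion. You would need something of that flavor (or Lemma \ref{L.FinIndex} together with the $Stab(T_V)=V$ claim) to replace your ellipticity-by-subword step.
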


\begin{proof}
 If $\phi$ is not admissible, then, by Lemma \ref{L.Simp}, there is some simplicial tree $Y \in \overline{cv}_n$, such that $Y\phi$ is trivial.  In this case some vertex group of the splitting corresponding to $Y$ carries every leaf of $\Lambda_{\Phi}$.

 So, assume that $\phi$ is admissible.  Put $T:=T_{\Phi}$, and let $f:T \rightarrow T$ be the $\phi$-compatible $\lambda:=\lambda_{\Phi}$ homothety of $T$.  As in the proof of Proposition \ref{P.FreeSimp}, after possibly passing to a power of $f$, we may find a branch point $x \in T$ that is fixed by $f$ and such that every direction at $x$ is fixed by $f$ as well.  It is easy to see that there is, in each diretion at $x$, an infinite ray based at $x$ that is fixed setwise by $f$.  Denote this infinite multipod by $X$; it follows that $\partial^2 X \subseteq \Lambda_{\Phi}$.

 Toward a contradiction, suppose that some leaf of $\Lambda=\Lambda_{\Phi}$ is carried by a vertex group $V$ of a very small splitting of $F_n$; without loss, we can assume that $V$ is a vertex group of a one-edge splitting.  As $\phi$ is irreducible, we have that every leaf of $\Lambda$ is carred by $V$ (see \cite{BFH97}).  The following is easily verified:

 \begin{claim}\label{C.VertexIntersection}
  Let $H \leq F_n$ be finitely generated, and let $Y \in cv_n$.  There is a constant $C=C(H,Y)$ such that if $Y_H \cap Y_{H^g}$ has diameter greater than $C$, then $Y_H \cap Y_{H^g}$ has infinite diameter, and $H \cap H^g$ is non-trivial.  
 \end{claim}

 \noindent Note that if $V$ is a vertex group of a very small 1-edge splitting and if $V \neq V^g$ with $V \cap V^g$ nontrivial, then $V \cap V^g$ is cyclic and is conjugate to the edge group of the splitting.

 \begin{claim}\label{C.NonPeriodic}
  No leaf of $\Lambda$ is periodic: for any tree $T \in cv_n$ and any non-trivial $g \in F_n$, there is a constant $K=K(T,g)$ such that if $l$ is a line in $T$ representing a leaf of $\Lambda$, then the diameter of $l \cap A(g)$ is bounded above by $K$.
 \end{claim}
  \begin{proof}
   The existence of a periodic leaf would give an element $f \in F_n$ such that $\phi(f)=hf^rh^{-1}$ for some $r$ and some $h \in F_n$.  If $|r|=1$, expansivity of $\phi$ is contradicted. If $|r|>1$, it follows from \cite[Lemma 4.1]{BF94} that $f$ is primitive, contradicting Lemma \ref{L.PreserveFF}.
  \end{proof}
 
 According to the above claims, after possibly replacing $V$ with a conjugate, we have that $X \subseteq T_V$.  As $\phi$ is irreducible, any line in $X$ crosses every $\phi(F_n)$-orbit of branch points in $T_{\phi(F_n)}$ infinitely often.  Let $C=C(V,T)$ be the constant guaranteed by Claim \ref{C.VertexIntersection}.  By replacing $\phi$ by a power if necessary, we can assume that branch points in $T_{\phi(F_n)}=f(T)$ are separated by distance at least $C$.  Suppose that $c \in F_n$ is a generator for some (non-trivial) cyclic intersection $V \cap V^f$, and put $K=K(T,c)$ as in Claim \ref{C.NonPeriodic}; increase $C$ if necessary to ensure that $C \geq K$.

 Let $y \in T$ be a branch point of $T_{\phi(F_n)}$ such that $[x,y]$ contains no other branch points of $T_{\phi(F_n)}$.  By the above discussion, there is $g \in F_n$ such that $gT_V$ contains an infinite multipod centered at $y$ that also contains the point $x$.  Hence the diameter of $T_v \cap gT_v$ is infinite and $V \cap V^g$ is nontrivial.  If it were the case that $V \neq V^g$, then $V \cap V^g$ is cyclic; however, this is impossible by choice of $C$.  It follows that $V=V^g$, and by iterating this argument, we get that $T_{\phi(F_n)} \subseteq T_V$.  

 \begin{claim}
  Let $Stab(T_V)$ denote the setwise stabilizer, then $Stab(T_V)=V$. 
 \end{claim}

 \begin{proof}
  If $Stab(T_V) \neq V$, there is a finitely generated $V'$ containing $V$ such that $T_{V'}=T_V$.  It follows from Lemma \ref{L.FinIndex} that $V$ is finite index in $V'$.  Let $Y$ be a Bass-Serre tree for a very small splitting of $F_n$ with vertex group $V$.  It is easy to see that $V'$ fixes the vertex of $Y$ corresponding to $V$, hence $V'=V$.
 \end{proof}

 It follows from the above claim that for any finitely generated $K\leq F_n$, if $T_K \leq T_V$, then $K \leq V$.  Therefore, we conclude that for some $k$, $\phi^k(F_n) \leq V$, contradicting admissibility of $\phi$.
\end{proof}

\section{Convergence for Simplicial Actions and Graphs of Actions}\label{S.SimpGraphConverge}

In this section we apply Proposition \ref{P.LamLemma} along with Proposition \ref{P.ConvergeLemma} to understand the action of a non-surjective admissible irreducible endomorphism on tree that splits as a non-trivial graph of actions.

\subsection{Simplicial Actions in $\overline{CV}_n$}

\begin{prop}\label{P.SimpConverge}
 Let $\phi:F_n \rightarrow F_n$ be irreducible and non-surjective, and suppose that $\phi$ is admissible.  For any simplicial $T \in \overline{cv}_n$, one has $[T]\phi^k \rightarrow [T_{\Phi}]$.
\end{prop}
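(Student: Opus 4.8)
The plan is to reduce the convergence statement to an application of the Bestvina--Feighn--Handel convergence criterion (Proposition \ref{P.ConvergeLemma}), exactly as reformulated in the dual-lamination language after that proposition: to show $[T]\phi^k \to [T_\Phi]$ it suffices to exhibit a single leaf $\gamma$ of $\Lambda_\Phi$ that is \emph{not} a leaf of $L^2(T)$ — equivalently, for any $T_0 \in cv_n$ and any equivariant map $f:T_0 \to T$ with small backtracking constant, a bi-infinite geodesic in $T_0$ representing $\gamma$ must have $f$-image of diameter exceeding $2BBT(f)$. By irreducibility of $\phi$, if one leaf of $\Lambda_\Phi$ lies in $L^2(T)$ then every leaf does, so it is enough to rule out $\Lambda_\Phi \subseteq L^2(T)$.

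So the core of the argument is: for a simplicial $T \in \overline{cv}_n$ that is not a point, no leaf of $\Lambda_\Phi$ can be a leaf of $L^2(T)$. First I would recall what $L^2(T)$ looks like for a simplicial tree $T$: if $T$ corresponds to a very small splitting of $F_n$, then a pair $(X,Y) \in \partial^2 F_n$ lies in $L^2(T)$ essentially when the bi-infinite geodesic it determines (in a Cayley tree, pushed forward to $T$) has bounded image, which happens precisely when $X,Y$ are carried by a common conjugate of a vertex group of the splitting; more generally a leaf of $L^2(T)$ is carried by a vertex group of $T$ (here one uses that edge stabilizers are trivial or maximal cyclic, and that a line with bounded image in a simplicial tree must eventually stay inside a single vertex orbit, up to the bounded-backtracking fudge). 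Hence $\Lambda_\Phi \subseteq L^2(T)$ would force every leaf of $\Lambda_\Phi$ to be carried by a vertex group of the very small splitting $T$. But that is exactly the situation excluded by Proposition \ref{P.LamLemma}: since $\phi$ is admissible, no leaf of $\Lambda_\Phi$ is carried by a vertex group of a very small splitting of $F_n$. This contradiction shows $\Lambda_\Phi \not\subseteq L^2(T)$, and then Proposition \ref{P.ConvergeLemma} gives a neighborhood $V$ of $[T]$ on which $\phi^p$ converges uniformly to $[T_\Phi]$; in particular $[T]\phi^k \to [T_\Phi]$.

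The step I expect to be the main obstacle is the precise identification of leaves of $L^2(T)$ with ``carried by a vertex group'' for simplicial $T$ — i.e.\ making rigorous the claim that a line whose image under an equivariant, bounded-backtracking map $f:T_0 \to T$ has bounded diameter must be carried by a single conjugate of a vertex group of the splitting $T$. One has to handle the possibility that the line repeatedly enters and leaves a vertex tree, or wanders among several vertex orbits while staying within a bounded region; boundedness of the image in the simplicial metric on $T$ (which is coarsely the same as the length-function metric here) forces the line to cross only finitely many edges of $T$, hence to be eventually contained in the star of a single vertex, and then — using triviality or maximal-cyclicity of edge groups plus the fact (Claim \ref{C.NonPeriodic}-type phenomena) that leaves of $\Lambda_\Phi$ are not periodic and cannot be carried by cyclic edge groups — to be carried by the vertex group itself. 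Once this dictionary is in place the rest is a direct invocation of Propositions \ref{P.LamLemma} and \ref{P.ConvergeLemma}. A secondary point worth checking is that $\overline{CV}_n$-convergence of projective length functions is what the criterion delivers, which is immediate since Proposition \ref{P.ConvergeLemma} is stated for the topology on $\overline{CV}_n$.
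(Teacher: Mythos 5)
Your proposal is correct and follows essentially the same route as the paper: reduce to showing that no leaf of $\Lambda_{\Phi}$ can have bounded image under an equivariant map $T_0 \to T$, and then derive this from the admissibility hypothesis via Proposition~\ref{P.LamLemma} together with Proposition~\ref{P.ConvergeLemma}. The step you flag as the main obstacle --- showing that a leaf whose image in $T$ stays bounded must be carried by a single conjugate of a vertex group --- is precisely what the paper's proof handles cleanly by citing Bowditch's Proposition~1.3, which identifies $\partial F_n$ with the disjoint union of $\partial T$ and the boundaries of the vertex stabilizers; your hands-on sketch of that step (a line crossing only finitely many edges eventually lies in the star of one vertex, then use triviality/maximal-cyclicity of edge groups and non-periodicity of leaves of $\Lambda_{\Phi}$) is in the right spirit, and the Bowditch reference is simply a cleaner way of packaging it. One small caution: the paper's $L^2$-reformulation of the convergence criterion is stated under a dense-orbits hypothesis, which a simplicial $T$ does not satisfy, so it is safer to phrase the conclusion directly in terms of the image of a leaf having infinite diameter, as the paper does, rather than literally as $\Lambda_{\Phi}\not\subseteq L^2(T)$.
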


\begin{proof}
 By Proposition \ref{P.LamLemma} we have that no leaf of $\Lambda_{\Phi}$ is carried by a vertex group of a very small splitting of $F_n$.  By \cite[Proposition 1.3]{Bow98} $\partial F_n$ is naturally identified with the disjoint union of $\partial T$ with the union of boundaries of the vertex stabilizers.  It follows that for any leaf $Z \in \Lambda_{\Phi}$ and any equivariant map $f:T_0 \rightarrow T$ from a simplicial tree $T_0$ to $T$, if $l$ is a line in $T_0$ representing $Z$, then $f(l)$ has infinite diameter in $T$.  Convergence follows from Proposition \ref{P.ConvergeLemma}.
\end{proof}

\subsection{Graphs of Actions in $\overline{CV}_n$}

\begin{lemma}\label{L.EventuallyFree}
 Let $\phi:F_n \rightarrow F_n$ be irreducible, non-surjective, and admissible; let $T \in \overline{cv}_n$.  There is $k$ such that $T\phi^k$ is free.
\end{lemma}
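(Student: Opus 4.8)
The plan is to show that the obstruction to freeness---an element $g \in F_n$ with $l_{T\phi^k}(g) = 0$---cannot persist for all $k$, by exploiting expansivity of $\phi$ together with the very small hypothesis on $T$. First I would recall that $l_{T\phi^k}(g) = l_T(\phi^k(g))$, so $g$ is elliptic in $T\phi^k$ precisely when $\phi^k(g)$ is elliptic in $T$. Since $\phi$ is irreducible and non-surjective, Proposition \ref{P.Dichot} gives that $\phi$ is expansive; combined with Corollary \ref{C.BoundaryDynamics} this says that for any fixed word length bound $L$ there is $K$ with $\|\phi^k(g)\|_B \geq L$ for all $k \geq K$ and all $1 \neq g$. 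So the ``bad'' elements $\phi^k(g)$ have word length tending to infinity.

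The key step is to rule out that these long elements remain elliptic in $T$. Because $T \in \overline{cv}_n$ is very small, the elliptic elements of $T$ form a controlled family: the stabilizer of any point is (the conjugate of) a vertex group of a very small splitting, hence---since $T$ has trivial tripod stabilizers and at most maximal cyclic arc stabilizers---the pointwise stabilizers are either trivial or contained in the vertex groups of a graph-of-groups decomposition of $F_n$ with very small edge groups. The crucial observation is that an elliptic element of $T$ is carried by such a vertex group; the collection of conjugacy classes of elements that are elliptic in $T$ is ``small'' in the sense that, using the simplicial approximation of Lemma \ref{L.Simp} / \cite{Gui98}, they all lie in finitely many orbits of vertex groups of a simplicial very small tree $T' \in \overline{cv}_n$. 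Now if for every $k$ there were $1 \neq g_k$ with $\phi^k(F_n)$ containing $g_k$ elliptic in $T'$, then $\phi^k(F_n)$ meets a vertex group $V$ of $T'$ nontrivially for infinitely many $k$; pushing through $\phi$ and using irreducibility (no leaf of $\Lambda_\phi$ is carried by a vertex group, by admissibility and Proposition \ref{P.LamLemma}) should force $\phi^k(F_n) \leq V^h$ for some $k,h$, contradicting admissibility exactly as at the end of the proof of Proposition \ref{P.LamLemma}.

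More directly, I would argue as follows. Suppose toward a contradiction that for every $k$ there is $1 \neq g_k \in F_n$ elliptic in $T\phi^k$, i.e.\ $\phi^k(g_k)$ is elliptic in $T$. Replacing $g_k$ by a root we may assume $g_k$ generates a maximal cyclic subgroup, and by passing to powers of $\phi$ and using that there are finitely many orbits of point stabilizers in the simplicial approximation $T'$, we may assume all the $\phi^k(g_k)$ are carried by a single vertex group $V$ (up to conjugacy), where $V \curvearrowright$ (its vertex tree) is elliptic, i.e.\ $V$ fixes a point of $T'$. Then $\phi^k(g_k) \in V^{h_k}$ for suitable $h_k$. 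The point is that the set of elements of $F_n$ lying in some conjugate of $V$ and having bounded translation length in a fixed $T_0 \in cv_n$ need not be finite, but the argument of Proposition \ref{P.FreeSimp}/Proposition \ref{P.LamLemma} shows that the fixed multipod $X \subseteq T_\Phi$, whose boundary lies in $\Lambda_\phi$, must then be carried by $V$; since $\Lambda_\phi$ is $\phi$-invariant and irreducible this forces every leaf of $\Lambda_\phi$ to be carried by $V$, contradicting admissibility via Proposition \ref{P.LamLemma}. Hence no such sequence $(g_k)$ exists, so for some $k$ every nontrivial $g$ has $l_{T\phi^k}(g) > 0$, which is the claim.

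The main obstacle I expect is the step where ``$\phi^k(g)$ elliptic in $T$ for all $k$'' is converted into ``a vertex group of a very small splitting carries $\Lambda_\phi$.'' The subtlety is that the elliptic elements of a non-simplicial very small $T$ are not literally packaged by a single splitting; one needs the approximation result (Lemma \ref{L.Simp}, based on \cite{Gui98}) to reduce to a simplicial $T'$, then a finiteness/pigeonhole argument on orbits of vertex groups to extract a single $V$ working for infinitely many $k$, and finally the dynamical input (fixed multipod $X$, $\partial^2 X \subseteq \Lambda_\phi$, irreducibility) to get a contradiction with admissibility. Care is also needed because passing to powers of $\phi$ is harmless for the asymptotic statement but the constant $k$ in the conclusion must be produced for the original $\phi$---this is fine since once $T\phi^{k_0}$ is free, $T\phi^{k}$ is free for all $k \geq k_0$ as ellipticity can only decrease: if $\phi^{k_0}(g)$ is hyperbolic in $T$ then so is $\phi^{k}(g) = \phi^{k - k_0}(\phi^{k_0}(g))$ provided $\phi^{k-k_0}$ is injective, which it is.
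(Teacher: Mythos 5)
Your opening reductions are right and essentially match the paper's: you pass to a simplicial $T'$ whose vertex stabilizers are exactly the point stabilizers of $T$ (using \cite{Gui98}), and you pigeonhole the elliptic elements $\phi^k(g_k)$ into a single vertex group $V$ up to conjugacy. Your closing remark that once $T\phi^{k_0}$ is free so is $T\phi^k$ for $k\geq k_0$ is also correct and useful. But the step that closes the argument has a genuine gap.

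Having reduced to the simplicial tree $T'$, the paper finishes in one line by invoking Proposition~\ref{P.SimpConverge}: $[T']\phi^{i_k}\to[T_\Phi]$, and since $T_\Phi$ is free and simplicial (so $[T_\Phi]$ lies in $CV_n$, which is open in $\overline{CV}_n$), eventually $T'\phi^{i_k}$ must itself be free — contradicting the existence of the elliptic $\tilde g_k$ you produced. You instead try to route through the stable lamination: from ``$\phi^k(g_k)\in V^{h_k}$ for all $k$'' you want to conclude that the fixed multipod $X\subseteq T_\Phi$ (equivalently, leaves of $\Lambda_\Phi$) is carried by $V$, and then contradict admissibility via Proposition~\ref{P.LamLemma}. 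That inference is not supplied by the results you cite and is not automatic. Proposition~\ref{P.LamLemma}'s proof runs in the \emph{opposite} direction — it \emph{assumes} some leaf of $\Lambda_\Phi$ is carried by $V$ and derives $\phi^k(F_n)\leq V$ — and the fixed-multipod argument in Proposition~\ref{P.FreeSimp} is about the homothety of $T_\Phi$, not about arbitrary elliptic sequences $\phi^k(g_k)$ in an unrelated tree $T$. In particular, the conjugators $h_k$ and the elements $g_k$ can vary wildly with $k$, and nothing you have said forces a subsequence of the periodic lines $(\phi^k(g_k)^{-\infty},\phi^k(g_k)^{\infty})$ to converge to a leaf of $\Lambda_\Phi$, let alone one carried by $V$. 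If you want to avoid the topological ``open set'' argument, you would need to prove such a limiting statement; as written, this is the missing idea. (Also, expansivity from Proposition~\ref{P.Dichot} and Corollary~\ref{C.BoundaryDynamics} play no role in the paper's proof of this lemma, and they do not actually help fill the gap here.)
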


\begin{proof}
 Toward a contradiction suppose that there is $T \in \overline{cv}_n$ such that for all $i$, $T\phi^i$ is not free.  By \cite{Gui98} point stabilizers in $T$ are vertex groups of a very small splitting of $F_n$; there are finitely many orbits of points in $T$ with nontrivial stabilizer, so finitely many conjugacy classes of such vertex groups appear.  Hence, we may find a sequences $g_k \in \phi^{i_k}(F_n)$ and $h_k \in F_n$, and a vertex group $V$ of a very small splitting of $F_n$ such that $g_k^{h_k} \in V$.  

 Again by \cite{Gui98}, there is a simplicial tree $T' \in \overline{cv}_n$ such that the set of point stabilizers in $T$ is equal to the set of vertex stabilizers in $T'$.  As $g_k^{h_k}$ fixes a point in $T\phi^{i_k}$, we have that $g_K^{h_k}$ fixes a point in $T'\phi^{i_k}$.  Since, $T_{\Phi}$ is free, we arrive at a contradiction to Proposition \ref{P.SimpConverge}.
\end{proof}

We now consider trees in $\overline{cv}_n$ that split as graphs of actions.  Let $T\in \overline{cv}_n$, and suppose that $T$ splits as a grpah of actions $T=T_{\mathscr{G}}$, for $\mathscr{G}=(S, \{T_v\}_{v \in V(S)}, \{p_e\}_{e \in E(S)})$ (refer to Section \ref{S.GoA} for definitions).  

\begin{lemma}\label{L.GraphsConverge}
 Let $\phi:F_n \rightarrow F_n$ be irreducible, non-surjective, and admissible; and let $T \in \overline{cv}_n$.  If $T$ splits as a graph of actions, then $[T]\phi^k \rightarrow [T_{\Phi}]$.  
\end{lemma}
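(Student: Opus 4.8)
The plan is to reduce the case of a tree $T$ that splits as a graph of actions to the cases already handled — simplicial trees (Proposition \ref{P.SimpConverge}) and indecomposable trees (the forthcoming Proposition \ref{P.IndecompConverge}) — by applying the convergence criterion of Proposition \ref{P.ConvergeLemma} directly to $T$. Concretely, I would fix a train track representative $f\colon G\to G$ for $\Phi$ with no illegal turns (Corollary \ref{C.NoIllegalTurns}), fix $T_0\in cv_n$ and an equivariant map $g\colon T_0\to T$ with small backtracking constant (Proposition 2.2 of \cite{LL03}), and fix a bi-infinite geodesic $\gamma_0\subseteq T_0$ representing a leaf $Z=(X,Y)$ of $\Lambda_\Phi$. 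The goal is to show $g(\gamma_0)$ has infinite diameter in $T$; by Proposition \ref{P.ConvergeLemma} this yields a neighborhood of $[T]$ on which $\phi^p$ converges to $[T_\Phi]$, and since $[T]$ itself lies in that neighborhood we get $[T]\phi^k\to[T_\Phi]$.

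The key step is therefore to rule out that the leaf $Z$ has bounded image in $T$, i.e. that $Z\in L^2(T)$. Write $T=T_{\mathscr G}$ for $\mathscr G=(S,\{T_v\},\{p_e\})$. First I would use Lemma \ref{L.EventuallyFree}: after replacing $\phi$ by a power we may assume $T$ is free, so all vertex actions $Stab(T_v)\curvearrowright T_v$ are free and all edge stabilizers in $S$ are trivial, and by Proposition \ref{C.IndiscreteGraph} each non-degenerate $T_v$ either is a finite segment or has dense orbits (hence, being free, one could further hope to arrange indecomposable pieces, but I will not need that). Now a leaf of $\Lambda_\Phi$ carried (in the sense of the carrying definition) by some vertex group $V=Stab(T_v)$ would, by irreducibility, force \emph{every} leaf to be carried by $V$ (see \cite{BFH97}), and $V$ is a vertex group of the very small splitting $S$; this directly contradicts Proposition \ref{P.LamLemma}. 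So no leaf of $\Lambda_\Phi$ is carried by a vertex group of $S$. I would then translate this into the desired statement about $g(\gamma_0)$: if $g(\gamma_0)$ had bounded diameter it would be contained (up to bounded error, using $BBT(g)$) in finitely many of the vertex trees $T_{v_1},\dots,T_{v_m}$ of a transverse covering; since $\gamma_0$ is bi-infinite and the edge stabilizers are trivial, confining $Q(X)=Q(Y)$ to such a finite subtree forces the leaf to be carried by a single vertex group — the classification of $\partial F_n$ for the splitting $S$ (as in \cite[Proposition 1.3]{Bow98}, used in the proof of Proposition \ref{P.SimpConverge}) identifies $\partial F_n$ with $\partial S$ together with the boundaries of vertex stabilizers, and a point landing in a bounded piece of $T$ must, after collapsing, land in $\partial S$ or in one vertex stabilizer. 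This contradicts the conclusion of the previous sentence.

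There is one subtlety to address: I want to apply Proposition \ref{P.ConvergeLemma} to $[T]$ where $T$ is a genuine graph of actions, possibly neither simplicial nor with dense orbits, so I should check that a single leaf $\gamma_0$ of $\Lambda_\Phi$ maps to an infinite-diameter set — not merely that leaves are not all carried by one vertex group. Here I would argue as follows: collapsing each vertex tree $T_v$ of the transverse covering to a point gives an equivariant map $T\to T_S$ onto (a subdivision of) the skeleton $S$, a simplicial tree in $\overline{cv}_n$; composing $g\colon T_0\to T$ with this collapse gives an equivariant map to a simplicial tree, and by Proposition \ref{P.SimpConverge}'s argument (invoking Proposition \ref{P.LamLemma} and \cite{Bow98}) the image of $\gamma_0$ already has infinite diameter in $S$ — unless $Z$ lies in the boundary of a vertex stabilizer, i.e. unless $X,Y\in\partial V$ for a common vertex group $V$. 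In that exceptional case $\gamma_0$, after the collapse, is a bounded set, but its $g$-image lies essentially inside $T_v\subseteq T$, meaning $V$ carries the leaf $Z$, and by irreducibility every leaf is carried by $V$, again contradicting Proposition \ref{P.LamLemma}. Either way $g(\gamma_0)$ has infinite diameter, so Proposition \ref{P.ConvergeLemma} applies and $[T]\phi^k\to[T_\Phi]$.

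I expect the main obstacle to be the bookkeeping in the exceptional case: making precise that a leaf whose endpoints both lie in $\partial V$ for a vertex group $V$ of the splitting $S$ really is ``carried'' by $V$ in the technical sense of the carrying definition (quantifying over all $T'\in cv_n$, not just $T$), so that Proposition \ref{P.LamLemma} can be invoked. This requires knowing that carrying is detected on the level of the splitting $S$ rather than on $T$ itself, which follows from the fact that $\partial F_n$'s decomposition induced by $S$ is independent of the metric tree realizing $S$; I would cite \cite{BFH97, Bow98} for this. The rest — choosing $T_0$ with small backtracking, the collapse map to $S$, and the final appeal to Proposition \ref{P.ConvergeLemma} — is routine.
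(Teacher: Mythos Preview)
Your strategy and the paper's are genuinely different. The paper never tries to verify the hypothesis of Proposition~\ref{P.ConvergeLemma} for $T$ itself; instead it shows that some iterate $T\phi^l$ is free and simplicial, and then invokes Proposition~\ref{P.SimpConverge}. After passing to $T\phi^k$ free (Lemma~\ref{L.EventuallyFree}), each $T_l:=T\phi^l$ inherits a graph-of-actions structure with skeleton $S_l:=S_{H_l}$ for $H_l=\phi^l(F_n)$; freeness of $T_k$ makes edge stabilizers in $S_l$ trivial, so $S_l\in\overline{cv}_n$, and Proposition~\ref{P.SimpConverge} applied to $S$ gives $[S_l]\to[T_\Phi]$. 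Since $T_\Phi$ is free simplicial, for large $l$ the action $H_l\curvearrowright S_l$ is free with all simplicial translation lengths at least two, hence $S_l$ is locally finite; the distance formula~(\ref{E.Dist}) then yields a positive lower bound on translation lengths in $T_l$, so $T_l$ is simplicial.

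Your argument has a real gap at the ``collapse map'' $T\to S$. No equivariant map with the metric properties you need exists in general: distinct vertex trees $T_v,T_{v'}$ sharing an attaching point would be sent to distinct vertices of $S$ at simplicial distance at least one, while in $T$ they contain points arbitrarily close to one another---so any such map fails to be Lipschitz, and collapsing each $T_v$ to a point literally identifies all of $S$. The consequence is that infinite diameter in $S$ does \emph{not} force infinite diameter in $T$. Concretely, an end of $S$ determines a nested sequence of attaching points $p_1,p_2,\ldots$ along a ray in $T$, and when $T$ has dense orbits nothing prevents $\sum_i d_T(p_i,p_{i+1})<\infty$. So even after you correctly deduce from Proposition~\ref{P.LamLemma} and Bowditch that a leaf of $\Lambda_\Phi$ is not carried by any vertex group of $S$, you cannot conclude that $g(\gamma_0)$ is unbounded in $T$. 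Your idea does go through when $T$ has a nontrivial simplicial part (collapse the dense-orbit vertex trees to get an honest $1$-Lipschitz projection onto a simplicial quotient), but graphs of actions with dense orbits---which are exactly what arise later in Section~\ref{S.MainDynamics}---are not covered.
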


\begin{proof}
 Let $\phi$ as in the statement, and suppose that $T$ splits as a graph of action $\mathscr{G}=(S, \{T_v\}_{v \in V(S)}, \{p_e\}_{e \in E(S)})$.  According to Lemma \ref{L.EventuallyFree}, there is $k$ such that the action $T\phi^k$ is free.  Keep in mind that the action $T\phi^k$ is equivariantly isometric to the action $\phi^k(F_n) \curvearrowright T_{\phi^k(F_n)}$.  Put $H_l:=\phi^l(F_n)$, and put $T_l:=T\phi^l$; we regard the action $T_l$ as a subaction of the action $F_n \curvearrowright T$; namely, the action $F_n \curvearrowright T_l$ is precisely the action $H_l \curvearrowright T_{H_l}$.  

 The subgroups $H_l$ act on $S$, and it is evident that the union of vertex trees in $T$ corresponding to vertices of $S_{H_l}$ give a transverse covering of $T_l$, whence $T_l$ inherits a graph of actions structure from $\mathscr{G}$, with skeleton $S_l:=S_{H_l}$.  As $T_k$ is free, the action $H_l \curvearrowright S_l$ is with trivial arc stabilizers; this is because arc stabilizers in the action $F_n \curvearrowright S$ correspond to stabilizers of attaching points.  Thus it follows from Proposition \ref{P.SimpConverge} that there is $M$ such that $H_l \curvearrowright S_l$ is free for $l \geq M$ and such that for any non-trivial $h \in H_l$, the translation length of $h$ is at least two in the simplicial metric on $S_l$.  Hence, for $l$ big enough, $S_l$ is locally finite, and it follows from the distance formula for graphs of actions (Formula \ref{E.Dist}) that there is a positive lower bound for translation lengths of non-trivial elements for the action $H_l \curvearrowright T_l$.  Hence, for $l$ big enough, $T_l$ is free and simplicial, and by Proposition \ref{P.SimpConverge} we have that $[T]\phi^k \rightarrow [T_{\Phi}]$.
\end{proof}

\section{Convergence for Indecomposable Actions in $\overline{CV}_n$}\label{S.IndecompConverge}

In this section we consider trees with the following strong mixing property introduced by Guirardel in \cite{Gui08}; this definition is crucial for the sequel.

\begin{defn}\label{D.Indecomposable}
 An action $G \curvearrowright T$ of a finitely generated group on an $\mathbb{R}$-tree is called \emph{indecomposable} if for any finite, non-degenerate arcs $I,J \subseteq T$, there are elements $g_1,...,g_r \in G$ such that $J \subseteq g_1I \cup ... \cup g_rI$ and such that $g_iI \cap g_{i+1}I$ is non-degenerate for $i \leq r-1$.
\end{defn}

It is important to note that the intersections $g_iI \cap J$ can be degenerate; see \cite{Gui08} for further discussion.  The following result of \cite{R10a} allows us to handle convergence for indecomposable trees in $\overline{cv}_n$.

\begin{prop}\label{P.IndecompConverge}\cite[Theorem 4.4]{R10a}
Let $T \in \overline{cv}_n$ be indecomposable, and let $H \leq F_n$ be a finitely generated subgroup.  The action $H \curvearrowright T_H$ has dense orbits if and only if $H$ is finite index in $F_n$.  
\end{prop}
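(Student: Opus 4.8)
The plan is to prove both directions separately. For the easy direction, suppose $H$ is finite index in $F_n$. Then $H$ contains $g^k$ for every $g \in F_n$ and a uniformly chosen $k$; since $A(g^k) = A(g)$ for hyperbolic isometries, it follows exactly as in Lemma \ref{L.FinIndex} that $T_H = \cup_{1 \neq h \in H} A(h) = T$. Because $F_n \curvearrowright T$ is minimal and, being indecomposable (hence in particular not simplicial in the relevant cases), has dense orbits by the standard dichotomy for very small trees, the action $H \curvearrowright T_H = H \curvearrowright T$ also has dense orbits, as having dense orbits is a property of the tree together with any finite-index subgroup of a group acting with dense orbits. (If one wants to be careful: a single $F_n$-orbit $F_n x$ is dense; writing $F_n$ as a finite union of $H$-cosets $\cup h_i H$, density of $F_n x = \cup h_i (Hx)$ forces some $H$-orbit to be dense, hence all are.)

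For the hard direction, I would argue by contrapositive: assume $H \leq F_n$ is finitely generated of infinite index and show $H \curvearrowright T_H$ does \emph{not} have dense orbits. The key point is that if $H$ has infinite index, then $H$ is a \emph{free factor of a finite-index subgroup}, or at least is contained in some proper free factor up to finite index — more usefully, by Marshall Hall's theorem $H$ is a free factor of a finite-index subgroup $K \leq F_n$. Then $T_H \subsetneq T_K \subseteq T$ and, crucially, $T_H$ together with its $K$-translates (respectively $F_n$-translates) forms a \emph{transverse family} in $T$: distinct translates of $T_H$ meet in at most a point, because $H$ being a free factor of $K$ means the Bass--Serre tree of the corresponding splitting of $K$ detects $T_H$ as a proper invariant subtree whose translates overlap only along the (trivial, since $T$ is free on $K$ after passing to a power via Lemma \ref{L.EventuallyFree}-type reasoning, or directly since we may take $T$ free) edge stabilizers. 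A nontrivial transverse family in $T$ gives, after taking the transverse covering generated by the $F_n$-orbit of $T_H$, a splitting of $T$ as a nontrivial graph of actions — contradicting indecomposability of $T$ via the observation (see \cite{Gui08}) that an indecomposable tree admits no nontrivial transverse family. Hence $T_H$ cannot be all of $T$; and in fact the same reasoning shows $H x$ is not dense in $T_H$: if it were, one could promote $T_H$ to a genuine vertex tree with dense orbits sitting inside a transverse covering, again contradicting indecomposability.

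The main obstacle I anticipate is making precise the step "$H$ finitely generated of infinite index $\Rightarrow$ $T_H$ is a proper subtree that is a member of a transverse family in $T$." The inclusion $T_H \subsetneq T$ when $[F_n : H] = \infty$ is essentially Lemma \ref{L.FinIndex}. The transversality — that $g T_H \cap T_H$ is at most a point for $g \notin \mathrm{Stab}(T_H)$ — is the delicate part: one wants to use that $H$ (up to finite index, via Hall) is a free factor, so that $\langle H, g H g^{-1} \rangle$ is a free product $H * gHg^{-1}$ whenever these are distinct conjugates of a free factor meeting trivially, forcing $T_H \cap g T_H$ to contain no arc (an arc in the intersection would be stabilized by a common conjugate or would force an amalgam). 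I would isolate this as a lemma: \emph{in a free, minimal $F_n$-tree, the minimal subtrees of the $F_n$-conjugates of a proper free factor form a transverse family}. Once that lemma is in hand, the contradiction with Definition \ref{D.Indecomposable} is immediate, since indecomposability prohibits any nontrivial transverse family, completing the contrapositive and hence the proposition.
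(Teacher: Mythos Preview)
This proposition is not proved in the present paper; it is quoted from \cite[Theorem~4.4]{R10a} and used as a black box. So there is no in-paper argument to compare against, and I will assess your proposal on its own merits.

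The easy direction is fine. The hard direction has a genuine gap, and in fact the argument as written is internally inconsistent. Your entire strategy rests on the proposed lemma that, for a proper free factor $H$ of $K$ (obtained via Marshall Hall), the $K$-translates of $T_H$ form a transverse family. But observe what this would prove: a nontrivial transverse family $\{gT_H\}$ with $T_H\neq T$ immediately contradicts indecomposability of $K\curvearrowright T$ (any overlapping chain $g_1I,\dots,g_rI$ with $I\subseteq T_H$ is trapped in a single translate, so cannot reach an arc in a different translate). This contradiction is derived from the hypotheses ``$T$ indecomposable'' and ``$H$ has infinite index'' alone---the assumption that $H\curvearrowright T_H$ has dense orbits is never used. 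Since infinite-index finitely generated subgroups certainly exist, your lemma must fail in this generality; and indeed your justification (``$H*gHg^{-1}$ free $\Rightarrow$ $T_H\cap gT_H$ contains no arc'') is simply false: minimal subtrees of subgroups with trivial intersection routinely overlap along arcs, and such an arc is not stabilized by anything and witnesses no amalgam.

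The sentence ``if $Hx$ were dense in $T_H$, one could promote $T_H$ to a vertex tree inside a transverse covering'' is exactly where the dense-orbits hypothesis should do real work, but you supply no mechanism. That mechanism \emph{is} the theorem: one must explain why a dense-orbits subaction of an infinite-index subgroup forces a transverse covering of the ambient indecomposable tree. The proof in \cite{R10a} does this by passing to geometric (strong) approximations $T_i\to T$, using the Imanishi/Rips decomposition of each geometric $T_i$ to locate the image of $T_H$ among the indecomposable pieces, and extracting in the limit a genuine transverse covering of $T$---contradicting indecomposability. Marshall Hall's theorem plays no role; the content is in the interaction between dense orbits for the subaction and the resolution theory for geometric trees, which your outline does not touch.
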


Note that Proposition \ref{P.IndecompConverge} implies that for finitely generated, infinite index $H \leq F_n$, it must be the case that $H \curvearrowright T_H$ is simplicial.  Indeed, if not, by Proposition \ref{C.IndiscreteGraph} there would be some finitely generated $K \leq H$ such that $K \curvearrowright T_K$ has dense orbits.

\begin{cor}
 Let $\phi:F_n \rightarrow F_n$ be irreducible, non-surjective, and admissible, and let $T \in \overline{cv}_n$ be indecomposable.  Then $[T]\phi^k \rightarrow [T_{\Phi}]$.  
\end{cor}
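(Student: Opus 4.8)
The plan is to reduce to the already-established simplicial case. The key point is that after a single application of $\phi$ an indecomposable tree becomes simplicial, so that Proposition~\ref{P.SimpConverge} applies. First I would observe that $\phi$, being irreducible, is injective, and, being non-surjective, has image $H:=\phi(F_n)$ which is a finitely generated subgroup of \emph{infinite} index in $F_n$: a proper finite-index subgroup of $F_n$ has rank strictly larger than $n$, hence cannot be isomorphic to $F_n$, whereas $\phi$ gives an isomorphism $F_n\cong H$. So $H$ is finitely generated and of infinite index.

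Next I would invoke Proposition~\ref{P.IndecompConverge}: since $T$ is indecomposable and $H$ is finitely generated of infinite index in $F_n$, the action $H\curvearrowright T_H$ does not have dense orbits. By the remark following Proposition~\ref{P.IndecompConverge} (which uses Proposition~\ref{C.IndiscreteGraph}), this forces $H\curvearrowright T_H$ to be simplicial. Now the action $T\phi$ is, by definition, the action of $F_n$ on $T_H$ in which $g\in F_n$ acts as the isometry $\phi(g)\in H$; in particular $T\phi$ is simplicial. It is non-trivial because $\phi$ is admissible, and hence $T\phi\in\overline{cv}_n$.

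Finally I would apply Proposition~\ref{P.SimpConverge} to the simplicial tree $T\phi\in\overline{cv}_n$, which gives $[T\phi]\phi^k\to[T_\Phi]$, i.e.\ $[T]\phi^{k+1}\to[T_\Phi]$; since this is just the original sequence $[T]\phi^k$ with its first term dropped, we conclude $[T]\phi^k\to[T_\Phi]$.

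I do not expect a serious obstacle. The only step needing a little care is confirming that $T\phi$ genuinely lies in $\overline{cv}_n$ rather than being merely a simplicial $F_n$-tree: one must check the very-small conditions, and the mildly delicate part is that arc stabilizers remain trivial or maximal cyclic. This follows from injectivity of $\phi$ — the preimage under an injective endomorphism of a maximal cyclic subgroup is again maximal cyclic, by uniqueness of roots in $F_n$ — so it is routine; alternatively one can bypass it by recalling that admissibility of $\phi$ is precisely the hypothesis ensuring $\phi$ carries $\overline{cv}_n$ into itself.
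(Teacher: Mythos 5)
Your proof is correct and follows essentially the same route as the paper's: reduce to the simplicial case by noting that $\phi(F_n)$ is finitely generated of infinite index, invoke Proposition~\ref{P.IndecompConverge} (with the remark following it) to conclude $\phi(F_n)\curvearrowright T_{\phi(F_n)}$ is simplicial, and then apply Proposition~\ref{P.SimpConverge} to $T\phi$. You spell out two points the paper leaves implicit — the rank argument for why an injective non-surjective endomorphism must have infinite-index image, and the verification that $T\phi$ remains very small — both of which are sound, the latter following from injectivity of $\phi$ together with uniqueness of roots in $F_n$.
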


\begin{proof}
 Let $\phi$ and $T$ as in the statement.  As $\phi$ is non-surjective, we have that $\phi(F_n)$ has infinite index in $F_n$; it then follows from Proposition \ref{P.IndecompConverge} and the discussion above that $\phi(F_n) \curvearrowright T_{\phi(F_n)}$ is simplicial.  Convergence then follows from Proposition \ref{P.SimpConverge}.
\end{proof}

\section{Invariant Measures and Projections}\label{S.Measures}

In this section we establish some structure theory for trees $T \in \overline{cv}_n$ that do not split as graphs of actions and are not indecomposable; in short, we show how to find $T' \in \overline{cv}_n$ such that $L^2(T) \subseteq L^2(T')$ and such that either $T'$ splits as a graph of actions, or $T'$ is indecomposable.  The aim, of course, is to obtain convergence for the remainder of trees in $\overline{cv}_n$.  The main technical tool is the notion of a \emph{length measure}; as mentioned in the Introduction, this tool treats a tree $T \in \overline{cv}_n$ as a generalization of a measured lamination on a surface: the length measures are analogs of the transverse measures.

\subsection{Length Measures}\label{SS.LM}

Let $T$ be an $\mathbb{R}$-tree.  The following definition appears in \cite{Gui00}, where it is attributed to F. Paulin.

\begin{defn}
 A \emph{length measure} (or just \emph{measure}) $\mu$ on $T$ is a collection $\mu=\{\mu_I\}_{I\subseteq T}$ of finite positive Borel measures on the finite arcs $I \subseteq T$; it is required that for $J \subseteq I$ $\mu_J=(\mu_I)|_J$. 
\end{defn}

As these measures are defined locally on finite arcs, all the usual measure-theoretic definitions are similarly defined: a set $X \subseteq T$ is $\mu$-\emph{measurable} if $X \cap I$ is $\mu_I$-measurable for each $I \subseteq T$; $X$ is $\mu$-\emph{measure zero} if $X \cap I$ is $\mu_I$-measure zero for each $I$; and so on.  The \emph{Lebesgue length measure} on $T$, denoted $\mu_L$, is the collection of Lebesgue measures on the finite arcs of $T$.  

If $T$ is equipped with an action of a group $G$, then we say that a (length) measure $\mu$ is $G$-\emph{invariant} if $\mu_I(X \cap I) = \mu_{g.I}(g.X \cap g.I)$ holds for each $g \in G$.  Note that if the action $G \curvearrowright T$ is by isometries, then the Lebesgue measure is invariant.  We let $M(T)=M(G \curvearrowright T)$ stand for the set of invariant measures on $T$.  The following lemma shows that the existence of an invariant atomic measure has a simple interpretation.

\begin{lemma}\label{L.SplitAtoms}
Suppose that $G \curvearrowright T$ has an invariant atomic measure, then $T$ splits as a graph of actions
\end{lemma}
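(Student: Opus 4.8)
The plan is to extract from an invariant atomic measure $\mu$ a nonempty $G$-invariant set of "atom points" and use this set to build a transverse covering, then invoke Lemma \ref{L.Graphs} to conclude that $T$ splits as a graph of actions. First I would make precise what it means for $\mu$ to be atomic: there is some finite arc $I_0$ and some point $p \in I_0$ with $\mu_{I_0}(\{p\}) > 0$. By the compatibility condition $\mu_J = (\mu_I)|_J$ for $J \subseteq I$, the mass $\mu(\{p\})$ is well-defined independent of which arc through $p$ we use, so we get a function $p \mapsto w(p) := \mu(\{p\}) \in [0,\infty)$, and the set $A := \{p \in T : w(p) > 0\}$ is nonempty. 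By $G$-invariance of $\mu$, the set $A$ is $G$-invariant, and $w(gp) = w(p)$ for all $g \in G$.

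The key finiteness input: for any finite arc $I$, the total mass $\mu_I(I)$ is finite, so $A \cap I$ is countable and for each $\epsilon > 0$ only finitely many points of $A \cap I$ have weight $\geq \epsilon$. Restricting attention to the set $A_\epsilon := \{p : w(p) \geq \epsilon\}$ for small enough $\epsilon$ (chosen so that $A_\epsilon \neq \emptyset$), we get a $G$-invariant, discrete (in each finite arc) subset of $T$. The natural thing is then to build a transverse family whose pieces are the closures of the components of $T \setminus A_\epsilon$, together with... — but the cleanest route, following the idea behind Proposition \ref{C.IndiscreteGraph}, is: the complement $T \setminus A_\epsilon$ has components whose closures $\{Y_v\}$ form a $G$-invariant family of nondegenerate subtrees meeting pairwise in at most one point (a point of $A_\epsilon$), so this is a transverse family; and since $A_\epsilon \cap I$ is finite for every finite arc $I$, any finite arc is covered by finitely many of the $Y_v$, so $\{Y_v\}$ is in fact a transverse covering. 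Then Lemma \ref{L.Graphs} gives that $T$ splits as a graph of actions, with the points of $A_\epsilon$ recorded as attaching points.

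I would need to handle the degenerate case where some $Y_v$ could fail to be nondegenerate — but a component of $T \setminus A_\epsilon$ always contains an open arc hence is nondegenerate, provided $T$ itself is nondegenerate (which we may assume, since otherwise the statement is vacuous or trivial). One subtlety: I should verify that $A_\epsilon$ really is a closed set and discrete in each finite arc so that the complement genuinely breaks into open pieces with tree closures; this follows because the total mass bound forces $A_\epsilon$ to have no accumulation points within any finite arc, and any convergent sequence in $T$ eventually lies in a single finite arc. The main obstacle I anticipate is the bookkeeping needed to confirm the transverse covering axioms precisely — in particular, checking that the $Y_v$ really do pairwise intersect in at most one point and that the attaching-point data is $G$-equivariant in the sense required by Definition \ref{D.GoA} — rather than any deep difficulty; conceptually this is just the observation that an invariant atomic measure gives an invariant discrete set of cut points, and invariant discrete sets of cut points are exactly what graph-of-actions decompositions encode.
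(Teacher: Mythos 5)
Your proof is correct and takes essentially the same approach as the paper's: both extract from the atomic measure a nonempty $G$-invariant set of cut points that meets every finite arc in a finite set (you use $A_\epsilon$, the set of atoms of mass $\geq\epsilon$; the paper passes to an ergodic component and takes the single orbit $G.x$ of one atom), then observe that the closures of the components of the complement form a transverse covering and invoke Lemma \ref{L.Graphs}.
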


\begin{proof}
Let $\mu$ be a $G$-invariant atomic measure on $T$; without loss, we suppose that $\mu$ is ergodic.  Let $x \in T$ with $\mu(x) >0$.  Since the measures $\mu_I$ are finite, it follows that $G.x$ meets any finite subtree of $T$ in a finite set.  Consider the collection $\{T_v\}_{v \in V}$ of closures of components of $T \setminus G.x$; this family is evidently a transverse covering of $T$.  Hence, $G \curvearrowright T$ splits as a graph of actions by Lemma \ref{L.Graphs}.
\end{proof}

Later we will restrict our attention to non-atomic measures; Lemma \ref{L.SplitAtoms} shows that this restriction is vacuous as long as the tree in question does not split as a graph of actions.  The following definition from \cite{Gui08} is convenient when dealing with length measures; the discussion following it shows that global properties of length measures can be seen in finite subtrees.

\begin{defn}
Let $G$ a group and $T$ an $\mathbb{R}$-tree equipped with an action of $G$ be isometries; and let $K \subseteq T$ be a subtree.  We say that the action $G \curvearrowright T$ is \emph{supported} on $K$ if for any finite arc $I \subseteq T$, there are $g_1, ...,g_r \in G$ such that $I \subseteq g_1K \cup ... \cup g_rK$.  
\end{defn}

Let $G$ and $T$ as above, and suppose that $G$ is finitely generated with generating set $X$.  Then for any $y \in T$ the convex hull of $\{gy\}_{g \in X^{\pm}}$ is a finite supporting subtree for the action $G \curvearrowright T$.  For a measure $\mu \in M_0(T)$ and a finite tree $K=I_1 \cup ... \cup I_l \subseteq T$ for finite arcs $I_j \subseteq T$, let $Supp_K(\mu)$ denote the union of support sets $Supp(\mu_{I_1}) \cup ... \cup Supp(\mu_{I_l})$.  The set $Supp_K(\mu)$ is called the \emph{$K$-support} of $\mu$; if $K$ is clear from context, then $Supp_K(\mu)=Supp(\mu)$ is called the \emph{support} of $\mu$.  If $X \subseteq T$ is some subset, say that the support of $\mu$ is contained in $X$, if for every finite $K \subseteq T$, one has $Supp_K(\mu) \subseteq K \cap X$; similarly write $Supp(\mu)=X$ if for every finite $K \subseteq T$, one has $Supp_K(\mu)=K \cap X$.   

Recall that given an action $G \curvearrowright T$, $M(T)$ denotes the positive convex cone of $G$-invariant measures on $T$.  A non-trivial measure $\mu \in M(T)$ is called \emph{ergodic} if any $G$-invariant subset is either full measure or measure zero; the $G$-tree is called \emph{uniquely ergodic} if there is a unique, up to scaling, $G$-invariant measure $\mu$ on $T$; in this case $\mu$ must be ergodic.  Let $M_0(T)$ denote the set of non-atomic, $G$-invariant measures on $T$, and let $M_1(T) :=\{\nu \in M_0(T)|\nu \leq \mu_L\}$.  Note that both $M_0(T)$ and $M_1(T)$ are convex.

We equip $M_0(T)$ and $M_1(T)$ with the \emph{weak topology}: a sequence $\mu_i$ converges to $\mu$ if for every finite arc $I$ and every continuous functional $f$ on $I$, we have $\int_I f d(\mu_i)_I \rightarrow \int_I f d\mu_I$.  If $G \curvearrowright T$ is an action with finite supporting subtree $K \subseteq T$, then $M(T)$ can be identified with the space of (ordinary) maeasures $\nu$ on $K$ that are invariant under the (closed) pseudogroup $\Gamma=\Gamma_K$ generated by the restrictions $g|_K:g^{-1}K \cap K \rightarrow K \cap gK$  It should be noted that $\Gamma$ differs from a pseudogroup in the usual sense in that the domains of elements of $\Gamma$ are closed.  It is important to keep in mind the following issue: choose an ennumeration $G=\{g_1,g_2,...\}$, and suppose that there is a sequence $\mu_l$ of probability measures on $K$ with $\mu_l$ invariant under the restrictions $\{g_1|_K,...,g_l|_K\}$.  Since the domains of $g_i|_K$ are closed, it does not follow that $\mu=\lim \mu_l$ is invariant under $\{g_1|_K, g_2|_K,...\}$; see \cite{Gui00} for further discussion.  

The following result shows that certain actions in the closure of Outer space are finite dimensional from the current measure-theoretic point of view.

\begin{prop}\label{P.M0T}\cite[Corollary 5.4]{Gui00}
Let $T \in \overline{cv}_n$ be with dense orbits. Then $M_0(T)$ is a finite dimensional convex set, which is projectively compact. Moreover, $T$ has at most $3n-4$ non-atomic ergodic measures (up to homothety), and every measure in $M_0(T)$ is a sum of these ergodic measures.
\end{prop}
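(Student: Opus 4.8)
The plan is to follow Guirardel's strategy \cite{Gui00} of converting the statement about length measures on $T$ into one about invariant measures of a finite system of partial isometries. Since $T$ has dense orbits, fix a finite generating set $X$ of $F_n$ and a point $y \in T$, and let $K$ be the convex hull of $\{gy : g \in X \cup X^{-1}\}$. As recalled in the discussion preceding the statement, $K$ is a finite supporting subtree, and $M(T)$ is identified with the cone of finite Borel measures on $K$ invariant under the closed pseudogroup $\Gamma$ generated by the partial isometries $g|_K$; by Lemma \ref{L.SplitAtoms}, passing from $M(T)$ to $M_0(T)$ only discards the case in which $T$ splits as a graph of actions. So the first step is to establish this dictionary carefully and reduce to analyzing $\Gamma$-invariant non-atomic measures on the finite tree $K$.

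Finite dimensionality of $M_0(T)$ then follows quickly from Guirardel's embedding result: $\mathbb{P}M_0(T)$ injects into the finite-dimensional space $\overline{CV}_n$ via $[\mu] \mapsto [T_\mu]$, where $T_\mu$ is the tree obtained by making the pseudometric $d_\mu(x,y) := \mu([x,y])$ Hausdorff. To obtain the sharp bound of $3n-4$ ergodic non-atomic measures, together with the assertion that $M_0(T)$ is the simplicial cone they span, I would run the Rips machine on $(K,\Gamma)$ (equivalently, invoke the structure theory of geometric trees and band complexes going back to Imanishi; see \cite{GLP94}): up to a set of $\mu_L$-measure zero, the dynamics decomposes into finitely many minimal components, each of \emph{interval-exchange (surface) type} or of \emph{thin (Levitt) type}. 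A thin component is uniquely ergodic, a surface component of genus $g$ supports at most $g$ ergodic transverse measures by a Katok-type bound, and a bookkeeping argument --- bounding the total complexity of the band complex by the rank $n$, using finiteness of the orbits of branch points and directions of $T$ \cite{GL95} --- caps the total count at $3n-4$. Extracting this precise constant from the complexity budget is the step I expect to be the main obstacle.

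Finally, convexity of $M_0(T)$ is immediate, and the ergodic decomposition theorem, applied componentwise and combined with the finiteness just established, shows that every element of $M_0(T)$ is a finite nonnegative combination of the ergodic measures; hence $\mathbb{P}M_0(T)$ is a closed subset of a finite simplex and is in particular projectively compact. The delicate point to watch --- flagged before the statement --- is that $\Gamma$ is a \emph{closed} pseudogroup, so a weak limit of $\Gamma$-invariant measures need not be $\Gamma$-invariant; one must verify directly that the relevant space is closed, and this is precisely where the explicit description of its extreme rays as the finitely many ergodic component measures does the work.
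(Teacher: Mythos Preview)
The paper does not prove this proposition: it is stated with a direct citation to \cite[Corollary 5.4]{Gui00} and no argument follows. There is therefore no proof in the paper against which to compare your attempt; the author simply imports Guirardel's result as a black box and immediately uses it.

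Your outline is broadly faithful to the strategy in \cite{Gui00}, so as a reconstruction it is reasonable. Two remarks on the details. First, the embedding argument for finite-dimensionality does work once one checks that $\mu\mapsto l_{T_\mu}$ is linear and injective into $\mathbb{R}^{F_n}$ (both verified in \cite{Gui00}): a linear injective image of a $(k-1)$-simplex is again a $(k-1)$-simplex, and since $\dim\overline{CV}_n=3n-4$ this forces $k\le 3n-3$. But note this only yields $3n-3$, not $3n-4$; as you anticipate, the sharp constant genuinely needs the combinatorial analysis of the system of isometries. Second, in Guirardel's paper the logic is organized essentially in the reverse order from your sketch: the finiteness (and count) of ergodic measures is established first via the band-complex/Rips-machine analysis, and projective compactness and finite-dimensionality of $M_0(T)$ are then immediate, since $M_0(T)$ is the simplicial cone on finitely many rays. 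Your final paragraph already has this right, and your caution about the closed pseudogroup is exactly why one cannot simply appeal to weak compactness of probability measures.
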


Let $T \in \overline{cv}_n$, and suppose that $T$ has dense orbits; then Proposition \ref{P.M0T} enusres that there is a finite set $\{\nu_1,...,\nu_k\} \subseteq M_0(T)$ of mutually singular ergodic measures spanning $M_0(T)$.  The following simple proposition shows that the supports of these ergodic measures are arranged in $T$ in a simple way; the result follows from the definition of ergodicity and the fact that the ``topological dyanmics'' of $F_n \curvearrowright T$ determine the way $F_n$-invaraint sets can be arranged in $T$.  

\begin{prop}\label{P.SuppProperties}
With notation as above:
\begin{enumerate}
 \item [(i)] if $I\subseteq Supp(\nu_i)$ is non-degenerate, then $\nu_i(I) > 0$,
 \item [(ii)] $K = \cup Supp(\nu_i)$,
 \item [(iii)] if $Supp(\nu_i) \cap Supp(\nu_j)$ contains a set of positive $\nu_i$-measure, then $Supp(\nu_i) \subseteq Supp(\nu_j)$,
 \item [(iv)] if $Supp(\nu_i) \cap Supp(\nu_j)$ contains a non-degenerate arc, then $Supp(\nu_i) = Supp(\nu_j)$.  
\end{enumerate}
\end{prop}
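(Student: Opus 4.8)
The plan is to handle (i) and (ii) as formal consequences of the definition of the support and of the ergodic decomposition of Proposition \ref{P.M0T}, to obtain (iii) from ergodicity of $\nu_i$ applied to the $F_n$-invariant closed set $Supp(\nu_j)$, and to deduce (iv) by combining (i) and (iii) symmetrically. I expect (iii) to be the only step requiring real care.

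For (i) I would argue as follows. Given a non-degenerate arc $I \subseteq Supp(\nu_i)$, enlarge it to a finite subtree $K = I_1 \cup \dots \cup I_l$ containing $I$; then $I$ overlaps some $I_j$ in a non-degenerate subarc $I'$, and consistency of the family $\{(\nu_i)_J\}$ shows $I' \subseteq Supp((\nu_i)_{I_j})$. Since a non-empty relatively open subset of the support of a Borel measure has positive mass, $(\nu_i)_{I_j}(\mathrm{relint}\,I') > 0$, hence $\nu_i(I) > 0$. For (ii), the inclusion $\bigcup_i Supp_K(\nu_i) \subseteq K$ is immediate. For the reverse, I would apply Proposition \ref{P.M0T} to the Lebesgue length measure $\mu_L$, which lies in $M_0(T)$ and has full support on $K$, and write $\mu_L = \sum_i c_i \nu_i$ with $c_i \ge 0$; since the $K$-support of a non-negative combination is the union of the $K$-supports of the terms with positive coefficient, $K = Supp_K(\mu_L) \subseteq \bigcup_i Supp_K(\nu_i) \subseteq K$.

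The heart of the argument is (iii). The key observation is that $Supp(\nu_j)$, viewed as a subset of $T$, is closed and $F_n$-invariant — this is where the topological dynamics enter, since $F_n$ acts by isometries and therefore carries supports to supports. Assuming $Supp(\nu_i) \cap Supp(\nu_j)$ carries positive $\nu_i$-measure, I would note that $Supp(\nu_j)$ is then not $\nu_i$-null, so ergodicity of $\nu_i$ forces $T \setminus Supp(\nu_j)$ to be $\nu_i$-null. If some point $x$ of $Supp(\nu_i)$ lay outside the closed set $Supp(\nu_j)$, then inside a finite subtree and an arc $I_j$ with $x \in Supp((\nu_i)_{I_j})$, the relatively open neighborhood $(T \setminus Supp(\nu_j)) \cap I_j$ of $x$ would have positive $(\nu_i)_{I_j}$-mass, contradicting nullity. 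Hence $Supp(\nu_i) \subseteq Supp(\nu_j)$. Finally, for (iv): a non-degenerate arc $I$ in $Supp(\nu_i) \cap Supp(\nu_j)$ has positive $\nu_i$-measure by (i), so (iii) gives $Supp(\nu_i) \subseteq Supp(\nu_j)$, and symmetrically, applying (i) to $\nu_j$ and (iii) with the roles reversed, $Supp(\nu_j) \subseteq Supp(\nu_i)$.

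The main obstacle I anticipate is purely technical bookkeeping: making precise, for length measures (collections of finite measures on arcs rather than a single global measure), the statements ``$Supp(\nu_j)$ is a Borel $F_n$-invariant set'', ``$X$ is $\nu_i$-null or $\nu_i$-conull'', and ``a relatively open subset of a support has positive mass'', and checking that the ergodicity dichotomy in the setting of Proposition \ref{P.M0T} genuinely applies to $Supp(\nu_j)$. Once this is in place, the geometric content of (iii)--(iv) — that $F_n$-invariant closed subsets of a tree with dense orbits cannot overlap ``partially'' without being nested or equal — falls out as above.
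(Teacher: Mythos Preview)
Your proposal is correct and follows essentially the same approach as the paper: (i) is immediate from the definition of support, (ii) comes from decomposing $\mu_L$ in terms of the $\nu_i$ (the paper phrases this as ``$\cup Supp(\nu_i)$ is $\mu_L$-full, hence dense, hence all of $K$''), (iii) uses ergodicity of $\nu_i$ against the closed invariant set $Supp(\nu_j)$, and (iv) follows from (i) and (iii). The only cosmetic difference is that the paper works throughout in the finite supporting subtree $K$ with the pseudogroup $\Gamma_K$, while you phrase invariance in terms of the $F_n$-action on $T$; these are equivalent here, and your anticipated bookkeeping issues are exactly the ones the paper also leaves implicit.
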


\begin{proof}
The statement (i) is immediate from the definition.  For (ii), we have that $\cup Supp(\nu_i)$ is a $\mu_L$ full measure subset of $K$, hence dense; but it is closed, so $\cup Supp(\nu_i) = K$.  For (iii) note that, by ergodicity of $\nu_i$, the union of $\Gamma$-translates of $Supp(\nu_i) \cap Supp(\nu_j)$ is a $\nu_i$ full measure subset of $Supp(\nu_i)$, hence dense in $Supp(\nu_i)$.  On the other hand, the union of $\Gamma$-transaltes of $Supp(\nu_i) \cap Supp(\nu_j) \subseteq Supp(\nu_j)$ as $Supp(\nu_j)$ is $\Gamma$-invariant.  Since $Supp(\nu_j)$ is closed, it follows that $Supp(\nu_i) \subseteq Supp(\nu_j)$.  The claim (iv) follows from (i) and (iii).  
\end{proof}

This immediately gives:

\begin{cor}\label{C.SuppProperties}
With notation as above, $K =\cup Supp(\nu_{i_j})$, where $\nu_{i_j}$ runs over measures in $M_1(T)$ whose supports contain a non-degenerate arc.
\end{cor}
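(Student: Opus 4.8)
The plan is to read off the decomposition of $K$ directly from the ergodic decomposition of the Lebesgue length measure $\mu_L$. Since $\mu_L$ is non-atomic and $F_n$-invariant, $\mu_L \in M_0(T)$, so Proposition \ref{P.M0T} yields a finite expression $\mu_L = \sum_{i=1}^{k} c_i \nu_i$ with $c_i \geq 0$; reorder the $\nu_i$ so that $c_i > 0$ exactly for $i \leq m$. For $i \leq m$, all summands being positive gives $c_i \nu_i \leq \mu_L$, so $c_i \nu_i \in M_1(T)$ and $Supp(c_i \nu_i) = Supp(\nu_i)$; conversely, if $c_i = 0$ then $\nu_i$ is mutually singular with every $\nu_j$ occurring in the sum, hence $\nu_i \perp \mu_L$, and then no positive multiple of $\nu_i$ is dominated by $\mu_L$. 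Thus, among $\nu_1, \dots, \nu_k$, the ones lying (up to scaling) in $M_1(T)$ are precisely $\nu_1, \dots, \nu_m$. Finally, Lebesgue measure on any non-degenerate arc has full support, so $Supp_K(\mu_L) = K$; since the support of a finite sum of positive measures is the union of the supports of the summands, $K = \bigcup_{i=1}^{m} Supp(\nu_i)$.

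It remains to discard those $\nu_i$, $i \leq m$, whose support contains no non-degenerate arc. Partition $\{1, \dots, m\} = P \sqcup Z$ according to whether $Supp(\nu_i)$ does or does not contain a non-degenerate arc, and set $K_P = \bigcup_{i \in P} Supp(\nu_i)$ and $K_Z = \bigcup_{i \in Z} Supp(\nu_i)$, so $K = K_P \cup K_Z$ and both are closed. The set $K_Z$ contains no non-degenerate arc: it is a finite union of closed sets each containing no non-degenerate arc, and were $J \subseteq K_Z$ a non-degenerate arc, then $J$ would be the union of the finitely many sets $J \cap Supp(\nu_i)$, $i \in Z$, each closed in $J$ with empty interior in $J$, contradicting the Baire category theorem for the complete metric space $J$. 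Now suppose $K_P \neq K$. Then $K \setminus K_P$ is a non-empty open subset of $K$ and hence contains a non-degenerate arc $I$ disjoint from $K_P$. For $i \in P$ we have $Supp(\nu_i) \cap I = \emptyset$, so $(\nu_i)_I = 0$; hence $(\mu_L)_I = \sum_{i=1}^{m} c_i (\nu_i)_I = \sum_{i \in Z} c_i (\nu_i)_I$, which is concentrated on $I \cap K_Z$, giving $Supp\bigl((\mu_L)_I\bigr) \subseteq I \cap K_Z$. But $(\mu_L)_I$ is Lebesgue measure on $I$, whose support is all of $I$, while $I \cap K_Z$ is a proper closed subset of $I$ since $K_Z$ contains no non-degenerate arc — a contradiction. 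Therefore $K = K_P = \bigcup_{i \in P} Supp(\nu_i)$.

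This proves the corollary: the measures $\nu_i$ with $i \in P$ are, up to scaling, exactly the members $\nu_{i_j}$ of $\{\nu_1, \dots, \nu_k\}$ lying in $M_1(T)$ whose support contains a non-degenerate arc, and the inclusion $\bigcup_j Supp(\nu_{i_j}) \subseteq K$ is immediate since every element of $M_1(T) \subseteq M_0(T)$ has support contained in $K$; the same two inclusions give the statement if the union is instead read over all measures of $M_1(T)$ with arc-containing support. The genuinely delicate point is the argument of the second paragraph — combining the Baire-category observation with the full support of the restricted Lebesgue measure — which is exactly what lets us avoid analyzing whether a $\mu_L$-absolutely continuous ergodic measure could be concentrated on a ``fat Cantor set'' type invariant subset of an arc; alternatively, one could invoke the Imanishi-type structure theory of the holonomy pseudogroup $\Gamma$ (its decomposition into regular and exceptional minimal components, cf.\ \cite{GLP94}) to conclude directly that each $\nu_i$ with $c_i > 0$ has support containing a non-degenerate arc.
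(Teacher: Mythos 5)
Your argument is correct. The paper offers no proof of this corollary---it is preceded only by the phrase ``This immediately gives:''---so there is no proof to compare against in the usual sense, but your argument fills in exactly what is being elided. The first half (decomposing $\mu_L$ ergodically via Proposition~\ref{P.M0T} and noting $K=Supp_K(\mu_L)=\bigcup_{c_i>0}Supp(\nu_i)$, with $c_i>0$ characterizing precisely which ergodic $\nu_i$ lie in $M_1(T)$ up to scaling) is the same calculation underlying the proof of Proposition~\ref{P.SuppProperties}(ii) in the paper. The second half---the Baire-category observation that the union of the nowhere-dense supports $Supp(\nu_i)$, $i\in Z$, cannot contain a non-degenerate arc, hence cannot contribute an open piece of $K$---is the genuinely substantive step that the paper's ``immediately'' glosses over, and you are right to flag it: without it one would have to rule out an $\mu_L$-absolutely-continuous ergodic component concentrated on a nowhere-dense positive-measure set, which is not obvious. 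One small remark: you do not actually need Baire here, since a \emph{finite} union of closed nowhere-dense sets is nowhere dense by an elementary argument; but invoking Baire is harmless and arguably cleaner.
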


\subsection{Pullbacks and Projections}

\begin{defn}\label{D.AlignPreserve}\cite{Gui00}
 Let $T$ and $T'$ be $\mathbb{R}$-trees.  A map $f:T \rightarrow T'$ is \emph{alignment-preserving} if for any $x \in T'$, $f^{-1}(x)$ is a convex subset of $T$.
\end{defn}

Suppose now that $T$ and $T'$ are $\mathbb{R}$-trees, equipped with actions by isometries of a finitely generated group $G$; and suppose that $f:T \rightarrow T'$ is $G$-equivariant and alignment-preserving.  It is observed in \cite{Gui00} that any non-atomic $\mu' \in M(T')$ can be pulled-back to a measure $\mu=f_*(\mu') \in M(T)$.  Indeed, we let $\mu$ be the unique non-atomic measure such that for all $J \subseteq I \subseteq T$, $\mu_I(J)=\mu'_{f(I)}(f(J))$.  As pointed out in \cite{Gui00}, the hypothesis on $f$ can be weakened; the construction goes through as long as $f$ is equivariant and any finite arc $I \subseteq T$ can be subdivided into finitely-many subarcs such that the restriction of $f$ to each subarc is alignment-preserving.  

Let $G \curvearrowright T$ be an action by isometries.  Given a $G$-invariant non-atomic measure $\mu$ on $T$, one may consider a pseudo-metric $d_{\mu}$ on $T$ defined by $d_{\mu}(x,y):=\mu([x,y])$.  It is easy to see that making this pseudo-metric Hausdorff gives an $\mathbb{R}$-tree $T_{\mu}$, equipped with an isometric action of $G$.  In the situation above, the natural map $f_{\mu}:T \rightarrow T_{\mu}$ is alignment-preserving, and if $\mu \leq \mu_L$, then $f_{\mu}$ is 1-Lipschitz.  

\begin{defn}
 If there is an equivariant, alignment-preserving map $f:T \rightarrow T'$, then we say that $T'$ is a \emph{projection} of $T$.
\end{defn}

Note that if $G \curvearrowright T$ is indiscrete but not with dense orbits, then the map $T \rightarrow T'$ collapsing each component of the simplicial part of $T$ to a point is a projection.  Indeed, in this case, by Proposition \ref{C.IndiscreteGraph} splits as a graph of actions with vertex trees either simplicial or with dense orbtis; it is easy to see that restricting the Lebesgue measure of $T$ to the trees with dense orbits gives and invariant measure $\mu \in M_0(T)$ such that $T'=T_{\mu}$.

\subsection{Exceptional Sets}

Let $T \in \overline{cv}_n$ have dense orbits.  

\begin{defn}\label{D.Exceptional}
 An invariant subset $X \subseteq T$ is called \emph{exceptional} if for any finite subtree $K \subseteq T$,  $X \cap K$ is closed and nowhere dense and if there is a finite subtree $K_0 \subseteq T$ such that $X \cap K_0$ contains a Cantor set.
\end{defn}  

A famous theorem of Imanishi \cite{Ima79}, rediscovered by Morgan-Shalen \cite{MS84} and proved in the present context by Levitt \cite{GLP94}, states that given a finite 2-complex $A$, equipped with a codimension-1 singular measured foliation, one is able to cut the 2-complex $A$ along certain subsets of singular leaves to arrange that every leaf is either finite or locally dense (see \cite{GLP94} or \cite{BF95}); the key property is that no leaf closure is a Cantor set.  

An action $G \curvearrowright T$ of a finitely preseted group $G$ on a tree $T$ is called \emph{geometric} if there is a finite 2-complex $A$, equipped with a codimension-1 singular measured foliation, such that $\pi_1(A)=G$ and such that the action $G \curvearrowright T$ is dual to the natural action of $G$ on the space of leaves in $\tilde{A}$--the metric comes from the transverse measure (see \cite{MS84, GLP94, BF95}).  Given a geometric action $G \curvearrowright T$ with dense orbits, the Imanishi theorem implies that $G \curvearrowright T$ splits as a graph of indecomposable actions (see \cite{Gui08}).  In particular, no geometric action can contain an exceptional set.  

In \cite{CHL09}, Coulbois-Hilion-Lustig show that any $T \in \overline{cv}_n$ with dense orbits is weakly geometric in the following sense: for any basis $B$ of $F_n$, there is a canonical compact subtree $K_A \subseteq T$ such that the action $F_n \curvearrowright T_{K_A}$ dual to the restrictions of elements of $A$ to $K_A$ contains $F_n \curvearrowright T$ as its unique minimal subaction.  Here, the action $F_n \curvearrowright T$ is dual to a compact 2-complex, equipped with a codimension-1 singular measured foliation, and one might hope to generalize the theorem of Imanishi to this context, ruling out the possibility of an exceptional set in $T$.  However, Imanishi's theorem fails in this case, as is evidenced by the following example.  


This example was pointed out to us independently by M. Lustig and V. Guirardel.  Here, we use the language of relative train tracks; the reader is directed to \cite{BH92} for background.

\begin{example}\label{E.ExceptionalSet}
 Let $\alpha \in Aut(F_n)$ be represented by a relative train track map $f:G \rightarrow G$, and suppose that there are exactly two exponential strata such that the PF eiqenvalue $\lambda_l$ of the lower stratum is strictly larger than the PF eigenvalue $\lambda_u$ of the upper stratum.  Equip $G$ with a metric that restricts to the PF metrics on the exponential strata.  Let $\tilde{f}$ be a lift of $f$ to the universal cover $\tilde{G}$ of $G$; set $T_k':=\tilde{f}^k(\tilde{G})$; and finally define $T_k:=\lambda^{-k}T_k'$.  It is easy to check that the sequence $(T_k)_{k \in \mathbb{N}}$ is convergent in the Gromov-Hausdorff topology to an action $F_n \curvearrowright T \in \overline{cv}_n$.  

 Color the lower stratum green and the upper stratum red; it is evident that each action $F_n \curvearrowright T_k$ is color-preserving.  By the assumption $\lambda_l > \lambda_u$, in the limit we get an invariant (red) set that intersects a finite supporting subtree in a Cantor set.  Hence the action $F_n \curvearrowright T$ contains an exceptional set.
 \end{example}

One can check that the action $F_n \curvearrowright T$ from the example is neither indecomposable nor a graph of actions; the following result establishes a useful trichotomy.  

\begin{prop}\label{P.Tricot}
 Let $T \in \overline{cv}_n$ have dense orbits, and let $K \subseteq T$ be a finite supporting subtree.  Suppose that for each $\mu \in M_1(T)$, one has that $Supp_K(\mu)=K$.  Then one of the following holds:

 \begin{enumerate}
  \item [(i)] the action $F_n \curvearrowright T$ is indecomposable,
  \item [(ii)] the action $F_n \curvearrowright T$ splits as a graph of actions, or
  \item [(iii)] there is an exceptional subset of $T$.
 \end{enumerate}

\end{prop}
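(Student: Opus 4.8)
The plan is to reduce to the essential case: assume that $F_n \curvearrowright T$ is neither indecomposable nor splits as a graph of actions, and exhibit an exceptional set. The tool is the holonomy pseudogroup of the action on the finite supporting subtree $K$, together with an Imanishi-type analysis of its leaves (cf.\ \cite{Ima79, GLP94, Gui08}); the point will be that the only obstruction to the ``clean'' outcome is a Cantor-type invariant set, and the hypothesis on length measures is what guarantees the obstruction, when present, is genuinely of Cantor type.

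First I would fix a basis $\{a_1,\dots,a_n\}$ of $F_n$ and form the pseudogroup $\Gamma=\Gamma_K$ of partial isometries of $K$ generated by the restrictions $a_i|_{a_i^{-1}K\cap K}$, as in Section \ref{S.Measures}. Since $K$ supports the action, $\Gamma$-invariant subsets of $K$ encode $F_n$-invariant structure of $T$, and invariant length measures on $T$ correspond to $\Gamma$-invariant Borel measures on $K$. If $T$ carried an atomic invariant measure it would split as a graph of actions by Lemma \ref{L.SplitAtoms}, contrary to assumption; so the Lebesgue measure and every member of $M_1(T)$ are non-atomic, and by Proposition \ref{P.M0T} the cone $M_0(T)$ is finite dimensional with finitely many ergodic rays. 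For $x\in K$ let $\mathcal{L}(x)$ be its $\Gamma$-orbit and $C(x):=\overline{\mathcal{L}(x)}\cap K$ (which, unlike $F_nx\cap K$, need not be dense in $K$, since the orbit-path is required to stay in $K$); I would classify $x$ according to whether $C(x)$ has non-empty interior in $K$ (\emph{locally dense}), is finite, or is infinite with empty interior (\emph{exceptional}).

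The dichotomy is then: either some point of $K$ is exceptional, or none is. In the first case, a minimal subset of such a leaf closure is a closed, nowhere-dense set meeting $K$ in a perfect set, hence containing a Cantor set; this is an exceptional set in the sense of Definition \ref{D.Exceptional}, so (iii) holds. In the second case I would run the Imanishi/Rips decomposition of the system $(K,\Gamma)$: after cutting $K$ along finitely many $\Gamma$-orbits of points (which, via Lemma \ref{L.Graphs}, only unveils a transverse covering of $T$ and changes nothing essential), every remaining leaf is finite or locally dense, the locally-dense minimal components being subtrees carrying minimal, hence indecomposable (Definition \ref{D.Indecomposable}), subactions, and the finite-leaf part contributing simplicial edges. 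If the resulting transverse covering has more than one orbit of pieces or any simplicial edge, then $T$ splits as a non-trivial graph of actions by Lemma \ref{L.Graphs}; if it is trivial, then $T$ equals a single locally-dense component and is indecomposable. Either conclusion contradicts our standing assumptions, so the second case cannot occur and (iii) holds.

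The heart of the matter, and the main obstacle, is the Imanishi step for $\Gamma$: the partial isometries here have \emph{closed} domains, so the classical statements do not transfer verbatim, and one must check that the only way the clean decomposition can fail is through a perfect, Cantor-type minimal set rather than a sparser (scattered, countable) invariant set. This is exactly where the hypothesis that every $\mu\in M_1(T)$ has full $K$-support enters: combined with the finite-dimensionality of $M_0(T)$ from Proposition \ref{P.M0T} and with Corollary \ref{C.SuppProperties}, it excludes such degenerate invariant sets and, in the exceptional case, pins the bad set down to one meeting a finite subtree in a non-degenerate perfect set. I expect this verification, rather than any single trick, to consume most of the work.
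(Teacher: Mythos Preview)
Your route through the holonomy pseudogroup and an Imanishi-type leaf classification is genuinely different from the paper's, which never classifies orbit closures. The paper instead runs the Guirardel--Levitt iteration directly on the tree: failure of indecomposability gives an arc $I$ with $Y_I\neq T$; one sets $\mathscr{Y}_1=\{g\overline{Y_I}\}_{g\in F_n}$ and iterates by merging intersecting trees and taking closures to obtain $\mathscr{Y}_2,\mathscr{Y}_3,\dots$. If some $\mathscr{Y}_i$ covers $T$ one has a graph of actions; if at some stage distinct members of $\mathscr{Y}_i$ are pairwise disjoint, the uncovered set $X_i$ is the exceptional set. The full-support hypothesis on $M_1(T)$ enters only to show each $X_i$ is nowhere dense (every arc of $T$ meets some tree of $\mathscr{Y}_1$ non-degenerately, by ergodicity of a fully-supported measure), and perfectness of the terminal $X_i$ is automatic from closedness of the trees in $\mathscr{Y}_i$.

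The gap in your argument is the step ``single locally-dense minimal component, hence indecomposable.'' Minimality (dense orbits) does not imply indecomposability in the sense of Definition~\ref{D.Indecomposable}; that implication holds for geometric trees precisely \emph{because} Imanishi's theorem applies there, and the paper notes explicitly (just before Example~\ref{E.ExceptionalSet}) that Imanishi fails for the closed systems arising from non-geometric $T$. In your dichotomy, nothing prevents the ``no exceptional $\Gamma$-orbit closure'' branch from returning $T$ itself as the sole locally-dense component --- and $T$ is by assumption \emph{not} indecomposable, so your contradiction does not close. You correctly flag this Imanishi step as the crux, but the ingredients you propose (full-support hypothesis, Proposition~\ref{P.M0T}, Corollary~\ref{C.SuppProperties}) speak to measures, not to the overlap-chain condition in Definition~\ref{D.Indecomposable}, and do not obviously bridge the gap. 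The paper's iteration sidesteps the whole issue because it never claims any piece is indecomposable: it only needs transverse families and the structure of their complements.
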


The proof of Proposition \ref{P.Tricot} uses a technique of Guirardel-Levitt \cite{GL10} that is described in the next subsection; we first present the proof, as it illustrates the effectiveness of the procedure.

\subsection{The Procedure of Guirardel-Levitt}

Let $T \in \overline{cv}_n$.  For a nondegenerate arc $I \subseteq T$, let $Y_I$ be the subtree of $T$ that is the union of all segments $J \subseteq T$ such that there are $g_1,...,g_r \in F_n$ with $J \subseteq g_1I \cup ... \cup g_rI$ and such that $g_iI \cap g_{i+1}I$ is nondegenerate.  By construction, the collection $\mathscr{Y}:=\{gY_I\}_{g \in F_n}$ is a transverse family for the action $F_n \curvearrowright T$; and the same holds for the collection $\mathscr{Y}_1:=\{g\overline{Y_I}\}_{g \in F_n}$ of translates of the closure $\overline{Y_I}$ of $Y_I$.  

\begin{proof}(Proposition \ref{P.Tricot})
Assume that the action $F_n \curvearrowright T$ is not indecomposable; there is a nondegenerate arc $I \subseteq T$ such that $Y_I \neq T$.  If $\mathscr{Y}_1$ is a transverse covering of $T$, then $T$ splits as a graph of actions by Lemma \ref{L.Graphs}.  Otherwise, the set $X_1 \subseteq K$ of points of $K$ that are not covered by trees in $\mathscr{Y}_1$ is a non-empty subset that is invariant under the pseudogroup $\Gamma_K$ generated by restrictions $g|_K:g^{-1}K \cap K \rightarrow K \cap gK$ for $g \in F_n$.  Notice that $X_1$ cannot contain a nondegenerate arc; this follows from the assumption that any ergodic $\mu \in M_1(T)$ satisfies $Supp_K(\mu)=K$.  In fact $X_1$ is nowhere dense; indeed, let $\mu \in M_1(T)$ be ergodic.  For any nondegenerate arc $J' \subseteq T$ and any $\epsilon>0$, there are $g_1,...,g_r \in F_n$ such that $\mu(K \setminus \cup_i g_iJ')<\epsilon$.  It follows that for any nondegenerate arc $J \subseteq T$, there is some tree $Y \in \mathscr{Y}$ meeting $J$ nondegenerately.  

Recursively define $\mathscr{Y}_{i+1}$ to be the collection obtained from $\mathscr{Y}_i$ via the following procedure: take unions of intersecting trees in $\mathscr{Y}_i$ to get $\mathscr{Y}_{i+1}'$, then take closures of the trees in $\mathscr{Y}_{i+1}'$ to get $\mathscr{Y}_{i+1}$.  Since $\mathscr{Y}_1$ is a transverse family, so is each $\mathscr{Y}_i$.  Put $X_i$ to be the collection of points of $K$ not covered by $\mathscr{Y}_i$; each $X_i$ is a totally disconnected, nowhere dense invariant subset.    

Note that each $Y \in \mathscr{Y}_{i+1}'$ carries the structure of a graph of actions, with vertex trees coming from $\mathscr{Y}_i$.  It may be the case that for some $i$, $\mathscr{Y}_i=\{T\}$; in this case $T$ splits as a graph of actions.  Alternatively, it could be that for some $i$, we have for $Y,Y' \in \mathscr{Y}_i$, $Y \cap Y' \neq \emptyset$ if and only if $Y=Y'$.  In this case we claim that $X_i$ is an exceptional subset of $T$.  We already know that $X_i$ is totally disconnected and nowhere dense; so we need only see that $X_i$ is perfect.  This follows from the fact that $X_i$ could contain no isolated point (the trees $Y \in \mathscr{Y}_i$ are closed).
\end{proof}

\begin{remark}\label{R.GLProcedure}
The procedure of Guirardel-Levitt has two parts:

\begin{enumerate}
 \item [(I)]  Given an action $G \curvearrowright T$ that is not indecomposable, one is able to find a subtree $Y_I \neq T$ such that the collection $\mathscr{Y}_1:=\{g\overline{Y_I}\}_{g \in F_n}$ is a transverse family for the action $G \curvearrowright T$.
 \item [(II)]  Second, given a transverse family $\mathscr{Y}_i$, one applies the iterative procedure from the above proof to obtain $\mathscr{Y}_{i+1}$:
  \begin{enumerate}
   \item [(a)] $\mathscr{Y}_{i+1}'$ consists of trees that are (maximal) unions of intersecting trees from $\mathscr{Y}_i$,
   \item [(b)] $\mathscr{Y}_{i+1}$ consists of closures of trees from $\mathscr{Y}_{i+1}'$.
  \end{enumerate}
\end{enumerate}

If the transverse family $\mathscr{Y}_1$ is ``large enough,'' then the result is either a graph of actions structure for the action $G \curvearrowright T$ or an exceptional subset of $T$.  
\end{remark}

\begin{defn}
 Let $T \in \overline{cv}_n$, and let $\mathscr{Y}_1$ be a transverse family.  If iteratively applying procedure II to $\mathscr{Y}_1$ eventually gives $\mathscr{Y}_i=\{T\}$, then we say that $T$ is obtained by an \emph{iterated graph of actions} starting from $\mathscr{Y}_1$.
\end{defn}

\subsection{Invariant Measures on Exceptional Sets}\label{SS.ExceptionalMeasure}

We now begin the analysis of trees that do not split as a graph of actions and contain an exceptional subset; we will show that such a tree $T$ admits a projection onto a tree $T'$, such that either $T'$ splits as a graph of actions, or $T'$ is indecomposable.  We begin with a definition from \cite{Lev98}, attributed there to M. Bestvina.

\begin{defn}
 Let $G$ a finitely generated group, and let $T$ an $\mathbb{R}$-tree.  An action of $G$ on $T$ by homeomorphisms is called \emph{non-nesting} if for any nondegenerate arc $I \subseteq T$ and any $g \in G$, if $gI \subseteq I$, then $gI=I$.  
\end{defn}

In \cite{Lev98} Levitt shows that if a finitely presented group $G$ admits a non-trivial, non-nesting action by homeomorphisms on an $\mathbb{R}$-tree $T$, then $G$ admits a non-trivial isometric action on some $\mathbb{R}$-tree $T_0$.  The key observation is the following:

\begin{prop}\label{P.InvariantMeasure}\cite[Proposition 4]{Lev98}
 Let $\mathscr{K}$ be a non-nesting closed system of maps on a finite tree $K$.  Assume that $\mathscr{K}$ has an infinite orbit.  Then there exists a $\mathscr{K}$-invariant probability measure $\mu$ on $\mathscr{K}$ with no atom.  
\end{prop}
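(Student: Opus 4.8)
The plan is to run a Krylov--Bogolyubov averaging argument on the compact finite tree $K$, and to invoke the non-nesting hypothesis only at the end, to kill atoms. First I would fix the finite generating set $\phi_1,\dots,\phi_m$ of $\mathscr{K}$, each a homeomorphism between closed subtrees $A_j,B_j\subseteq K$, and a point $x_0\in K$ whose $\mathscr{K}$-orbit $O$ is infinite. For $N\geq 1$ let $O_N$ be the set of images of $x_0$ under all words of length $\leq N$ in the $\phi_j^{\pm1}$ that are defined at $x_0$, and let $\mu_N$ be the uniform probability measure on $O_N$ (equivalently a Ces\`aro average of pushforwards of $\delta_{x_0}$ under length-$k$ words). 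Since $K$ is compact, a subsequence of $(\mu_N)$ converges weak-$*$ to a probability measure $\mu$ on $K$.

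Next I would verify that $\mu$ is $\mathscr{K}$-invariant, i.e. $(\phi_j)_*(\mu|_{A_j})=\mu|_{B_j}$ for every $j$. The averaging forces the discrepancy between $(\phi_j)_*(\mu_N|_{A_j})$ and $\mu_N|_{B_j}$ to $0$, since only the finitely many ``boundary-word'' orbit points contribute to the defect and they are a vanishing proportion of $O_N$. The one genuine subtlety is exactly the one flagged in Section \ref{S.Measures}: restriction $\mu\mapsto\mu|_{A_j}$ fails to be weak-$*$ continuous where $\mu$ charges $\partial A_j$. But $\bigcup_j\partial A_j$ is a \emph{finite} set of points of $K$ (here one uses that $\mathscr{K}$ is finitely generated), so it is enough to know $\mu$ assigns them no mass; I would fold this into the last step by showing $\mu$ is non-atomic outright, which both removes this obstruction and delivers the conclusion.

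It remains to show $\mu$ is non-atomic, and this is where I expect the real work to lie. The invariance of $\mu$ already forces each $\phi_j$ to carry an atom of mass $c$ to an atom of mass $c$, so if $c=\sup_{y}\mu(\{y\})>0$ then the (finite, nonempty) set $F$ of atoms of maximal mass is $\mathscr{K}$-invariant. The non-nesting hypothesis is then the tool that rules this out: heuristically, a non-nesting system has no ``sinks'' (if $\phi$ pushed an arc $I$ strictly inside itself this would violate $gI\subseteq I\Rightarrow gI=I$), and concentration of the orbit averages near a point of $F$ would, after pulling back shrinking arcs around that point under the defining maps, produce a strictly nested but non-collapsing chain of subarcs of the finite tree $K$ --- an impossibility. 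Making this precise --- choosing the right arc to track and checking that non-nesting genuinely obstructs its degeneration --- is the delicate part; alternatively one could obtain \emph{existence} of an invariant measure from a Markov--Kakutani fixed-point argument on the cone of invariant measures and then run the same atom-exclusion step. The principal obstacle is thus the conversion of measure-theoretic concentration at an atom into a dynamical statement about arcs to which the non-nesting condition can be applied; the closed-domain point is a second, more routine technicality given that $\mathscr{K}$ has finitely many generators.
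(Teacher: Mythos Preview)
First, note that the paper does not prove this proposition; it is quoted from \cite[Proposition~4]{Lev98} and used as a black box in the proof of Proposition~\ref{P.ExceptionalMeasure}. So there is no in-paper argument to compare against.

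On its own terms, your invariance step has a genuine gap. The assertion that the defect between $(\phi_j)_*(\mu_N|_{A_j})$ and $\mu_N|_{B_j}$ is a ``vanishing proportion of $O_N$'' is exactly a F{\o}lner condition on the orbit, equivalent to $|O_{N+1}\setminus O_N|/|O_N|\to 0$, i.e.\ subexponential orbit growth. Nothing in the hypotheses delivers this, and without it a weak-$*$ limit of your $\mu_N$ need not be invariant at all. This is not a mere technicality: the paper's Acknowledgements recount that an earlier draft tried to use Plante's \cite[Theorem~3.1]{Pl75}---essentially the averaging-without-growth statement---and that Guirardel pointed out it is false; the corrected \cite[Theorem~3.2]{Pl75} requires subexponential growth. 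Your parenthetical ``equivalently a Ces\`aro average of pushforwards'' is not in fact equivalent to the uniform measure on $O_N$, and neither variant evades the obstruction: Krylov--Bogolyubov succeeds for a single transformation because $\mathbb{Z}$ is amenable, not because averaging alone produces invariance.

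The upshot is that non-nesting must enter in the \emph{construction} of $\mu$, not only in the atom-exclusion step. Your outline for killing atoms (maximal-mass atoms form a finite $\mathscr{K}$-invariant set; combine with non-nesting and the infinite orbit) is plausible once an invariant $\mu$ is in hand, but it cannot repair the existence argument. If you want to salvage averaging you would first have to prove that non-nesting forces subexponential orbit growth---itself a substantial claim---or else abandon averaging and use non-nesting more directly, for instance near an accumulation point of the infinite orbit, to control return maps and build $\mu$ from that local structure.
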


A closed system of maps on a finite tree $K$ is a variant of a finitely generated pseudo-group of partial homeomorphisms of $K$, where each partial homeomorphism is required to have closed domain.  Levitt observes that if such a $\mathscr{K}$ has no infinite orbit, then the existence of a measure as in the conclusion is easy to see; this is because the system decomposes into a finite union of parallel families of finite orbits, and such a system has many invariant non-atomic measures (see \cite{Lev98}, also \cite{GLP94}).  Our goal now is to use Proposition \ref{P.InvariantMeasure} to construct invariant measures on exceptional subsets of certain trees in $\overline{cv}_n$.

We will need the following result of Guirardel.

\begin{prop}\label{P.LimitMap}\cite[Proposition 5.5]{Gui00}
 Let $T$ be a minimal non-abelian action with dense orbits of a finitely generated group $G$, and assume that $T$ is not a line.  Assume that we are given actions $T_p$, $T_p'$, and $T'$ such that $T_p \rightarrow T$ and $T_p' \rightarrow T'$; further assume that we have equivariant 1-Lipshitz maps preserving alignment $q_p:T_p \rightarrow T_p'$.  Then there is an equivariant 1-Lipshitz map preserving alignment $q:T \rightarrow T'$.
\end{prop}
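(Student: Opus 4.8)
The plan is to construct $q$ first on a single dense orbit of $T$ and then extend it by uniform continuity. If $G$ fixes a point $z_0\in T'$, the constant map $q\equiv z_0$ is already $G$-equivariant, $1$-Lipschitz and alignment-preserving, so I may assume $G$ fixes no point of $T'$; then $G$ contains an element $g_0$ that is hyperbolic on $T'$, and hence (since $\ell_{T_p'}\to\ell_{T'}$) hyperbolic on $T_p'$ for all large $p$. Fix any point $x\in T$; since $T$ has dense orbits, $\overline{Gx}=T$. Using the convergence $T_p\to T$ and a diagonal argument over the countable group $G$, choose base points $x_p\in T_p$ with $d_{T_p}(gx_p,hx_p)\to d_T(gx,hx)$ for all $g,h\in G$, and set $y_p:=q_p(x_p)\in T_p'$.

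The crux is to keep the points $y_p$ from escaping to infinity under the convergence $T_p'\to T'$. Since $q_p$ is $G$-equivariant and $1$-Lipschitz, for all $g,h\in G$ we have $d_{T_p'}(gy_p,hy_p)=d_{T_p'}(q_p(gx_p),q_p(hx_p))\le d_{T_p}(gx_p,hx_p)$, and the right-hand side converges to $d_T(gx,hx)$; thus each of these sequences is bounded, and a further diagonal extraction gives limits $\delta(g,h):=\lim_p d_{T_p'}(gy_p,hy_p)$ with $\delta(g,h)\le d_T(gx,hx)$. Taking $g=1$ and $h=g_0$ shows that $d_{T_p'}(y_p,g_0y_p)$ is bounded, so for large $p$ the point $y_p$ lies within uniformly bounded distance of the axis $A_{T_p'}(g_0)$, hence within uniformly bounded distance of the minimal subtree of $T_p'$; more generally $gy_p$ stays in a bounded region for each fixed $g\in G$. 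Since $T_p'\to T'$, the finite configurations $\{gy_p\}_{g\in K}$ ($K\subseteq G$ finite) therefore subconverge to a $G$-configuration $\{gy\}_{g\in G}$, a priori only in the completion $\overline{T'}$, with $d(gy,hy)=\delta(g,h)$. I then set $q(gx):=gy$; this is well defined, since $gx=hx$ forces $d_T(gx,hx)=0$, hence $\delta(g,h)=0$ and $gy=hy$, and it is manifestly $G$-equivariant.

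It remains to extend $q$ to all of $T$ and to check the two analytic conditions. Because $d(q(gx),q(hx))=\delta(g,h)\le d_T(gx,hx)$, the map $q$ is $1$-Lipschitz on the dense set $Gx$, so it extends uniquely to a $1$-Lipschitz, $G$-equivariant map $q\colon T\to\overline{T'}$; with some additional care, using again that the relevant configurations stay in a bounded region and that $T_p'\to T'$, one checks that the image lies in $T'$ itself. For alignment-preservation it suffices, by continuity and density of $Gx$, to verify the condition on the convex hull of any finite subset $P=\{g_1x,\dots,g_kx\}$ of $Gx$: the hulls $\mathrm{Hull}(\{g_ix_p\})\subseteq T_p$ converge to $\mathrm{Hull}(P)\subseteq T$, the restricted maps $q_p|_{\mathrm{Hull}(\{g_ix_p\})}$ are $1$-Lipschitz with image in $\mathrm{Hull}(\{g_iy_p\})$, and along a subsequence they converge to a map $\mathrm{Hull}(P)\to T'$ agreeing with $q$ on $P$; since each $q_p$ preserves alignment and this is a closed condition on maps of trees (the relation $d(f(a),f(b))+d(f(b),f(c))=d(f(a),f(c))$ whenever $b\in[a,c]$), the limit map, and thus $q$, preserves alignment. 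The steps I expect to demand the most care are the non-escape estimate for $(y_p)$ --- where the hyperbolic element $g_0$ is indispensable --- and the verification that the limit map lands in $T'$ rather than merely in $\overline{T'}$.
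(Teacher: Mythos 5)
The paper does not prove this statement; it is quoted directly from Guirardel \cite[Proposition 5.5]{Gui00}, so there is no in-paper argument to compare against.

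Your strategy --- diagonalize to obtain base points $x_p\in T_p$, set $y_p:=q_p(x_p)$, bound the displacements $d_{T_p'}(gy_p,hy_p)$ via $1$-Lipschitzness, extract a limiting configuration, and extend by density --- is the natural one. But the non-escape step has a genuine gap. From boundedness of $d_{T_p'}(y_p,g_0y_p)$ you conclude that $y_p$ lies within uniformly bounded distance of the axis $A_{T_p'}(g_0)$, ``hence within uniformly bounded distance of the minimal subtree of $T_p'$.'' The second clause is vacuous ($T_p'$ is already minimal), and the first gives nothing because $A_{T_p'}(g_0)$ is unbounded: $y_p$ is free to drift off to infinity along that axis without violating any of your displacement bounds (on a line, $d(y,gy)$ is independent of $y$ when $g$ acts as a translation). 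To actually localize $y_p$ relative to the reference configuration implicit in $T_p'\to T'$ you need at least two hyperbolic elements $g_0,g_1$ whose axes in $T'$ intersect in a bounded (possibly empty) segment; proximity to both axes then traps $y_p$ near the bridge between them, a bounded region that persists under the convergence and furnishes a legitimate base point for a pointed limit. This presupposes that $T'$ is irreducible, which is not among the hypotheses (they constrain $T$, not $T'$). You dispose of the case where $T'$ is a point, but the case where $T'$ is a line must be treated separately and your sketch does not cover it; the verification that the limit lands in $T'$ rather than merely $\overline{T'}$ is likewise flagged but not carried out. So the argument as written singles out a lone $g_0$ as ``indispensable,'' when in fact a single hyperbolic element cannot prevent escape --- that is the missing idea.
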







As mentioned in the Acknowledgements, the proof of Proposition \ref{P.ExceptionalMeasure} was inspired by a conversation with Vincent Guirardel.  Before talking to Guirardel, we were using \cite[Theorem 3.1]{Pl75} instead of proving Proposition \ref{P.ExceptionalMeasure}.  Guirardel pointed out that \cite[Theorem 3.1]{Pl75} is not correct and mentioned that it is possible to construct an invariant measure on actions that are non-nesting and \emph{mixing}.  

The idea is as follows: given an action $F_n \curvearrowright T$ with dense orbits such that $T$ contains an exceptional set $X$, we form an action $F_n \curvearrowright T'$ on the space $T'$ obtained from $T$ by collapsing the components of the complement of $X$ to points.  One checks that $F_n \curvearrowright T'$ is a non-nesting action on a tree $T'$.  We then find an invariant, non-atomic measure on $T'$, and then this measure pulls back to an invariant measure on $T$, supported on $X$.  

The idea of Guirardel ensures the existence of an invariant, non-atomic measure as long as the action $F_n \curvearrowright T'$ is mixing.  However, we cannot assume that our quotient action is mixing; a resultant problem is that our attempt to build an invariant measure may result in an atomic measure, which may not be invariant.  Fortunately, in this case we are able to use Proposition \ref{P.LimitMap} to obtain a graph of actions structure on $F_n \curvearrowright T$.

It should be noted that this ``collapse, measure, pull-back'' idea is not necessary to prove Proposition \ref{P.ExceptionalMeasure}; indeed, it is possible to work in a pseudogroup associated to the original tree, and this approach is likely more intuitive.  However, we have chosen not to use pseudogroups so as to not further lengthen an already long exposition.  

\begin{prop}\label{P.ExceptionalMeasure}
 Let $T \in \overline{cv}_n$ have dense orbits.  Suppose that $T$ does not split as a graph of actions and that $T$ contains an exceptional set $X$.  Then there is $\mu \in M_0(T)$ supported on $X$.  
\end{prop}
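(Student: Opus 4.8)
The plan is to obtain $\mu$ as the pull-back, along an equivariant alignment-preserving collapse, of a non-atomic invariant measure produced by Levitt's result on invariant measures for non-nesting systems (Proposition \ref{P.InvariantMeasure}). First I would form the quotient $T'$ of $T$ obtained by collapsing to a point the closure of each component of $T\setminus X$. Since $X$ is closed, $F_n$-invariant, and nowhere dense in every finite subtree, and since the components of $T\setminus X$ are convex, the quotient $T'$ is a topological tree, the map $\pi\colon T\to T'$ is $F_n$-equivariant and alignment-preserving in the sense of Definition \ref{D.AlignPreserve}, and $F_n$ acts on $T'$ by homeomorphisms. One then checks that this action is \emph{non-nesting}: an isometric action on an $\mathbb{R}$-tree cannot carry a finite arc properly into itself, and (modulo care with possibly unbounded collapsed components) this property descends through $\pi$.

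Next I would pass to a finite supporting subtree: take $K\subseteq T$ to be the convex hull of a finite orbit of a point, as in Subsection \ref{SS.LM}, and set $K':=\pi(K)$. The restrictions to $K'$ of a finite generating set of $F_n$ generate a non-nesting closed system of maps $\mathscr K$ on the finite tree $K'$, and Proposition \ref{P.InvariantMeasure}, together with Levitt's observation for systems all of whose orbits are finite, furnishes a non-atomic $\mathscr K$-invariant probability measure $\mu'$ on $K'$. Via the identification of invariant length measures with $\Gamma_{K'}$-invariant measures on a finite supporting subtree recalled in Section \ref{S.Measures}, $\mu'$ determines an $F_n$-invariant non-atomic length measure on $T'$, which I again denote $\mu'$. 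Finally I would set $\mu:=\pi_*\mu'$: by the pull-back construction of \cite{Gui00} this is a well-defined $F_n$-invariant length measure on $T$; it is non-atomic because $\mu'$ is and each collapsed component is $\mu'$-null; it is nonzero because $\pi|_K\colon K\to K'$ is surjective and alignment-preserving; and it is supported on $X$ precisely because every component of $T\setminus X$ collapses to a $\mu'$-null point, so that $\mu_I(I\setminus X)=0$ for every finite arc $I$. Hence $\mu\in M_0(T)$ is supported on $X$.

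The main obstacle, and the place where the hypothesis that $T$ does not split as a graph of actions enters, is that the measure-production step can degenerate. The quotient action $F_n\curvearrowright T'$ need not be mixing --- in contrast with the situation treated in \cite{GL10} --- so the natural attempt to construct an invariant measure can return an atomic measure, and a weak limit of measures invariant under only finitely many elements of the pseudogroup generated by $F_n$ on $K'$ need not be invariant under all of it, the domains of that pseudogroup being closed (see the discussion in Section \ref{S.Measures}). To handle this I would argue by contradiction: if the construction yields an atomic or non-invariant limit, then, after rescaling, Guirardel's limiting-map result (Proposition \ref{P.LimitMap}) applied to the approximating trees produces an equivariant alignment-preserving map of $T$ onto a tree carrying an invariant atomic measure; by Lemma \ref{L.SplitAtoms} this exhibits a transverse covering of $T$, i.e. a graph-of-actions splitting, contradicting the hypothesis. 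Thus the degenerate case cannot occur, the measure built above is genuinely non-atomic and $F_n$-invariant, and the argument closes. As noted in the surrounding text, one could instead carry out the whole argument inside a pseudogroup attached directly to $T$, which would be more transparent but longer; the role of Proposition \ref{P.LimitMap} in excluding the atomic case would be unchanged.
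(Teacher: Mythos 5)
Your proposal captures the overall shape of the paper's argument: collapse along the closures of components of $T\setminus X$ to a non-nesting topological tree, use Levitt's Proposition \ref{P.InvariantMeasure} to manufacture a non-atomic invariant measure, pull it back to $T$, and exclude a degenerate case via Proposition \ref{P.LimitMap} and the no-graph-of-actions hypothesis. However, two of the steps as you have written them do not go through.

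First, applying Proposition \ref{P.InvariantMeasure} to the finitely generated closed system $\mathscr{K}$ of restrictions to $K'$ does \emph{not} yield an $F_n$-invariant length measure on $T'$. This is exactly the closed-domain subtlety flagged in Section \ref{S.Measures}: since the domains of the maps $g|_{K'}$ are closed, a measure invariant under the system generated by the restrictions of a finite generating set need not be invariant under the restrictions of arbitrary $g\in F_n$, because compositions lose domain and so need not recover $(gh)|_{K'}$. The identification of $M(T')$ with $\Gamma_{K'}$-invariant measures requires invariance under the full pseudogroup, not just under $\mathscr{K}$. The paper sidesteps this precisely by passing to the geometric approximations $T_i$ dual to the restrictions of a fixed basis $B$ to an exhaustion $K_i$: for a geometric tree the pseudogroup on $K_i$ really is the finitely generated closed system, so Levitt applies cleanly and gives genuine invariant measures $\mu_i$ on $T_i$. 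The price is that one must then take a limit of the $\mu_i$, which is where the degeneration can occur; without the geometric approximation there is no sequence whose limit is at issue, and your invariance claim on $T'$ is unsupported.

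Second, your treatment of the degenerate case is incorrect. You propose to produce an alignment-preserving map from $T$ onto a tree carrying an invariant atomic measure and then invoke Lemma \ref{L.SplitAtoms}. But that lemma splits the \emph{target}, not $T$; and a transverse covering of a quotient does not pull back to a transverse covering of $T$ along a collapse, because preimages of the pairwise intersection points can be non-degenerate arcs, destroying transversality. The paper's argument runs in the opposite direction. It forms auxiliary trees $Y_i$ whose underlying set is $T_i$ but whose Lebesgue measure is $\mu_L(T_i)+\mu_i$, so that $Y_i\to T_i$ and $Y_i\to T_i''$ are alignment-preserving and the former are bijections; Proposition \ref{P.LimitMap} is then applied to get limit maps $g\colon Y\to T$ and $h\colon Y\to T''$ \emph{out of} $Y$, not out of $T$. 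If $g$ is a bijection, $\mu_L(Y)-g^{*}\mu_L(T)$ is the desired measure. If not, the atom in the limit forces a positive lower bound on translation lengths of certain hyperbolic elements, so $T''$, hence $Y$ via the $1$-Lipschitz alignment-preserving $h$, fails to have dense orbits; $Y$ then splits by Proposition \ref{C.IndiscreteGraph} into simplicial edges and dense-orbit subtrees, and pushing the dense-orbit part forward along $g$ gives the transverse covering of $T$. The contradiction is reached through that specific construction of $Y$ and the two alignment-preserving limit maps, not through Lemma \ref{L.SplitAtoms}.
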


\begin{proof}
 Let $T$ and $X$ as in the statement, and fix a basis $B=\{g_1,...,g_n\}$ for $F_n$.  Let $x \in T$, and set $K$ to be the convex hull of $\{gx|g \in B^{\pm}\}$, so $K$ is a finite supporting subtree.  

 Define a relation $R_0$ on $T$ by $xR_0y$ if $[x,y]$ is contained in the closure of some component of $T \setminus X$.  Let $R$ be the equivalence relation generated by $R_0$.  We want to see that the classes of $R$ are closed subtrees of $T$.  Let $Y$ be some class of $R$, and let $y_i \in Y$ be a sequence of points converging to $y \in T$.  The segments $I_m:=\cap_{k \geq m} [y_1,y_k]=[y_1,p_m]$ are contained in $Y$, and $p_m$ converges to $y$.  If $y \notin Y$, then $y \in X$.  For $k >> 0$, there is $g \in F_n$ such that $g[p_m,y] \subseteq K$; as $X \cap K$ is a Cantor set, by increasing $m$ if necessary, we can assume that $g[p_m, y] \cap X=\{y\}$, hence $y \in Y$.  

 The set $\mathscr{T}=\{Y_i\}_{i \in I}$ of non-degenerate classes of $R$ is clearly a transverse family; further, it is easy to see that for $Y \neq Y' \in \mathscr{T}$, $Y \cap Y' = \emptyset$.  Put $T'$ to be the quotient of $T$ by $R$; the natural map $f:T \rightarrow T'$ is continuous on segments of $T$ and has convex point preimages.  As classes of $R$ are closed, $T'$ is a regular Hausdorff space.  Further, it is easy to see that $T'$ is uniquely arc connected and locally arc connected, hence by \cite{MO90} $T'$ is an $\mathbb{R}$-tree.  There is by construction an action of $F_n$ on $T'$ such that each $f \in F_n$ acts as a homeomorphism on any finite subtree of $T'$ .  It is easy to check that the action is non-nesting and supported on the finite subtree $K':=f(K)$.



 Let $K_i \subseteq T$ be the convex hull of $\{gx: ||g||_B \leq i\}$, so $K_1=K$, and the sequence $K_i$ is an invasion of $T$ by finite subtreees.  Let $T_i$ be the geometric action corresponding to the restrictions of elements of $B$ to $K_i$; see \cite{GLP94}.  The set $X \cap K$ gives rise to an exceptional sets $X_i \subseteq T_i$; as above, let $f_i:T_i \rightarrow T_i'$ be the quotient map, so that we have non-nesting actions $F_n \curvearrowright T_i'$ that are dual to finite systems of maps as in \cite{Lev98}.  By Proposition \ref{P.InvariantMeasure}, each such action supports an invariant measure with no atoms, so we get actions by isometries $F_n \curvearrowright T_i''$ along with equivariant, alignment preserving maps $f_i'':T_i \rightarrow T_i''$.  Passing to a subsequence if necessary and rescaling, we get a sequence of actions $F_n \curvearrowright T_i''$ that is convergent in $\overline{cv_n}$ to some action $F_n \curvearrowright T''$.  

 Pulling back via $f_i''$ the Lebesgue measure on $T_i''$, we get invariant measures $\mu_i$ on $T_i$ supported on $X_i$; let $Y_i$ be the tree with underlying set $T_i$ and Lebesgue measure $\mu_L(Y_i):=\mu_L(T_i) + \mu_i$.  We get equivariant 1-Lipschitz, alignment preserving maps $g_i:Y_i \rightarrow T_i$ and $h_i:Y_i \rightarrow T_i''$; further, as each $\mu_i$ is non-atomic, each $g_i$ is a bijection.  

 For an $F_n$-tree $U$ and an element $g \in F_n$, let $A_U(g)$ denote the characteristic set of $g$ in $U$.  Let $g \in F_n$ be hyperbolic in $T$; then for $i >>0$, $g$ is hyperbolic in $T_i$.  It follows from \cite[Lemma 5.1]{Gui00} that $f_i''(A_{T_i}(g))=A_{T_i''}(g)$; hence it follows that $l_{Y_i}(g)=l_{T_i}(g) + l_{T_i''}(g)$.  On the other hand, we have arranged that the sequences $l_{T_i}(g)$ and $l_{T_i''(g)}$ are convergent; hence, the sequence $l_{Y_i}(g)$ is convergent.  It follows that the sequence of actions $F_n \curvearrowright Y_i$ converges to an action $F_n \curvearrowright Y \in \overline{cv}_n$; further, Proposition \ref{P.LimitMap} gives 1-Lipschitz alignment preserving maps $g:Y \rightarrow T$ and $h:Y \rightarrow T''$.  

 If the map $g:Y \rightarrow T$ is a bijection, then we are finished; indeed, in this case $\mu_L(Y)-g^*(\mu_L(T))$ is an invariant measure with no atoms supported on $X$.  Hence, we suppose that the sequence $\mu_i$ converges to an atomic measure $\nu$ on $K'$, and let $y \in K'$ have $\nu(\{y\})=m>0$.  It is easy to see that in this case there is a germ $\hat{d}$ of a direction $d$ at $y$ in $K'$ such that the set $O(\hat{d}):=\{g \in F_n| gy \in K'$ and $g[y,\hat{d}] \cap K'$ is non-degenerate$\}$ is finite (else, the sequence $\mu_i(K')$ is unbounded).  It follows that there is a positive lower bound for the translation lengths of hyperbolic elements $g \in F_n$ such that $A(g)$ meets $F_ny$ and non-degenerately interects $F_nd$.  This shows that $T''$ does not have dense orbits, \emph{i.e.} $T''$ has non-empty simplicial part.  

 As the map $h:Y \rightarrow T''$ is equivariant, 1-Lipschitz, and alignment-preserving, $Y$ has a non-empty simplicial part as well.  Let $\mathscr{C}=\{Y_i\}_{i \in I}$ be the transverse covering of $Y$ by subtrees $Y_i$, which are either simplicial edges or have dense orbits (see Proposition \ref{C.IndiscreteGraph}); and put $\mathscr{C}_0 \subseteq \mathscr{C}$ to be collection of trees with dense orbits.  Evidently, $g(\mathscr{C}_0):=\{g(Y_i)|Y_i \in \mathscr{C}_0\}$ is a transverse covering of $T$, hence by Lemma \ref{L.Graphs} $T$ splits as a graph of actions, a contradiction.



\end{proof}

This immediately gives.

\begin{cor}\label{C.NoException}
 Let $T \in \overline{cv}_n$ have dense orbits, and let $K \subseteq T$ be a finite supporting subtree.  Suppose that $T$ does not split as a graph of actions.  If for each $\mu \in M_0(T)$, one has that $Supp_K(\mu)=K$, then $T$ is indecomposable.
\end{cor}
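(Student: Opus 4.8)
The plan is to deduce this directly from the trichotomy of Proposition \ref{P.Tricot} together with Proposition \ref{P.ExceptionalMeasure}. First I would note that $M_1(T) \subseteq M_0(T)$, so the hypothesis that $Supp_K(\mu) = K$ for every $\mu \in M_0(T)$ in particular forces $Supp_K(\mu) = K$ for every $\mu \in M_1(T)$; this is precisely the hypothesis of Proposition \ref{P.Tricot}. Applying that proposition, one of the following three alternatives must hold: $F_n \curvearrowright T$ is indecomposable, $T$ splits as a graph of actions, or $T$ contains an exceptional subset.

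The second alternative is excluded by hypothesis, so it remains only to rule out the third. Suppose toward a contradiction that $T$ contains an exceptional set $X$. Since $T$ does not split as a graph of actions, Proposition \ref{P.ExceptionalMeasure} applies and yields a measure $\mu \in M_0(T)$ supported on $X$; that is, $Supp_{K'}(\mu) \subseteq K' \cap X$ for every finite subtree $K' \subseteq T$, and in particular $Supp_K(\mu) \subseteq K \cap X$. But $X$ is exceptional, so $K \cap X$ is closed and nowhere dense in $K$; since $T$ is non-trivial the supporting subtree $K$ is non-degenerate, hence $K$ is not nowhere dense in itself and $K \cap X \subsetneq K$. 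Therefore $Supp_K(\mu) \neq K$, contradicting the hypothesis applied to this particular $\mu \in M_0(T)$. Thus $T$ contains no exceptional subset, and we are left with the first alternative: $F_n \curvearrowright T$ is indecomposable.

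I expect no genuinely hard step here: the corollary is essentially a bookkeeping consequence of the two preceding propositions. The only points that deserve a word of care are the implication in the first paragraph (that the $M_0(T)$-hypothesis is at least as strong as the $M_1(T)$-hypothesis needed to invoke Proposition \ref{P.Tricot}) and the elementary observation in the second paragraph that an invariant measure whose support is contained in a nowhere-dense invariant set cannot have full support on a non-degenerate finite supporting subtree.
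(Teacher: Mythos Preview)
Your proof is correct and follows essentially the same approach as the paper: combine Proposition \ref{P.Tricot} with Proposition \ref{P.ExceptionalMeasure} to rule out the exceptional-set alternative, leaving only indecomposability. Your version simply spells out the two points the paper leaves implicit, namely that $M_1(T)\subseteq M_0(T)$ so Proposition \ref{P.Tricot} applies, and that a measure supported on a nowhere-dense set cannot have $K$-support equal to $K$.
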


\begin{proof}
 By the hypothesis on $M_0(T)$ and by Proposition \ref{P.ExceptionalMeasure}, we have that $T$ contains no exceptional set; the result is then a restatement of Proposition \ref{P.Tricot}, 
\end{proof}

To finish this section, we provide an example showing that the hypothesis of Proposition \ref{P.ExceptionalMeasure} is necessary and not an artifact of our proof.

\begin{example}\label{E.HiddenDynamics}
Let $F_n \curvearrowright T$ be the action constructed in Example \ref{E.ExceptionalSet}; as aforementioned, $F_n \curvearrowright T$ is neither indecomposable nor a graph of actions.  By Proposition \ref{P.ExceptionalMeasure} we can find an invariant measure $\nu$ supported on the exceptional set $E \subseteq T$; we remark that it is easy to check that $E$ is $\mu_L$-measure zero, where $\mu_L=\mu_L(T)$ is the Lebesgue measure for $T$.  Further, one can show that $dim(M_0(T))=2$, \emph{i.e.} $\mu_L$ and $\nu$ are the only ergodic measures on $T$ up to rescaling; see \cite{CH10}. 

Let $T'$ be the tree with underlying set $T$ and Lebesgue measure $\mu_L(T')=\mu_L +\nu$; as $\nu$ is non-atomic, the ``identity map'' $f:T \rightarrow T'$ is continuous on segments and bijective.  By Lemma \ref{L.Observers}, $f$ extends to a unique homeomorphism $\hat{f}:\hat{T} \rightarrow \hat{T'}$; refer to Subsection \ref{SS.Observe}.  On the other hand, since $\nu$ is singular with respect to $\mu_L(T)$, the map $f$ is not continuous with respect to the metric topologies on $T$ and $T'$; we leave this as an exercise to the reader.

As in Subsection \ref{SS.Observe}, let $q \in T$ be a base point, and let $(p_k)$ be a sequence in $T$.  Put $I_m:= \cap_{k \geq m} [q,p_k]$, so $I_m=[q,r_m]$, and we have $I_m \subseteq I_{m+1}$.  Recall that the \emph{inferior limit} of $(p_k)$ from $q$ is the limit $\lim_q p_k:= \lim r_m$.  If $(p_k)$ is convergent in the metric topology to $p \in T$, then $p=\lim_q p_k$.  Since $f$ is not continuous with respect to the metric topology, we can find a convergent sequence $(p_k)$ of points in $T$ such that $(f(p_k))$ is not convergent in $T'$.  Since $f$ is continuous on segments and since $\lim p_k = \lim_q p_k =p$, we have that the sets $I_m':=[f(q),f(r_m)]$ satisfy $\overline{\cup_m I_m'}=f([q,p])$; it follows that the distances $d_{T'}(f(p_k),[f(q),f(p)])$ are bounded below by some number $c>0$.  On the other hand, since $\lim p_k=\lim_q p_k=p$, we have that $d_T(p_k,[q,p])\rightarrow 0$.

Replace $(p_k)$ be a subsequence to ensure that $\sum_k d(p_k, [q,p])$ is finite.  Let $y_k \in [q,p]$ be defined so that $d(p_k,y_k)=d(p_k,[q,p])$, and set $J_k:=[p_k,y_k]$.  By the above paragraph, the lengths of $f(J_k)$ are bounded below by $c >0$.  It is an easy exercise using the fact that $T$ has dense orbits to use the intervals $J_k$ to produce a finite length ray $\rho$ in $T$, such that $f(\rho)$ is unbounded in $T'$. Since $f$ is continuous on segments, and since $\hat{f}$ is a homeomorphism, we can conclude that $\rho$ converges to a point $w \in \overline{T} \setminus T$, while $f(\rho)$ converges to a point of $\partial T'$.  

Let $w'\in T$ be a point with trivial stabilizer; we are going to form an ``HNN-extension'' of $T$ using $w$ and $w'$.  Let $S$ be the Bass-Serre tree for the splitting $F_{n+1}=F_n \ast_{\langle 1 \rangle}$.  Associate to each vertex of $S$ is a copy of the tree $T$.  Let $\tau \subseteq S$ be a lift of a spanning tree of $S/F_{n+1}$, \emph{i.e.} $\tau$ is an edge $e$ of $S$; put $p_e:=w$ and $p_{\overline{e}}:=w'$, and extend this equivariantly to associate to each directed edge of $S$ an attaching point to get a graph of actions $\mathscr{G}$.  Let $Y:=T_{\mathscr{G}}$ be the tree dual to $\mathscr{G}$. 

By construction, the measure $\nu$ on $T$ does not give an invariant measure on $Y$; this is because we have arranged that any finite arc of the form $[z,w]$ would have infinite measure.  On the other hand, any invariant measure $\mu'$ on $Y$ clearly gives rise to an invariant measure on $T$.  Hence, $dim(M_0(Y))=1$, \emph{i.e.} $Y$ is uniquely ergodic.  The existence of the exceptional set $E \subseteq T$ gives rise to an exceptional set $E' \subseteq Y$, and it follows that there is no measure supported on $E'$.   
\end{example}

It should be noted that the trick in the example can be applied to any tree $T$ with dense orbits such that there exist two mutually-singular ergodic measures $\mu, \mu' \in M_0(T)$ such that $Supp(\mu) \subseteq Supp(\mu')$; the result will be an HNN-extension of $T$ in which the measure $\mu$ has been ``hidden.''  

\section{Dynamics on $\overline{CV}_n$}\label{S.MainDynamics}

We are now in a position to prove our main dynamics result; we are left to handle convergence for trees $T \in \overline{cv}_n$ that are not indecomposable and do not split as a graph of actions.  The results of Section \ref{S.Measures} shows that in this case, there is a projection $T'$ of $T$ that is either indecomposable or a graph of actions.  We are left to show that this projection does not distort the tree $T$ in non-obvious ways: we need to control $L^2(T')$.

\begin{lemma}\label{L.ControlL2}
 Let $T \in \overline{cv}_n$ have dense orbits, and let $\mu \in M_0(T)$.  Then $L^2(T) \subseteq L^2(T_{\mu})$.
\end{lemma}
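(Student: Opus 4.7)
The plan is to exploit $L^2(T) = L^2_\Omega(T) = \bigcap_{\epsilon > 0} L^2_\epsilon(T)$, where $L^2_\epsilon(T)$ is the closure in $\partial^2(F_n)$ of the $F_n$- and $i$-invariant set $\{(g^{-\infty}, g^{\infty}) : l_T(g) < \epsilon\}$. The inclusion $L^2(T) \subseteq L^2(T_\mu)$ then reduces to the following \emph{Key Lemma}: if $g_n \in F_n$ satisfies $l_T(g_n) \to 0$, then $l_{T_\mu}(g_n) \to 0$. Granted this, any $(X,Y) \in L^2(T)$ is approximated via diagonalization by pairs $(g_n^{-\infty}, g_n^{\infty})$ with $l_T(g_n) \to 0$ and $(g_n^{-\infty}, g_n^{\infty}) \to (X,Y)$ in $\partial^2(F_n)$; the Key Lemma places these eventually in $\Omega_\delta(T_\mu)$ for every $\delta > 0$, so $(g_n^{-\infty}, g_n^{\infty}) \in L^2_\delta(T_\mu)$ eventually, and closedness gives $(X,Y) \in L^2_\delta(T_\mu)$; intersecting over $\delta$ yields $(X,Y) \in L^2(T_\mu)$.

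To prove the Key Lemma: elliptic $g_n$ in $T$ fix a point whose image under the equivariant map $f_\mu : T \to T_\mu$ is fixed in $T_\mu$, so $l_{T_\mu}(g_n) = 0$. For hyperbolic $g_n$ with axis point $x_n$, equivariance and the definition of $T_\mu$ give
\[
l_{T_\mu}(g_n) \leq d_{T_\mu}(f_\mu(x_n), f_\mu(g_n x_n)) = \mu([x_n, g_n x_n]).
\]
Fix a finite supporting subtree $K \subseteq T$. Using $F_n$-invariance of $\mu$ and density of orbits, we replace $g_n$ by a conjugate so $d_T(x_n, K) \to 0$, leaving $l_T(g_n)$ and $l_{T_\mu}(g_n)$ unchanged. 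Pass to a subsequence with $x_n \to x_\infty \in \overline{K}$ in the metric topology, so that $g_n x_n \to x_\infty$ as well (since $d_T(x_n, g_n x_n) = l_T(g_n) \to 0$). Using the tree bound $\mu([x_n, g_n x_n]) \leq \mu([x_\infty, x_n]) + \mu([x_\infty, g_n x_n])$ and a further subsequence arranging that the directions of approach of $x_n$ and $g_n x_n$ to $x_\infty$ stabilize (so both small arcs sit in a common finite arc through $x_\infty$), non-atomicity of the resulting finite Borel measure at $x_\infty$ forces each summand to zero. The conclusion for the full sequence follows by the standard subsequence argument.

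The central obstacle is that $\mu$ need not be dominated by the Lebesgue measure on $T$, so a short $T$-segment can have large $\mu$-measure a priori; the Key Lemma thus has genuine content. The argument combines $F_n$-invariance of $\mu$ (to localize the translation segments near a fixed bounded region) with non-atomicity (to force the measures to vanish as the segments collapse). The most delicate point is the direction-stabilization step, which is subtle when $T$ has infinite branching at $x_\infty$: there one must supplement the pigeonhole argument by summing $\mu$-measures over directions and using finiteness of $\mu$ on each finite arc through $x_\infty$.
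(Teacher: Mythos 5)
Your reduction of the lemma to the Key Lemma via $L^2=L^2_\Omega=\bigcap_{\epsilon>0}L^2_\epsilon$ is sound, and the Key Lemma would indeed suffice; the problem is that your proof of the Key Lemma breaks at exactly the step you flag as delicate, and that step cannot be repaired. What you need is: if $x_n\to x_\infty$ in the metric of $T$ (inside a compact finite subtree), then $\mu([x_\infty,x_n])\to 0$. Non-atomicity gives this only when the arcs $[x_\infty,x_n]$ eventually lie in a single finite arc $[x_\infty,y]$, so that continuity from above of the genuine finite Borel measure $\mu_{[x_\infty,y]}$ applies. When the $x_n$ approach $x_\infty$ through infinitely many branches, the proposed repair --- ``summing $\mu$-measures over directions and using finiteness of $\mu$ on each finite arc through $x_\infty$'' --- imposes no constraint at all: a finite arc through $x_\infty$ meets only two of the directions at $x_\infty$, so finiteness of $\mu$ on finite arcs bounds no sum taken over infinitely many directions, and length measures are not global measures on $T$. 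This is not a technicality: Example \ref{E.HiddenDynamics} exhibits a tree $T$ with dense orbits, a measure $\nu\in M_0(T)$, and a metrically convergent sequence $p_k\to p$ for which $(\mu_L+\nu)([p_k,p])$ is bounded below by $c>0$; since $\mu_L([p_k,p])\to 0$, the $\nu$-measures of these shrinking arcs do not tend to $0$. So the pointwise metric continuity your argument relies on is genuinely false for measures with a part singular with respect to $\mu_L$ --- precisely the case you correctly identify as the one with content.

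Beyond the broken step, be aware that your Key Lemma is strictly stronger than the statement being proved: it asserts a uniform comparison of short elements (for every $\delta$ some $\epsilon$ with $\Omega_\epsilon(T)\subseteq\Omega_\delta(T_\mu)$), whereas the lemma only asserts an inclusion of the limiting laminations; nothing in the paper establishes the stronger statement, and I would not assume it is true. The paper's proof deliberately avoids any metric-continuity claim: it reduces to a generic measure $\mu'$ with $\mu_L\leq\mu'$, notes that the resulting equivariant bijection is continuous on segments (which \emph{is} what non-atomicity buys you), and applies Lemma \ref{L.Observers} together with Proposition \ref{P.L2DeterminesT} to get $L^2(T)=L^2(T_{\mu'})$ via the observers' topology, where metric discontinuity of the identity map is irrelevant; the passage from $T_{\mu'}$ down to $T_\mu$ is then the easy $1$-Lipschitz case, where $l_{T_\mu}\leq l_{T_{\mu'}}$ and short elements trivially stay short. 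To salvage your approach you would need either to restrict to $\mu\leq\mu_L$ (where the Key Lemma is immediate) and treat the singular part by the observers'-topology route, or to find a proof of the Key Lemma that does not pass through pointwise metric continuity of $\mu$.
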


\begin{proof}
 Let $T$ as in the statement, and note that if $\mu$ is absolutely continuous with respect to $\mu_L$ then the result is obvious.  Hence, we may reduce to the case that $\mu$ is ``generic'' in the sense that there is no $\nu' \in M_0(T)$ singular with respect to $\mu$; any generic $\mu$ can be rescaled so that $\mu_L \leq \mu$, so we assume that $\mu_L \leq \mu$.  Hence, a 1-Lipschitz, alignment-preserving map $f:T_{\mu} \rightarrow T$, and we need to establish that $L^2(T_{\mu})=L^2(T)$.  It is easy to check that $f$ is a bijection that is continuous on segments; therefore, by Lemma \ref{L.Observers}, $f$ induces a homeomorphism $\hat{f}:\hat{T_{\mu}} \rightarrow \hat{T}$ so that $L^2(T)=L^2(T_{\mu})$.  
\end{proof}

This immediately gives:

\begin{cor}\label{C.OthersConverge}
 Let $T \in \overline{cv}_n$ have dense orbits, and suppose that $T$ is not indecomposable and does not split as a graph of actions.  There is a projection $f:T \rightarrow T'$ such that:

 \begin{enumerate}
  \item [(i)] either $T'$ is indecomposable, or $T'$ splits as a graph of actions,
  \item [(ii)] $L^2(T) \subseteq L^2(T')$.
 \end{enumerate}

\end{cor}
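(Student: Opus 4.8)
The plan is to produce the projection $f\colon T\to T'$ as a \emph{composite} of length-measure projections, running an induction on $\dim M_0(T)$; part (ii) will be free, since every single projection step preserves the relevant inclusion by Lemma \ref{L.ControlL2}, and part (i) will be reached precisely when the dimension can no longer drop. Fix once and for all a finite supporting subtree $K\subseteq T$ (the convex hull of $B^{\pm}x$ for a basis $B$ and a point $x$), so that all the support data $Supp_K(\cdot)$ is available.

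Here is the inductive step. Assume the current tree $T$ has dense orbits and is \emph{neither} indecomposable \emph{nor} a graph of actions (otherwise we stop, with the identity projection). Since $T$ is not a graph of actions, Corollary \ref{C.NoException} in contrapositive form yields, because $T$ is also not indecomposable, a measure $\mu\in M_0(T)$ with $Supp_K(\mu)\subsetneq K$; decomposing $\mu$ into ergodic components via Proposition \ref{P.M0T} and using $Supp_K(\mu)=\bigcup_i Supp_K(\nu_i)$, we may take $\mu$ itself ergodic with $Supp_K(\mu)\subsetneq K$. Form the projection $f_\mu\colon T\to T_\mu$ collapsing the $\mu$-null arcs; it is equivariant and alignment-preserving (its fibers are convex and it is monotone on arcs), so $T_\mu$ is a projection of $T$ in the sense of Section \ref{S.Measures}, and $L^2(T)\subseteq L^2(T_\mu)$ by Lemma \ref{L.ControlL2}. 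The key claim is $\dim M_0(T_\mu)<\dim M_0(T)$. Pullback along $f_\mu$ (see \cite{Gui00}) gives an injective linear map $M_0(T_\mu)\hookrightarrow M_0(T)$ whose image lies in $\{\nu\in M_0(T)\mid \nu\ll\mu\}$: a $\mu$-null arc is collapsed by $f_\mu$ to a point, hence has measure zero for any pulled-back non-atomic measure. Now $Supp_K(\mu)\subsetneq K$ provides a non-degenerate arc $I\subseteq K$ with $\mu(I)=0<\mu_L(I)$, so $\mu_L\not\ll\mu$; writing $\mu_L$ as a sum of ergodic measures, some ergodic summand $\nu_{i_0}$ has $\nu_{i_0}\not\ll\mu$. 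Since the ergodic measures of $T$ are mutually singular, hence linearly independent, and span $M_0(T)$ (Proposition \ref{P.M0T}), the subcone $\{\nu\ll\mu\}$ is spanned by a \emph{proper} subcollection of them and therefore has strictly smaller dimension — which proves the claim.

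Now iterate, replacing $T$ by $T_\mu$. If at some stage the tree fails to have dense orbits, then by Proposition \ref{C.IndiscreteGraph} it is simplicial or splits as a graph of actions, and we stop; if it has dense orbits and is indecomposable or a graph of actions, we stop; otherwise we apply the inductive step again. Because $\dim M_0(\cdot)\ge 1$ always and drops strictly each time, the process terminates after finitely many steps. Composing the (finitely many) alignment-preserving equivariant maps, which are all monotone on arcs, yields an alignment-preserving equivariant map $f\colon T\to T'$, i.e.\ a projection, with $T'$ indecomposable or a graph of actions; chaining the inclusions from Lemma \ref{L.ControlL2} gives $L^2(T)\subseteq L^2(T')$. (The terminal dimension-$1$ case is harmless: a uniquely ergodic tree with dense orbits has every length measure of full support, so Corollary \ref{C.NoException} already forces it to be indecomposable or a graph of actions.)

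The step I expect to be the main obstacle is the strict dimension drop $\dim M_0(T_\mu)<\dim M_0(T)$. The delicate point is that a proper subcone of a convex cone need not be lower-dimensional, so one genuinely needs that passing to $\{\nu\ll\mu\}$ \emph{removes a basis vector} from the finite spanning set of ergodic measures; this is exactly where mutual singularity of the ergodic measures (Proposition \ref{P.M0T}) together with the strict inclusion $Supp_K(\mu)\subsetneq K$ are both essential, and one must also be careful that pullback along $f_\mu$ is well defined and injective on \emph{all} of $M_0(T_\mu)$ even when $T_\mu$ is highly degenerate. A secondary bookkeeping matter is verifying that a composite of equivariant alignment-preserving maps (arising here as projections along nested pseudometrics) is again alignment-preserving, so that the final $f$ legitimately qualifies as a projection.
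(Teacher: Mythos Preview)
Your proof is correct but takes a genuinely different route from the paper. The paper dispatches the corollary in a \emph{single} projection step: it chooses $\mu\in M_0(T)$ whose support is set-theoretically \emph{minimal} and sets $T'=T_\mu$. Minimality of $Supp(\mu)$ guarantees (via pullback and Proposition~\ref{P.SuppProperties}) that every measure on $T'$ has full support---any $\nu'\in M_0(T')$ pulls back to a measure on $T$ with support contained in $Supp(\mu)$, hence equal to it---so Corollary~\ref{C.NoException} applies directly to $T'$ and yields (i) at once; part (ii) is Lemma~\ref{L.ControlL2}, exactly as in your argument.

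You instead pick an \emph{arbitrary} ergodic measure with proper support, project, and iterate, using a strict drop in $\dim M_0(\cdot)$ to force termination. The paper's clever choice of the minimal-support measure is precisely what collapses your induction to one step: projecting along such a $\mu$ already lands in a tree where no further dimension drop is possible. Your dimension-drop argument is sound---the subcone $\{\nu\ll\mu\}$ misses an ergodic basis vector, and the mutually singular ergodic measures are linearly independent---and the bookkeeping about composing alignment-preserving maps is fine (convex preimages pull back to convex sets under maps that send segments to segments). What each approach buys: the paper's is shorter and exploits the finite poset of support sets more sharply; yours is more explicitly inductive and makes transparent that finite-dimensionality of $M_0(T)$ (Proposition~\ref{P.M0T}) is the real engine behind the result.
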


\begin{proof}
Let $T$ as in the statement; choose $\mu \in M_0(T)$ such that $Supp(\mu)$ is set-theoretically minimal.  It follows from Propositions \ref{P.SuppProperties} and \ref{P.ExceptionalMeasure} that if $T':=T_{\mu}$ contains an exceptional set, then $T'$ splits as a graph of actions; hence, by Corollary \ref{C.NoException}, either $T'$ is indecomposable or $T'$ splits as a graph of actions.  Further, it follows from Lemma \ref{L.ControlL2} that $L^2(T) \subseteq L^2(T')$.  
\end{proof}

We are now prepared to prove the main dynamics result of the paper.

\begin{theorem}\label{T.Main}
 Let $\phi:F_n \rightarrow F_n$ be an irreducible endomorphism, and suppose that $\phi \notin Aut(F_n)$.  Further suppose that for any $T \in \overline{cv}_n$, $T\phi$ is non-trivial.  Then $\phi$ acts on $\overline{CV}_n$; there is a unique fixed point $[T_{\Phi}] \in CV_n \subseteq \overline{CV}_n$; and for any compact neighborhood $N$ of $[T_{\Phi}]$, there is $k=k(N)$ such that for any $[T] \in \overline{CV}_n$, $[T]\phi^k \in N$.  
\end{theorem}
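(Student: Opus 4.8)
The plan is to establish the theorem in four steps: check that $\phi$ induces a continuous self-map of $\overline{CV}_n$ that fixes $[T_\Phi]$; show that every point of $\overline{CV}_n$ has an open neighborhood on which the $\phi$-iterates converge uniformly to $[T_\Phi]$; globalize this by compactness of $\overline{CV}_n$; and read off uniqueness of the fixed point. The first step is routine. Since $\phi$ is injective (the corollary to Lemma \ref{L.PreserveFF}), $l_{T\phi}=l_T\circ\phi$ is the length function of the minimal action $\phi(F_n)\curvearrowright T_{\phi(F_n)}$ pulled back along $\phi$, and it is non-trivial exactly by admissibility. One checks $T\phi$ is again very small: a tripod stabilizer pulls back to $\phi^{-1}$ of a tripod stabilizer in $T$, hence is trivial; an arc stabilizer pulls back to $\phi^{-1}$ of a maximal cyclic subgroup $\langle c\rangle$, which is cyclic and is moreover maximal cyclic because $\phi$ is injective (if $h^r\in\phi^{-1}(\langle c\rangle)$ then $\phi(h)$ commutes with a power of $c$, so $\phi(h)\in\langle c\rangle$, and then necessarily $r=1$). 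Thus $T\phi\in\overline{cv}_n$; as the construction is scale-equivariant, $\phi$ acts on $\overline{CV}_n$, extending the known action on $CV_n$, and it is continuous for the projective length-function topology. By Proposition \ref{P.FreeSimp} the stable tree $T_\Phi$ is free and simplicial, so $[T_\Phi]\in CV_n$, and the homothety $f_\phi$ gives $[T_\Phi]\phi=[T_\Phi]$.

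The main step is the claim that every $[T]\in\overline{CV}_n$ has an open neighborhood $V_{[T]}$ on which $\phi^p$ converges uniformly to $[T_\Phi]$; this is assembled from the previous sections by cases on the structure of $T$. If $[T]\in CV_n$ it is Corollary \ref{C.InteriorDynamics}; if $T$ is simplicial it is Proposition \ref{P.SimpConverge}; if $T$ is indiscrete without dense orbits then $T$ splits as a non-trivial graph of actions by Proposition \ref{C.IndiscreteGraph}, so Lemma \ref{L.GraphsConverge} applies; if $T$ has dense orbits and is indecomposable or is a graph of actions, then it is the corollary to Proposition \ref{P.IndecompConverge}, respectively Lemma \ref{L.GraphsConverge}; and if $T$ has dense orbits but is neither, then Corollary \ref{C.OthersConverge} produces a projection $T\to T'$ with $T'$ of one of the handled types and $L^2(T)\subseteq L^2(T')$. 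In all of the handled cases the convergence is furnished by Proposition \ref{P.ConvergeLemma} (possibly after replacing $T$ by an image $T\phi^l$, and then pulling the neighborhood back under the continuous map $\phi^l$), i.e.\ it is witnessed by a leaf of $\Lambda_\Phi$ that a map of small bounded backtracking sends to a set of infinite diameter --- equivalently, a leaf not carried by $L^2(T')$; since $L^2(T)\subseteq L^2(T')$, that leaf is not carried by $L^2(T)$, so Proposition \ref{P.ConvergeLemma} applies to $T$ itself and yields $V_{[T]}$.

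With the claim in hand the theorem follows quickly. The $V_{[T]}$ cover the compact space $\overline{CV}_n$, so there is a finite subcover $V_{[T_1]},\dots,V_{[T_m]}$. Given a compact neighborhood $N$ of $[T_\Phi]$, uniform convergence on each $V_{[T_i]}$ provides $k_i=k_i(N)$ with $V_{[T_i]}\phi^k\subseteq N$ for all $k\ge k_i$; setting $k(N)=\max_i k_i$ we get $[T]\phi^k\in N$ for every $[T]\in\overline{CV}_n$ and every $k\ge k(N)$, which is the ``sink'' statement (and in particular $[T]\phi^k\to[T_\Phi]$ for all $[T]$). Consequently any $\phi$-fixed point $[T]$ satisfies $[T]=[T]\phi^{k(N)}\in N$ for every compact neighborhood $N$ of $[T_\Phi]$, so $[T]=[T_\Phi]$; thus $[T_\Phi]\in CV_n$ is the unique fixed point.

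I expect the genuine obstacle to lie not in this assembly but upstream, in the exceptional-tree case of the case analysis: it depends on Corollary \ref{C.OthersConverge}, hence on Proposition \ref{P.ExceptionalMeasure} (an invariant non-atomic length measure on an exceptional set) and Lemma \ref{L.Observers} (the induced projection preserves the dual lamination), which together reduce an otherwise intractable tree to an indecomposable tree or a graph of actions while keeping control of $\Lambda_\Phi$. Relative to that, the remaining content is bookkeeping: that each application of Proposition \ref{P.ConvergeLemma} yields a convergence neighborhood and not just pointwise convergence, that the inclusion $L^2(T)\subseteq L^2(T')$ lets this neighborhood be transported back to $T$, and that compactness of $\overline{CV}_n$ converts the resulting open cover into a single exponent $k(N)$.
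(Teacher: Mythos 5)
Your proposal takes essentially the same case-analysis approach as the paper, but it handles the final globalization step with significantly more care, and I think that difference is worth recording. The paper's own proof of the ``sink'' statement is two sentences: assume $[T_k]\phi^k\notin N$ for each $k$, and then claim that ``the set of accumulation points $\{[T_k]\}_{k\in\mathbb{N}}$ is $\phi$-invariant and does not contain $[T_\Phi]$, hence empty.'' That claim is not justified as stated: the $[T_k]$ are arbitrary and unrelated to one another, so neither the accumulation set of $([T_k])$ nor of $([T_k]\phi^k)$ is forward-$\phi$-invariant in any obvious sense (if $[T_{k_i}]\phi^{k_i}\to[S]$, then $[T_{k_i}]\phi^{k_i+1}\to[S]\phi$, but the terms $[T_{k_i}]\phi^{k_i+1}$ are not of the form $[T_m]\phi^m$). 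Your route --- extract from Proposition \ref{P.ConvergeLemma} an open neighborhood of \emph{local uniform} convergence for each $[T]$ (pulling back under $\phi^l$ where the convergence is established only for $T\phi^l$), pass to a finite subcover by compactness of $\overline{CV}_n$, and take the maximum exponent --- is the argument that actually makes the uniformity precise, and it also yields uniqueness of the fixed point cleanly. Your explicit check that $T\phi$ is very small (so that $\phi$ genuinely acts on $\overline{CV}_n$) is a welcome addition the paper suppresses.

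One point in your write-up is imprecise, and it corresponds to a place the paper is also terse. In the exceptional case your ``i.e.'' conflates two distinct mechanisms. For trees that split as a nontrivial graph of actions and for indecomposable trees, the uniformity is obtained not from a leaf escaping $L^2(T')$ but from replacing $T'$ by $T'\phi^l$ (which is free simplicial, per Lemma~\ref{L.GraphsConverge} or Proposition~\ref{P.IndecompConverge}), applying Proposition~\ref{P.ConvergeLemma} there, and pulling the neighborhood back under $\phi^l$. This gives a neighborhood of $[T'\phi^l]$, hence of $[T']$, \emph{not} of $[T]$; the projection $T\to T'$ is a map of trees defined via a length measure, not a continuous self-map of $\overline{CV}_n$, so you cannot transport a neighborhood of $[T']$ back to $[T]$ through it. For trees with dense orbits that are neither indecomposable nor a graph of actions you therefore genuinely need the $L^2$-inclusion route: some leaf of $\Lambda_\Phi$ must escape $L^2(T')$, so that via $L^2(T)\subseteq L^2(T')$ it escapes $L^2(T)$ and Proposition~\ref{P.ConvergeLemma} applies to $T$ directly. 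That a leaf of $\Lambda_\Phi$ escapes $L^2(T')$ when $T'$ is indecomposable or a graph of actions is not an automatic consequence of the $\phi^l$-iterate argument (which says nothing directly about $L^2(T')$), and should be justified --- for graphs of actions it follows from Lemma~\ref{L.Supp1} together with Proposition~\ref{P.LamLemma} and the structure of $Q_{T'}$, and for indecomposable $T'$ one needs a separate argument. Since the paper itself does not spell this out, your proof is as complete as the paper's; but if you want to call this ``bookkeeping,'' you should note that it is the one place in the assembly where a nontrivial claim is being made implicitly.
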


\begin{proof}
 Let $\phi$ as in the statement.  First note that it follows from Propositions \ref{P.SimpConverge} and \ref{P.IndecompConverge}, Lemma \ref{L.GraphsConverge}, and Corollary \ref{C.OthersConverge} that for any $T \in \overline{cv}_n$, we have that $[T]\phi^k \rightarrow [T_{\Phi}]$, hence $[T_{\Phi}]$ is the unique fixed point of $\phi$ acting on $\overline{CV}_n$.  

 Toward contradiction suppose that there is a compact neighborhood $N$ of $[T_{\Phi}]$ in $\overline{CV}_n$ and actions $[T_k]$ such that $[T_k]\phi^k \notin N$.  Note that $\phi$ induces a continuous function on $\overline{CV}_n$.  It follows that the set of accumulation points $\{[T_k]\}_{k \in \mathbb{N}}$ is $\phi$-invariant and does not contain $[T_{\Phi}]$, hence this set must be empty, a contradiction.
\end{proof}

To finish this section we bring an immediate corollary of Theorem \ref{T.Main}, showing the existence of a certain type of rigid subgroup of $F_n$.  

\begin{cor}\label{C.Rigid}
For any $C>1$, there is a finitely generated, non-abelian subgroup $H \leq F_n$, such that for any non-trivial $h,h' \in H$ and any trees $T,T' \in \overline{cv}_n$, one has $l_T(h)>0$ and $$\frac{1}{C} \leq \frac{l_T(h)/l_T(h')}{l_{T'}(h)/l_{T'}(h')} \leq C$$
\end{cor}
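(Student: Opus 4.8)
The plan is to take $H:=\phi^k(F_n)$ for a suitably large $k$, where $\phi:F_n\to F_n$ is a fixed admissible irreducible non-surjective endomorphism, and to feed the sink dynamics of Theorem \ref{T.Main} into a uniform comparison of length functions near the fixed point $[T_{\Phi}]$, which by Proposition \ref{P.FreeSimp} is free and simplicial. Since $\phi$ is injective, $H\cong F_n$ is finitely generated and non-abelian. Using the defining identity $l_T(\phi^k(g))=l_{T\phi^k}(g)$ (valid for all $T\in\overline{cv}_n$, $g\in F_n$), the double ratio in the statement rewrites, for $h=\phi^k(g)$ and $h'=\phi^k(g')$ in $H$, as
\[
\frac{l_T(h)/l_T(h')}{l_{T'}(h)/l_{T'}(h')}=\frac{l_{T\phi^k}(g)\,l_{T'\phi^k}(g')}{l_{T\phi^k}(g')\,l_{T'\phi^k}(g)} .
\]
Because $\phi$ is admissible, $T\phi^k$ and $T'\phi^k$ are non-trivial, hence represent points of $\overline{CV}_n$; and by Theorem \ref{T.Main} their classes lie in any prescribed neighbourhood $N$ of $[T_{\Phi}]$ once $k=k(N)$ is large. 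So the whole statement reduces to a single comparison estimate near $[T_{\Phi}]$.

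The key step is: for every $\epsilon>0$ there is a neighbourhood $N(\epsilon)$ of $[T_{\Phi}]$ in $\overline{CV}_n$ such that every $[S]\in N(\epsilon)$ has a representative $S$ with
\[
(1-\epsilon)\,l_{T_{\Phi}}(g)\ \le\ l_S(g)\ \le\ (1+\epsilon)\,l_{T_{\Phi}}(g)\qquad\text{for all }1\neq g\in F_n;
\]
in particular $l_S(g)>0$, since $T_{\Phi}$ is free. Granting this, fix $\epsilon$ with $\bigl(\tfrac{1+\epsilon}{1-\epsilon}\bigr)^{2}\le C$, put $N=N(\epsilon)$, let $k=k(N)$ be as in Theorem \ref{T.Main}, and set $H=\phi^k(F_n)$. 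Applying the comparison to $S=T\phi^k$ and to $S'=T'\phi^k$, whose classes lie in $N$, the factors $l_{T_{\Phi}}(g)$ and $l_{T_{\Phi}}(g')$ cancel in the displayed ratio, giving $l_T(h)=l_{T\phi^k}(g)>0$ and
\[
\frac{l_T(h)/l_T(h')}{l_{T'}(h)/l_{T'}(h')}\ \in\ \Bigl[\bigl(\tfrac{1-\epsilon}{1+\epsilon}\bigr)^{2},\ \bigl(\tfrac{1+\epsilon}{1-\epsilon}\bigr)^{2}\Bigr]\ \subseteq\ \bigl[\tfrac1C,\,C\bigr],
\]
which is the assertion.

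To prove the comparison lemma one can either quote that $CV_n$ is open in $\overline{CV}_n$ and that the symmetrized Lipschitz metric is compatible with the topology there — so that a small enough $N(\epsilon)$ is a Lipschitz ball about $[T_{\Phi}]$ and the estimate is immediate — or argue by hand. Directly: realize the marking by an equivariant map $f_S:T_{\Phi}\to S$ sending each of the finitely many edges of $T_{\Phi}$ to a geodesic, re-metrize the underlying graph of $T_{\Phi}$ so that $f_S$ becomes isometric on edges, obtaining a point $T_{\Phi}^{(S)}\in cv_n$; after normalizing $S$ by $l_S(g_0)=l_{T_{\Phi}}(g_0)$ for one fixed $g_0\neq1$, the new edge lengths lie within $(1\pm\epsilon)$ of the original ones once $[S]$ is close to $[T_{\Phi}]$. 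Since a cyclically reduced loop representing $[g]$ crosses the $i$-th edge $w_i(g)$ times and $l_{T_{\Phi}}(g)=\sum_i w_i(g)\,\ell_i$, pushing an edge path representing $g$ across $f_S$ yields the upper bound $l_S(g)\le\sum_i w_i(g)\,\ell_i(S)\le(1+\epsilon)\,l_{T_{\Phi}}(g)$; for the lower bound, Proposition \ref{P.BBT} bounds the backtracking constant of $f_S$ by $V:=vol(T_{\Phi}^{(S)})\le(1+\epsilon)\,vol(T_{\Phi}/F_n)$, so $l_S(g)\ge\sum_i w_i(g)\,\ell_i(S)-2V\ge(1-\epsilon)l_{T_{\Phi}}(g)-2V$. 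This already gives $l_S(g)\ge(1-2\epsilon)l_{T_{\Phi}}(g)$ once $l_{T_{\Phi}}(g)$ exceeds a fixed multiple of $V/\epsilon$; and since $T_{\Phi}/F_n$ is a \emph{finite} metric graph there are only finitely many conjugacy classes below that threshold, which are controlled directly by shrinking $N(\epsilon)$ in the weak topology. Reabsorbing constants into $\epsilon$ at the outset completes the argument.

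The main obstacle is precisely the lower bound in the comparison lemma: closeness of $[S]$ to $[T_{\Phi}]$ in $\overline{CV}_n$ controls only finitely many translation lengths at a time, so genuinely geometric input — here, bounded backtracking together with the finiteness of the underlying graph of the \emph{free simplicial} tree $T_{\Phi}$ — is needed to pin down $l_S(g)$ simultaneously for all $g$. Were $[T_{\Phi}]$ a boundary point of Outer space this comparison would fail, and with it the corollary; the upper bound and the formal reduction via Theorem \ref{T.Main} are comparatively routine.
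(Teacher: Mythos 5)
Your proof is correct and follows the same route as the paper: take $H=\phi^k(F_n)$ for an admissible irreducible non-surjective $\phi$ and apply the sink dynamics of Theorem \ref{T.Main}. The paper dismisses the rest as ``immediate''; you usefully spell out the only non-formal point, namely that near the free simplicial point $[T_{\Phi}]\in CV_n\subseteq\overline{CV}_n$ one has a \emph{uniform} multiplicative comparison of length functions (equivalently, that the symmetrized Lipschitz metric is compatible with the topology there), which is exactly what makes the two-sided bound on the double ratio go through.
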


\begin{proof}
 Let $\phi:F_n \rightarrow F_n$ be an irreducible, non-surjective, admissible endomorphism, and set $H_k:= \phi^k(F_n)$.  It is immediate from Theorem \ref{T.Main} that for any $C>1$, there is $K$ such that for all $k \geq K$, $H_k$ satisfies the desired properties.  
\end{proof}

\section{Structure of Actions in $\overline{cv}_n$}\label{S.Decompose}

In this section we expand the results of Section \ref{S.Measures} to give a dynamical decomposition of trees in $\overline{cv}_n$ that generalizes the dynamical decomposition of geometric trees coming from Imanishi's theorem.  The results we obtain are similar in spirit to new work of Guirardel-Levitt \cite{GL10}.  

\subsection{The Transverse Family $\mathscr{F}$}\label{SS.F}

Suppose that $T \in \overline{cv}_n$ is not simplicial; if $T$ does not have dense orbits, then by Proposition \ref{C.IndiscreteGraph} $T$ splits as a graph of actions, with vertex trees either simplicial edges or trees with dense orbits.  So, let $T \in \overline{cv}_n$ have dense orbtis.  By Proposition \ref{P.M0T} we have that $M_0(T)$ is $\mathbb{R}_{>0}$-spanned by a finite set $B=\{\nu_1,...,\nu_r\}$ of mutually-singular ergodic measures, and by Proposition \ref{P.SuppProperties} we have for $\nu_i,\nu_j \in B$ with $Supp(\nu_i),Supp(\nu_j)$ non-degenerate, if $Supp(\nu_i) \neq Supp(\nu_j)$, then for any finite tree $K \subseteq T$, $Int(Supp_K(\nu_i)) \cap Int(Supp_K(\nu_j)) = \emptyset$.  We define a family $\mathscr{F}$ of subtrees of $T$; a subtree $Y \subseteq T$ is a member of $\mathscr{F}$ if:

\begin{enumerate}
 \item [(i)] $Y$ is non-degenerate,
 \item [(ii)] there is $\nu_i \in B$ and an invasion $\{F_k\}_{k \in \mathbb{N}}$ of $Y$ by finite subtrees $F_k$ such that $F_k = Supp_{F_k}(\nu_i)$, and 
 \item [(iii)] $Y$ is maximal with respect to (ii).
\end{enumerate}

It is then easy to check that $\mathscr{F}$ is a transverse family for the action $F_n \curvearrowright T$; further, by Proposition \ref{P.SuppProperties}, there are finitely-many $F_n$-orbits of trees in $\mathscr{F}$.  

\begin{lemma}\label{L.SuppGraph}
 Let $T \in \overline{cv}_n$ have dense orbits, and suppose that there are $\mu,\mu' \in M_0(T)$ such that $Supp(\mu)\neq Supp(\mu')$ are both non-degenerate.  Then $T$ splits as a graph of actions.
\end{lemma}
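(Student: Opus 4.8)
The plan is to produce a transverse family in $T$ that is in fact a transverse covering, so that Lemma~\ref{L.Graphs} gives the desired graph of actions. The natural candidate is the family $\mathscr{F}$ defined just above the statement (or, more directly, the family of support sets of ergodic measures). So first I would invoke Proposition~\ref{P.M0T} to write $M_0(T)$ as the positive span of finitely many mutually singular ergodic measures $\nu_1,\dots,\nu_r$, and use Proposition~\ref{P.SuppProperties} — especially parts (ii) and (iv) — to record that the distinct non-degenerate support sets $Supp(\nu_i)$ have pairwise degenerate intersections, and that their union is a finite supporting subtree $K$ when intersected with $K$. Thus $\mathscr{F}=\{g\,Supp(\nu_i)\}$ (closures taken, discarding degenerate ones) is a transverse family; the content of the lemma is that under the hypothesis $Supp(\mu)\neq Supp(\mu')$ — i.e.\ $r\geq 2$ among the non-degenerate supports — this family is a transverse \emph{covering}.

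The key step is to verify the covering condition: every finite arc $I\subseteq T$ is contained in a finite union of members of $\mathscr{F}$. By Corollary~\ref{C.SuppProperties} we already know $K=\bigcup_j Supp(\nu_{i_j})$ over measures in $M_1(T)$ whose support contains a non-degenerate arc, and since $K$ is a finite supporting subtree, any finite arc $I$ is covered by finitely many $F_n$-translates of $K$, hence by finitely many translates of the $Supp(\nu_{i_j})$'s. That is precisely the transverse covering condition. Then Lemma~\ref{L.Graphs} yields that $T$ splits as a graph of actions whose non-degenerate vertex trees are the members of $\mathscr{F}$. The only thing to check is that this graph of actions is \emph{non-trivial}, i.e.\ the skeleton $S$ is not a single vertex: if $\mathscr{F}$ consisted of a single $F_n$-orbit of one tree equal to all of $T$, then $T$ itself would be a support set, forcing (via ergodicity and Proposition~\ref{P.SuppProperties}(iv)) every $Supp(\nu_i)$ containing a non-degenerate arc to equal $T$, contradicting $Supp(\mu)\neq Supp(\mu')$ with both non-degenerate. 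Hence $S$ has more than one orbit of vertices (or an orbit of edges with infinite stabilizer collapse is impossible), so the splitting is genuine.

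I expect the main obstacle to be the bookkeeping around non-degeneracy and closures: one must be careful that replacing $Y_I$ by its closure keeps the family transverse (this is noted after Definition~\ref{D.TF}), that the maximal trees in condition (iii) of the definition of $\mathscr{F}$ genuinely exist (a Zorn/ascending-union argument using that an increasing union of subtrees each equal to its own $\nu_i$-support is again such a subtree), and that two \emph{different} ergodic measures with the \emph{same} support do not spuriously split off a degenerate piece — this is exactly where Proposition~\ref{P.SuppProperties}(iii)--(iv) is used to collapse the ambiguity. Once those points are settled the argument is short: finite supporting subtree $+$ union of supports $=K$ $\Rightarrow$ transverse covering $\Rightarrow$ graph of actions, with non-triviality coming from the hypothesis that there are at least two distinct non-degenerate support sets.
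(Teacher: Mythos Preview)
Your argument has a genuine gap at the ``key step.'' You assert that because $K=\bigcup_j Supp_K(\nu_{i_j})$ (a finite union of closed sets) and any arc is covered by finitely many translates of $K$, the family $\mathscr{F}$ is a transverse covering. But the members of $\mathscr{F}$ are certain \emph{subtrees} of $T$ (roughly, the connected components of the global support sets), not the sets $Supp_K(\nu_{i_j})\subseteq K$. Nothing you have said rules out a finite arc $I$ meeting infinitely many distinct trees of $\mathscr{F}$ in non-degenerate subarcs accumulating at some point $p\in I$; equivalently, nothing forces $Supp_K(\nu_i)\cap I$ to have only finitely many components. Proposition~\ref{P.SuppProperties} only tells you that distinct non-degenerate supports meet degenerately, not that the induced ``partition'' of $I$ into support-pieces is finite. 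So the transverse covering condition is unproven.

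The paper's proof confronts exactly this difficulty. It does \emph{not} claim $\mathscr{F}$ is a transverse covering; instead it applies the Guirardel--Levitt iterative procedure (Remark~\ref{R.GLProcedure}, II) to $\mathscr{F}$, obtaining transverse families $\mathscr{F}_k$. Either some $\mathscr{F}_{k_0}=\{T\}$, in which case $\mathscr{F}_{k_0-1}$ is the desired transverse covering, or the procedure produces an exceptional set $E$. In the latter case Proposition~\ref{P.ExceptionalMeasure} yields either a graph-of-actions splitting directly, or an ergodic measure supported on $E$; but such a measure would have support strictly inside some non-degenerate $Supp(\nu_j)$, contradicting the construction of $\mathscr{F}$ and $E$. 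The paper even flags your intended shortcut in the remark following the proof: the conclusion that vertex trees correspond bijectively to non-degenerate support sets holds only under the \emph{additional} hypothesis that every finite arc decomposes into finitely many support-subintervals --- precisely what you assumed without justification.
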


\begin{proof}
Let $\mathscr{F}$ be the transverse family defined above.  Since there are $\mu, \mu' \in M_0(T)$ with $Supp(\mu) \neq Supp(\mu')$ both non-degenerate, the family $\mathscr{F}$ contains more than one tree.  We apply the iterative procedure II of Remark \ref{R.GLProcedure} to get a sequence $\mathscr{F}_k$ of transverse families in $T$.  This procedure might terminate with an exceptional set $E \subseteq T$; in this case, by Proposition \ref{P.ExceptionalMeasure}, either $T$ splits as a graph of actions, or there would be ergodic $\nu_i \in B$ supported on $E$.  By Proposition \ref{P.SuppProperties}, $Supp(\nu_i)$ would be contained in the interior of $Supp(\nu_j)$ for some $\nu_j \in B$ with non-degenerate support; but this is impossible by definition of $\mathscr{F}$ and by construction of $E$.  So, for some $k_0$, we have that $\mathscr{F}_{k_0}=\{T\}$; suppose that $k_0$ is minimal with respect to this property.  Then $T$ splits as a graph of actions corresponding to the transverse covering $\mathscr{F}_{k_0-1}$.
\end{proof}

The proof of Lemma \ref{L.SuppGraph} also shows that if for any finite arc $I \subseteq T$, $I$ is the union of finitely many subintervals $I_1,...,I_k$ such that $I_j = Supp_{I_j}(\nu_{i_j})$, then the set orbits of vertex trees in this graph of actions structure for $T$ bijectively corresponds to the set of non-degenerate support sets of the ergodic measures $\nu \in M_0(T)$.  In particular, this would be the case if the action $F_n \curvearrowright T$ happened to be geometric.

\begin{lemma}\label{L.Supp1}
 Let $T \in \overline{cv}_n$ have dense orbits, and let $\mathscr{F}$ be the transverse family constucted above.  Suppose that $\mathscr{F}$ contains at least two orbits of trees.  Then for each $Y \in \mathscr{F}$, the set-wise stabilizer $Stab(Y)$ is a vertex group of a very small splitting of $F_n$.  
\end{lemma}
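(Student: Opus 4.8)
The plan is to combine the graph-of-actions structure produced by Lemma \ref{L.SuppGraph} with the equivalence between graphs of actions and transverse coverings (Lemma \ref{L.Graphs}) and then read off the stabilizer statement from Bass-Serre theory, using \cite{Gui98} to upgrade ``vertex group of a graph of actions'' to ``vertex group of a very small splitting.'' First I would observe that since $\mathscr{F}$ contains at least two orbits of trees, there are $\mu,\mu' \in M_0(T)$ with $Supp(\mu) \neq Supp(\mu')$ both non-degenerate; hence by Lemma \ref{L.SuppGraph} $T$ splits as a graph of actions. In fact the proof of Lemma \ref{L.SuppGraph} gives more: iterating procedure II of Remark \ref{R.GLProcedure} starting from $\mathscr{F}$ produces, at the last non-trivial stage $\mathscr{F}_{k_0-1}$, a transverse covering of $T$; and each tree $Y \in \mathscr{F}$ sits inside a unique tree of $\mathscr{F}_{k_0-1}$, being one of the vertex trees of the graph-of-actions structure that tree carries. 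Chasing this down one sees that $Y$ itself is a vertex tree of a graph-of-actions structure for $T$ (one may refine the transverse covering so that the $\mathscr{F}$-trees appear as vertex trees — the trees of $\mathscr{F}$ are pairwise disjoint or equal by construction via Proposition \ref{P.SuppProperties}, so the skeleton construction recalled after Lemma \ref{L.Graphs} applies directly to $\mathscr{F}$ together with the complementary pieces).

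Next I would identify $Stab(Y)$ with a vertex group of the corresponding splitting. Let $\mathscr{G}=(S,\{T_v\},\{p_e\})$ be a graph-of-actions structure for $F_n \curvearrowright T$ in which $Y = T_{v_0}$ for some vertex $v_0$; then $F_n$ acts on the skeleton $S$ and, as recalled in Section \ref{S.GoA}, vertex stabilizers in $F_n \curvearrowright S$ are exactly the setwise stabilizers $Stab(T_v)$ of the vertex trees. So $Stab(Y) = Stab(v_0)$ is a vertex group of the splitting of $F_n$ dual to $F_n \curvearrowright S$. It remains to check that this splitting is very small, equivalently that $S \in \overline{cv}_n$ after assigning it the simplicial metric. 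The three conditions are: minimality (pass to the minimal subtree, which does not change vertex groups since $Y$ is non-degenerate and $F_n$-translates of $Y$ span), trivial tripod stabilizers, and arc stabilizers trivial or maximal cyclic. These follow because $F_n \curvearrowright T$ is itself a very small action: an element stabilizing an edge of $S$ stabilizes an attaching point $p_e \in T$, hence fixes a point of $T$; an element stabilizing a tripod in $S$ fixes a tripod's worth of branch data in $T$; and the very-small hypothesis on $T$ then bounds these edge and tripod stabilizers appropriately. I would spell this out by noting that a non-trivial arc stabilizer in $S$ fixes a segment of $T$ (namely a sub-arc of $Y_v$ near an attaching point, or an actual segment between two attaching points of the same vertex tree), so it is trivial or maximal cyclic by condition (ii) in the definition of very small.

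Finally, to be safe about ``very small splitting'' in the precise sense used in Proposition \ref{P.LamLemma} and elsewhere, I would invoke \cite{Gui98}: the action $F_n \curvearrowright S$ (simplicial, very small) is itself a point of $\overline{cv}_n$, and its vertex stabilizers are by definition vertex groups of a very small splitting of $F_n$. So $Stab(Y)$ is a vertex group of a very small splitting, as claimed. The main obstacle I anticipate is the bookkeeping in the first step — making sure the trees of $\mathscr{F}$, rather than merely the trees of some later $\mathscr{F}_k$, genuinely occur as vertex trees of a single graph-of-actions structure, and that the resulting skeleton is a simplicial tree with the right stabilizers; this is where one must be careful that the complement of $\cup_{Y \in \mathscr{F}} Y$ in $T$, together with $\mathscr{F}$, forms a transverse covering (using that $\mathscr{F}$-trees are closed and pairwise disjoint, and that any finite arc meets only finitely many of them — the latter from finiteness of $\dim M_0(T)$ and the structure of supports in Proposition \ref{P.SuppProperties}). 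Everything after that is a routine application of Bass-Serre theory and the very-small conditions.
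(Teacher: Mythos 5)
Your proposal takes a genuinely different route from the paper, and in its current form it has two real gaps.

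The paper does \emph{not} build a simplicial splitting from $\mathscr{F}$. Instead it chooses the ergodic decomposition $\mu_L=\nu_1+\dots+\nu_l$, sets $\mu:=\sum_{Supp(\nu_i)=Supp(\nu)}\nu_i$ (so $\mu$ "forgets" the other supports), and uses the alignment-preserving collapse $f\colon T\to T_\mu$. The tree $Y\in\mathscr{F}$ corresponding to a different support set is crushed to a single point $y\in T_\mu$, so $Stab(Y)$ fixes a point of a tree in $\overline{cv}_n$ with dense orbits, and \cite{Gui98} applies directly. The remaining work in the paper is to promote "contained in a vertex group" to "is a vertex group," by showing $Stab(Y)=Stab(\overline{Y})=Stab(\{y\})$; this uses ergodicity of the measure supported on $Y$ to produce elements of $Stab(Y)$ of arbitrarily small translation length, hence $Stab(Y)\curvearrowright Y$ has dense orbits, which rules out an element fixing a non-degenerate arc in $\overline{Y}$.

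First gap in your approach: Lemma \ref{L.SuppGraph} does not give a transverse covering whose vertex trees are the trees of $\mathscr{F}$. Its proof produces a covering by $\mathscr{F}_{k_0-1}$ for the minimal $k_0$ with $\mathscr{F}_{k_0}=\{T\}$, and $\mathscr{F}_{k_0-1}$ can be strictly coarser than $\mathscr{F}$ (each $Y\in\mathscr{F}$ sits inside a tree of $\mathscr{F}_{k_0-1}$ but need not equal one). Your suggested fix—that $\mathscr{F}$ together with the complementary pieces forms a transverse covering because "finiteness of $\dim M_0(T)$" controls how many $\mathscr{F}$-trees meet a finite arc—does not follow: finiteness of the number of \emph{orbits} of trees says nothing about how many \emph{translates} a fixed finite arc can meet, and this is exactly where the iterative Procedure II and the exceptional-set analysis in Remark \ref{R.GLProcedure} are needed.

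Second, more serious gap: the assertion that "a non-trivial arc stabilizer in $S$ fixes a segment of $T$" is false. An element stabilizing an edge of the skeleton $S$ fixes an attaching point $p_e\in T$ and stabilizes a vertex tree $Y_v$ setwise, but it can permute the directions of $Y_v$ at $p_e$ without fixing any non-degenerate segment of $T$. Since $T$ has dense orbits, arc stabilizers of $T$ are trivial, but \emph{point} stabilizers of $T$ can be non-cyclic, so the bound you are hoping for on edge stabilizers of $S$ does not come for free from the very-smallness of $T$. This is precisely the difficulty the paper avoids by routing through $T_\mu$ and invoking Guirardel's point-stabilizer theorem rather than trying to exhibit a very small simplicial tree by hand.
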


\begin{proof}
 Let $T$ and $\mathscr{F}$ as in the statement.  By hypothesis, we can find ergodic $\nu, \nu' \in M_0(T)$ such that $Supp(\nu)$ and $Supp(\nu')$ are non-degenerate and such that $Supp(\nu) \neq Supp(\nu')$.  Let $\mu_L=\nu_1 + ... + \nu_l$ be the decomposition of the Lebesgue measure on $T$ as a sum of muntually-singular ergodic measure $\nu_i \in M_0(T)$.  Define a measures $\mu \in M_1(T)$ by $\mu:= \Sigma_{Supp(\nu_i)=Supp(\nu)} \nu_i$; we get an equivariant 1-Lipschitz alignment-preserving map $f:T \rightarrow T_{\mu}$.  We observe that if $Y \in \mathscr{F}$ corresponds to $Supp(\nu')$, then the image of $Y$ under $f$ is a point.  It follows that $Stab(Y)$ fixes a point $y \in T_{\mu}$.  By \cite{Gui98} $Stab(Y)$ is contained in a vertex group of a very small splitting of $F_n$.  

 On the other hand, it is immediate that $Stab(\{y\})=Stab(\overline{Y})$.  Further, it is clear that if $f \in F_n$ is hyperbolic in $T$, then $f \in Stab(Y)$ if and only if $f \in Stab(\overline{Y})$.  Toward a contradiction suppose that there is $g \in Stab(\overline{Y}) \setminus Stab(Y)$, then $g$ acts elliptically on $\overline{Y}$, and so $g$ must fix a point $p \in \overline{Y} \setminus Y$.  There is only one direcction (in $\overline{Y}$) at $p$, so $g$ must fix a non-degenerate arc $[p',p] \subseteq \overline{Y}$.  We show this is impossible.

 Let $\eta \in M_0(T)$ ergodic such that $Supp(\eta)=Y$, and let $I_0 \subseteq Y$ be a small arc.  By ergodicity, for any $J \subseteq Y$ and $\epsilon > 0$, there are $g_1,...,g_r \in F_n$ such that $\eta(J \setminus g_1I_0 \cup ... \cup g_rI_0) < \epsilon$.  It easily follows that there are elements $g' \in Stab(Y)$ with arbitrarily short translation length, hence $Stab(Y)$ acts on $Y$ with dense orbits, and the same holds for the action $Stab(\overline{Y}) \curvearrowright \overline{Y}$.  This contradicts the above observation that $g$ must fix a non-degenerate arc of $\overline{Y}$ and completes the proof.  
\end{proof}

\subsection{The Case $\mathscr{F}=\{T\}$}

We turn to analyzing trees in $\overline{cv}_n$ for which the family $\mathscr{F}$ is as simple as possible.  

\begin{prop}\label{P.Diagram}
 Let $T \in \overline{cv}_n$ have dense orbits, and suppose that for each $\nu \in M_0(T)$, if $Supp(\nu)$ is non-degenerate, then $Supp(\nu)=T$.  There is a finite set of projections $\{P_i\}_{i \in \{1,...,r\}}=\{f_i:T \rightarrow T_i\}_{i \in \{1,...,r\}}$ such that:
 \begin{enumerate}
  \item [(i)] $dim(M_0(T_i)) < dim(M_0(T))$,
  \item [(ii)] there is a partial order $\leq$ on $\{P_1,...,P_r\}$ such that:
    \begin{enumerate}
     \item [(a)] if $P_i \leq P_j$, then $f_i$ factors through $f_j$,
     \item [(b)] if $P_i < P_j$, then $dim(M_0(T_i)) < dim(M_0(T_j))$,
     \item [(c)] for $P_i$ minimal, if $T_i$ contains an exceptional set, then $T$ (and $T_i$) splits as a graph of actions.
    \end{enumerate}
  \item [(iii)] for any projection $T \rightarrow T'$, there is $T_i$ and is $\mu \in M_0(T_i)$ such that $T'$ is equivariantly isometric to $(T_i)_{\mu}$ and such that the natural map $T_i \rightarrow (T_i)_{\mu}$ is a bijection.
 \end{enumerate}
\end{prop}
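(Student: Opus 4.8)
The plan is to translate projections of $T$ into invariant measures and then to read the finite family off the ergodic decomposition of $M_0(T)$ supplied by Proposition~\ref{P.M0T}; note that the hypothesis puts us in the case $\mathscr{F}=\{T\}$ of the previous subsection.

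First I would set up a dictionary between projections and measures. Since every tree in $\overline{cv}_n$ is minimal, an equivariant alignment-preserving map $g\colon T\to T'$ is onto, and the pullback $\mu:=g^{*}\mu_L(T')$ of the Lebesgue measure is a well-defined element of $M_0(T)$ (it is non-atomic, and $g$ alignment-preserving means the pullback construction of Section~\ref{S.Measures} applies), with $\mu([x,y])=d_{T'}(g(x),g(y))$ for all $x,y$; hence $T'$ is equivariantly isometric to $T_\mu$, with $g$ conjugate to the natural map $f_\mu\colon T\to T_\mu$, and conversely every $\mu\in M_0(T)$ arises this way. The point I would isolate is the \emph{fibre comparison}: $f_\mu(x)=f_\mu(y)$ exactly when the arc $[x,y]$ is $\mu$-null, so $f_\mu$ and $f_{\mu'}$ have the same point-fibres whenever $\mu,\mu'$ have the same null arcs; in particular, writing $\mu=\sum_i c_i\nu_i$ with $c_i>0$ and the $\nu_i$ mutually-singular ergodic, $f_\mu$ has the same fibres as $f_{\mu_B}$, where $B=\{\nu_i\}$ and $\mu_B:=\sum_{\nu\in B}\nu$, so that $T_\mu$ is an alignment-preserving \emph{bijective} image of $T_{\mu_B}$, i.e.\ $T_\mu\cong (T_{\mu_B})_{\bar\mu}$ with $\bar\mu$ the image of $\mu$ and the natural map a bijection.

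Next, by Proposition~\ref{P.M0T} write $M_0(T)$ as the positive span of mutually-singular ergodic measures $\nu_1,\dots,\nu_k$ ($k\le 3n-4$), and partition these into the set $\mathcal{E}^{+}$ of those with support $T$ and the set $\mathcal{E}^{-}$ of those with degenerate support. A Cantor--Bendixson argument --- a closed nowhere-dense subset of a finite subtree with no Cantor subset is countable and so carries no non-atomic measure --- shows that each $\nu\in\mathcal{E}^{-}$ is supported on an exceptional set, and that $\mathcal{E}^{+}\neq\emptyset$ (else $\mu_L$, a finite sum of the $\nu_i$, would have nowhere-dense support, which is absurd). I would then take the canonical family to consist of the identity $T\to T$ together with $f_{\mu_B}\colon T\to T_{\mu_B}$ for $\emptyset\neq B\subseteq\mathcal{E}^{-}$ (regarding two of these as the same projection if the natural maps have identical point-fibres); this is finite. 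Order them by ``$f_i$ factors through $f_j$'', with the identity as maximum: if $B\subseteq B'$ then $\mu_B\le\mu_{B'}$, hence $d_{\mu_B}\le d_{\mu_{B'}}$, so $f_{\mu_B}$ factors through $f_{\mu_{B'}}$, which gives (ii)(a). For (i) and (ii)(b) I would use the elementary observation that a \emph{non-bijective} equivariant alignment-preserving surjection $U\to V$ collapses some non-degenerate arc, on which $\mu_L(U)$ is positive, so $\mu_L(U)$ does not descend to $V$ and therefore $dim\,M_0(V)<dim\,M_0(U)$: applied to $f_{\mu_B}$ (non-bijective, since $\mu_B$ has nowhere-dense support) this gives (i), and applied to the factoring map between two strictly comparable members it gives (ii)(b). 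For (ii)(c), a minimal member is an $f_\nu$ with $\nu\in\mathcal{E}^{-}$; Proposition~\ref{P.ExceptionalMeasure} applied to $T_\nu$ says that if $T_\nu$ contains an exceptional set then either $T_\nu$ splits as a graph of actions --- whence so does $T$, by pulling the transverse covering back along $f_\nu$ and invoking Lemma~\ref{L.Graphs} --- or $T_\nu$ carries a measure on that exceptional set, which I would rule out by showing (via the support structure of Proposition~\ref{P.SuppProperties}) that every non-atomic invariant measure on such a minimal $T_\nu$ has full support. Finally, for (iii): given a projection $g\colon T\to T'=T_\mu$, if $\mu$ charges some $\nu\in\mathcal{E}^{+}$ then $Supp(\mu)=T$, $g$ is a bijection and $T_i=T$ works; otherwise $\mu$ is a positive combination supported on a non-empty $B\subseteq\mathcal{E}^{-}$, and the fibre comparison exhibits $T'$ as a bijective alignment-preserving image of $T_{\mu_B}$, so $T_i=T_{\mu_B}$ works.

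The step I expect to be the genuine obstacle is the last part of (ii)(c) --- and, more broadly, the fine control of $M_0$ under the collapses $f_{\mu_B}$ together with the precise definition of the partial order. The difficulty is exactly the phenomenon behind Example~\ref{E.HiddenDynamics}: an exceptional set can carry several mutually-singular ergodic measures, so the naive ``subset of $\mathcal{E}^{-}$'' bookkeeping may fail to be faithful (different subsets can give the same collapse), and one must argue directly with the null arcs of the measures in question --- using the structural properties of supports in Proposition~\ref{P.SuppProperties}, and, for minimal $T_\nu$, the observation that collapsing onto an exceptional set turns that set into a full-support object --- to verify that the family really is closed under all projections and that minimality forces full support.
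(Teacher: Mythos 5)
Your proposal follows the same essential strategy as the paper's proof: normalize via the ergodic decomposition of $M_0(T)$ provided by Proposition~\ref{P.M0T}, identify projections with invariant length measures and their null arcs, order the resulting collapses by factoring, and invoke Proposition~\ref{P.ExceptionalMeasure} at the minimal level. The one place you genuinely diverge is the indexing of the family: the paper takes the support sets $X_i=\mathrm{Supp}(\nu_i)$ of single ergodic measures, normalizes $\mu_L$ to be interior, and sets $\mu_i:=\mu_L|_{X_i}$, whereas you index by arbitrary non-empty subsets $B\subseteq\mathcal{E}^{-}$ via $\mu_B=\sum_{\nu\in B}\nu$ (identifying those with the same point-fibres). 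This is not merely cosmetic: the paper's measure $\mu_L|_{X_i}$ corresponds to the set $\{\nu_m:\mathrm{Supp}(\nu_m)\subseteq X_i\}$, a ``downward-closed'' subset of $\mathcal{E}^{-}$, so the paper's family consists only of such downward-closed collapses. If there were, say, two ergodics in $\mathcal{E}^{-}$ with disjoint (or merely incomparable) exceptional supports, the collapse $T_{\nu_1+\nu_2}$ is not bijectively equivalent to any $T_{\mu_L|_{X_i}}$, so item (iii) would fail for that family; your larger family handles this correctly. In short, indexing by subsets of $\mathcal{E}^{-}$ is the more robust bookkeeping, and since the paper's proof is very terse at exactly this point (``it is then easy to check''), your version is arguably the more careful one.

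On (ii)(c), which you rightly flag as the genuine obstacle: your intended route goes through, and it is worth recording the clean way to finish it. Suppose $P_i=[f_\nu]$ is minimal and $T_\nu$ contains an exceptional set. If $T_\nu$ does not split as a graph of actions, Proposition~\ref{P.ExceptionalMeasure} gives a non-atomic $\mu\in M_0(T_\nu)$ with degenerate support. Its pull-back $\tilde\mu\in M_0(T)$ still has degenerate support in $T$ (since the $\nu$-null arcs are $\tilde\mu$-null and $\nu$ has degenerate support), so $T_{\tilde\mu}$ is, up to bijection, a member $T_{\mu_{B'}}$ of your family with $B'\subseteq\mathcal{E}^{-}$. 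But $T_{\tilde\mu}=(T_\nu)_\mu$ strictly refines $T_\nu$ because $\mu$ has non-degenerate null arcs in $T_\nu$; hence $P_{B'}<P_i$, contradicting minimality. So the measure on the exceptional set cannot exist, and $T_\nu$ (and hence $T$, by Lemma~\ref{L.Graphs} applied to the preimage of the transverse covering) splits. One small further remark: the ``elementary observation'' you use for (i) and (ii)(b) is correct, but the reason is that the pull-back embeds $M_0(T_i)$ onto a \emph{face} of the simplicial cone $M_0(T)$ (a descending measure descends if and only if each of its ergodic components does), and $\mu_L(T)$ charging an ergodic that fails to descend forces that face to be proper, which for a simplicial cone does lower the dimension.
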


\begin{proof}
 Let $T$ as in the statement.  We first arrange that $\mu_L$ is contained in the interior of $M_0(T)$, \emph{i.e.} there is no $\nu \in M_0(T)$ singular with respect to $\mu_L$.  This is accomplished by replacing $T$ with $T_\mu$ for some $\mu \in M_0(T)$ with the desired property; the natural map $T \rightarrow T_{\mu}$ is a bijection.  By Proposition \ref{P.M0T} $M_0(T)$ is spanned by a finite set $\{\nu_1,...,\nu_r\}$ of mutually-singular ergodic measures.  In particular, there are finitely many sets $Supp(\nu)$ for ergodic $\nu \in M_0(T)$; let $\mathscr{X}=\{X_1,...,X_s\}$ denote the collection of these support sets.  Then $\mathscr{X}$ carries the obvious partial order $\leq$, with unique maximal element $X_i =T$.  By Proposition \ref{P.ExceptionalMeasure}, if $X_j=Supp(\nu)$ is minimal with respect to $\leq$, then if $T_{\nu}$ contains an exceptional, then $T_{\mu}$ splits as a graph of actions.  Define a set of measures $\mathscr{M}=\{\mu_1,...,\mu_s\} \subseteq M_0(T)$ by $\mu_i:=\mu_L|_{X_i}$.  It is then easy to check that the collection $\{f_i:T \rightarrow T_{\mu_i}|Supp(\mu) \neq T\}$ satisfy the conclusions of the proposition.
\end{proof}

Let $T \in \overline{cv}_n$ have dense orbits, and assume that no exceptional subset of $T$ has non-zero measure.  For any ergodic $\mu \in M_1(T)$, and for any finite arc $I \subseteq T$ with $Supp_I(\mu)$ non-degenerate, the restriction $\mu|_I$ determines $\mu$ in the sense that $\mu$ is the unique minimal element of $M_1(T)$ restricting to $\mu|_I$ on $I$.  To extend the analogy between length measures and measured laminations on surfaces, we look for invariant subsets of $T$ that encode $\mu$ as above.  As any two non-degenerate, invariant subsets supporting $\mu$ must intersect non-degenerately, we are naturally led to consider transverse families with one orbit of trees.  We need the following result, whose proof was sketched to us by V. Guirardel.


\begin{lemma}\label{L.FF}\cite{GL10}
 Let $T \in \overline{cv}_n$ have dense orbits.  Let $Y \subseteq T$ be a non-degenerate subtree such that for all $g \in F_n$, either $gY=Y$ or $gY \cap Y = \emptyset$.  Then $Stab(Y)$ is a free factor of $F_n$.
\end{lemma}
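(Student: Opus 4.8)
The plan is to extract from the hypothesis a transverse covering of $T$ whose associated graph of actions (Lemma~\ref{L.Graphs}) has $Y$ as a vertex tree and whose skeleton is a \emph{free} splitting of $F_n$; then $Stab(Y)$, being the corresponding vertex group, is a free factor by Bass--Serre theory. First I would record the reductions: the collection $\mathscr{Y}=\{gY\}_{g\in F_n}$ is a transverse family (Definition~\ref{D.TF}) on which $F_n$ acts transitively with point stabilizer $Stab(Y)$; replacing each tree by its closure keeps a transverse family, and $Stab(\overline Y)=Stab(Y)$ (the argument of Lemma~\ref{L.Supp1}: an element of $Stab(\overline Y)\setminus Stab(Y)$ acts elliptically on $\overline Y$ and so fixes a non-degenerate arc, impossible since $T$, having dense orbits, has trivial arc stabilizers --- a fact I will use repeatedly). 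If $gY=Y$ for every $g$ then $Y=T$ by minimality and $Stab(Y)=F_n$; so we may assume $Y=\overline Y\subsetneq T$, and then the translates of $Y$ are pairwise disjoint.

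The heart of the matter is to enlarge $\mathscr{Y}$ to a transverse covering. Since the $gY$ are pairwise disjoint they cannot by themselves cover the connected tree $T$, so the complement $T\setminus\bigcup_g gY$ must be organized. I would start the Guirardel--Levitt procedure (Remark~\ref{R.GLProcedure}(II)) from $\mathscr{Y}_1:=\mathscr{Y}$; disjointness makes it stationary, and I would analyse the uncovered set exactly as in the proof of Proposition~\ref{P.Tricot}: by Proposition~\ref{P.ExceptionalMeasure} either $T$ already splits as a graph of actions in which $Y$ is carried by a vertex tree, or there is no exceptional obstruction and the components of $T\setminus\bigcup_g\overline{Y}$ --- decomposed via Levitt's Proposition~\ref{C.IndiscreteGraph} into simplicial edges and dense-orbit subtrees --- complete $\mathscr{Y}$ to a transverse covering $\mathscr{Y}'\supseteq\mathscr{Y}$. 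In every case Lemma~\ref{L.Graphs} produces a graph of actions $\mathscr{G}=(S,\{T_v\},\{p_e\})$ with $T_{v_0}=Y$ for some vertex $v_0$ and $Stab(v_0)=Stab(Y)$.

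It then remains to see that $F_n\curvearrowright S$ is a free splitting, i.e.\ that edge stabilizers are trivial. An edge $e$ of $S$ incident to $v_0$ has $Stab(e)=Stab(p_e)$, where $p_e\in\overline Y$ is the point at which the adjacent vertex tree attaches. If that tree is another translate $gY$, then, the translates being disjoint, the bridge $[Y,gY]$ in $T$ is a non-degenerate arc, so an element stabilizing $e$ fixes that arc pointwise and hence is trivial. If it is a complementary piece $C$ (attached along a single point), I would argue that the attaching point has trivial stabilizer by again exploiting the disjoint-or-equal hypothesis: an elliptic $h\neq 1$ fixing it lies in $Stab(Y)$, and one derives a contradiction with the disjointness of the family of complementary pieces. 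By homogeneity all edge stabilizers vanish, $F_n\curvearrowright S$ is a free splitting of $F_n$, and $Stab(Y)=Stab(v_0)$ is a vertex group of a free splitting, hence a free factor, as desired.

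The main obstacle is the combination of the completion step and the edge-triviality: controlling the complement $T\setminus\bigcup_g gY$ and ruling out that cyclic (or larger) edge groups sneak in at attaching points of complementary pieces. This is exactly where the dense-orbit hypothesis is indispensable --- it forces trivial arc stabilizers and makes the length-measure machinery (Propositions~\ref{P.M0T} and \ref{P.ExceptionalMeasure}) available to exclude pathological complements --- and I expect the precise execution to parallel the proofs of Proposition~\ref{P.Tricot} and Lemma~\ref{L.SuppGraph}. A minor but genuine subtlety, which must be settled before any of this makes sense, is the identity $Stab(\overline Y)=Stab(Y)$.
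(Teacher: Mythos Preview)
Your approach—building a transverse covering of $T$ that contains $Y$ as a vertex tree and then arguing that the skeleton is a free splitting—is natural, but the edge-stabilizer step has a genuine gap that does not seem repairable along the lines you sketch. In the skeleton $S$ of a graph of actions, edge stabilizers are \emph{point} stabilizers in $T$ (stabilizers of attaching points), not arc stabilizers. Dense orbits give trivial arc stabilizers, but point stabilizers in $T$ can be large. Your ``bridge'' argument for two adjacent translates is based on a false picture: adjacent vertex trees in a transverse covering meet in a point, so there is no non-degenerate arc between them to invoke trivial arc stabilizers on. For a complementary piece $C$ attached to $\overline{Y}$ at $p$, an element $h\in Stab(Y)\cap Stab(C)$ fixes the single point $p$, and nothing you have written rules out $h\neq 1$. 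The vague ``contradiction with disjointness of complementary pieces'' does not materialize: $h$ may well permute directions at $p$ outside $\overline{Y}$ while fixing $C$ setwise. There is also a secondary problem with the completion step: the components of $T\setminus\bigcup_g\overline{Y}$ need not have finitely generated stabilizers, so Proposition~\ref{C.IndiscreteGraph} does not obviously apply to them, and there is no reason the resulting family is a transverse \emph{covering}.

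The paper's proof avoids exactly this difficulty by passing to \emph{geometric approximations}. One writes $T$ as a strong limit of geometric trees $T_i$ (dual to the restrictions of a basis to larger and larger finite subtrees $K_i$); trivial arc stabilizers pass from $T$ to each $T_i$. In each $T_i$ one builds a subtree $Y_i$ from $Y\cap K_i$ that is disjoint from its translates. Because $T_i$ is geometric, Imanishi's theorem decomposes it into indecomposable pieces and \emph{simplicial edges}; the indecomposable pieces eventually get absorbed into the $Y_j$. Collapsing the orbit of $Y_i$ then yields a genuinely simplicial tree $S_i$ whose edges are honest arcs of $T_i$, so their stabilizers are arc stabilizers and hence trivial. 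Thus $Stab(Y_i)$ is a free factor for each $i$, and one finishes by showing $Stab(Y_i)$ stabilizes at $Stab(Y)$. The point is that geometricity converts the attaching data into actual \emph{arcs}, which is precisely what your direct approach in $T$ cannot guarantee.
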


\begin{proof}
 We prove the result in the case that $\mathscr{F}=\{T\}$, as the general case follows immediately from this case.  First note that $Stab(Y)$ is non-trivial.  Indeed, let $I \subseteq Y$ be a non-degenerate arc, then there is ergodic $\nu \in M_1(T)$ such that $\nu(I)>0$.  For a small subarc $I_0 \subseteq I$ with $\nu(I_0)>0$ and any $\epsilon > 0$, we can find elements $g_1,...,g_r \in F_n$ such that $\nu(I \setminus \cup_i g_iI_0) > \nu(I) - \epsilon$.  It follows that there is some $g \in F_n$ such that $A(g) \cap I$ contains a fundamental domain for the action $\langle g \rangle \curvearrowright A(g)$, and since $Y$ is disjoint from its translates, it follows that $g \in Stab(Y)$.

 Fix a basis $B=\{x_1,...,x_n\}$ for $F_n$ and a point $y \in T$; define $K_i$ to be the convex hull of the set $\{gy:||g||_B \leq i\}$; and let $T_i$ be the geometric action dual to the restrictions of elements of $B$ to $K_i$.  So, $T$ is the strong limit of the sequence $T_i$, and as $T$ has trivial arc stabilizers, each $T_i$ has trivial arc stabilizers as well.  

 Replace $Y$ with a translate if necessary to ensure that $Y \cap K_1$ is non-degenerate.  Define $Y_i \subseteq T_i$ as follows: $Y_i^1:= Y \cap K_i$; $Y_i^{r+1}$ is the union of $Y_i^r$ and all translates of $Y_i^r$ meeting $Y_i^r$; $Y_i':= \cup_r Y_i^r$; and $Y_i:= \overline{Y_i'}$.  Hence $Y_i$ is a closed subtree of $T_i$ disjoint from its translates.  Being geometric, $T_i$ splits as a graph of actions, with each vertex tree either simplicial edge or an indecomposable tree.  Evidently, if $Y_i$ meets an indecomposable vertex tree $V$ in a non-degenerate arc, then $V \subseteq Y_i$.  By the hypothesis $\mathscr{F}=\{T\}$, we have that for any non-degenerate $I,J \subseteq T$, there is $g \in F_n$ such that $gI \cap J$ is non-degenerate.  It follows that the splitting of each $T_i$ into a graph of indecomposable trees and simplicial edges can contain at most one orbit of indecomposable trees; further, since $T$ is a strong limit of the $T_i$'s, if $V$ is an indecomposable vertex tree of some $T_i$, then there is $j \geq i$ such that $V \subseteq Y_j$.  

 We collapse to a point each tree in the orbit of $Y_i$ tree to get a tree $S_i$, equipped with a non-trivial action of $F_n$.  From the above discussion, $S_i$ is a simplicial tree with trivial arc stabilizers such that $Stab(Y_i)$ is a vertex stabilizer in $S_i$.  Hence, $Stab(Y_i)$ is a free factor of $F_n$.  To conclude, just note that if $g_1,...,g_k \in Stab(Y)$ are hyperbolic, then there is $i_k$ such that each $g_j$ is hyperbolic in $T_{i_k}$ and such that $A(g_j) \subseteq Y_{i_k}$; therefore eventually $Stab(Y_i)=Stab(Y)$.
\end{proof}

\begin{defn}\label{D.Mixing}
An action $T \in \overline{cv}_n$ is called \emph{mixing} if for any non-degenerate arcs $I,J \subseteq T$, there are $g_1,...,g_r \in F_n$ such that $J \subseteq g_1I \cup ... \cup g_rI$.  
\end{defn}

Mixing differs from indecomposable in that we place no requirements on the overlaps $g_iI \cap g_{i+1}I$.  Mixing is equivalent to the following condition: for any non-degenerate arcs $I,J \subseteq T$, $J$ can be subdivided into arcs $J_1,...,J_r$ such that there are $g_1,...,g_r \in F_n$ with $g_iJ_i \subseteq I$.  

\begin{lemma}\label{L.MixLemma}
Let $T \in \overline{cv}_n$.  The following are equivalent

\begin{enumerate}
 \item [(i)] For any direction $d$ at $x \in T$, and any non-degenerate arc $I \subseteq T$ there is $g \in F_n$ such that $gx \in I$ and $gd \cap I$ is non-degenerate,
 \item [(ii)] the action $F_n \curvearrowright T$ is mixing. 
\end{enumerate}

\end{lemma}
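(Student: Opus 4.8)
The plan is to prove the two implications separately; (ii) $\Rightarrow$ (i) is essentially immediate, while (i) $\Rightarrow$ (ii) is where the work lies, via a compactness argument along a single arc. For (ii) $\Rightarrow$ (i), assuming $F_n \curvearrowright T$ is mixing, I would fix a direction $d$ at a point $x$ and a non-degenerate arc $I$, pick a point $y$ in the component $d$ of $T \setminus \{x\}$ so that $J := [x,y]$ is non-degenerate and leaves $x$ in direction $d$, and apply the subdivision reformulation of mixing recorded just before the statement: $J = J_1 \cup \dots \cup J_r$ with each $J_i$ non-degenerate and $g_i J_i \subseteq I$, where $J_1 = [x,z]$ is the piece containing $x$. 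Then $g_1 x \in I$, and since $g_1$ is an isometry it carries the direction $d$ at $x$ onto $g_1 d$ at $g_1 x$, so $g_1 d \cap I$ contains the non-degenerate arc $g_1 J_1 \setminus \{g_1 x\}$; thus $g = g_1$ witnesses (i).

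For (i) $\Rightarrow$ (ii) the first step is an auxiliary claim: under (i), for every non-degenerate arc $I$, every $p \in T$, and every direction $d$ at $p$, there are $q \neq p$ in direction $d$ and $g \in F_n$ with $g[p,q] \subseteq I$. To see this I would apply (i) to get $g$ with $gp \in I$ and $gd \cap I$ non-degenerate; since a direction and an arc are convex subsets of the $\mathbb{R}$-tree, $gd \cap I$ is a non-degenerate subarc of $I$, and since $gp \in I$ its closure is a non-degenerate closed subarc $[gp, r]$ of $I$ with $(gp, r] \subseteq gd$, so $q := g^{-1} r$ works. A brief addendum using medians lets me further insist that $[p,q]$ lie on any prescribed arc issuing from $p$ in the direction $d$ (replace $q$ by the median of $p$, $q$ and a chosen point of that arc).

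The second step is the covering argument. By the subdivision reformulation of mixing it suffices, given non-degenerate arcs $I$ and $J$, to subdivide $J$ into finitely many subarcs each mapping into $I$ under some element of $F_n$. I would parametrize $J$ isometrically as $[0,L]$, and for each $t$ apply the claim to the direction(s) of $J$ at $J(t)$ --- one at an endpoint of $J$, two in the interior --- to obtain $\varepsilon(t), \delta(t) > 0$ and group elements carrying $J([t, t+\varepsilon(t)])$, respectively $J([t-\delta(t), t])$, into $I$. The relatively open sets $(t-\delta(t), t+\varepsilon(t)) \cap [0,L]$ cover the compact interval $[0,L]$; extracting a finite subcover and passing to the common refinement of the finitely many subarcs thereby produced yields the required subdivision of $J$. (Equivalently one can run a connectedness argument on the set of $t \in [0,L]$ for which $J([0,t])$ already admits such a subdivision, using the claim to show this set is nonempty and both relatively open and relatively closed.)

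The main obstacle to be careful about is that $T$ need not be locally compact, so I must avoid any use of compactness of neighborhoods in $T$: the argument above is arranged to invoke only compactness of the segment $J$ itself --- which does hold, $J$ being isometric to $[0,L]$ --- and to work throughout with the arc $J$ and translates of $I$ rather than with the ambient local structure of $T$. The remaining ingredients (that directions in an $\mathbb{R}$-tree are convex, that isometries permute directions, and that the relevant intersections of convex sets are subarcs) are routine.
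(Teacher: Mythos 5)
Your proof is correct, and while the direction (ii)~$\Rightarrow$~(i) is essentially the same as the paper's (take $y \in d$, subdivide $[x,y]$ by mixing, and observe that the first piece witnesses (i)), your argument for (i)~$\Rightarrow$~(ii) follows a genuinely different route. The paper argues by contrapositive: assuming $T$ is not mixing, it asserts the existence of non-degenerate arcs $I,J$ with a point $y \in J \setminus \bigcup_{g \in F_n} gI$, so that no $g$ can even satisfy $gy \in I$. You instead argue directly: you first upgrade (i) to the local statement that for every $p$, every direction $d$ at $p$, and every non-degenerate $I$, some non-degenerate initial piece of a \emph{prescribed} arc leaving $p$ in direction $d$ is carried into $I$ (via the convexity of directions and a median trick), and then you run a Lebesgue-number style compactness argument along the compact segment $J \cong [0,L]$ to obtain the finite subdivision that the reformulation of mixing requires. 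Your route is longer, but it is more self-contained and actually makes explicit a step that the paper leaves implicit: the negation of mixing only says no \emph{finite} family of translates $g_iI$ covers $J$, and since the $gI$ are closed this does not by itself yield a point of $J$ missed by the entire union $\bigcup_g gI$; one needs exactly your observation that, under (i), every point of $J$ lies in a translate of $I$ meeting $J$ in a non-degenerate arc on each side, so that compactness of the segment $J$ forces a finite subcover. You also correctly identify that the only compactness used is that of the segment $J$, not local compactness of $T$, which is the right thing to worry about here.
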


\begin{proof}
 To see (i) implies (ii), let $T \in \overline{cv}_n$, and assume that $T$ is not mixing.  There are non-degenerate arcs $I,J \subseteq T$ such that $J \setminus \cup_{g \in F_n}gI \neq \emptyset$; let $y \in J \setminus \cup_{g \in F_n}gI$, and let $d$ be a direction at $y$ meeting $J$ non-degenerately.  Evidently, there is no $g \in F_n$ such that $gy \in I$ and such that $gd \cap I$ is non-degenerate.

 To see (ii) implies (i), assume that $T$ is mixing, and let $I \subseteq T$ be any non-degenerate arc.  For any direction $d$ at $x \in T$, we take $y \in d \subseteq T$, so $[x,y]$ is a non-degenerate arc.  Since $T$ is mixing $[x,y]$ can be divided into finitely many subarcs $[x,y]=[x=y_0,y_1] \cup [y_1,y_2] \cup ... \cup [y_{r-1},y_r]$ such that there are $g_1,...,g_r \in F_n$ with $g_i[y_{i-1},y_i] \subseteq I$.  Hence, $g_ox \in I$, and $g_0d \cap I$ is non-degenerate.  
\end{proof}

\begin{prop}\label{P.MixingSubaction}
 Suppose that $T \in \overline{cv}_n$ has dense orbits.  There is a transverse family $\mathscr{T}=\{Y_i\}_{i \in I}$, with orbits in $\mathscr{T}$ in bijective correspondence with orbits in $\mathscr{F}$, such that the actions $Stab(Y_i) \curvearrowright Y_i$ are mixing.  
\end{prop}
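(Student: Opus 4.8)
The plan is to refine the transverse family $\mathscr{F}$ constructed in Subsection \ref{SS.F} into a transverse family whose pieces carry mixing actions, by iterating a "subdivision" construction inside each tree of $\mathscr{F}$ and taking a limit. First I would observe that it suffices to treat the case $\mathscr{F} = \{T\}$: in general, by Lemma \ref{L.SuppGraph} the tree $T$ splits as a graph of actions over $\mathscr{F}$, and for each $Y \in \mathscr{F}$ the setwise stabilizer $Stab(Y)$ acts on $\overline Y$ with dense orbits (this is the content of the density argument in the proof of Lemma \ref{L.Supp1}), so one applies the $\mathscr{F}=\{T\}$ case to each action $Stab(Y) \curvearrowright \overline Y$ and reassembles. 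Thus I reduce to: $T \in \overline{cv}_n$ has dense orbits and $\mathscr{F} = \{T\}$, and I must show $F_n \curvearrowright T$ is mixing (so $\mathscr{T} = \{T\}$ works).

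Next I would set up the construction that produces the mixing subtree. Using Lemma \ref{L.MixLemma}, the failure of mixing is witnessed by a direction $d$ at a point $x$ and a non-degenerate arc $I$ with no translate of $d$ "hitting" $I$ in the required way. To kill such obstructions, I would fix a basis $B$ and a point $y \in T$, form the finite supporting subtrees $K_i = \mathrm{conv}\{gy : \|g\|_B \le i\}$, and the associated geometric approximations $T_i$ (as in the proof of Lemma \ref{L.FF}), so that $T$ is the strong limit of the $T_i$. Being geometric with dense orbits, each $T_i$ splits as a graph of indecomposable actions and simplicial edges by Imanishi's theorem (see the discussion preceding Example \ref{E.ExceptionalSet}). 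Starting from a non-degenerate arc $I_i \subseteq K_i$, I would build $Y_i \subseteq T_i$ exactly as in the proof of Lemma \ref{L.FF} — taking the union of translates of $I_i$ meeting each other, then closing — so that $Y_i$ is disjoint from its translates, is a union of indecomposable vertex trees, and $Stab(Y_i)$ is a free factor of $F_n$ (this last by Lemma \ref{L.FF}, or directly by collapsing translates of $Y_i$ to get a simplicial tree with trivial arc stabilizers). The hypothesis $\mathscr{F} = \{T\}$ forces each $T_i$ to contain at most one orbit of indecomposable trees, and strong convergence forces, for any indecomposable vertex tree $V$ of $T_i$, some $j \ge i$ with $V \subseteq Y_j$.

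Then I would pass to the limit. Since $Stab(Y_i)$ is a free factor and the ranks are bounded by $n$, after passing to a subsequence the free factors $Stab(Y_i)$ stabilize to a single free factor $H$; and the subtrees $Y_i$ exhaust, in the strong limit, a subtree $Y \subseteq T$ invariant under $H$ with $gY \cap Y = \emptyset$ for $g \notin H$. The key point is that $H \curvearrowright Y$ is mixing: given non-degenerate $I, J \subseteq Y$, both lie in some $Y_i$, hence inside finitely many indecomposable vertex trees of $T_i$; by the single-orbit property these are all $F_n$-translates of one indecomposable $V$, and indecomposability of $V$ (for its own stabilizer) together with transitivity of the translation action on the orbit lets one cover $J$ by translates of $I$ — carefully tracking that the covering group elements can be taken in $H = Stab(Y)$, using that $Y$ is disjoint from its translates. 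Finally, using $\mathscr{F} = \{T\}$ once more (any two non-degenerate arcs of $T$ have overlapping translates), I would argue $Y = T$ and $H = F_n$, so $F_n \curvearrowright T$ itself is mixing. The main obstacle I anticipate is the limit/strong-convergence bookkeeping in this last paragraph — verifying that the mixing relations found in the geometric approximations $T_i$ genuinely survive to $T$ with the covering elements lying in the correct stabilizer, rather than drifting out of $H$; this is where the disjointness $gY \cap Y = \emptyset$ and the single-orbit-of-indecomposables consequence of $\mathscr{F} = \{T\}$ must be used most delicately.
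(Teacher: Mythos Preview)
Your reduction in the first paragraph is wrong, and this undermines the whole argument. When $\mathscr{F}=\{T\}$, the proposition asks for a transverse family with \emph{one orbit} of mixing subtrees; it does \emph{not} ask that $T$ itself be mixing. These are different statements, and in general $\mathscr{F}=\{T\}$ does not force $F_n\curvearrowright T$ to be mixing. Example~\ref{E.ExceptionalSet} already illustrates the setup: the only ergodic measure with non-degenerate support is Lebesgue, so $\mathscr{F}=\{T\}$, yet $T$ contains an exceptional set and there is no reason for $T$ to be mixing. Your final step, ``using $\mathscr{F}=\{T\}$ once more \dots\ I would argue $Y=T$ and $H=F_n$,'' is exactly where this bites: the fact that any two arcs of $T$ have non-degenerately overlapping translates says nothing about whether the translates of your limit subtree $Y$ exhaust $T$, and in the exceptional-set case they will not.

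The paper's proof avoids this trap by induction on $n$ and a dichotomy that your approach lacks. After reducing to $\mathscr{F}=\{T\}$, one asks whether some direction $d$ at a point $x$ has the property that $\{g:gx\in J,\ gd\cap J\text{ non-degenerate}\}$ is finite for every arc $J$. If such a direction exists (this covers the exceptional-set case via Lemma~\ref{L.FF}), one blows up along its orbit to produce a genuine graph-of-actions splitting of $T$ with free-factor vertex groups, and induction on rank finishes. Only when \emph{no} such direction exists does one prove $T$ is mixing, and that step uses a separate argument (the Claim in the paper's proof) showing that the failure of mixing would manufacture exactly such a bad direction. Your limit-of-geometric-approximations construction also has a direction-of-maps problem---arcs $I,J\subseteq Y\subseteq T$ do not ``lie in some $Y_i\subseteq T_i$,'' since the resolution maps go $T_i\to T$, not the reverse---but the decisive error is the false target you set yourself in the reduction.
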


\begin{proof}
 Let $T$ as in the statement; we suppose that for any $\mu \in M_0(T)$ with non-degenerate support, we have $Supp(\mu)=T$, as the general case follows immediately from this case by considering the trees in $\mathscr{F}$.  We proceed by induction on $n$.  It follows from Harrison's theorem that every $T \in \overline{cv}_2$ is geometric.  The result is clear in this case; indeed, it follows from Imanishi's theorem that any geometric tree with dense orbits is a graph of indecomposable actions.  So, we suppose the result holds for all $T \in \overline{cv}_m$ for $m < n$.

 If $T \in \overline{cv}_n$ contains an exceptional set, then we are done by Lemma \ref{L.FF} and induction, so we suppose this is not the case.  It follows that for each $\mu \in M_0(T)$, $Supp(\mu)=T$, hence applying Procedure II of Remark \ref{R.GLProcedure} to any transverse family in $T$ will eventually produce $T$.  


 Assume that there is some $y \in T$ and direction $d$ at $y$ such that for any finite arc $J \subseteq T$, the set $\{g \in F_n| gy \in J$ and $gd \cap J$ is non-degenerate$\}$ is finite.  In this case, we may blow-up the orbit of $d$ as follows; split $T$ open at $y$, gluing directions not in the orbit of $d$ back to $y$, and for each $d'$ at $y$ in the orbit of $d$, glue a simplicial edge of length 1 to $y$; finally glue each direction in the orbit of $d$ to its corresponding simplicial edge.  This gives a tree $T'$, which, by the finiteness assumption on $d$, carries an isometric action of $F_n$.  

 The obvious map $f:T' \rightarrow T$ is equivaraint, 1-Lipschitz and alignment-preserving.  The graph of actions structure on $T'$ gives a graph of actions structure $T$.  Further, collasping to a point every tree with dense orbits in $T'$ gives an action of $F_n$ on a simplicial tree with trivial arc stabilizers, where vertex stabilizers correspond to stabilizers of the trees in $T'$ with dense orbits.  Hence, we get a transverse covering of $T$ by subtrees $\{Y_i\}_{i \in I}$ such that $Stab(Y_i)$ is a free factor of $F_n$, and the result follows from induction.

 We are left to consider the case that no direction $d$ in $T$ satisfies the above finiteness condition.

 \begin{claim}
  Let $T \in \overline{cv}_n$, and suppose that there is no direction $d$ at $x \in T$ such that for every non-degenerate arc $I \subseteq T$, $\{y|y=gx$ and $gd \cap I$ is non-degenerate$\}$ is finite.  Then $T$ is mixing.
 \end{claim}

  \begin{proof}
   If it were the case that for any direction $d$ at $y$ in $T$ and any non-degenerate arc $J \subseteq T$, the collection $\{gy|gd \cap J$ is non-degenerate$\}$ is dense in $J$, the $T$ would be mixing by Lemma \ref{L.MixLemma}.  Toward contradiction, we suppose that there is a point $x \in T$, a direction $d$ at $x$, and a non-degenerate segment $I \subseteq T$ such that there is no $g \in F_n$ with $gd \cap I$ non-degenerate. 

   Define $X:=\cup I$ where $I$ runs over all non-degenerate arcs of $T$ not meeting $d$ non-degenerately; the collection $\mathscr{X}_1$ of path components of $X$ is a transverse family for the action $F_n \curvearrowright T$.  Note that since for each $\mu \in M_0(T)$, $Supp(\mu)=T$, we have that $T \setminus X$ contains no non-degenerate arc; further, by applying Procedure II of Remark \ref{R.GLProcedure} to $\mathscr{X}_1$ will eventually produce $T$.  By construction, the family $\mathscr{X}$ cannot be a transverse covering of $T$.

   As in Remark \ref{R.GLProcedure}, let $\mathscr{X}_{i+1}', \mathscr{X}_{i+1}$ denote the results of applying Procedure II to $\mathscr{X}_i$.  As $\mathscr{X}$ is not a transverse covering of $T$, some member of $\mathscr{X}_2$ is not closed in $T$.  Let $i$ minimal such that $\mathscr{X}_{i+1}=\{T\}$, then there is $Y \in \mathscr{X}_i'$, which is not closed in $T$, and such that the $F_n$-translates of its closure $\overline{Y}$ give a transverse cover of $T$.  Let $x_1 \in \overline{Y} \setminus Y$, and let $d_1$ denote the unique direction in $\overline{Y}$ at $x_1$.  Note that for any non-degenerate arc $I \subseteq \overline{Y}$, the orbit $F_nx_1$ can meet $I$ only at its endpoints.  For any non-degenerate arc $J \subseteq T$, let $g_1,...,g_r \in F_n$ such that $J \subseteq g_1\overline{Y} \cup ... \cup g_r\overline{Y}$; it follows that $\{y \in J|y=gx_1$ and $gd_1 \cap J$ non-degenerate$\}$ is finite (with cardinality bounded by $2r$), a contradiction.   
  \end{proof}


\end{proof}

\begin{remark}
 One should note that if an action $F_n \curvearrowright T$ has dense orbits and is free, then $T$ is (uniquely) a graph of indecomposable actions.  Indeed, $T$ is not indecomposable if and only if there is a transverse family for the action $F_n \curvearrowright T$; for simplicity, we assume that for any $\mu \in M_0(T)$, $Supp(\mu)=T$.  If $T$ contains a transverse family, then by Procedure II of Remark \ref{R.GLProcedure}, $T$ splits as a graph of actions, say with skeleton $S$.  Since the action $F_n \curvearrowright T$ is free, the action $F_n \curvearrowright S$ has trivial arc stabilizers; \emph{i.e.} $S$ encodes a non-trivial free decomposition of $F_n$, and the claim follows by induction on rank.
\end{remark}

\begin{cor}\label{C.UniqueMaxMix}
 Let $T \in \overline{cv}_n$ have dense orbits, and suppose that for all $\mu \in M_0(T)$, $Supp(\mu)=T$.  There is a unique conjugacy class $[H]$ of finitely generated subgroups $H \leq F_n$ such that:

 \begin{enumerate}
  \item [(i)] the action $H \curvearrowright T_H$ is mixing,
  \item [(ii)] $H = Stab(T_H)$, and
  \item [(iii)] $T_H$ is maximal with respect to (i), (ii).
\end{enumerate}

\end{cor}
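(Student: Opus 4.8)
The plan is to prove that under the stated hypothesis the action $F_n \curvearrowright T$ is \emph{itself} mixing, and that consequently $[H] = [F_n]$ is the unique conjugacy class satisfying (i)--(iii).

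First I would establish that $F_n \curvearrowright T$ is mixing. Apply Proposition~\ref{P.MixingSubaction} to obtain a transverse family $\mathscr{T} = \{Y_i\}_{i \in I}$ with $Stab(Y_i) \curvearrowright Y_i$ mixing and with the orbits of $\mathscr{T}$ in bijection with the orbits of $\mathscr{F}$. Since by hypothesis every non-degenerate support set of an ergodic measure in $M_0(T)$ equals $T$, the family $\mathscr{F}$ reduces to the single tree $T$; hence $\mathscr{T}$ consists of a single orbit of trees, say the orbit of $Y$. Next I would note that $\mathscr{T}$ is in fact a transverse \emph{covering} of $T$: this is how it is produced in the proof of Proposition~\ref{P.MixingSubaction}, and it also follows from the hypothesis, since if some component of the complement of $\bigcup_i Y_i$ contained a non-degenerate arc then restricting the Lebesgue measure to the closure of that component would, by Proposition~\ref{P.SuppProperties}, yield a measure in $M_0(T)$ with support properly contained in $T$. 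Now, given non-degenerate arcs $I, J \subseteq T$, cover $I$ and $J$ by finitely many elements of $\mathscr{T}$, pick a non-degenerate $I_0 \subseteq I$ contained in a single $g_0 Y$, and for each $Y_j \in \mathscr{T}$ meeting $J$ non-degenerately write $J \cap Y_j = g_j\cdot(\text{an arc of } Y)$; using that $Stab(Y) \curvearrowright Y$ is mixing, cover that arc by finitely many $Stab(Y)$-translates of $g_0^{-1} I_0$. Reassembling shows that $J$ lies in a finite union of $F_n$-translates of $I$, so $F_n \curvearrowright T$ is mixing (this is immediate when $\mathscr{T} = \{T\}$).

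Given this, the remainder is formal. Since $F_n \curvearrowright T$ is minimal, $T_{F_n} = T$, so $(F_n, T)$ satisfies (i), while $Stab(T) = F_n$ gives (ii). For (iii), recall that for every finitely generated $H' \leq F_n$ the tree $T_{H'} = \bigcup_{l_T(h) > 0} A_T(h)$ is a subtree of $T$; hence in the domination preorder $(A, T_A) \preceq (B, T_B) \iff T_A \subseteq g T_B$ for some $g \in F_n$, every pair is $\preceq (F_n, T)$, so $(F_n, T)$ is a maximum and in particular maximal, proving existence. For uniqueness, suppose $(H', T_{H'})$ also satisfies (i)--(iii); being maximal and dominated by $(F_n, T)$, it must also dominate $(F_n, T)$, so $T \subseteq g T_{H'}$ for some $g$, and since $g^{-1}T = T$ this forces $T_{H'} = T$; then (ii) gives $H' = Stab(T_{H'}) = Stab(T) = F_n$. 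Hence $[H'] = [F_n]$, so $[H] = [F_n]$ is the unique class.

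The only substantive step is the first one: deducing that $T$ is mixing from the fact that it is covered by a single orbit of mixing subtrees. This is where the hypothesis $Supp(\mu) = T$ for all $\mu \in M_0(T)$ really enters — it is exactly what forbids a simplicial part, an exceptional part, or a complementary arc, and so guarantees the transverse-covering property that the reassembly argument needs; everything after that is bookkeeping with the preorder.
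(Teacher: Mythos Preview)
Your approach has a genuine gap: the claim that $F_n \curvearrowright T$ is itself mixing is false in general, and so the conclusion $[H]=[F_n]$ is wrong. Example~\ref{E.HiddenDynamics} constructs a tree $Y \in \overline{cv}_{n+1}$ that is uniquely ergodic (hence satisfies the hypothesis $Supp(\mu)=Y$ for all $\mu \in M_0(Y)$) yet contains an exceptional set $E'$. Take a non-degenerate arc $I$ in the open dense complement $Y\setminus E'$ and an arc $J$ meeting $E'$ in a Cantor set; since $E'$ is $F_{n+1}$-invariant, every translate $gI$ lies in $Y\setminus E'$, so no finite union of translates of $I$ can contain $J$. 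Thus $Y$ is not mixing, and the maximal mixing subaction is carried by a proper subtree with stabilizer a proper free factor.

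The step that fails is the assertion that the transverse family $\mathscr{T}$ from Proposition~\ref{P.MixingSubaction} is a transverse \emph{covering}. Your argument for this only shows the complement of $\bigcup_i Y_i$ contains no non-degenerate arc; but a transverse covering requires every finite arc to lie in \emph{finitely many} members of the family, and when the complement is a Cantor set an arc crossing it meets infinitely many $Y_i$. Indeed, in the proof of Proposition~\ref{P.MixingSubaction} the exceptional-set case is handled precisely by producing (via Lemma~\ref{L.FF}) a subtree disjoint from its translates whose stabilizer is a proper free factor, and the resulting family is not a covering. The paper's proof of the corollary therefore does not try to show $T$ is mixing; instead it constructs the subtree $Y$ by splitting $T$ along the finitely many direction-orbits satisfying the finiteness condition, proves the key property $Y=Y_I$ for every non-degenerate $I\subseteq Y$, and uses this to show that any other maximal mixing subaction $K\curvearrowright X$ must (after translating) coincide with $Y$.
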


\begin{proof}
 Let $T$ as in the statement.  As in the proof of Proposition \ref{P.MixingSubaction}, there are finitely-many (orbits of) directions $\{d_i\}_{i=1,...,k}$ at points $\{x_i\}_{i=1,...,k}$ such that for any non-degenerate arc $I \subseteq T$, there are finitely many elements $g \in F_n$ taking $x_i$ into $I$ such that $gd_i \cap I$ is non-degenerate.  Splitting $T$ apart on the orbits of these directions gives a transverse family $\{gY\}_{g \in F_n}$ such that $Stab(Y) \curvearrowright Y$ is mixing.  The following claim follows easily from the defintion of $Y$.

 \begin{claim}
  Let $Y$ as defined above, and let $Y_I:=\cup J$, where $J$ runs over all non-degenerate arcs contained in $T$ such that there are $g_1,...,g_r \in F_n$ with $J \subseteq g_1I \cup ... \cup g_rI$ and such that $\cup_i g_iI$ is connected.  For any non-degenerate $I \subseteq Y$, $Y_I=Y$.
 \end{claim}

 Now, let $K \curvearrowright X$ be a subaction satisfying (i)-(iii).  Since for all $\mu \in M_0(T)$, one has $Supp(\mu)=T$, it is the case that up to replacing $X$ by a translate and replacing $K$ with a conjugate, we can assume that $Y \cap X$ is non-degenerate.  Further, since $K \curvearrowright X$ is mixing, we can assume that no direction $d_i$ at $x_i$ as in the first paragraph of the proof meets $K$ non-degenerately; indeed, it is immediate that if $d_i$ met $K$ non-degenerately, then $x_i \in \overline{X} \setminus X$, but then the orbit $Kx_i$ would not be dense in every non-degenerate arc of $K$, contradicting mixing.  Let $I \subseteq X \cap Y$ be a non-degenerate arc.  From the claim above, $Y=Y_I$; from maximality of $X$, we can assume that $Y \subseteq X$.  On the other hand, from the fact that $Y=Y_J$ for any non-degenerate arc $J \subseteq Y$, it follows that $Y$ is the maximal mixing subaction of $F_n \curvearrowright T$ containing $I$.  Hence, $X=Y$, and by (ii) $K=Stab(Y)$.  

\end{proof}

\subsection{Decomposing Actions in the Boundary of Outer Space}

In this subsection we collect the preceding results of Section \ref{S.Decompose} to associate to any action $T \in \overline{cv}_n$ a diagram of actions that encodes the structure of $T$.  Let $\leq$ denote the obvious partial order on the finite set $\{Supp(\mu)\}_{\mu \in M_0(T)}$; also denote by $\leq$ the partial order inherited by $M_0(T)$, \emph{i.e.} $\mu \leq \mu'$ if and only if $Supp(\mu) \leq Supp(\mu')$.  Define $[[\mu]]:=\{\nu \in M_0(T) | \nu \leq \mu, \mu \leq \nu \}$; for any $\mu \in  [[\mu_L]]$, we have that the natural map $T \rightarrow T_{\mu}$ is a bijection, and there is an identification $M_0(T)=M_0(T_{\mu})$.  Further, by Lemma \ref{L.ControlL2}, we have that $L^2(T)=L^2(T_{\mu})$.  

Let $T \in \overline{cv}_n$ have dense orbits, and let $\mathscr{F}=\mathscr{F}(T)$ be the transverse family constructed in Subsection \ref{SS.F}.  By Proposition \ref{P.MixingSubaction} and Corollary \ref{C.UniqueMaxMix} we have associated to each $F_n$-orbit $O$ of trees in $\mathscr{F}$ a (canonical) mixing action $H_O \curvearrowright T_{H_O}$, defined up to translation in $T$, \emph{i.e.} up to replacing $H_O$ by a conjugate.  Let $\mu \in [[\mu_L]]$, with $f:T \rightarrow T_{\mu}$ the natural map.  Then it is easy to see that $f(\mathscr{F}):=\{f(Y)|Y \in \mathscr{F}\}=\mathscr{F}(T_{\mu})$ and that $H_O \curvearrowright f(T_{H_O})$ is the mixing action associated to the orbit $f(O)$ in $\mathscr{F}(T_{\mu})$.  

By Lemma \ref{L.SuppGraph} if $\mathscr{F} \neq \{T\}$, then $T$ can be recovered by the iterated graph of actions procedure of Remark \ref{R.GLProcedure} starting from $\mathscr{F}$.  The family $\mathscr{F}$ is canonical, and $Y \in \mathscr{F}$ satisfies the hypotheses of Proposition \ref{P.Diagram}.  Note that the projections of Proposition \ref{P.Diagram} are canonical if we consider them to be defined only up to $[[.]]$.  Hence, we obtain:

\begin{theorem}\label{T.Decompose}
 Let $T \in \overline{cv}_n$ have dense orbits.  With notation as above:
 \begin{enumerate}
  \item [(i)] there is a transverse family $\mathscr{F}$ such that:
    \begin{enumerate}
     \item [(a)] the set of orbits of trees in $\mathscr{F}$ bijectively corresponds to the set of classes $[[\mu]]$ of ergodic measures $\mu \in M_0(T)$ with non-degenerate support,
     \item [(b)] associated to each orbit $O$ of trees in $\mathscr{F}$ is a subtree $T(O)=T([[\mu]]) \subseteq T$, unique up to translation in $T$, such that the action $H(O)=Stab(T(O) \curvearrowright T(O)$ is mixing,
     \item [(c)] the action $F_n \curvearrowright T$ can be recovered via an iterated graphs of actions construction (Remark \ref{R.GLProcedure}, procedure II) starting from $\mathscr{F}$,
     \item [(d)] if $\mathscr{F} \neq \{T\}$, then for any $Y \in \mathscr{F}$, $Stab(Y)$ is a vertex group of a very small splitting of $F_n$.
    \end{enumerate}
  \item [(ii)] there is a diagram of projections of $T$: associated to the class $[[\mu]]$ of an ergodic measure $\mu \in M_0(T)$ with degenerate support is a projection $f_{[[\mu]]}:T \rightarrow T_{[[\mu]]}$ such that:
    \begin{enumerate}
     \item [(a)] $dim(M_0(T_{[[\mu]]})) < dim(M_0(T))$,
     \item [(b)] for ergodic $\mu' \in M_0(T)$ with degenerate support, if $\mu \leq \mu'$, then the projection $f_{[[\mu]]}:T \rightarrow T_{[[\mu]]}$ factors through the projection $f_{[[\mu']]}:T \rightarrow T_{[[\mu']]}$.
     \item [(c)] there is a unique class $[[\nu]]$ of ergodic measures $\nu' \in M_0(T)$ with non-degenerate support such that $\mu \leq \nu$; the subgroup $H([[\nu]])=Stab(T([[\nu]])$ is a point stabilizer in $T_{[[\mu]]}$. 
     \item [(d)] if $[[\mu]]$ is minimal, and if $T_{[[\mu]]}$ contains an exceptional set, the $T$ (and $T_{[[\mu]]}$) splits as a graph of actions.
    \end{enumerate}
 \end{enumerate}
\end{theorem}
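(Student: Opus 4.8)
The plan is to assemble the results of this section around the transverse family $\mathscr{F} = \mathscr{F}(T)$ built in Subsection \ref{SS.F}. Property (i)(a) is essentially the construction: by Proposition \ref{P.M0T} the cone $M_0(T)$ is spanned by finitely many mutually-singular ergodic measures, and Proposition \ref{P.SuppProperties}(iv) shows that two such measures with non-degenerate support have either equal support or supports with disjoint interiors in every finite subtree; thus the non-degenerate support sets are exactly the sets $Supp(\mu)$ for the $[[.]]$-classes $[[\mu]]$ of ergodic measures with non-degenerate support, and each such class contributes a single $F_n$-orbit of maximal subtrees $Y$ satisfying condition (ii) in the definition of $\mathscr{F}$. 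Property (i)(d) is then exactly Lemma \ref{L.Supp1}. Property (i)(c) is the content of the proof of Lemma \ref{L.SuppGraph}: iterating Procedure II of Remark \ref{R.GLProcedure} starting from $\mathscr{F}$ cannot terminate in an exceptional set, since by Proposition \ref{P.ExceptionalMeasure} such a set would carry an ergodic measure whose support is strictly contained in a non-degenerate support set, contradicting the maximality built into the definition of $\mathscr{F}$; hence the iteration terminates with $\{T\}$, and reading the last steps backwards recovers $F_n \curvearrowright T$ as an iterated graph of actions.

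For property (i)(b) I would apply Proposition \ref{P.MixingSubaction} to obtain a transverse family $\mathscr{T}$, with orbits matching those of $\mathscr{F}$, on whose trees the relevant stabilizers act mixingly; Corollary \ref{C.UniqueMaxMix}, applied to the action $Stab(Y) \curvearrowright Y$ (which, after replacing $Y$ by $Y_\mu$ for a suitable $\mu$, has all ergodic measures of full support), shows that the conjugacy class of the mixing subgroup $H(O)$ and the subtree $T(O)$ it stabilizes are uniquely determined, i.e.\ independent of the choices in Proposition \ref{P.MixingSubaction} up to translation in $T$. This produces the subtrees $T([[\mu]])$ and groups $H([[\mu]])$ attached to the non-degenerate classes.

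Part (ii) reduces to Proposition \ref{P.Diagram} applied to the members of $\mathscr{F}$: each $Y \in \mathscr{F}$ satisfies, by construction, the hypothesis of that proposition, so for every ergodic $\mu$ with degenerate support I obtain a projection $f_{[[\mu]]}:T \to T_{[[\mu]]}$ with $dim(M_0(T_{[[\mu]]})) < dim(M_0(T))$ (property (ii)(a)), a partial order under which $\mu \leq \mu'$ forces $f_{[[\mu]]}$ to factor through $f_{[[\mu']]}$ (property (ii)(b)), and the dichotomy for minimal classes via Proposition \ref{P.ExceptionalMeasure} (property (ii)(d)). That $T_{[[\mu]]}$ depends only on the class $[[\mu]]$ follows because for $\nu \in [[\mu]]$ the natural map between $T_\nu$ and $T_\mu$ is a bijection, so by Lemma \ref{L.ControlL2} neither $M_0$ nor $L^2$ changes.

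The step I expect to require the most care is property (ii)(c): that above each degenerate class $[[\mu]]$ there is a unique non-degenerate class $[[\nu]]$, and that $H([[\nu]]) = Stab(T([[\nu]]))$ is a point stabilizer in $T_{[[\mu]]}$. Uniqueness of $[[\nu]]$ is Proposition \ref{P.SuppProperties}(iii): the non-degenerate support sets containing a fixed $Supp(\mu)$ are totally ordered, hence have a unique minimum. For the point-stabilizer claim I would take $\mu$ supported on the degenerate set $X \subseteq T([[\nu]])$ and realize $f_{[[\mu]]}$ restricted to $T([[\nu]])$ as the collapse of the complement of $X$; this map sends every subtree of $T([[\nu]])$ transverse to $X$ to a point, so the $H([[\nu]])$-action on $T([[\nu]])$ collapses onto a single point of $T_{[[\mu]]}$, whence $H([[\nu]])$ is contained in that point stabilizer, and equality follows as in Lemma \ref{L.Supp1}, using that $H([[\nu]]) \curvearrowright T([[\nu]])$ has dense orbits so that no proper overgroup can fix the same point. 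Finally, since $\mathscr{F}$, the subtrees $T([[\mu]])$, the groups $H([[\mu]])$, and the projections $f_{[[\mu]]}$ were all constructed from the intrinsic data $\{Supp(\nu)\}_{\nu \in M_0(T)}$, they are canonical once regarded up to translation in $T$ and up to $[[.]]$-equivalence, which completes the proof.
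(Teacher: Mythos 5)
Your overall plan---assemble the section's lemmas and propositions around the family $\mathscr{F}$---is exactly what the paper does; the paper's own ``proof'' is only the final paragraph of Section \ref{S.Decompose}, which does nothing beyond citing Lemma \ref{L.SuppGraph}, Lemma \ref{L.Supp1}, Proposition \ref{P.MixingSubaction}, Corollary \ref{C.UniqueMaxMix}, and Proposition \ref{P.Diagram}. Your treatment of (i)(a)--(d), (ii)(a), (ii)(b), and (ii)(d) is a faithful unpacking of that.

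The trouble is with (ii)(c), which you correctly flag as the delicate point, but where both halves of your argument have a gap. For uniqueness, Proposition \ref{P.SuppProperties}(iii) does not give what you want: it says that if $Supp(\nu_i)\cap Supp(\nu_j)$ has positive $\nu_i$-measure then $Supp(\nu_i)\subseteq Supp(\nu_j)$. If $\nu_1,\nu_2$ are two distinct ergodic measures with non-degenerate support, both containing the degenerate set $Supp(\mu)$, then $Supp(\nu_1)\cap Supp(\nu_2)\supseteq Supp(\mu)$ is \emph{not} of positive $\nu_1$- or $\nu_2$-measure (the three measures are mutually singular and $Supp(\mu)$ is nowhere dense), so (iii) does not force any inclusion; the non-degenerate support sets above $Supp(\mu)$ are not a priori totally ordered as you claim. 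What does give uniqueness is the transverse-family structure of $\mathscr{F}$ together with ergodicity of $\mu$: the union of translates of each $T([[\nu]])$ is a closed $F_n$-invariant set, so by ergodicity $Supp(\mu)$ sits inside the union of translates of exactly one $\mathscr{F}$-orbit; if it sat in two distinct orbits, it would be contained in their (discrete) intersection, forcing $\mu$ to be atomic and contradicting $\mu\in M_0(T)$.

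For the point-stabilizer claim the error is more serious, and I think you have the picture from Lemma \ref{L.Supp1} reversed. You say that $f_{[[\mu]]}|_{T([[\nu]])}$ collapses $T([[\nu]])$ to a single point. It does not: $Supp(\mu)$ lies inside the union of translates of $T([[\nu]])$, and $Supp(\mu)\cap T([[\nu]])$ carries the full local $\mu$-mass. So the map $f_{[[\mu]]}$ restricted to $T([[\nu]])$ only collapses the complementary subintervals, and the image $f_{[[\mu]]}\bigl(T([[\nu]])\bigr)$ is a non-degenerate subtree of $T_{[[\mu]]}$---it is the Hausdorffification of $T([[\nu]])$ with the pseudometric $d_{\mu}$---not a point. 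In Lemma \ref{L.Supp1} it is the subtree corresponding to the \emph{other} support set $Supp(\nu')$, the one on which $\mu$ does not live, that collapses to a point. Your argument conflates the two roles. (Already in the base case $\mathscr{F}=\{T\}$ your reading of (ii)(c) would force $F_n$ to fix a point of the nontrivial minimal tree $T_{[[\mu]]}$, which is impossible, so the point-stabilizer statement needs a different interpretation than the one you are arguing for.) If you want to conclude something via a collapse-to-a-point argument as in Lemma \ref{L.Supp1}, it must concern $H([[\nu']])$ for $\nu'\neq\nu$, and you should state and argue that version rather than the one you wrote.
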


\bibliographystyle{amsplain}
\bibliography{indecompREF.bib}
\end{document}